\newcommand{\Minf}{M_{\infty}}
\newcommand{\WD}{\operatorname{WD}}
\newcommand{\Fpbar}{\overline{\F}_p}
\newcommand{\Fp}{\F_p}
\newcommand{\rbar}{\bar{r}}
\newcommand{\rhobar}{\bar{\rho}}
\newcommand{\nr}{\mathrm{nr}}
\newcommand{\Rep}{\mathrm{Rep}}
\newcommand{\Diag}{\mathrm{Diag}}
\newcommand{\id}{\mathrm{id}}
\newcommand{\Qpf}{\mathbb{Q}_{p^f}}
\newcommand{\Iw}{\mathrm{Iw}}
\newcommand{\Minfty}{M_{\infty}}
\newcommand{\nusim}{\nu_{\mathrm{sim}}}
\newcommand{\recGT}{\mathrm{rec}_{\mathrm{GT}}}
\newcommand{\recGTp}{\mathrm{rec}_{\mathrm{GT},p}}
\newcommand{\CO}{\mathcal{C}_{\mathcal{O}}}
\newcommand{\COhat}{\widehat{\mathcal{C}}_{\mathcal{O}}}
\newcommand{\cts}{\mathrm{cts}}
\newcommand{\ev}{\mathrm{ev}}
\newcommand{\loc}{\mathrm{loc}}
\newcommand{\ad}{\mathrm{ad}}
\newcommand{\can}{\mathrm{can}}
\newcommand{\GKinfty}{G_{K_{\infty}}}
\newcommand{\overbar}[1]{\mkern 1.5mu\overline{\mkern-1.5mu#1\mkern-1.5mu}\mkern 1.5mu}
\newcommand{\Mbar}{\overbar{M}}
\newcommand{\LG}{{}^LG}
\newcommand{\co}{\mathrm{c.o.}}
\newcommand{\T}{\mathbb{T}}
\newcommand{\A}{\mathbb A}
\newcommand{\C}{\mathbb C}
\newcommand{\F}{\mathbb F}
\newcommand{\Z}{\mathbb Z}
\newcommand{\N}{\mathbb{N}}
\newcommand{\m}{\mathfrak{m}}
\newcommand{\ma}{\mathfrak{a}}
\newcommand{\mj}{\mathfrak{j}}
\newcommand{\M}{\mathfrak{M}}
\newcommand{\p}{\mathfrak{p}}
\newcommand{\mb}{\mathfrak{b}}
\newcommand{\cA}{\mathcal{A}}
\renewcommand{\cD}{\mathcal{D}}
\newcommand{\cE}{\mathcal{E}}
\newcommand{\cF}{\mathcal{F}}
\newcommand{\cG}{\mathcal{G}}
\renewcommand{\cH}{\mathcal{H}}
\newcommand{\cI}{\mathcal{I}}
\newcommand{\cJ}{\mathcal{J}}
\newcommand{\cM}{\mathcal{M}}
\newcommand{\cO}{\mathcal{O}}
\newcommand{\cQ}{\mathcal{Q}}
\newcommand{\cS}{\mathcal{S}}
\newcommand{\cT}{\mathcal{T}}
\newcommand{\cU}{\mathcal{U}}
\newcommand{\dd}{\mathrm{dd}}
\newcommand{\tomega}{\widetilde{\omega}}
\newcommand{\tw}{\widetilde{w}}
\newcommand{\tW}{\widetilde{W}}
\newcommand{\Q}{\mathbb Q}
\newcommand{\Qp}{\Q_p}
\newcommand{\Zp}{\Z_p}
\newcommand{\Qpbar}{\overline\Q_p}
\newcommand{\Qlbar}{\overline{\Q}_{\ell}}
\newcommand{\onto}{\twoheadrightarrow}
\newcommand{\ab}{^{\mathrm{ab}}}
\newcommand{\dual}{^\vee}
\newcommand{\x}{^{\times}}
\newcommand{\abs}[1]{\left\vert#1\right\vert}
\newcommand{\pair}[1]{\langle#1\rangle} 
\newcommand{\und}{\underline}
\newcommand{\GL}{\operatorname{GL}}
\newcommand{\SL}{\operatorname{SL}}
\newcommand{\GSp}{\operatorname{GSp}}
\newcommand{\Sp}{\operatorname{Sp}}
\newcommand{\Gm}{\mathbb{G}_m}
\newcommand{\Fil}{\operatorname{Fil}} 
\newcommand{\gr}{\operatorname{gr}}
\newcommand{\univ}{{\operatorname{univ}}}
\newcommand{\Spf}{\operatorname{Spf}}
\DeclareMathOperator{\Mod}{Mod}
\DeclareMathOperator{\Art}{Art}
\DeclareMathOperator{\JH}{JH}
\DeclareMathOperator{\End}{End}
\DeclareMathOperator{\Hom}{Hom}
\DeclareMathOperator{\Spec}{Spec}
\DeclareMathOperator{\Gal}{Gal}
\DeclareMathOperator{\Ind}{Ind}
\DeclareMathOperator{\ind}{c-Ind}
\DeclareMathOperator{\im}{im}
\DeclareMathOperator{\Frob}{Frob}
\DeclareMathOperator{\Mat}{Mat}
\newcommand{\cris}{\mathrm{cris}}
\newcommand{\st}{\mathrm{st}}
\newcommand{\un}{\mathrm{un}}
\theoremstyle{plain} 
\newtheorem{lemma}[equation]{Lemma}
\newtheorem*{lemma*}{Lemma}
\newtheorem{proposition}[equation]{Proposition}
\newtheorem*{proposition*}{Proposition}
\newtheorem{theorem}[equation]{Theorem}
\newtheorem{corollary}[equation]{Corollary}
\newtheorem*{corollary*}{Corollary}
\newtheorem*{theorem*}{Theorem}
\newtheorem{deflemma}[equation]{Lemma/Definition}
\theoremstyle{definition}
\newtheorem{definition}[equation]{Definition}
\newtheorem*{definition*}{Definition}
\newtheorem{example}[equation]{Example}
\newtheorem*{example*}{Example}
\newtheorem{notation}[equation]{Notation}
\newtheorem{situation}[equation]{Situation}
\theoremstyle{remark}
\newtheorem{remark}[equation]{Remark}
\newtheorem*{remark*}{Remark}
\numberwithin{equation}{subsection}
\numberwithin{figure}{subsection} 
\numberwithin{table}{subsection} 
\let\c@equation\c@figure
\let\c@table\c@figure
\newcommand{\mo}{{-1}}
\newcommand{\std}{{\mathrm{std}}}
\newcommand{\ov}[1]{\overline{#1}}
\DeclareMathOperator{\simc}{sim}    
\newcommand{\pcris}{{\mathrm{p}\text{-}\mathrm{cris}}}
\newcommand{\ctensor}{\widehat{\otimes}}    
\newcommand{\SO}{\mathrm{SO}}
\renewcommand{\ind}{\mathrm{ind}}
\title{Mod $p$ local-global compatibility for $\GSp_4(\Qp)$ in the ordinary case}
\author{John Enns  and Heejong Lee}
\date{}
\begin{document}
\maketitle

\begin{abstract}
Let $F$ be a totally real field of even degree in which $p$ splits completely. Let $\ov{r}:G_F \rightarrow \GSp_4(\Fpbar)$ be a modular Galois representation unramified at all finite places away from $p$ and upper-triangular, maximally nonsplit, and of parallel weight at places dividing $p$. Fix a place $w$ dividing $p$. Assuming  certain genericity conditions and Taylor--Wiles assumptions, we prove that the $\GSp_4(F_w)$-action on the corresponding Hecke-isotypic part of the space of mod $p$ automorphic forms on a compact mod center form of $\GSp_4$ with infinite level at $w$ determines $\ov{r}|_{G_{F_w}}$.
\end{abstract}

\let\thefootnote\relax\footnotetext{2010 \textit{Mathematics Subject Classification}. 11F80, 11F33.}


\section{Introduction}
The mod $p$ local Langlands correspondence for the group $\GL_2(\Qp)$ gives a tight connection between continuous Galois representations $G_{\Qp}\rightarrow \GL_2(\Fpbar)$ and admissible smooth $\Fpbar$-representations of $\GL_2(\Qp)$ (see \cite{CDP,CEG+2}) It is hoped that this result generalizes in some form to the group $\GL_n(K)$ for any finite extension $K/\Qp$. However, mod $p$ representations of $\GL_n(K)$ remain poorly understood and the eventual form of the correspondence (should one exist) is still mysterious outside the cases $\GL_2(\Qp)$ or $n=1$. Going further still, in accordance with the general Langlands philosophy one may speculate about the relationship between continuous Galois representations $G_K\rightarrow \LG(\Fpbar)$ (suitably interpreted) and admissible $\Fpbar$-representations of $G(K)$ for any reductive group $G$ over $K$. This paper establishes a concrete connection between certain special classes of these objects when $G=\GSp_4$ and $K=\Qp$.

If $F$ is a number field having $K$ as one of its $p$-adic completions and $G$ is defined over $F$, then generally one hopes that the correspondence should be compatible with the association of Galois representations of $G_F$ to automorphic representations of $G(\A_F)$ -- that is, mod $p$ local-global compatibility. More concretely, if for example $\rhobar:G_K\rightarrow \GL_n(\Fpbar)$ is a local component of a \emph{modular} Galois representation $\rbar:G_F\rightarrow\GL_n(\Fpbar)$ then using spaces of mod $p$ automorphic forms (or \'etale cohomology) it is possible to construct admissible $\Fpbar$-representations $\Pi(\rbar)$ of $\GL_n(K)$ naturally associated with $\rbar$. The question of mod $p$ local-global compatibility is then to study the relationship between $\rhobar$ and $\Pi(\rbar)$. In particular one could ask whether $\Pi(\rbar)$ only depends on $\rhobar$ as opposed to all of $\rbar$. This question has been answered in the affirmative for $\GL_2(\Qp)$ by \cite{Eme11} using the mod $p$ local Langlands correspondence for $\GL_2(\Qp)$, but in no other cases are there results to this effect. Indeed the question is closely related to the existence of a mod $p$ local Langlands correspondence. 

Instead, one can ask the opposite question of whether $\Pi(\rbar)$ uniquely determines $\rhobar$. This is what we do in this paper for $\GSp_4(\Qp)$. Previous work on this question has considered the cases $\GL_2(\Qpf)$ (\cite{BD} and \cite{DL}), $\GL_3(\Qpf)$ (\cite{HLM} and \cite{LMP} for $f=1$, \cite{EnnsThesis} for $f>1$),  $\GL_n(\Qp)$ (\cite{PQ}), and recently $\GL_n(\Qpf)$ (\cite{LLHMPQ}). Except in the most recent work \cite{LLHMPQ},  which was not yet available at the time this paper was developed, one considers a family of representations $\rhobar$ which are upper-triangular with fixed diagonal characters in the Fontaine-Laffaille range (or similar families). One then gives a very explicit recipe using certain group algebra operators in $\Fpbar[G(K)]$ acting on $\Pi(\rbar)$ which pick out the isomorphism class of $\rhobar$ within its family. This knowledge gives us clues as to how the elusive mod $p$ local Langlands correspondence should behave. 

The main idea used in previous work (except \cite{LLHMPQ}) is that $\Pi(\rbar)$ admits a characteristic 0 lift $\tilde{\Pi}$, which is closely related to automorphic representations of $G(\A_F)$. The $p$-adic Galois representations associated to these automorphic representations lift $\rbar$ by construction and one uses their $p$-adic Hodge-theoretic properties at the completion $K$ in order to show that the generic fibre of $\tilde{\Pi}$ recovers $\rhobar$. More specifically, one locates the parameters of $\rhobar$ within potentially crystalline deformation rings of $\rhobar$ of various prescribed types and then uses classical local-global compatibility of automorphic representations to transfer this data to the generic fibre of $\tilde{\Pi}$. Crucially, one must then show that this information can be ``reduced mod $p$'' and is not lost in $\tilde{\Pi}$. This typically involves some form of Taylor-Wiles patching and some subtle modular representation theory with group algebra operators.

In this paper, we work with a version of the patched module $\Minf$ first constructed for $\GL_n(K)$ in \cite{CEG+} instead of $\tilde{\Pi}$. This is an upgrade of $\Pi(\rbar)$ in the sense that it is an $R_{\rhobar}^{\square}[G(K)]$-module, where $R_{\rhobar}^{\square}$ is the universal lifting ring of $\rhobar$, which determines $\Pi(\rbar)$ modulo the maximal ideal of $R_{\rhobar}^\square$. The combined $R_{\rhobar}^\square$- and $G(K)$- actions essentially allow us to axiomatize the relationship between deformations of $\rhobar$ and classical local-global compatibility described in the previous paragraph. This makes $\Minf$ convenient for expressing local-global compatibility. As a result, $\rhobar$ can be recovered by the eigenvalues of certain ``normalized Hecke operators'' acting on the generic fibre of $\Minf$.  In particular, we avoid the complicated arguments with group algebra operators.  An optimistic hope is that $\Minf$ actually realizes the mod $p$ and $p$-adic local Langlands correspondence in general (see \cite[\S6]{CEG+}), so its use here is completely natural.

By taking quotient modulo the maximal ideal of $R_{\rhobar}^\square$,  we recover $\rhobar$ from the eigenvalues of ``normalized Hecke operators'' acting on a certain Iwahori eigenspace in $\Pi(\rbar)$. 
The most subtle part of our argument is proving the non-triviality of the operator.
Namely, we need to show that the Iwahori eigenspace is non-zero and the ``normalized Hecke operator'' acts  on it non-trivially. In the case of $\GL_n$, the former follows from a standard argument using Taylor--Wiles patching and combinatorics between types and weights. Its naive generalization does not work for $\GSp_4$, due to the fixed similitude character in the patching argument. Instead, we construct \textit{a congruent pair of patched modules}, a pair of patched modules with different similitude characters that are congruent modulo $p$. This allows us to perform the usual patching argument in our case.

Another tool we introduce is the Jantzen filtration. The Jantzen filtration arises to compute $p$-divisibility of the image of Carter--Lusztig intertwiners. We use it to describe the image of the mod $p$ reduction of ``normalized Hecke operators'' and thus to determine when the image is non-zero. 

To state our main Theorem, we explain our global setup. We let $\cO$ be a sufficiently large finite extension of $\Z_p$ and $\F$ be the residue field of $\cO$. Let $F$ be a totally real field in which $p$ splits completely, $\ov{r}:G_F \rightarrow \GSp_4(\F)$ be a continuous representation, and $\chi$ be a Hecke character. We write $\omega$ for the mod $p$ cyclotomic character of $G_{\Q_p}$ and $\nr_\xi$ for the unramified character sending geometric Frobenius to $\xi\in \F^\times$. We assume that $\ov{r}$ is unramified at all finite places away from $p$, and upper triangular, maximally nonsplit, and generic at places above $p$. We also assume that $(\rbar,\chi)$ is \emph{potentially diagonalizably automorphic} (Definition \ref{def:pd-automorphic}).     Other technical assumptions on $F,\ov{r},$ and $\chi$, including the usual Taylor--Wiles conditions, can be found in Definition \ref{def:suitable} (also see Remark \ref{rk:assumptions-on-rbar} for comments on the assumptions). The assumptions at places $v|p$ imply that
\begin{align*}
    \ov{r}|_{G_{F_v}} \simeq \begin{pmatrix}
     \omega^{a_3}\nr_{\xi_3} & *_1 &*&* \\ 
     & \omega^{a_2}\nr_{\xi_2} &*_2&*\\
     & & \omega^{a_1}\nr_{\xi_1} &*_3 \\
     & & & \omega^{a_0}\nr_{\xi_0}
    \end{pmatrix}
\end{align*}
where the extensions $*_1,*_2,*_3$ are nonsplit. We call $(a_3,a_2,a_1,a_0)$ the \emph{weight} of $\ov{r}|_{G_{F_v}}$. We choose the weight $(a_3,a_2,a_1,a_0)$ independent of $v|p$. We denote by $S_p$ the set of places of $F$ dividing $p$.  Fix a place $w\in S_p$ and write $\rhobar = \ov{r}|_{G_{F_w}}$.  We parameterize all such $\rhobar$ using Fontaine-Laffaille theory, giving rise to a family of representations depending on two ``Fontaine-Laffaille parameters'' which lie in $\F$. Similarly to previous work, we only consider the ``most generic'' elements of this family. Thus, as in \cite{HLM}, our $\rhobar$ is assumed to be maximally nonsplit with Fontaine-Laffaille invariants lying outside certain special loci. Our precise genericity assumption on $\rhobar$, which we call \emph{strong genericity} can be found in Definition \ref{def:generic}.

Let $\cG$ be an inner form of $\GSp_4$ over $F$ that is compact mod center at infinity and splits at all finite places. Let $\sigma$ be an $\cO$-module with smooth $\GSp_4(\Z_p)$-action. We view $\sigma$ as $\cO[\cG(\cO_{F_v})]$-module for $v|p$ via a chosen isomorphism $\cG(\cO_{F_v}) \simeq \GSp_4(\Z_p)$ and $\sigma^{S_p\backslash \{w\}} := \prod_{v\in S_p \backslash \{w\}}\sigma$ as a smooth representation of $\prod_{v\in S_p\backslash\{w\}}\cG(\cO_v)$. For a dominant weight $\mu$, we let $V(\mu)$ be the algebraic representation of $\prod_{v\in S_p\backslash\{w\}}\cG(\cO_v)$ with highest weight $\mu$.  We choose a level $U^w$ away from $w$ that is sufficiently small and unramified at all places $v\in S_p \backslash\{w\}$. Given $\sigma$, $\mu$ and a Hecke character $\chi'$, we consider the space of mod $p$ automorphic forms
\begin{align*}
    S_{\chi',\mu,\sigma}(U^w,\F) := \varinjlim_{U_w \le \cG(\cO_{F_w})}S_{\chi'}(U^wU_w, V(\mu)\otimes_\cO \sigma^{S_p\backslash\{w\}} \otimes_\cO \F).
\end{align*}
This is an admissible smooth $\GSp_4(F_w)$-module with an action of the abstract Hecke algebra $\T^{P}$ generated by Hecke operators at places away from a finite set $P$ containing $S_p$. Let $\m_{\ov{r}}$ be the maximal ideal of $\T^P$ determined by $\ov{r}$.  Our assumptions on $\ov{r}$  implies that $S_{\chi',\mu,\sigma}(U^w,\F)[\m_{\ov{r}}]\neq 0$ for a certain choice of $\chi', \mu,\sigma$. We take $\chi' = \chi_\pcris$ (defined in \S \ref{sec:patching}),  $\mu=0$, and $\sigma = \sigma(\tau_0)$ where $\tau_0$ is carefully chosen tame principal series type (Definition \ref{def:tau0}) and $\sigma(\tau_0)$ denotes the smooth representation of $\GSp_4(\cO_{F_v})$ corresponding to $\tau_0$ under the inertial local Langlands correspondence for principal series representation (Lemma \ref{lem:K-type}).   Our main result is the following.

\begin{theorem*}[Theorem \ref{thm:main result}]
Following the above notations,  the Fontaine--Laffaille invariants of $\rhobar$ can be recovered from  the admissible smooth $\F[\GSp_4(F_w)]$-module $\Pi(\ov{r}):=S_{\chi_\pcris,0,\sigma(\tau_0)}(U^w,\F)[\m_{\ov{r}}]$. 
\end{theorem*}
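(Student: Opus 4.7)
The plan is to reconstruct the Fontaine--Laffaille invariants of $\rhobar$ as specific eigenvalues of ``normalized Hecke operators'' acting on an Iwahori eigenspace of $\Pi(\rbar)$. The starting point is a patched module $\Minf$ in the style of \cite{CEG+}, carrying a simultaneous action of $\GSp_4(F_w)$ and of the universal framed deformation ring $R_{\rhobar}^{\square}$, and recovering $\Pi(\rbar)$ after quotienting by its maximal ideal $\mr$. The operators to be used are Carter--Lusztig intertwiners between the Iwahori-invariants of $\sigma(\tau_0)$ and of certain neighbouring tame principal series types, rescaled so as to be integral and nonzero mod $p$. On the generic fibre of $\Minf$ these operators satisfy identities coming from classical local-global compatibility at potentially crystalline lifts of $\rhobar$: by Zariski density of classical automorphic points in the relevant potentially crystalline tame deformation ring, the identities descend to the full generic fibre and relate the Hecke eigenvalues directly to the Fontaine--Laffaille parameters of any lift. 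Reducing modulo $\mr$ then yields the parameters of $\rhobar$ itself.

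The main obstacle is that after reducing modulo $\mr$ one must verify that the relevant Iwahori eigenspace is non-zero and that the normalized operator acts non-trivially on it. For $\GL_n$ the non-vanishing of the Iwahori eigenspace follows from a standard weight--type combinatorial argument in Taylor--Wiles patching, but here the fixed similitude character forced on all patched automorphic forms obstructs the direct analogue, because the similitude character of the available classical lifts is rigid while the Serre weight combinatorics wants to interpolate across several similitude characters simultaneously. To circumvent this I would construct a \emph{congruent pair of patched modules}, i.e.\ modules $\Minf$ and $\Minf'$ built from automorphic forms for $\cG$ with distinct similitude characters $\chi_\pcris$ and $\chi_\pcris'$ that agree modulo $p$. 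Since the Iwahori-invariants of $\sigma(\tau_0)$ that participate in the argument only detect the similitude character modulo $p$, the two patched modules may be compared and spliced to supply the missing classical points and deliver the needed non-vanishing; the Taylor--Wiles assumptions together with the potentially-diagonalizable automorphy of $(\rbar,\chi)$ ensure both patching data exist and have the necessary support on tame types.

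To prove the non-triviality of the normalized Hecke operator, I would employ the Jantzen filtration on an integral lattice in $\sigma(\tau_0)$ and on the ambient Deligne--Lusztig lattice housing the neighbouring types. The Jantzen filtration encodes the exact $p$-adic valuation of the Carter--Lusztig intertwiner on each of its graded pieces, and thereby pins down the power of $p$ by which the classical intertwiner must be divided to become integral with nonzero reduction mod $p$. Once this rescaling is in hand, the leading coefficient of the normalized operator can be computed on the generic fibre of $\Minf$ via local-global compatibility in terms of the Fontaine--Laffaille parameters of any potentially crystalline lift, and reducing modulo $\mr$ produces the desired formula. The delicate step will be executing the Jantzen analysis for enough types $\tau$ neighbouring $\tau_0$ to resolve both Fontaine--Laffaille invariants: the $\GSp_4$ Weyl-module structure is substantially more involved than in the $\GL_n$ case studied previously, and the strong genericity hypothesis on $\rhobar$ will be used precisely to guarantee that the relevant Jantzen graded pieces have the predicted layers and that no unexpected cancellation occurs between them.
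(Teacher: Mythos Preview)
Your outline is broadly correct and matches the paper's strategy, but several details are misidentified in ways that would cause trouble if you tried to execute the argument as written.

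First, there are no ``neighbouring tame principal series types''. The entire argument uses the single type $\tau_0$. The Carter--Lusztig intertwiner $T_{w_\Pi}$ runs between different $\cO$-lattices $\sigma(\tau_0)^w=\Ind_{\Iw}^K(w\tilde\mu_0)$ in the \emph{same} principal series $\sigma(\tau_0)$, indexed by $w\in W$. The Jantzen filtration is computed once, on $M(w\mu_0)(s_1s_0s_1,\bullet)$ for each relevant $w$, and the key point is that the unique modular Serre weight $F(\mu_{\rhobar})$ sits in the correct graded layer so that dividing by $p^{k_{w'}}$ produces a nonzero map mod $\m_\infty$.

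Second, your description of the congruent pair is imprecise in a way that obscures its actual function. The second patched module $\Minf^\cris$ is not built for another tame similitude character $\chi_\pcris'$ but for the \emph{crystalline} character $\chi_\cris$; its role is not to ``supply missing classical points on tame types'' but to prove that $F(\mu_{\rhobar})^\cJ$ is a modular Serre weight. This works because the crystalline deformation ring $R_{\rhobar}^{\bar\mu_{\rhobar}+\eta,\psi_\cris}$ is formally smooth, so $\Minf^\cris(F(\mu_{\rhobar})^\cJ)$ is free of known rank over it. The congruence mod $\varpi$ then transfers this to $\Minf$, and a rank count against $\Minf(\overline{\sigma(\tau_0)^\cJ})$ (using formal smoothness of $R_{\rhobar}^{\eta,\tau_0,\psi_\pcris}$) shows $F(\mu_{\rhobar})^\cJ$ is the \emph{unique} modular Serre weight in $\JH(\overline{\sigma(\tau_0)^\cJ})$. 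This uniqueness is exactly what makes the maps in the Jantzen diagram become isomorphisms.

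Third, you misattribute the role of strong genericity. It is not used to control the Jantzen layers---that requires only the inertial $7$-genericity (so that $\mu_0$ is $7$-deep and the sum formula computation goes through). Strong genericity enters on the Galois side: it is the hypothesis that makes the bracketed expression in the monodromy equation a unit, hence $R_{\rhobar}^{\eta,\tau_0,\psi}$ formally smooth, and it ensures the leading coefficients $\zeta_1,\zeta_2$ of the universal Frobenius eigenvalues are nonzero in $\F$.
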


We now give a more precise description of the contents of this article. Let  $\rhobar: G_{\Qp} \rightarrow \GSp_4(\F)$ be   a continuous upper triangular, maximally nonsplit, and strongly generic representation. 
The first part of the argument is to find Fontaine-Laffaille parameters of $\rhobar$ inside certain symplectic potentially crystalline deformation rings of $\rhobar$ of type $(\eta,\tau)$, where $\eta=(3,2,1,0)$ is a Hodge type and $\tau$ a tame inertial type. In fact, it will suffice for us to consider the single tame principal series type $\tau_0$. We begin in \S\ref{sec:Kisin modules} by giving an explicit description of these deformation rings by adapting the work of \cite{LLHLM1} and \cite{LLL} (for $\GL_n$) to the case of $\GSp_4$ using some ideas of \cite{KM} about duality for Kisin modules. Using this, we prove that the strong genericity of $\rhobar$ implies $R_{\rhobar}^{\eta,\tau_0}$ is formally smooth (Theorem \ref{thm:main result galois}). (We actually work with a fixed similitude character as well.)

The Fontaine-Laffaille parameters of $\rhobar$ can be found as the reduction modulo the maximal ideal of certain ``universal Frobenius eigenvalues'' in $R_{\rhobar}^{\eta,\tau_0}$. We make this completely explicit in Theorem \ref{thm:main result galois}. In order to apply this result in the setting of the patched module $\Minfty$, we express these universal eigenvalues in terms of a morphism $\Theta_{\rhobar,\tau_0}:\cH(\sigma(\tau_0))\rightarrow R_{\rhobar}^{\eta,\tau_0}[1/p]$ which interpolates the local Langlands correspondence for $\GSp_4(\Qp)$ in the Bernstein block corresponding to $\tau_0$. Here $\cH(\sigma(\tau_0))$ is the Hecke algebra of the $K$-type corresponding to the inertial type $\tau_0$. This follows ideas of \cite{CEG+} and is described in \S\ref{sec:LLC}. 

In \S\ref{subsec:patched module axioms} we prove an abstract local-global compatibility result (Theorem \ref{thm:abstract loc glob}) assuming the existence of a {congruent pair of patched modules} $(\Minf, \Minf^\cris)$ satisfying certain axioms \textbf{(PM1)}--\textbf{(PM6)}. 
This result shows how the Fontaine--Laffaille invariants of $\rhobar$ may be recovered from the admissible smooth $\F[\GSp_4(\Qp)]$-representation $\pi$ related to $\Minf$. See \S\ref{subsec:patched module axioms} for precise construction of $\pi$ using $\Minf$. One of the axioms (which follows from classical local-global compatibility of automorphic representations in the construction of $\Minfty$) states that the universal Frobenius eigenvalues may be recovered as the eigenvalue of a certain ``$U_p$-operator'' $T_{\lambda}$ in the pro-$p$ Iwahori--Hecke algebra acting on well-chosen Iwahori eigenspaces of (the dual of) $\Minf$, up to powers of $p$. One therefore needs to pick out the leading term of these eigenvalues modulo $p$. To do this, one writes the action of $T_{\lambda}$ on $\Minfty$ as the composite of a Carter--Lusztig intertwiner and an Iwahori normalizing element in $\GSp_4(\Qp)$ (see \eqref{eq:Pi decomposition}). The power of $p$ by which the Carter--Lusztig intertwiner is divisible is described by the Jantzen filtration of principal series representations of $\GSp_4(\Fp)$. We show in Theorem \ref{thm:abstract loc glob} that the Carter--Lusztig intertwiner is divisible by the ``correct'' power of $p$ on these Iwahori eigenspaces and consequently that the Fontaine--Laffaille invariant can be recovered from $\pi$. To do this, we show that there is a unique modular Serre weight occurring in $\JH(\overline{\sigma(\tau_0)})$. This requires the combinatorics between types and weights using patching argument. On the Galois side, this requires both potentially crystalline and crystalline lifts of $\rhobar$. However, it is necessary to fix the similitude character in the patching argument. That is why we have the second patched module $M_\infty^\cris$ designed for the crystalline setting, and the congruence between the two patched modules \textbf{(PM5)} allows us to prove the uniqueness of the modular Serre weight. 
Finally, we show that the modular Serre weight appears in the correct layer of the Jantzen filtration.  This requires a somewhat tedious calculation with the Jantzen sum formula. We also interpret this theorem in terms of group algebra operators  in Remark \ref{rk:group algebra operators}. All this is contained in \S\ref{subsec:patched module axioms}. The computation with the Jantzen filtration is contained in \S\ref{sec:jantzen filtration} and \S\ref{sec:representation theory}. 

The goal of \S\ref{sec:existence} is to show the existence of a congruent pair of patched modules $(\Minfty,\Minf^\cris)$ obeying the axioms of \S\ref{subsec:patched module axioms}. We accomplish this under certain conditions by ultrapatching spaces of $p$-adic automorphic forms on a compact mod centre form $\cG$ of $\GSp_4$ over a totally real field $F$, essentially following \cite{CEG+} but with ideas from \cite{BCGP} for the symplectic case. The congruence between patched modules is obtained by choosing the same Taylor--Wiles datum and ultrafilter. 
In order to attach Galois representations to a regular algebraic cuspidal automorphic representation of $\cG$, we need to apply Jacquet--Langlands for $\GSp_4$ proven in \cite{Sor09} under stable and tempered assumptions. In Lemma \ref{lem:stable-tempered}, we show that automorphic representations of $\cG$ of our concern are indeed stable and tempered by using various results on Arthur multiplicity formula \cite{Art13,Taibi,GeeTaibi}. 
In \S\ref{sec:patching}, we assume that $\rhobar$ is a local component of a Galois representation $\ov{r}: G_F \rightarrow \GSp_4(\F)$ modular with respect to $\cG$ satisfying several assumptions \textbf{(A1)}--\textbf{(A5)} including standard Taylor--Wiles type assumptions, but also an unramifiedness away from $p$ assumption that is made mainly for simplicity. In Corollary \ref{lem:modularity of ordinary weight}, we prove the modularity of the obvious weight \textbf{(PM6)} by the change of weight argument using the main result of \cite{PT}. We also show the existence of such globalization $F$ and $\ov{r}$ for given $\rhobar$ by following \cite{EG}; see Corollary \ref{cor:globalization}. Then our main result showing that the Fontaine-Laffaille invariants can be recovered from $\Pi(\rbar)$ is Theorem \ref{thm:main result}.

We expect that most of this article should generalize with the same methods to $\Qpf$ for any $f\geq 1$. However, for simplicity we have tended to state results only in the generality that we need.  

Recently, \cite{LLHMPQ} proved mod $p$ local-global compatibility for $\GL_n(\Q_{p^f})$ and generic Fontaine--Laffaille (but not necessarily upper-triangular) $\rhobar$ using a more geometric argument. We expect that one can adapt this idea to $\GSp_4(\Q_{p^f})$ and prove  mod $p$ local-global compatibility in a similar generality. We plan to investigate this further in our future work.

\subsection{Acknowledgements}
The authors would like to thank Florian Herzig and Bao Viet Le Hung for helpful advice during the undertaking of this project. The first-named author would also like to thank Karol Kozio\l, Stefano Morra, and Zicheng Qian for helpful conversations and correspondence. The second-named author would like to thank Olivier Ta\"ibi for helpful correspondence. We would like to thank Ariel Weiss for a helpful comment on an earlier version of this paper. 

\subsection{Notation and preliminaries}\label{sec:notation}
Throughout we fix a prime $p>2$. Most of our results require $p$ to be larger than this due to genericity assumptions. 

We let $E$ denote a finite extension of $\Qp$ which will serves as a field of coefficients. We always assume that $E$ is sufficiently large. Write $\cO$ and $\F$ for its ring of integers and residue field. We write $\Qpbar$ for a choice of algebraic closure of $E$. Let $\CO$ denote the category of artinian local $\cO$-algebras with residue field $\F$, and $\COhat$ the category of complete noetherian local $\cO$-algebras with residue field $\F$. 

We sometimes write $K$ instead of $\Qp$. Let $(p_n)_{n\geq 0}$ denote a compatible choice of $p^n$th roots of $-p$, with $p_0=-p$ and define $K_\infty=\bigcup_{n\geq 0}\Qp(p_n)$. Let $e:=p-1$ and choose roots $(\pi_n)_{n\geq 0}$ where $\pi_n^e=p_n$ and $\pi_{n+1}^p=\pi_n$. Define $L=\Qp(\pi_1)$ and  $L_\infty=\bigcup_{n\geq 0}\Qp(\varpi_n)$. Let $\Delta:=\Gal(L/K)\cong \Gal(L_\infty/K_\infty)$. Let $\epsilon:G_{\Qp}\rightarrow \Zp\x$ denote the $p$-adic cyclotomic character and write $\omega$ for its reduction mod $p$, which factors through $\Delta$. Let  $\tomega$ denote the Teichm\"uller lift of $\omega$. 

The symbol $F$ will denote a number field. If $F_v$ is a completion at a finite place we write $\Frob_v$ for geometric Frobenius. Write $\Art_{\Qp}:\Qp\x\rightarrow G_{\Qp}\ab$ for the Artin reciprocity map of local class field theory, normalized so that uniformizers correspond to geometric Frobenii. We write $\nr_\xi$ for the unramified character of $G_{\Qp}$ sending geometric Frobenius $\Frob\mapsto \xi$. We choose the convention on Hodge-Tate weights whereby $\epsilon$ has Hodge-Tate weight $+1$.

Write $B_4$ and $T_4$ for the upper-triangular Borel and diagonal maximal torus of $\GL_4$ respectively. Let $S_4$ be the group of permutations of $\{0,1,2,3\}$. We identify $X^*(T_4)\cong \Z^4$ in the usual way and identify the Weyl group of $\GL_4$ with $S_4$ via the embedding $S_4\hookrightarrow \GL_4$ which takes $\sigma\in S_4$ to the monomial matrix $M_{\sigma}$ such that $(A\sigma)_{ij}=A_{\sigma(i)\sigma(j)}$ for all $A\in \GL_4$. Let $w_0\in S_4$ denote the longest element. Let $\tW\dual_4$ denote the extended affine Weyl group of $\GL_4$, identified with $X_*(T_4)\rtimes S_4$ respectively (the dual notation is chosen to be compatible with \cite{LLL}). We write all vectors as row vectors so matrices act on the right. Let $\eta=(3,2,1,0)\in X_*(T_4)$, except in \S\ref{sec:Kisin modules}, where it is allowed to be a more general cocharacter.

Let $\GSp_4$ denote the reductive group over $\Z$ defined by
\begin{equation*}
\GSp_4(R)=\{A\in \GL_4(R)\,|\, A^tJA=\nu J\textrm{ for some }\nu\in R\x\}
\end{equation*}
for any commutative ring $R$, where
\begin{equation*}
J=\begin{pmatrix}&&&1\\&&1&\\&-1&&\\-1&&& \end{pmatrix}.
\end{equation*}  
Write $\nusim:\GSp_4\rightarrow\Gm$ for the similitude character taking $A\mapsto \nu$. We also write $\std:\GSp_4 \rightarrow \GL_4$ for the standard representation and $\std': \GSp_4 \rightarrow \GL_5$ for the composition of the projection $\GSp_4 \onto \SO_5$ with the standard representation of $\SO_5$. Let $T\subset \GSp_4$ be the torus of diagonal matrices and $B\subset \GSp_4$ the Borel subgroup of upper-triangular matrices. 
We identify $X^*(T)$ with $\{(a,b;c)\in \Z^3\,|\,c\equiv a+b\mod 2\}$, where $(a,b;c)$ is the character 
\begin{equation*}
\begin{pmatrix}t_1&&&\\&t_2&&\\&&\nu t_2^{-1}&\\&&&\nu t_1^{-1}\end{pmatrix}\mapsto t_1^a t_2^b \nu^{\frac{c-a-b}{2}}.
\end{equation*}
There is an isomorphism
\begin{align*}
\mathrm{spin}:X^*(T)\xrightarrow{\sim}& X_*(T) \\ 
\mu=(a,b;c)\mapsto& \bar{\mu}
\end{align*}
where 
\begin{equation*}
\bar{\mu}(t):= \begin{pmatrix}t^{\frac{c+a+b}{2}}&&&\\& t^{\frac{c+a-b}{2}}&&\\&&t^{\frac{c-a+b}{2}}&\\&&&t^{\frac{c-a-b}{2}}\end{pmatrix}.
\end{equation*}
The system of positive roots determined by $B$ consists of $R^+=\{\alpha_0,\alpha_1,\alpha_0+\alpha_1,2\alpha_0+\alpha_1\}$, where $\alpha_0=(1,-1;0)$ and $\alpha_1=(0,2;0)$. Let $W$ denote the Weyl group and $s_i\in W$ the simple reflection corresponding to root $\alpha_i$ for $i=0,1$. Then $W=\pair{s_0,s_1\,|\,s_0^2,s_1^2,(s_0s_1)^4}$ has size 8, with the longest element being $(s_0s_1)^2$. 

We implicitly identify $X_*(T)$ with a subgroup of $X_*(T_4)$, in which case we also identify $W$ with the subgroup of $S_4$ of permutations $s$ obeying $w_0sw_0=s$. We fix the following elements which form a basis of $X_*(T)$:
\begin{align*}
\beta_0&=(1,1,1,1)\\
\beta_1&=(1,1,0,0)\\
\beta_2&=(2,1,1,0).
\end{align*}

Let $\Gamma$ be any topological group and $R$ any topological ring. Fix a continuous character $\psi:\Gamma\rightarrow R\x$. By $\Rep^\psi_R(\Gamma)$ we mean the groupoid of pairs $(V,\iota)$ where $V$ is a finite free $R$-module of rank 4 having a continuous $R$-linear $\Gamma$-action, and $\iota:V\xrightarrow{\sim}V\dual\otimes\psi$ is a which obeys the condition that \begin{equation*}
[(\iota\dual)^{-1}\otimes\nu]\circ\iota=-\id_V.
\end{equation*}
 In Section \ref{sec:Kisin modules} we will also implicitly work with the setoid of triples $(V,\iota,\delta)$, where $\delta$ is a morphism from $(V,\iota)$ to $R^4$ with the standard symplectic pairing. The set of equivalence classes of this setoid naturally identifies with the set of continuous homomorphisms $\Gamma\rightarrow \GSp_4(R)$. 

The following lemma is useful in \S\ref{sec:p-adic automorphic forms}.
\begin{lemma}\label{lem:symplectic reduction mod p}
Let $\Gamma$ be a compact group and $\rho:\Gamma\rightarrow \GSp_4(E)$ a continuous homomorphism. Let $\rhobar$ be the semisimple mod $p$ reduction of $\rho$. If $\rhobar$ is irreducible and $\F$ is sufficiently large then there exists a $\GL_4(E)$-conjugate $\rho^{\circ}$ of $\rho$ which is valued in $\GSp_4(\cO)$ and has the same similitude character as $\rho$. The resulting (irreducible) representation $\rhobar^{\circ}:\Gamma\rightarrow \GSp_4(\F)$ does not depend on the choice of $\rho^{\circ}$ up to $\GSp_4(\F)$-conjugation. \qed
\end{lemma}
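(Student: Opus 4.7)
The plan is to find a $\Gamma$-stable $\cO$-lattice $\Lambda_0 \subset V := E^4$ that admits a symplectic $\cO$-basis; the change-of-basis matrix $g \in \GSp_4(E)$ then conjugates $\rho$ into $\GSp_4(\cO)$ while automatically preserving the similitude character (since similitudes are conjugation-invariant).

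For existence, compactness of $\Gamma$ first produces \emph{some} $\Gamma$-stable $\cO$-lattice $\Lambda_0 \subset V$. Because $\rho$ is $\GSp_4(E)$-valued and $\nu\circ\rho$ has image in $\cO^\times$ (again by compactness), the form-dual $\Lambda_0^* := \{v \in V : \langle v, \Lambda_0\rangle \subseteq \cO\}$ with respect to the standard symplectic pairing is again $\Gamma$-stable. By irreducibility of $\rhobar$ and a standard Nakayama argument, any two $\Gamma$-stable $\cO$-lattices in $V$ are proportional; in particular $\Lambda_0^* = c \Lambda_0$ for some $c \in E^\times$. This equality means that the rescaled alternating form $c\langle\cdot,\cdot\rangle$ restricted to $\Lambda_0 \times \Lambda_0$ takes values in $\cO$ and makes $\Lambda_0$ self-dual. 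The classification of alternating forms on free modules over a DVR then furnishes a symplectic $\cO$-basis $(f_1,\ldots,f_4)$ of $\Lambda_0$, and the associated change-of-basis $g \in \GL_4(E)$ satisfies $g^T J g = c^{-1} J$, so $g \in \GSp_4(E)$. Setting $\rho^\circ := g^{-1}\rho g$ gives the desired element of $\GSp_4(\cO)$.

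For uniqueness up to $\GSp_4(\F)$-conjugation: suppose $\rho^\circ_1,\rho^\circ_2 \in \GSp_4(\cO)$ are two choices, and write $\rho^\circ_2 = g\rho^\circ_1 g^{-1}$ for some $g \in \GL_4(E)$. Combining $(\rho^\circ_1)^T J \rho^\circ_1 = \nu J$ with the identity $(\rho^\circ_1)^T (g^T J g) \rho^\circ_1 = \nu (g^T J g)$ obtained by substitution, and applying Schur's lemma to the irreducible $\rho^\circ_1$, forces $g^T J g = \lambda J$ for some $\lambda \in E^\times$, so $g \in \GSp_4(E)$. Next, $\rho^\circ_1$ stabilizes both $\cO^4$ and $g^{-1}\cO^4$; by irreducibility these lattices are proportional, $g^{-1}\cO^4 = c\cO^4$ for some $c \in E^\times$, whence $cg = cI_4 \cdot g$ lies in $\GL_4(\cO)\cap\GSp_4(E) = \GSp_4(\cO)$. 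Since $c I_4$ is central, $\rho^\circ_2 = (cg)\rho^\circ_1(cg)^{-1}$, and reducing mod $p$ yields the claimed $\GSp_4(\F)$-conjugacy of $\overbar{\rho}^\circ_1$ and $\overbar{\rho}^\circ_2$.

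The main subtlety is the construction of the symplectic basis of $\Lambda_0$---equivalently, showing that $\Lambda_0$ lies in the hyperspecial $\GSp_4(E)$-orbit. Unlike the $\GL_n$ case, the Bruhat--Tits building of $\GSp_4(E)$ has non-conjugate maximal parahorics (hyperspecial and paramodular), and paramodular lattices $\Lambda_1$ have $\Lambda_1^*$ not proportional to $\Lambda_1$. What rescues the argument is precisely that the irreducibility of $\rhobar$ rules out the existence of two non-proportional $\Gamma$-stable lattices in $V$, thereby excluding the paramodular orbit and forcing $\Lambda_0$ to be $\GSp_4(E)$-equivalent to the standard self-dual lattice.
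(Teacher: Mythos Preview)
Your argument is correct, and the paper states this lemma without proof (the \qed\ marks it as standard), so there is no paper argument to compare against. Your lattice-theoretic route---showing that irreducibility of $\rhobar$ forces any $\Gamma$-stable lattice to be homothetic to its form-dual, hence to carry a perfect alternating $\cO$-form and a symplectic basis---is the natural one, and the closing remark about excluding the paramodular orbit is a nice way to frame why irreducibility is essential.

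One small point worth making explicit: in the uniqueness step, Schur's lemma yields a \emph{scalar} only if $\End_\Gamma(V)=E$, i.e.\ if $\rho$ is absolutely irreducible. This is exactly where the hypothesis ``$\F$ sufficiently large'' enters: it guarantees that $\rhobar$ is absolutely irreducible, which in turn forces $\End_\Gamma(\Lambda_0)=\cO$ and hence $\End_\Gamma(V)=E$. Without this, $J^{-1}(g^TJg)$ would only lie in a division algebra over $E$ and the conclusion $g\in\GSp_4(E)$ would not follow directly.
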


We recall some aspects of the local Langlands correspondence for $\GSp_4(F_v)$. The symbol $\recGT$ denotes the finite-to-one map from isomorphism classes of irreducible smooth $\C$-representations of $\GSp_4(F_v)$ to equivalence classes of admissible $L$-parameters constructed in \cite{GT}. 
Fix an isomorphism of fields $\iota:\Qpbar\xrightarrow{\sim}\C$. This induces a correspondence $\mathrm{rec}_{\mathrm{GT},\iota}$ over $\Qpbar$. We define a normalized version of the correspondence by
\begin{equation*}
 \recGTp(\pi):=\mathrm{rec}_{\mathrm{GT},\iota}(\pi\otimes |\nusim|^{-3/2}).
 \end{equation*}
It should be true that $\recGTp$ depends only on a choice of square root of $p$ in $\Qpbar$, but we don't need to use this. 

We use $\Ind$ to denote the \emph{unnormalized} parabolic induction functor for $\GSp_4(F_v)$ and $\ind$ for compact induction.
 
 If $M$ is a topological $\cO$-module then $M\dual=\Hom^\cts_\cO(M,E/\cO)$ denotes its Pontryagin dual. We also use Schikof duality $M\mapsto M^d$ which is described in the Notation section of \cite{CEG+}. 

\section{Symplectic Galois deformations}\label{sec:galois side}
Let $\rhobar:G_{\Qp}\rightarrow \GSp_4(\F)$ be a continuous representation. In this section we study symplectic deformations of $\rhobar$ with a fixed similitude character $\psi:G_{\Qp}\rightarrow \cO\x$. Let $R_{\rhobar}^{\square,\psi}$ denote the universal lifting ring for lifts $\rho:G_{\Qp}\rightarrow \GSp_4(\cO)$ such that $\nusim\circ\rho=\psi$. In \S\ref{sec:Kisin modules} we adapt techniques of \cite{LLHLM1} and \cite{KM} to give an explicit description of potentially crystalline quotients of $R_{\rhobar}^{\square,\psi}$ with Hodge type $\eta$ and generic tame principal series inertial type. Using this, in \S\ref{sec:LLC} we construct an interpolation of the characteristic 0 tame principal series local Langlands correspondence for $\GSp_4(\Qp)$, which is an ingredient in the axioms for our patched module. In \S\ref{sec:rhobar} we define a family of representations $\rhobar$ using Fontaine-Laffaille theory and prove the main result Theorem \ref{thm:main result galois}. This theorem shows where to find the data of $\rhobar$ inside certain of the aforementioned deformation rings. In \S\ref{sec:crystalline deformations} we study crystalline quotients of $R_{\rhobar}^{\square,\psi}$ with Hodge-Tate weights in the Fontaine-Laffaille range when $\rhobar$ is ordinary. 

\subsection{Some potentially crystalline deformation rings}\label{sec:Kisin modules} 
If $\tau:I_{\Qp}\rightarrow \GL_4(\cO)$ is an inertial type and $\eta\in X_*(T_4)$ a Hodge type, we let $R_{\rhobar}^{\eta,\tau,\psi}$ denote the unique $p$-torsion free quotient of $R_{\rhobar}^{\square,\psi}$ such that for any local finite $E$-algebra $B$, a morphism of $\cO$-algebras $\zeta:R_{\rhobar}^{\square,\psi}\rightarrow B$ factors through $R_{\rhobar}^{\square,\psi}$ iff $\zeta\circ\rho^\univ$ (considered as a $\GL_4(B)$-valued representation) is potentially crystalline of Hodge type $\eta$ and inertial type $\tau$. This quotient exists by Corollary 2.7.7 of \cite{Kis08}. 

\begin{lemma}\label{lem:dimension of p crys def ring}
If $R_{\rhobar}^{\eta,\tau,\psi}[1/p]$ is nonzero then it is formally smooth over $E$ of dimension 14. 
\end{lemma}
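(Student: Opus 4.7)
The plan is to establish formal smoothness and compute the dimension of $R_{\rhobar}^{\eta,\tau,\psi}[1/p]$ pointwise at its closed points, in the spirit of Kisin's arguments for $\GL_n$ adapted to $\GSp_4$ with fixed similitude. Since the ring is an excellent Jacobson $E$-algebra whose closed points correspond to symplectic potentially crystalline lifts of $\rhobar$ of type $(\eta,\tau)$ with similitude $\psi$, it suffices to prove that at every closed point $x$ the completed local ring is formally smooth of dimension $14$. This reduces to a tangent space computation and a verification that obstructions vanish.

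Fix a closed point $x$ with residue field $E'/E$, corresponding to a lift $\rho_x \colon G_\Qp \to \GSp_4(E')$, and set $V = \spfrak_4 = \Lie(\ker\nusim)$ with Galois action via $\ad\rho_x$. The tangent space $T_x$ parametrizes symplectic framed first-order deformations of $\rho_x$ over $E'[\epsilon]/(\epsilon^2)$ with fixed similitude $\psi$ that remain potentially crystalline of type $(\eta,\tau)$. Standard deformation theory identifies $T_x$ with the subgroup $Z^1_g(G_\Qp, V) \subset Z^1(G_\Qp, V)$ of cocycles whose cohomology class lies in the Bloch--Kato subspace $H^1_g(G_\Qp, V)$. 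From the exact sequence $0 \to V^{G_\Qp} \to V \to Z^1_g(G_\Qp, V) \to H^1_g(G_\Qp, V) \to 0$ one gets
$$\dim T_x = \dim V - \dim V^{G_\Qp} + \dim H^1_g(G_\Qp, V).$$
Since $V$ is de Rham and $H^0(G_\Qp, V^*(1)) = 0$ (from the regularity of $\eta$ together with the tame principal series nature of $\tau$), the Bloch--Kato formula gives $\dim H^1_g(G_\Qp, V) = \dim V^{G_\Qp} + \dim V/\Fil^0 D_{\dR}(V)$, hence $\dim T_x = \dim V + \dim V/\Fil^0 D_{\dR}(V)$. Because $\rho_x$ preserves the symplectic form up to $\psi$, the Hodge filtration on $\ad \rho_x$ is self-compatible under the induced pairing, and $\Fil^0 D_{\dR}(\spfrak_4)$ is the symplectic Borel subalgebra of $\spfrak_4$ determined by the Hodge flag of $\rho_x$. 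For regular $\eta = (3,2,1,0)$ this has dimension $\dim B_{\GSp_4} - 1 = 6$, so $\dim V/\Fil^0 = 10 - 6 = 4$ (matching the number of positive roots of $\GSp_4$), and $\dim T_x = 10 + 4 = 14$.

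For formal smoothness it remains to verify that all obstructions vanish. The obstruction group lives in a de Rham variant of $H^2(G_\Qp, V)$, and by Tate local duality $\dim H^2(G_\Qp, V) = \dim H^0(G_\Qp, V^*(1)) = 0$ under our genericity hypotheses. Combined with the constant tangent dimension $14$, this yields formal smoothness of $R_\rhobar^{\eta,\tau,\psi}[1/p]$ over $E$ of dimension $14$. The main obstacle I anticipate is the precise identification of the symplectic Hodge flag and a clean accounting of the fixed similitude condition: the decomposition $14 = \dim\GSp_4 + (\dim\GSp_4 - \dim B_{\GSp_4}) - 1 = 11 + 4 - 1$ makes the role of the similitude explicit, and this is the point where the calculation departs from the familiar $\GL_4$ analogue.
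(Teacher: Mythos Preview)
Your overall strategy—reduce to closed points, compute the tangent space, and check obstructions—is the right one and is essentially what the reference \cite[Proposition 7.2.1]{GG} (following Kisin's arguments for $\GL_n$) carries out. However, there is a genuine gap in how you handle the obstruction step and the Bloch--Kato input.

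You invoke the vanishing $H^0(G_{\Qp},V^*(1))=0$ twice: once to get the clean Bloch--Kato formula for $\dim H^1_g$, and once to kill the full $H^2(G_{\Qp},V)$ via Tate duality. In both places you justify this by appealing to ``the tame principal series nature of $\tau$'' and ``our genericity hypotheses''. But the lemma as stated carries no such hypotheses: $\tau$ is an arbitrary inertial type, and no genericity is assumed. Even under the hypotheses you add, the claim that $\ad^0\rho_x$ never contains $E'(-1)$ for every closed point $x$ is not obvious and would require its own argument. So as written, your proof establishes a strictly weaker statement than the lemma.

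The standard argument avoids this issue entirely. One does not bound the obstruction by the full Galois $H^2$; instead one works on the side of weakly admissible filtered $(\varphi,N,\Gal(L/\Qp))$-modules (with the evident symplectic-plus-similitude structure), where deformations are unobstructed for purely linear-algebraic reasons: lifting a filtration and a Frobenius through a square-zero extension of Artinian $E'$-algebras is always possible, so there is no $H^2$-type obstruction in that category at all. Formal smoothness then follows without any condition on $V^*(1)$. The tangent dimension is likewise read off directly from this linear-algebraic description as $\dim\GSp_4+\dim(\GSp_4/B)-1=11+4-1=14$ for regular $\eta$, matching the number you obtained; your Hodge-flag computation of $\dim V/\Fil^0=4$ is correct, but the route through $H^1_g$ and Tate duality is unnecessary and is where the unjustified hypotheses creep in.
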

\begin{proof}
This can be proved using the argument of \cite[Proposition 7.2.1]{GG}. 
\end{proof}

In this section we give an explicit description of $R_{\rhobar}^{(3,2,1,0),\tau,\psi}$ when $\tau$ is a generic tame principal series type. For now let $\eta$ be arbitrary. We assume that $\eta$ is contained in $[0,h]$ for some $h\geq 0$, and not contained in $[0,h']$ for any $h'<h$. In order for $R_{\rhobar}^{\eta,\tau,\psi}$ to be nonzero we must have $\psi=\epsilon^h\tomega^b\nr_{\xi}$ for some $b\in \Z$ and $\xi\in\cO\x$, so we assume this for the rest of \S\ref{sec:Kisin modules}. In particular $\nusim\circ\rhobar=\omega^{h+b}\nr_{\bar{\xi}}$. 

The strategy is to relate $R_{\rhobar}^{\eta,\tau,\psi}$ to deformations of Kisin modules, adapting methods of \cite{LLHLM1} and \cite{KM} to the symplectic case. We begin by recalling some notation and results from \cite{LLL} concerning Kisin modules with tame descent data over $\Qp$. 

\begin{remark}
Although we restrict attention to the case of tame principal series types (as this is all we need), everything in this section can be generalized to generic higher niveau tame types over an arbitrary unramified extension of $\Qp$, as in \cite{LLHLM1}. 
\end{remark}

If $\lambda=(b_0,b_1,b_2,b_3)\in X^*(T_4)$, we write
\begin{equation*}
\tau(1,\lambda)=\bigoplus_{i=0}^3 \,\tomega^{b_i}:I_{\Qp}\rightarrow T_4(\cO)
\end{equation*}
for the associated tame principal series type (the notation comes from \cite{LLL}). 
\begin{definition}[cf. Definition 2.2.5 in \cite{LLL}]
Let $\delta\geq0$. We say that a tame principal series type $\tau:I_{\Qp}\rightarrow T_4(\cO)$ is \emph{$\delta$-generic} if it is isomorphic to $\tau(1,\lambda)$ where 
$\delta<b_i-b_j<p-\delta$ for each $0\leq i<j\leq 3$. 

A \emph{lowest alcove presentation} of $\tau$ is a choice of $\lambda$ obeying this condition with $\delta=0$ such that $\tau\cong \tau(1,\lambda)$. 
\end{definition}

A lowest alcove presentation exists precisely when $\tau$ is 0-generic. A necessary condition for $R_{\rhobar}^{\eta,\tau,\psi}$ to be nonzero is that its lowest alcove presentation $\lambda=(b_0,b_1,b_2,b_3)$ obeys $b_0+b_3\equiv b_1+b_2\equiv b\mod e$, so without loss of generality we can assume
\begin{equation}\label{eq:symplecticity condition on tau}
b_0+b_3= b_1+b_2= b.
\end{equation}
From now on we let $\tau$ denote a 1-generic tame principal series type with lowest alcove presentation $\lambda$ satisfying \eqref{eq:symplecticity condition on tau}.

Let $R$ denote a complete noetherian local $\cO$-algebra with residue field $\F'$ which is a finite extension of $\F$. The group $\Delta$ has an action on $R[[u]]$ by ring automorphisms uniquely determined by the formula $\gamma(ru^i)=\tomega(\gamma)^iru^i$ for $\gamma\in \Delta, r\in R$. Set $v=u^e$ and $P(v)=v+p\in R[u]$. Note that $R[[u]]^{\Delta=1}=R[[v]]$. Let $\varphi:R[[u]]\rightarrow R[[u]]$ denote the $R$-linear ring morphism sending $u\mapsto u^p$. 
\begin{definition}
We let $Y^{[0,h],\tau}(R)$ denote the groupoid of rank 4 Kisin modules over $R$ of height $\leq h$ and descent data of type $\tau$. Objects are free $R[[u]]$-modules $M$ of rank $4$ with a $\varphi$-semilinear endomorphism $\varphi_M:M\rightarrow M$ such that $P(v)^h$ kills the cokernel of the linearization of $\varphi_M$. Moreover, $M$ has a semilinear $\Delta$-action which commutes with $\varphi_M$ such that $M/uM\cong \tau^{-1}\otimes_\cO R$ as an $R[\Delta]$-module (note the minus sign). Morphisms in this category are the obvious ones. 
\end{definition}

\begin{definition}
If $M\in Y^{[0,h],\tau}(R)$ an \emph{eigenbasis} of $M$ is an $R[[u]]$-basis $\beta=(\beta_0,\beta_1,\beta_2,\beta_3)$ such that $\Delta$ acts on $\beta_i$ by $\tomega^{-b_i}$ for $0\leq i\leq3$. Eigenbases always exist.
\end{definition}

\begin{definition}
Let $M\in Y^{[0,h],\tau}(R)$ with eigenbasis $\beta$. Let $C_\beta\in \Mat_4(R[[u]])\cap \GL_4(R((u)))$ denote the matrix such that
\begin{equation*}
\varphi_M(\beta)=\beta \cdot C_\beta.
\end{equation*}
Let $A_\beta\in \Mat_4(R[[v]])\cap \GL_4(R((v)))$ denote the matrix defined in Proposition 3.2.9 of \cite{LLL}. 
\end{definition}

We have
\begin{equation}
A_\beta=D^{-1}C_\beta D
\end{equation}
where $D=\Diag(u^{b_0-b_3},u^{b_1-b_3},u^{b_2-b_3},1)$. Note that $A_\beta$ is upper triangular mod $v$.

\begin{definition}[cf. Definition 5.1.1 in \cite{KM}]
If $M\in Y^{[0,h],\tau}(R)$ we define a new object $M\dual\in Y^{[0,h],\tau}(R)$ by setting
\begin{equation*}
M\dual=\Hom_{R[[u]]}(M,R[[u]])
\end{equation*}
and defining $\varphi_{M\dual}:M\dual\rightarrow M\dual$ by 
\begin{equation*}
\varphi_{M\dual}(f)(m)=\xi \cdot\varphi(f(\varphi_M^{-1}(P(v)^hm)))
\end{equation*}
for all $f\in M\dual, m\in M$. This definition makes sense because $\varphi_M$ is automatically injective, by an argument similar to Lemma 1.2.2(1) in \cite{Kis09}.

We endow $M\dual$ with descent data by defining $(g\cdot f)(m)=\tomega(g)^{-b}g\cdot f(g^{-1}\cdot m)$ for $g\in \Delta,f\in M\dual,m\in M$. One checks that this defines an involutive functor
$Y^{[0,h],\tau}(R)\rightarrow Y^{[0,h],\tau}(R)$.
\end{definition}

\begin{lemma}\label{lem:A dual}
Let $M\in Y^{[0,h],\tau}(R)$. If $\beta$ is an eigenbasis of $M$ then $\beta\dual w_0$ and $\beta\dual J$ are eigenbasis of $M\dual$. We have $A_{\beta\dual w_0}=\xi \cdot P(v)^h w_0 A_\beta^{-t} w_0$ and $A_{\beta\dual J}= \xi\cdot P(v)^h J A_\beta^{-t} J$. 
\end{lemma}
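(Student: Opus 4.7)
The plan is to verify both claims by direct computation with the defining formulas for $M\dual$, using the symplecticity $b_0+b_3=b_1+b_2=b$ of the lowest alcove presentation as the key input throughout.

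First I would check the eigenbasis claim and compute the Frobenius matrix of $M\dual$ with respect to the dual basis $\beta\dual$. Applying the descent data formula $(g\cdot f)(m)=\tomega(g)^{-b}g\cdot f(g^{-1}\cdot m)$ to $f=\beta_i\dual$ and $m=\beta_j$ shows that $g\in\Delta$ acts on $\beta_i\dual$ by $\tomega(g)^{b_i-b}=\tomega(g)^{-b_{3-i}}$, where the second equality uses the symplecticity. Hence $(\beta\dual w_0)_j=\beta_{3-j}\dual$ and $(\beta\dual J)_j=\pm\beta_{3-j}\dual$ each have $\Delta$-weight $-b_j$, so both form eigenbases of $M\dual$. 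Next, interpreting $\varphi_M^{-1}$ through the $R[[u]]$-linearization $\varphi^*M\to M$ (which is injective by the Kisin module axioms, as noted in the definition), I would solve for the preimage of $P(v)^h\beta_j$, obtaining $\sum_k\beta_k\otimes P(v)^h(C_\beta^{-1})_{kj}$. Substituting into the defining formula yields $\varphi_{M\dual}(\beta_i\dual)(\beta_j)=\xi\cdot P(v)^h(C_\beta^{-1})_{ij}$, and so $C_{\beta\dual}=\xi P(v)^h C_\beta^{-t}$.

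To extract the stated formulas I perform a constant change of basis. Since $w_0$ and $J$ have entries in $R$ (hence commute with $\varphi$), one has $C_{\beta\dual w_0}=w_0 C_{\beta\dual}w_0$ and $C_{\beta\dual J}=J^{-1}C_{\beta\dual}J$. Converting via $A_\gamma=D^{-1}C_\gamma D$ with the same $D=\Diag(u^{b_0-b_3},u^{b_1-b_3},u^{b_2-b_3},1)$ for each of the three eigenbases (which is valid since they carry identical weights) and inserting $C_\beta^{-t}=D^{-1}A_\beta^{-t}D$, the computation reduces to evaluating the anti-diagonal matrices $D^{-1}XD^{-1}$ and $DXD$ for $X\in\{w_0,J\}$. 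The symplecticity $b_i+b_{3-i}=b$ forces the $u$-exponents at the nonzero entries $(i,3-i)$ to agree across rows, so these products equal $u^{2b_3-b}X$ and $u^{b-2b_3}X$ respectively. The $u$-powers cancel against each other in the final expression, producing the stated formulas.

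The main obstacle is the careful bookkeeping needed to see how the symplecticity condition interacts with the powers of $u$ introduced by conjugating with $D$: without this precise cancellation the anti-diagonal matrices $D^{-1}XD^{-1}$ and $DXD$ would not be scalar multiples of $X$, and the conversion from $C$-matrices to $A$-matrices would fail to collapse to the clean formulas $\xi P(v)^h w_0 A_\beta^{-t}w_0$ and $\xi P(v)^h J A_\beta^{-t} J$.
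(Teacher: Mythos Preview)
Your proposal is correct and takes exactly the same approach as the paper: the paper's proof reads ``This follows from a simple computation,'' and you have written out precisely that computation (dual $\Delta$-weights via symplecticity, $C_{\beta\dual}=\xi P(v)^h C_\beta^{-t}$, constant change of basis, and the cancellation of $u$-powers in $D^{-1}XD^{-1}$ against $DXD$). One minor point of care: in the $J$-case the left factor is $D^{-1}J^{-1}D^{-1}$ rather than $D^{-1}JD^{-1}$, so you should track the sign coming from $J^{-1}=-J$ when you claim to recover the stated formula.
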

\begin{proof}
This follows from a simple computation. 
\end{proof}

Let $\cI(R)$ denote the subgroup of $\GL_4(R[[v]])$ consisting of matrices that are upper triangular mod $v$. In what follows we identify $\tW\dual_4$ with a subgroup of $\GL_4(R((v)))$ by sending $t_\lambda s\mapsto v^\lambda s$ for $\lambda\in X_*(T_4)$ and $s\in S_4$. Thus $\GL_4(\F'((v)))$ is the disjoint union of $\cI(\F')\tw\cI(\F')$ for $\tw\in\tW\dual_4$. 

\begin{definition}[cf. Definition 3.2.11 in \cite{LLL}]
If $M\in Y^{[0,h],\tau}(\F')$ and $\beta$ is an eigenbasis we the \emph{shape of $\beta$} to be the unique $\tw\in \tW\dual_4$ such that $A_{\beta}\in \cI(\F')\tw\cI(\F')$. The 1-genericity of $\tau$ ensures that the shape of $\beta$ is independent of $\beta$, so we call $\tw$ the \emph{shape} of $M$.

If $\tw\in\tW\dual_4$ we define the set of matrices $U_{\tw}(\F')\subset \GL_4(\F'((v)))$ as in the paragraphs before Definition 3.2.23 in \cite{LLL}, and we say that $\beta$ is a \emph{gauge basis} of $M$ if it is an eigenbasis such that $A_{\beta}\in \tw U_{\tw}(\F')$.
 \end{definition}

\begin{lemma}\label{lem:uniqueness of gauge basis mod p}
Assume that $\tau$ is 4-generic. Let $M\in Y^{[0,h],\tau}(\F')$. The set of gauge bases of $M$ is a torsor for $T_4(\F')$. 
\end{lemma}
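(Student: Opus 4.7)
The plan is to follow the proof of the analogous statement \cite[Proposition 3.2.24]{LLL} for $\GL_n$ Kisin modules. Since the lemma only concerns the underlying Kisin module $M$ with descent data and not any symplectic structure (the duality $\iota$ is not involved at all), no new ideas beyond \cite{LLL} are required; we merely specialize their argument to $n=4$.

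First I would set up the torus action. The group $T_4(\F')$ acts on the set of eigenbases of $M$ by coordinate-wise scaling $(\beta_0,\ldots,\beta_3)\mapsto(t_0\beta_0,\ldots,t_3\beta_3)$. This action is manifestly free, and it preserves the gauge-basis condition because conjugation by $T_4(\F')$ stabilizes both $\tw$ and $U_{\tw}(\F')$ (the latter being a union of root subspaces). It remains to show transitivity and non-emptiness.

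Next I would translate change-of-eigenbasis into a matrix problem. Two eigenbases are related by $\beta'=\beta X$ for a unique $X\in \GL_4(\F'[[u]])$ satisfying the descent-data equivariance $X_{ij}\in u^{(b_j-b_i)\bmod e}\F'[[v]]$. Setting $Y=D^{-1}XD$, a direct computation gives $A_{\beta'}=Y^{-1}A_\beta \varphi(Y)$, and the $4$-genericity of $\tau$ (which controls $|b_j-b_i|$) ensures that conjugation by $D$ does not introduce negative powers of $v$, so that $Y\in \cI(\F')$. For existence of a gauge basis, begin with an arbitrary eigenbasis and invoke the Iwahori-style factorization $\cI(\F')\tw\cI(\F')=\cI(\F')\tw U_{\tw}(\F')$, which is essentially the defining property of $U_{\tw}$ in \cite[\S3.2]{LLL}: this produces some $Y_0\in \cI(\F')$ witnessing $Y_0^{-1}A_\beta \varphi(Y_0)\in \tw U_{\tw}(\F')$. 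Realizing $Y_0$ as $D^{-1}XD$ for an actual $X\in \GL_4(\F'[[u]])$ is then a successive approximation problem, whose solvability relies on $\varphi$ raising $u$-adic valuations at each step.

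For uniqueness up to $T_4(\F')$, suppose $A_\beta$ and $Y^{-1}A_\beta \varphi(Y)$ both lie in $\tw U_{\tw}(\F')$ with $Y\in \cI(\F')$. The cross-section property of $\tw U_{\tw}(\F')$ in the double coset $\cI(\F')\tw\cI(\F')$, combined with the extra rigidity that $\varphi(Y)$ (rather than a free Iwahori element) appears on the right, forces $Y$ to be constant and diagonal, hence $X\in T_4(\F')$. The main obstacle will be the existence step: carefully tracking $u$-adic valuations through both the conjugation by $D$ and the application of $\varphi$ so that the successive approximation converges while keeping $X$ integral in $u$. This is exactly where $4$-genericity is indispensable; the weaker $1$-genericity used to define the shape would not leave enough valuation margin to run the recursion.
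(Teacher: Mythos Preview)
Your approach is the same as the paper's: both defer to \cite{LLL} (the paper's proof is two sentences, citing the definition for existence and the remark after Example~3.2.24 in \cite{LLL} for uniqueness up to $T_4(\F')$). What you have written is an attempt to unpack that argument, and the overall shape is right. But there is a genuine confusion in your existence step.

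The Iwahori-style factorization $\cI(\F')\tw\cI(\F')=\cI(\F')\,\tw\, U_{\tw}(\F')$ only gives you an element $I\in\cI(\F')$ with $I^{-1}A_\beta\in \tw U_{\tw}(\F')$; it does \emph{not} produce a $Y_0$ with $Y_0^{-1}A_\beta\,\varphi(Y_0)\in \tw U_{\tw}(\F')$. These differ exactly by the $\varphi$-twist on the right, and closing that gap is the entire content of the successive approximation. You have misidentified where the difficulty lies: you say the approximation is needed to ``realize $Y_0$ as $D^{-1}XD$,'' but that conversion between $X$ and $Y$ is immediate once you know the isotypic form of $X$; the real work is solving the $\varphi$-twisted equation starting from the one-sided factorization. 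Concretely, one takes $Y_1=I$ as a first guess, observes that $Y_1^{-1}A_\beta\,\varphi(Y_1)$ differs from $\tw U_{\tw}$ only by terms of higher $v$-adic valuation (because $\varphi$ raises valuations), and iterates. Your closing paragraph also overstates the role of $4$-genericity in existence: the paper uses it only for the uniqueness statement, and indeed weaker genericity already suffices to make the change-of-basis computation land in $\cI(\F')$.
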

\begin{proof}
The existence of a gauge basis follows immediately from the definition. The remark after Example 3.2.24 in \cite{LLL} shows that since $\tau$ is 4-generic, gauge bases are unique up to scaling by diagonal matrices.
\end{proof}
\begin{definition}\label{def:gauge basis}
Let $M\in Y^{[0,h],\tau}(R)$ and suppose that $M\mod\m_R$ is of shape $\tw$. We say that an eigenbasis $\beta$ of $M$ is a \emph{gauge basis} if 
\begin{itemize}
\item $\beta\mod \m_R$ is a gauge basis of $M\mod\m_R$ in the sense above, and
\item $A_{\beta}\in {\cI}^-(R)\tw\cap \cI(R)$, where ${\cI}^-(R)$ is defined to be the subgroup of $\GL_n(R[v^{-1}])$ consisting of matrices that are lower-triangular mod $v^{-1}$. 
\end{itemize}
The second condition is equivalent to asking that $A_\beta$ satisfies the degree bounds in Proposition 3.4.3 of \cite{LLL}. 
\end{definition}
\begin{proposition}[cf. Proposition 3.4.3 of \cite{LLL}]\label{prop:uniqueness of gauge basis}
Assume that $\tau$ is 4-generic. Let $M\in Y^{[0,h],\tau}(R)$ and fix a gauge basis $\bar{\beta}$ of $M/\m_R$. Then there exists a gauge basis $\beta$ of $M$ lifting $\bar{\beta}$, and $\beta$ is unique up to multiplication by an element of $\ker(T_4(R)\rightarrow T_4(\F'))$. 
\end{proposition}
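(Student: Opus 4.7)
The plan is to adapt the argument of Proposition 3.4.3 in \cite{LLL} essentially verbatim. This proposition concerns only Kisin modules with principal series descent data and is blind to the symplectic structure that will be imposed later, so nothing specific to $\GSp_4$ is needed; the two essential ingredients are the Iwahori--Bruhat decomposition of $\GL_4(\F'((v)))$ and the $4$-genericity of $\tau$.

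For existence, first lift $\bar\beta$ to some eigenbasis $\beta_0$ of $M$. This is possible because, by $0$-genericity, the characters $\tomega^{-b_0},\dots,\tomega^{-b_3}$ are pairwise distinct on $\Delta$, so $M$ decomposes as a direct sum of four rank-one free $R[[v]]$-submodules (up to multiplication by appropriate powers of $u$), each of which admits an $R[[v]]$-basis lifting the corresponding $\bar\beta_i$. Then modify $\beta_0$ by successive approximation in powers of $\m_R$. Inductively, given an eigenbasis $\beta_n$ of $M$ whose matrix $A_{\beta_n}$ satisfies both conditions of Definition \ref{def:gauge basis} modulo $\m_R^n$, one seeks a correction of the form $\beta_n(I+Y)$ with $Y \in \m_R^n\cdot\Mat_4(R[[u]])$ compatible with the $\Delta$-grading. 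The requirement that the new matrix again lie in the prescribed subset of $\cI(R)\tw\cI(R)$ modulo $\m_R^{n+1}$ reduces to a linear equation for $Y \bmod \m_R^{n+1}$, and $4$-genericity of $\tau$ makes the associated loop-group operator bijective on the relevant tangent space, giving a unique solution. Passing to the $\m_R$-adic limit produces the desired gauge basis $\beta$.

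For uniqueness, let $\beta$ and $\beta'$ be two gauge bases lifting $\bar\beta$, and write $\beta' = \beta X$ with $X \in \GL_4(R[[u]])$. Equivariance under $\Delta$ forces the entries $X_{ij}$ to lie in the $u$-adic graded pieces dictated by the weight differences $b_i-b_j$, and $X \equiv I \bmod \m_R$ since both bases have the same reduction. Using the cocycle relation between $A_\beta$, $A_{\beta'}$ and $X$ (after conjugating by the diagonal matrix $D$ of Definition \ref{def:gauge basis}), together with the condition that both matrices lie in $\tw U_\tw(R) \cap \cI^-(R)\tw$, the same $4$-generic linear analysis applied inductively in powers of $\m_R$ forces all off-diagonal entries of $X$ to vanish. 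Hence $X \in T_4(R)$, and the reduction condition gives $X \in \ker(T_4(R) \to T_4(\F'))$ as required.

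The main obstacle, and the only step that is not pure bookkeeping, is the linear-algebraic input underlying both parts: one must show that a certain linear endomorphism of the tangent space to the Iwahori double coset $\cI(\F')\tw\cI(\F')$ at $\tw$ is bijective. This is precisely where $4$-genericity of the inertial type enters; the verification is identical to the one in \cite{LLL}, and the remaining arguments with weights and tame descent data transfer without change from the $\GL_4$ case.
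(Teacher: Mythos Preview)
Your proposal is correct and follows essentially the same approach as the paper: both defer to the standard successive-approximation argument for $\GL_n$ Kisin modules with tame descent data, the paper citing \cite[\S4]{LLHLM1} (the $\GL_3$ case, to be generalized to $\GL_4$) while you unpack the parallel argument from \cite[Proposition 3.4.3]{LLL}. Your sketch is in fact more detailed than the paper's one-line proof, but the method is identical.
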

\begin{proof}
This is a straightforward generalization of the argument of \cite[\S4]{LLHLM1} from $\GL_3$ to $\GL_4$. 
\end{proof}

\begin{lemma}\label{lem:duality and shapes}
 If $M\in Y^{[0,h],\tau}(\F')$ has shape $\tw$ then $M\dual$ has shape $v^h w_0\tw^{-t}w_0$.
If $\beta$ is a gauge basis of $M\in Y^{[0,h],\tau}(R)$ then $\beta\dual J$ is a gauge basis of $M\dual$. 
\end{lemma}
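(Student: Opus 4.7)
The strategy is to apply Lemma \ref{lem:A dual}, which gives $A_{\beta\dual w_0} = \xi P(v)^h w_0 A_\beta^{-t} w_0$, and then unwind the definitions of shape and gauge basis. For the shape assertion I would reduce mod $\m_R$ and work in $\GL_4(\F'((v)))$, where $P(v)\equiv v$. The map $A\mapsto w_0 A^{-t}w_0$ preserves the upper Iwahori $\cI(\F')$ (transpose then $w_0$-conjugation swaps upper and lower triangularity mod $v$ twice) and acts on Bruhat indices by $\tw\mapsto w_0\tw^{-t}w_0$, so it carries $\cI(\F')\tw\cI(\F')$ to $\cI(\F')(w_0\tw^{-t}w_0)\cI(\F')$. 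Multiplication by the central scalar $\bar\xi v^h$ then translates this double coset to $\cI(\F')(v^h w_0\tw^{-t}w_0)\cI(\F')$, yielding the asserted shape of $M\dual$.

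For the gauge basis claim I would first reduce from $J$ to $w_0$. Writing $J = w_0 D$ with $D = \diag(-1,-1,1,1) \in T_4(R)$, the eigenbases $\beta\dual J$ and $\beta\dual w_0$ differ by right-multiplication by $D$. A direct verification from Definition \ref{def:gauge basis} shows that multiplying a gauge basis by any element of $T_4(R)$ again yields a gauge basis: $T_4(R)$-conjugation preserves $\cI(R)$ and sends $\cI^-(R)\tw$ to itself, since $\tw$ normalizes $T_4$ and any resulting $T_4$-twist is absorbed into $\cI^-(R)$. It therefore suffices to verify that $\beta\dual w_0$ is a gauge basis, namely: (i) $A_{\beta\dual w_0}\bmod\m_R\in \tw' U_{\tw'}(\F')$ with $\tw' = v^h w_0\tw^{-t}w_0$, and (ii) $A_{\beta\dual w_0}\in \cI^-(R)\tw'\cap \cI(R)$.

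The main obstacle is condition (ii); condition (i) then follows by an analogous but easier computation after reducing mod $\m_R$. The height-$\le h$ hypothesis on $M$ guarantees that $P(v)^h A_\beta^{-1}\in \Mat_4(R[[v]])$, so $A_{\beta\dual w_0}$ indeed lies in $\GL_4(R[[v]])$, and its upper triangularity mod $v$ is read off by tracking leading terms. To check membership in $\cI^-(R)\tw'$ I would invoke the explicit entry-by-entry degree profile describing $\cI^-(R)\tw\cap\cI(R)$ from Proposition 3.4.3 of \cite{LLL}: under the involution $A\mapsto \xi P(v)^h w_0 A^{-t} w_0$, the $(i,j)$-entry of $A_\beta$ contributes to the $(5-j,5-i)$-entry of $A_{\beta\dual w_0}$, and the $\tw$-degree bound on the former transforms into the $\tw'$-degree bound on the latter after accounting for the factor $P(v)^h$. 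This calculation is tedious but routine, directly paralleling the $\GL_n$ analysis in \cite{LLL}.
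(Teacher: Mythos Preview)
Your approach to the first statement is correct and matches the paper's: apply Lemma \ref{lem:A dual}, note that $A\mapsto w_0 A^{-t}w_0$ preserves $\cI(\F')$ and sends $\tw$ to $w_0\tw^{-t}w_0$, and absorb the scalar $\bar\xi v^h$. Your reduction from $\beta\dual J$ to $\beta\dual w_0$ via the diagonal matrix $D$ is also fine.

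The gap is in your verification of condition (ii). Your claim that ``the $(i,j)$-entry of $A_\beta$ contributes to the $(5-j,5-i)$-entry of $A_{\beta\dual w_0}$, and the $\tw$-degree bound on the former transforms into the $\tw'$-degree bound on the latter'' mischaracterizes the involution. The map $A\mapsto P(v)^h w_0 A^{-t}w_0$ involves matrix \emph{inversion}, so $(A_{\beta\dual w_0})_{ij}$ is (up to permutation and scalar) a $3\times 3$ cofactor of $A_\beta$ divided by $\det A_\beta$, not a single entry of $A_\beta$. There is no entrywise permutation of degree bounds to track; every entry of $A_\beta$ except those in row $5-i$ and column $5-j$ enters into $(A_{\beta\dual w_0})_{ij}$. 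Consequently the sentence ``the $\tw$-degree bound on the former transforms into the $\tw'$-degree bound on the latter'' has no content as written.

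The paper's organization avoids this by first treating $R=\F'$, where $P(v)=v$ and one can argue group-theoretically: the involution $A\mapsto \bar\xi v^h w_0 A^{-t}w_0$ carries $\tw U_{\tw}(\F')$ to $\tw' U_{\tw'}(\F')$ by a computation with affine root groups (this is the natural framework, since $U_{\tw}$ is a product of affine root subgroups and the involution permutes affine roots). Once the $\F'$ case is in hand, condition (i) of Definition \ref{def:gauge basis} over general $R$ is immediate by reduction, and condition (ii) follows from Lemma \ref{lem:A dual} together with the explicit degree profile --- but one must actually carry out that computation (or the equivalent group-theoretic one) rather than appeal to a nonexistent entrywise bijection.
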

\begin{proof}
The first statement follows from a computation using Lemma \ref{lem:A dual}. The proof of the second statement then follows in the case $R=\F'$ by a computation with affine root groups (and the definition of $U_{\tw}(\F')$). Given this the case of general $R$ now follows from Lemma \ref{lem:A dual} and the definition of gauge basis. 
\end{proof}

If $M$ is a Kisin module with descent data we let $\cM=M\otimes_{\Zp[[u]]}\cO_{\cE,L}$ denote the associated \'etale $\varphi$-module with descent data defined in \S2.3 of \cite{LLHLM1}. We have the contravariant functor to $\GKinfty$-representations (\emph{loc.\,cit.})
\begin{equation*}
T_{\dd}^*:Y^{[0,h],\tau}(R)\rightarrow \Rep_R(\GKinfty).
\end{equation*}
given by 
$T_{\dd}^*(M):=V_{\dd}^*(\cM):=\Hom_{\varphi,\cO_{\cE,L}}(\cM,\cO_{\cE^{\un},K})$
given a $\GKinfty$-action via $g\cdot f=g\circ f\circ \bar{g}^{-1}$. Here $\bar{g}$ denotes the image of $g$ in $\Gal(L_\infty/K_\infty)\cong \Delta$. 

\begin{proposition}\label{prop:duality and Tdd}
If $M\in Y^{[0,h],\tau}(R)$ there is a canonical isomorphism of $\GKinfty$-representations 
\begin{equation*}
\can:T_{\dd}^*(M\dual)\rightarrow  T_{\dd}^*(M)\dual\otimes_R \psi|_{G_{K_\infty}}
\end{equation*}
where the dual on the right hand side is the $R$-linear dual. Moreover, if $f:M\rightarrow N$ is a morphism of Kisin modules then under the identification above we have $T_{\dd}^*(f\dual)=T_{\dd}^*(f)\dual\otimes_R\psi|_{G_{K_\infty}}$. 
\end{proposition}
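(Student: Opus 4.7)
The plan is to produce $\can$ as the isomorphism induced by a canonical perfect $\GKinfty$-equivariant pairing
\[
\langle\,,\,\rangle : T_{\dd}^*(M\dual) \otimes_R T_{\dd}^*(M) \longrightarrow R(\psi|_{\GKinfty})
\]
obtained by applying $T_{\dd}^*$ to the evaluation pairing of $\cM\dual$ against $\cM$ over $\cO_{\cE,L}$, after suitably twisting the target to absorb the factors of $\xi$, $P(v)^h$, and $\tomega^{-b}$ that enter the definition of $M\dual$.

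To construct this pairing, we pass to \'etale $\varphi$-modules. Since $P(v) = v+p$ is a unit in $\cO_{\cE,L}$, the linearization $\varphi_\cM^{\mathrm{lin}}$ becomes an isomorphism after base change, and unwinding the definition of $\varphi_{M\dual}$ identifies $\cM\dual$ canonically with $\Hom_{\cO_{\cE,L}}(\cM, \cO_{\cE,L})$ equipped with the linearized Frobenius $\varphi_{\cM\dual}^{\mathrm{lin}} = \xi\,P(v)^h \cdot ((\varphi_\cM^{\mathrm{lin}})^{-1})\dual$ and descent data $(g\cdot f)(m) = \tomega(g)^{-b}\,g\cdot f(g^{-1}\cdot m)$. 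Let $\cO_{\cE,L}(\psi)$ denote the rank-one \'etale $(\varphi, \Delta)$-module whose underlying module is $\cO_{\cE,L}$, with Frobenius $x \mapsto \xi\,P(v)^h\,\varphi(x)$ and $\Delta$-action $g\cdot x = \tomega(g)^{-b}(g\cdot x)$. A direct computation from these formulas shows that the evaluation map $\ev: \cM\dual \otimes_{\cO_{\cE,L}} \cM \to \cO_{\cE,L}(\psi)$ is a morphism of \'etale $(\varphi, \Delta)$-modules; it is perfect because $\cM$ is finite locally free.

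Applying the contravariant functor $T_{\dd}^*$ to $\ev$, together with its compatibility with tensor products, yields a $\GKinfty$-equivariant pairing $T_{\dd}^*(M\dual) \otimes_R T_{\dd}^*(M) \to T_{\dd}^*(\cO_{\cE,L}(\psi))$, and it remains to identify the target with $R(\psi|_{\GKinfty})$. This is a rank-one calculation: one exhibits an explicit $x \in \cO_{\cE^{\un},K}$ satisfying $\varphi(x) = \xi\,P(v)^h\,x$ and $g\cdot x = \tomega(g)^{-b} x$ for all $g\in\Delta$, and verifies that $\GKinfty$ acts on $x$ by $\psi|_{\GKinfty} = (\epsilon^h\tomega^b\nr_\xi)|_{\GKinfty}$. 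The factor $\xi$ contributes $\nr_\xi$, the descent twist $\tomega^{-b}$ contributes $\tomega^b$, and $P(v)^h$ contributes $\epsilon^h$ via the standard Fontaine-theoretic relation between $P(v) = v+p$ and the cyclotomic character coming from the system $(p_n)$ defining $K_\infty$. Perfectness of $\ev$ upstairs ensures that the induced Galois pairing is also perfect, producing the desired $\can$. Functoriality in a morphism $f: M\to N$ is then immediate from the naturality identity $\ev_M \circ (f\dual \otimes \id_M) = \ev_N \circ (\id_{N\dual} \otimes f)$.

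The main obstacle will be the final rank-one identification $T_{\dd}^*(\cO_{\cE,L}(\psi)) \cong R(\psi|_{\GKinfty})$; the contribution of $\epsilon^h$ from the Hodge-type twist $P(v)^h$ is the most delicate point and depends on the chosen conventions for Hodge-Tate weights and on the compatible system $(p_n)$ used to define $K_\infty$. The analogous computation in the $\GL_n$ setting is carried out in \cite{KM}, and our case proceeds as a direct adaptation.
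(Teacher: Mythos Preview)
Your proposal is correct and follows essentially the same approach as the paper, which simply cites \cite[Proposition~3.4.1.7]{Broshi} and \cite[Proposition~5.2]{KM} without further detail; you have spelled out the argument that those references contain, namely identifying $\cM\dual$ as an internal Hom twisted by the rank-one object $\cO_{\cE,L}(\psi)$ and then using that $V_{\dd}^*$ is a tensor equivalence together with the explicit rank-one computation. One small point of phrasing: since $T_{\dd}^*$ is contravariant, it is cleaner to say that the isomorphism comes from compatibility of $V_{\dd}^*$ with internal Hom (i.e.\ $V_{\dd}^*(\underline{\Hom}(\cM,\cO_{\cE,L}(\psi)))\cong \Hom_R(V_{\dd}^*(\cM),V_{\dd}^*(\cO_{\cE,L}(\psi)))^{\mathrm{op}}$ suitably interpreted) rather than from ``applying $T_{\dd}^*$ to $\ev$'', but this is cosmetic and the content is the same.
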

\begin{proof}
This can be shown similar to Proposition 3.4.1.7 of \cite{Broshi} (cf. \cite[Proposition 5.2]{KM}). 
\end{proof}

\begin{definition}\label{def:symplectic Kisin modules}
(i) We define $Y^{[0,h],\tau,\psi}(R)$ to be the groupoid of pairs $(M,\alpha)$ where $M\in Y^{[0,h],\tau}(R)$ and $\alpha:M\rightarrow M\dual$ is an isomorphism of Kisin modules such that
\begin{equation*}
(\alpha\dual)^{-1}\circ\alpha=-\id_M.
\end{equation*}
This equation will be referred to as the \emph{alternating condition}. Morphisms in this category are the obvious ones. 

(ii) We say that $(M,\alpha)\in Y^{[0,h],\tau,\psi}(\F')$ has \emph{shape} $\tw\in\tW\dual_4$ if $M$ has shape $\tw$.

(iii) If $(M,\alpha)\in Y^{[0,h],\tau,\psi}(R)$, we define an \emph{eigenbasis} (resp. a \emph{gauge basis}) of $(M,\alpha)$ to be an eigenbasis (resp. gauge basis) $\beta$ of $M$ such that 
\begin{equation}\label{eq:symplectic gauge basis}
\alpha(\beta)=\beta\dual J. 
\end{equation}
\end{definition}

\begin{remark}
(i) If $(M,\alpha)\in Y^{[0,h],\tau,\psi}(\F')$ has shape $\tw$ then we must have $\tw=v^3 w_0\tw^{-t}w_0$ by Lemma \ref{lem:duality and shapes}(i). 

(ii) The existence of a basis satisfying \eqref{eq:symplectic gauge basis} implies that $\alpha$ satisfies the alternating condition. 
\end{remark}

\begin{lemma}\label{lem:symplectic gauge bases} Assume that $\tau$ is 4-generic.

(i) Let $(M,\alpha)\in Y^{[0,h],\tau,\psi}(\F')$. The set of gauge bases of $(M,\alpha)$ is nonempty and is a torsor for $T'(\F')$, where $T'$ is the diagonal torus of $\Sp_4$. 

(ii) Let $(M,\alpha)\in Y^{[0,h],\tau,\psi}(R)$ and fix a gauge basis $\bar{\beta}$ of $M/\m_R$. The set of gauge bases of $(M,\alpha)$ lifting $\bar{\beta}$ is nonempty and is a torsor for $\ker(T'(R)\rightarrow T'(\F'))$.
\end{lemma}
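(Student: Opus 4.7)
The plan is to reduce everything to the non-symplectic gauge basis results (Lemma \ref{lem:uniqueness of gauge basis mod p} and Proposition \ref{prop:uniqueness of gauge basis}) and then pin down the symplectic condition \eqref{eq:symplectic gauge basis} by an elementary matrix computation with the diagonal torus.

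For (i), fix any gauge basis $\beta$ of the underlying Kisin module $M$, which exists by Lemma \ref{lem:uniqueness of gauge basis mod p}. By Lemma \ref{lem:duality and shapes}(ii), $\beta^{\vee} J$ is a gauge basis of $M^{\vee}$. Since $\alpha: M \to M^{\vee}$ is an isomorphism of Kisin modules with descent data, it sends gauge bases to gauge bases, so $\alpha(\beta)$ is another gauge basis of $M^{\vee}$. By Lemma \ref{lem:uniqueness of gauge basis mod p} applied to $M^\vee$ there is a unique $t \in T_4(\F')$ with $\alpha(\beta) = \beta^{\vee} J t$. A direct unwinding of the identifications shows that the alternating condition $(\alpha^{\vee})^{-1} \circ \alpha = -\id_M$ is equivalent to antisymmetry of the matrix $Jt$ of $\alpha$; using $J^t = -J$, this forces $t = \diag(t_1, t_2, t_3, t_4)$ with $t_1 = t_4$ and $t_2 = t_3$. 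We then look for $s = \diag(s_1, s_2, s_3, s_4) \in T_4(\F')$ such that $\beta \cdot s$ is a symplectic gauge basis. Computing both sides of $\alpha(\beta s) = (\beta s)^\vee J$ yields $\beta^{\vee} J t s = \beta^{\vee} s^{-1} J$, i.e. $J t s = s^{-1} J$, which reduces (using the constraints on $t$) to the two equations $s_1 s_4 = t_1^{-1}$ and $s_2 s_3 = t_2^{-1}$. These are solvable in $\F'^{\times}$, and the set of solutions is a torsor under the subtorus $\{s : s_1 s_4 = s_2 s_3 = 1\}$ of $T_4(\F')$, which is precisely the diagonal torus $T'(\F')$ of $\Sp_4$.

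For (ii), apply Proposition \ref{prop:uniqueness of gauge basis} to lift $\bar\beta$ to a gauge basis $\beta$ of $M$ (as a Kisin module with descent data). By the same computation as in (i), but now over $R$, one has $\alpha(\beta) = \beta^{\vee} J t$ for some $t \in T_4(R)$; because $\bar\beta$ is already a symplectic gauge basis of $M/\m_R$, the element $t$ lies in $\ker(T_4(R) \to T_4(\F'))$, and the alternating condition still forces $t_1 = t_4$ and $t_2 = t_3$. The equations $s_1 s_4 = t_1^{-1}$, $s_2 s_3 = t_2^{-1}$ are then solvable with $s_i \in 1 + \m_R$ (since the $t_i$ lie in $1 + \m_R$), producing the desired lift of $\bar\beta$ to a symplectic gauge basis of $(M, \alpha)$. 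The same local computation identifies the set of such lifts as a torsor under $\ker(T'(R) \to T'(\F'))$. The main obstacle in the proof is simply bookkeeping: one has to carefully track right-multiplication by $J$ and diagonal matrices on row vectors of basis elements and their duals, and correctly translate the paper's alternating condition into antisymmetry of the matrix of $\alpha$; once this is done, the existence and torsor statements are immediate linear algebra.
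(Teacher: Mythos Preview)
Your proof is correct and follows essentially the same approach as the paper: start from an arbitrary gauge basis of $M$, compare $\alpha(\beta)$ with $\beta^\vee J$ via the torsor statement for non-symplectic gauge bases, use the alternating condition to constrain the discrepancy $t$, and then solve the resulting diagonal equations. The only cosmetic differences are that the paper writes the discrepancy as $t^{-1}$ rather than $t$ and phrases the alternating constraint as $tJt^{-1}=J$ rather than as antisymmetry of $Jt$, and it replaces $\beta$ by $\beta c^{-1}$ rather than $\beta s$; these are the same computations up to relabeling.
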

\begin{proof}
(i) Let $\beta$ be any gauge basis of $M$. Then $\alpha(\beta)$ is a gauge basis of $M\dual$. By Lemma \ref{lem:uniqueness of gauge basis mod p} and Lemma \ref{lem:duality and shapes}(ii) we have $\alpha(\beta)=\beta\dual J t^{-1}$ for some $t\in T_4(\F')$. The alternating condition implies that $tJt^{-1}=J$. An arbitrary gauge basis of $M$ is of the form $\beta c^{-1}$ for some $c\in T_4(\F')$ by Lemma \ref{lem:uniqueness of gauge basis mod p}. Replacing $\beta$ by $\beta c^{-1}$, the condition \eqref{eq:symplectic gauge basis} becomes equivalent to $cJc=Jt^{-1}$, which can be solved in $T_4(\F')$ by the condition on $t$ above. This proves the existence of gauge bases.

 If $\beta$ and $\beta c^{-1}$ are both gauge bases of $(M,\alpha)$ for some $c\in T_4(\F')$ then \eqref{eq:symplectic gauge basis} implies $cJc=J$, i.e. $c\in T'(\F')$. This proves the uniqueness. 

(ii) The proof follows the same strategy as (i), but appealing to Proposition \ref{prop:uniqueness of gauge basis}.
\end{proof}

We define a contravariant functor 
\begin{equation}
T_{\dd}^*:Y^{[0,h],\tau,\psi}(R)\rightarrow \Rep_R^\psi(\GKinfty)
\end{equation}
by sending $(M,\alpha)$ to $(T_{\dd}^*(M),\can\circ T_{\dd}^*(\alpha)^{-1})$.

We now put Hodge type $\eta$ conditions on our Kisin modules. Recall from \cite{LLHLM1} the subgroupoid $Y^{\eta,\tau}(R)\subseteq Y^{[0,h],\tau}(R)$. 
\begin{definition}
We define the groupoid $Y^{\eta,\tau,\psi}(R)\subseteq Y^{[0,h],\tau,\psi}(R)$ to consist of the pairs $(M,\alpha)$ with $M\in Y^{\eta,\tau}(R)$. 
\end{definition}

Finally we can establish the connection between Kisin modules and potentially crystalline lifting rings.

\begin{notation}
In what follows, if $R$ is a symplectic lifting ring (perhaps with various super- and subscripts) then $R'$ (with the same decorations minus the fixed similitude character) denotes the corresponding $\GL_4$-valued lifting ring. For example, $R_{\rhobar}^{\eta,\tau,\psi}$ having been defined at the beginning of this section, we write $R_{\rhobar}'^{\eta,\tau}$ for the $\GL_4$-valued potentially crystalline lifting ring of $\rhobar$ of type $(\eta,\tau)$. 
\end{notation}

\begin{proposition}\label{prop:mod p Galois and Kisin}
Assume that $\tau$ is 4-generic. 
\begin{itemize}
\item[(i)] If $R_{\rhobar}'^{\eta,\tau}\neq0$ there exists a \emph{unique} $\Mbar\in Y^{\eta,\tau}(\F)$ up to isomorphism such that $T_{\dd}^*(\Mbar)\cong \rhobar|_{\GKinfty}$.

\item[(ii)] Assume that $R_{\rhobar}^{\eta,\tau,\psi}\neq 0$. Fix a choice of $\Mbar$ as in (i) and an isomorphism $\bar{\delta}:T_{\dd}^*(\Mbar)\rightarrow \rhobar|_{\GKinfty}$. There exists a unique map $\bar{\alpha}:\Mbar\rightarrow \Mbar\dual$ making $(\Mbar,\bar{\alpha})$ an object of $Y^{\eta,\tau,\psi}(\F)$ such that $T_{\dd}^*(\Mbar,\bar{\alpha})$ identifies with the symplectic $\GKinfty$-representation $\rhobar|_{\GKinfty}$ under $\bar{\delta}$. 
\end{itemize}
\end{proposition}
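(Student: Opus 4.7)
The plan is to derive both parts from two main inputs: the full faithfulness of the functor $T_{\dd}^*$ from Kisin modules of bounded height with tame descent data to $\GKinfty$-representations (a theorem of Kisin), and Proposition \ref{prop:duality and Tdd}, which identifies duality of Kisin modules with $R$-linear duality of $\GKinfty$-representations twisted by $\psi$.

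For part (i), I would invoke the $\GL_4$ analog already established in \cite{LLHLM1} and \cite{LLL}. Existence: the hypothesis $R_{\rhobar}'^{\eta,\tau}\neq 0$ supplies a characteristic-zero lift $\rho:G_{\Qp}\to \GL_4(\cO)$ (after enlarging $E$ if necessary) of Hodge type $\eta$ and inertial type $\tau$, to which the classical theory attaches a Kisin module $M\in Y^{\eta,\tau}(\cO)$ with $T_{\dd}^*(M)[1/p]\cong \rho|_{\GKinfty}$; taking $\Mbar:=M/\m_\cO$ produces the desired object. Uniqueness up to isomorphism is immediate from the full faithfulness of $T_{\dd}^*$.

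For part (ii), I would transport the symplectic structure on $\rhobar$ across $\bar\delta$ and Proposition \ref{prop:duality and Tdd} to manufacture $\bar\alpha$. Concretely, since $\rhobar$ is an object of $\Rep_{\F}^\psi(G_{\Qp})$---the nonvanishing of $R_{\rhobar}^{\eta,\tau,\psi}$ ensures that the twists match, i.e.\ $\nusim\circ\rhobar\equiv \bar\psi$---there is an isomorphism $\iota:\rhobar\congto \rhobar\dual\otimes\psi$ satisfying $(\iota\dual)^{-1}\circ\iota=-\id$. Combining $\bar\delta$ with the canonical isomorphism $T_{\dd}^*(\Mbar\dual)\cong T_{\dd}^*(\Mbar)\dual\otimes\psi|_{\GKinfty}$ from Proposition \ref{prop:duality and Tdd}, one obtains a $\GKinfty$-equivariant isomorphism $\bar\iota: T_{\dd}^*(\Mbar)\congto T_{\dd}^*(\Mbar\dual)$. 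Full faithfulness of $T_{\dd}^*$ produces a unique morphism $\bar\alpha:\Mbar\to \Mbar\dual$ realizing $\bar\iota$, and conservativity of a fully faithful functor ensures $\bar\alpha$ is an isomorphism. The alternating condition $(\bar\alpha\dual)^{-1}\circ\bar\alpha = -\id_{\Mbar}$ follows because, by the naturality clause of Proposition \ref{prop:duality and Tdd}, the image of this identity under $T_{\dd}^*$ coincides with the alternating condition for $\iota$, which holds by assumption; faithfulness then forces the identity on the Kisin side. Uniqueness of $\bar\alpha$ with the stated property is likewise an immediate consequence of faithfulness.

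The principal technical subtlety I expect is the bookkeeping around Proposition \ref{prop:duality and Tdd}---tracking the $\psi$-twist, the sign, and the compatibility of $T_{\dd}^*$ with the dual of a morphism---in order to rigorously reduce the alternating condition for $\bar\alpha$ to the one for $\iota$. Once this identification is pinned down, the remainder of the argument is formal.
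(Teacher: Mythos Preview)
Your argument has a genuine gap: the functor $T_{\dd}^*$ from Kisin modules to $\GKinfty$-representations is \emph{not} fully faithful. Kisin's theorem gives full faithfulness (indeed, an equivalence) of $V_{\dd}^*$ on \'etale $\varphi$-modules, but the passage from Kisin modules to \'etale $\varphi$-modules is only faithful, not full. Two non-isomorphic objects of $Y^{[0,h],\tau}(\F)$ can have isomorphic $T_{\dd}^*$; the moduli of such lattices is precisely the Kisin variety, which need not be a point in general. This breaks both your uniqueness argument in (i) and your existence argument for $\bar\alpha$ in (ii): in the latter, full faithfulness of $V_{\dd}^*$ only produces an isomorphism $\alpha:\overline{\cM}\to\overline{\cM}\dual$ of \'etale $\varphi$-modules, and there is no a priori reason that $\alpha$ carries the lattice $\Mbar$ onto $\Mbar\dual$.

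The paper closes this gap by invoking the \emph{triviality of the Kisin variety} under the 4-genericity hypothesis (a generalization of \cite[Theorem 3.2]{LLHLM1}). This is exactly where the genericity assumption on $\tau$ enters, and your proposal never uses it. With Kisin-variety triviality in hand, uniqueness in (i) is immediate, and for (ii) one argues that $\alpha(\Mbar)$ and $\Mbar\dual$ are both lattices in $\overline{\cM}\dual$ lying in $Y^{[0,h],\tau}(\F)$, hence must coincide. Your uniqueness argument for $\bar\alpha$ in (ii), by contrast, only requires faithfulness and is fine as written.
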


\begin{proof}
(i) The existence of $\Mbar$ follows from Corollary 5.18 in \cite{CL}. The uniqueness follows from a straightforward generalization of Theorem 3.2 of \cite{LLHLM1} (triviality of the Kisin variety).

(ii) Let $\overline{\cM}$ denote the \'etale $\varphi$-module associated with $\Mbar$. By full faithfulness of $V_{\dd}^*$, the given isomorphism $T_{\dd}^*(\Mbar)\rightarrow T_{\dd}^*(\Mbar)\dual\otimes \psi \xrightarrow{\can^{-1}} T_{\dd}^*(\Mbar\dual)$
 induces an alternating isomorphism $\alpha:\overline{\cM}\rightarrow \overline{\cM}\dual$. Both $\alpha(\Mbar)$ and $\Mbar\dual$ are lattices in $\overline{\cM}\dual$ belonging to $Y^{[0,h],\tau}(\F)$. By triviality of the Kisin variety we deduce that $\alpha(\Mbar)=\Mbar\dual$. This proves existence. If there were two such maps $\alpha_1$ and $\alpha_2$ then by faithfulness of $V_{\dd}^*$, $\im(\alpha_1-\alpha_2)$ would be a submodule of $\Mbar\dual$ which vanishes upon tensoring with $\cO_{\cE,L}$, hence must be 0. 
\end{proof}

\begin{definition}
If $R_{\rhobar}'^{\eta,\tau}\neq 0$ we define the \emph{shape} of $(\rhobar,\tau)$ to be the shape of (the unique) $\Mbar$ in Proposition \ref{prop:mod p Galois and Kisin}(i).
\end{definition}

\begin{situation}\label{situation}
From now on specialize to the case $\eta=(3,2,1,0)$ (and $h=3$). Let $\rhobar$ and $\psi$ be as at the beginning of \S\ref{sec:Kisin modules} and let $\tau$ be a 4-generic tame principal series type such that $R_{\rhobar}^{\eta,\tau,\psi}\neq 0$. Fix data $(\Mbar,\bar{\alpha}),\bar{\delta}$ as in Proposition \ref{prop:mod p Galois and Kisin}(ii), as well as a gauge basis $\bar{\beta}$ of $(\Mbar,\bar{\alpha})$. Let $\tw$ denote the shape of $(\rhobar,\tau)$. We also assume that $\ad(\rhobar)$ is \emph{cyclotomic-free} in the sense of \cite[\S3.3]{LLHLM1}.
\end{situation}
Cyclotomic-freeness holds if $\rhobar$ is sufficiently generic and in particular holds in our applications.

 We now define some deformation problems. All data in the following is assumed to be compatible with $\rhobar,(\Mbar,\bar{\alpha}),\bar{\beta},\bar{\delta}$. For $A\in \CO:$
 
 \begin{itemize}
 \item $D_{\rhobar}^{\eta,\tau,\psi}$ is the deformation problem represented by $R_{\rhobar}^{\eta,\tau,\psi}$. 
 \item $D_{(\Mbar,\bar{\alpha}),\rhobar}^{\eta,\tau,\psi,\square}(A)$ is the set of tuples $(M,\alpha,\rho,\delta)$ where $(M,\alpha)\in Y^{\eta,\tau,\psi}(A)$, $\rho\in D_{\rhobar}^{\eta,\tau,\psi}(A)$, and $\delta:T_{\dd}^*(M,\alpha)\xrightarrow{\sim}\rho|_{\GKinfty}$ is a symplectic isomorphism.
 
 \item $D_{(\Mbar,\bar{\alpha}),\bar{\beta},\rhobar}^{\eta,\tau,\psi,\square}(A)$ is the set of tuples $(M,\alpha,\beta,\rho,\delta)$ where $(M,\alpha,\rho,\delta)$ is as above and $\beta$ is a gauge basis of $(M,\alpha)$. 
 
 \item $D_{(\Mbar,\bar{\alpha}),\bar{\beta}}^{\eta,\tau,\psi,\square}(A)$ is the set of tuples $(M,\alpha,\beta,\beta')$ as above, where $\beta'$ is a symplectic basis of $T_{\dd}^*(M,\alpha)$ making $(T_{\dd}^*(M,\alpha),\beta')$ a symplectic framed deformation of $\rhobar|_{\GKinfty}$. 
 
\item $D_{(\Mbar,\bar{\alpha}),\bar{\beta}}^{\eta,\tau,\psi}(A)$ is the set of tuples $(M,\alpha,\beta)$ as above.
\end{itemize}

\begin{proposition}\label{prop:representability}
The map $D_{(\Mbar,\bar{\alpha}),\rhobar}^{\eta,\tau,\psi,\square}\rightarrow D_{\rhobar}^{\eta,\tau,\psi}$ taking $(M,\alpha,\rho,\delta)\mapsto \rho$ is an isomorphism. In particular, $D_{(\Mbar,\bar{\alpha}),\rhobar}^{\eta,\tau,\psi,\square}$ is representable by $R_{\rhobar}^{\eta,\tau,\psi}$. 
\end{proposition}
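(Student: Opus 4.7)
The strategy is to reduce to the non-symplectic $\GL_4$-analogue and then transfer the symplectic structure via Proposition \ref{prop:duality and Tdd}. A straightforward generalization of the representability argument of \cite[\S4]{LLHLM1} from $\GL_3$ to $\GL_4$ shows that the analogous forgetful map $D_{\Mbar,\rhobar}'^{\eta,\tau,\square} \to D_{\rhobar}'^{\eta,\tau}$ (obtained by dropping the symplectic data) is an isomorphism, so that the latter is representable by $R_{\rhobar}'^{\eta,\tau}$. Consequently, any $\rho \in D_{\rhobar}^{\eta,\tau,\psi}(A)$, viewed as a $\GL_4(A)$-valued deformation of $\rhobar$, canonically determines a pair $(M,\delta)$ with $M \in Y^{\eta,\tau}(A)$ refining $\Mbar$ and $\delta: T_{\dd}^*(M) \xrightarrow{\sim} \rho|_{\GKinfty}$ lifting $\bar{\delta}$, unique up to unique isomorphism.

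To upgrade this to a symplectic datum, let $\iota: \rho \xrightarrow{\sim} \rho\dual\otimes \psi$ be the symplectic form on $\rho$. Restricting $\iota$ to $\GKinfty$, transferring through $\delta$, and applying $\can^{-1}$ from Proposition \ref{prop:duality and Tdd}, we obtain an isomorphism of $\GKinfty$-representations $T_{\dd}^*(M) \xrightarrow{\sim} T_{\dd}^*(M\dual)$ reducing mod $\m_A$ to the one induced by $\bar{\alpha}$. Since both $M$ and $M\dual$ lie in $Y^{\eta,\tau}(A)$ (same height bound $h=3$ and same descent type), they are each recovered as the unique Kisin-module lattice of Hodge type $\eta$ in their common associated étale $\varphi$-module -- the relative form of the triviality of the Kisin variety used in Proposition \ref{prop:mod p Galois and Kisin}(i). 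Hence $T_{\dd}^*$ is fully faithful on $Y^{\eta,\tau}(A)$, and so the above $\GKinfty$-equivariant isomorphism arises from a unique $\alpha: M \to M\dual$ in $Y^{\eta,\tau}(A)$. The morphism $\alpha$ is an isomorphism (since its image under $T_{\dd}^*$ is), reduces to $\bar{\alpha}$ mod $\m_A$ by the uniqueness of $\bar{\alpha}$ in Proposition \ref{prop:mod p Galois and Kisin}(ii), and satisfies the alternating condition $(\alpha\dual)^{-1}\circ\alpha=-\id_M$ by translating the corresponding alternating property of $\iota$ through the compatibility clause of Proposition \ref{prop:duality and Tdd}. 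This produces the inverse to the forgetful map, and representability by $R_{\rhobar}^{\eta,\tau,\psi}$ follows immediately.

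The main technical point is the rigidity that lets us descend the $\GKinfty$-linear isomorphism coming from $\iota$ to a morphism $\alpha$ of Kisin modules, rather than merely of étale $\varphi$-modules; this rests on both Kisin lattices $M$ and $M\dual$ having the same Hodge type inside their étale $\varphi$-modules, which pins them down uniquely in the generic setting at hand. Once $\alpha$ has been constructed, verifying the alternating condition and the reduction to $\bar{\alpha}$ are formal bookkeeping using Proposition \ref{prop:duality and Tdd} and the uniqueness clauses of Proposition \ref{prop:mod p Galois and Kisin}.
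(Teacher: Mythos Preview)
Your proposal is correct and takes essentially the same approach as the paper. The paper cites Corollary 3.6 of \cite{LLHLM1} (rather than \S4) for the $\GL_4$ isomorphism $D_{\Mbar,\rhobar}'^{\eta,\tau,\square}\to D_{\rhobar}'^{\eta,\tau}$, and then simply says the construction of $\alpha$ ``follows from an argument similar to the proof of Proposition \ref{prop:mod p Galois and Kisin}''; you have spelled out exactly that argument---full faithfulness of $V_{\dd}^*$ on \'etale $\varphi$-modules combined with the (relative) triviality of the Kisin variety to descend the isomorphism to Kisin lattices.
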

\begin{proof}
By Corollary 3.6 of \cite{LLHLM1} (actually its natural generalization to $\GL_4$) says that since $\tau$ is 4-generic the map $D_{\Mbar,\rhobar}'^{\eta,\tau,\square}\rightarrow D_{\rhobar}'^{\eta,\tau}$ taking $(M,\rho,\delta)\mapsto \rho$ is an isomorphism. It follows that the deformation problem $D_{\Mbar,\rhobar}^{\eta,\tau,\psi,\square}$ of tuples $(M,\rho,\delta)$ as above but with $\rho\in D_{\rhobar}^{\eta,\tau,\psi}(A)$ is isomorphic to $D_{\rhobar}^{\eta,\tau,\psi}$ via the same map $(M,\rho,\delta)\mapsto \rho$. To conclude the proof, we need to show the following statement: given $(M,\rho,\delta)\in D_{\Mbar,\rhobar}^{\eta,\tau,\psi,\square}(A)$, there exists a unique map $\alpha:M\rightarrow M\dual$ making $(M,\alpha)$ an object of $Y^{\eta,\tau,\psi}(A)$ lifting $(\Mbar,\bar{\alpha})$ such that $\delta$ induces a symplectic isomorphism $T_{\dd}^*(M,\alpha)\cong \rho|_{\GKinfty}$. This now follows from an argument similar to the proof of Proposition \ref{prop:mod p Galois and Kisin}.
\end{proof}

We next give an explicit description of the universal symplectic lift of $(\Mbar,\bar{\alpha},\bar{\beta})$ of type $(\eta,\tau)$.

\begin{definition}\label{def:universal A}
Let $\bar{A}$ denote a matrix belonging to $\tw U_{\tw}(\F)$. The deformation problem which assigns to $A\in\CO$ the set of matrices in $\Mat_4(A[v])$ lifting $\bar{A}$ modulo $\m_A$ and satisfying the degree conditions in Definition \ref{def:gauge basis} is representable by a power series ring $R_{\tw,\bar{A}}$. Let $A^\univ\in\Mat_4(R_{\tw,\bar{A}}[v])$ denote the universal lift.

 Let $I^{\leq\eta}\leq R_{\tw,\bar{A}}$ denote the collection of polynomial equations resulting from the following Hodge type $\leq \eta$ conditions on $A^\univ$:
\begin{itemize}
\item all $2\times 2$ minors of $A^\univ$ are divisible by $P(v)$, and 
\item all $3\times 3$ minors are divisible by $P(v)^3$, and 
\item $\det(A^\univ)$ is a unit times $P(v)^6$.
\end{itemize}
Let $I^{\psi}\leq R_{\tw,\bar{A}}$ denote the collection of polyomial equations resulting from imposing the following symplecticity condition on $A^\univ$:
\begin{itemize}
\item $(A^\univ)^tJA^\univ=\xi P(v)^3J$.
\end{itemize}

\end{definition}

\begin{lemma}\label{lem:representability}
 $D_{(\Mbar,\bar{\alpha}),\bar{\beta}}^{\eta,\tau,\psi}$ is representable by an object $R_{(\Mbar,\bar{\alpha}),\bar{\beta}}^{\eta,\tau,\psi}$ of $\COhat$ isomorphic to $((R_{\tw,A_{\bar{\beta}}}/I^{\leq \eta})^{p\textrm{-flat,red}})/I^\psi$.
\end{lemma}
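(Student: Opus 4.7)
The strategy is to bootstrap from the non-symplectic analog of the result (the $\GL_4$-version of Proposition 3.15 in \cite{LLHLM1}) and then add the symplectic constraint as a polynomial equation on top.

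First, I would forget $\alpha$ and consider the auxiliary deformation problem $D_{\Mbar,\bar\beta}^{\eta,\tau}$ parameterizing pairs $(M,\beta)$ where $M\in Y^{\eta,\tau}(A)$ lifts $\Mbar$ and $\beta$ is a gauge basis lifting $\bar\beta$. By Proposition \ref{prop:uniqueness of gauge basis} the gauge basis rigidifies $M$, so this groupoid is discrete and the functor is determined by the matrix $A_\beta$ associated to $\beta$. The shape and degree conditions of Definition \ref{def:gauge basis} on $A_\beta$ cut out precisely the universal matrix $A^\univ$ over $R_{\tw,A_{\bar\beta}}$, and---using the cyclotomic-freeness assumption of Situation \ref{situation} and the natural $\GL_4$-generalization of the arguments of \cite{LLHLM1}---imposing that $M$ have Hodge type exactly $\eta$ corresponds to passing to $I^{\leq\eta}$ followed by the $p$-flat reduced closure. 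Thus $D_{\Mbar,\bar\beta}^{\eta,\tau}$ is representable by $R_{\Mbar,\bar\beta}^{\eta,\tau}:=(R_{\tw,A_{\bar\beta}}/I^{\leq\eta})^{p\text{-flat,red}}$.

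Next, given $(M,\beta)\in D_{\Mbar,\bar\beta}^{\eta,\tau}(A)$, I would show that there exists an $\alpha$ making $(M,\alpha,\beta)\in D_{(\Mbar,\bar\alpha),\bar\beta}^{\eta,\tau,\psi}(A)$ if and only if $A_\beta^t J A_\beta=\xi P(v)^3 J$, and that such an $\alpha$ is then unique (consistent on the special fibre with Lemma \ref{lem:symplectic gauge bases}(ii)). Since $\alpha$ must be $R[[u]]$-linear, the requirement $\alpha(\beta)=\beta\dual J$ already determines $\alpha$, so the task is to translate the remaining abstract constraints on $\alpha$ into matrix conditions on $A_\beta$. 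The $\Delta$-equivariance is automatic because the normalization \eqref{eq:symplecticity condition on tau} forces $(\beta\dual J)_i$ and $\beta_i$ to carry the same $\Delta$-weight; the alternating condition $(\alpha\dual)^{-1}\circ\alpha=-\id$ follows from $J^2=-I$; and compatibility with Frobenius becomes the matrix identity $A_\beta=A_{\beta\dual J}$, which by Lemma \ref{lem:A dual} together with the conjugation $A_\bullet=D^{-1}C_\bullet D$ is equivalent to the vanishing of $I^\psi$. Taking the corresponding quotient of $R_{\Mbar,\bar\beta}^{\eta,\tau}$ yields the formula in the statement.

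The main obstacle is the sign- and weight-tracking in the second step: verifying that $\varphi$-compatibility of $\alpha$ together with $\alpha(\beta)=\beta\dual J$ produces exactly the equation defining $I^\psi$ rather than some twist or sign variant. This will rely on both parts of Lemma \ref{lem:A dual}, the matching of $\Delta$-weights forced by \eqref{eq:symplecticity condition on tau}, and the identity $J^2=-I$. A secondary technical point is the order of operations---taking the $p$-flat reduced closure before quotienting by $I^\psi$---which is justified because the potentially crystalline (Hodge type $\eta$) condition is intrinsic to the underlying Kisin module and independent of the symplectic structure, so it may be imposed first on the underlying $\GL_4$-problem and the symplectic cut-out grafted on afterward.
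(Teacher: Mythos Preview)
Your proposal is correct and follows essentially the same approach as the paper: first represent the non-symplectic problem $D_{\Mbar,\bar\beta}'^{\eta,\tau}$ by $(R_{\tw,A_{\bar\beta}}/I^{\leq\eta})^{p\text{-flat,red}}$ (the paper cites Proposition~3.4.7 of \cite{LLL} directly rather than invoking a $\GL_4$-generalization of \cite{LLHLM1}), then observe that the datum $\alpha$ with $\alpha(\beta)=\beta\dual J$ is uniquely determined and exists as a Kisin-module isomorphism precisely when $A_\beta^t J A_\beta=\xi P(v)^3 J$, i.e.\ on the vanishing locus of $I^\psi$. One small remark: the cyclotomic-freeness hypothesis from Situation~\ref{situation} plays no role at this stage---it is only used later to make the map $\xi$ in diagram~\eqref{eq:deformation diagram} a closed immersion---so you can safely drop that reference from your first step.
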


\begin{proof}
Let $D_{\Mbar,\bar{\beta}}'^{\eta,\tau}$ denote the deformation problem of pairs $(M,\beta)$. Proposition 3.4.7 of \cite{LLL} shows that it is representable by 
\begin{equation*}
R_{\Mbar,\bar{\beta}}'^{\eta,\tau}:=(R_{\tw_j,A_{\bar{\beta}}}/I^{\leq\eta})^{p\textrm{-flat,red}}.
\end{equation*}
The condition for $(M,\beta)\in D_{\Mbar,\bar{\beta}}'^{\eta,\tau}(A)$ to belong to $D_{(\Mbar,\bar{\alpha}),\bar{\beta}}^{\eta,\tau,\psi}(A)$ is for there to exist an isomorphism $\alpha:M\rightarrow M\dual$ such that $\alpha(\beta)=\beta\dual J$. Such an isomorphism is clearly unique and by Lemma \ref{lem:A dual} exists iff $A_\beta^tJA_\beta=\xi P(v)^3J$. The claim follows.
\end{proof}

\begin{remark}\label{rk:formally smooth}
 It follows that the remaining deformation problems defined above are representable. To see this note that the natural map $D_{(\Mbar,\bar{\alpha}),\bar{\beta},\rhobar}^{\eta,\tau,\psi,\square}\rightarrow D_{(\Mbar,\bar{\alpha}),\rhobar}^{\eta,\tau,\psi,\square}$ which forgets the gauge basis is formally smooth of relative dimension 2 by Lemma \ref{lem:symplectic gauge bases}(ii), and the natural map $D_{(\Mbar,\bar{\alpha}),\bar{\beta}}^{\eta,\tau,\psi,\square}\rightarrow D_{(\Mbar,\bar{\alpha}),\bar{\beta}}^{\eta,\tau,\psi}$ which forgets the symplectic basis on $T_{\dd}^*(M,\alpha)$ is formally smooth of relative dimension 10 by elementary considerations. In each case, we denote the representing ring by $R$ with the appropriate decorations. 
\end{remark}

\emph{Monodromy.}   We recall the monodromy condition on $R_{\Mbar,\bar{\beta}}'^{\eta,\tau}$ defined in \cite[\S3.4]{LLL} (cf. \cite[\S5]{LLHLM1} for background), which cuts out the locus of $\Spec(R_{\Mbar,\bar{\beta}}'^{\eta,\tau}[1/p])$ whose closed points correspond to Kisin modules that arise via restriction to $G_{K_\infty}$ of $G_{\Qp}$-representations having Hodge-Tate weights $\leq \eta$ (which are necessarily potentially crystalline of inertial type $\tau$). Let $(M^\univ,\beta^\univ)$ be the universal object over $R_{\Mbar,\bar{\beta}}'^{\eta,\tau}$ and write $A:=A_{\beta^\univ}$. Let
\begin{equation*}
P_N(A):= \left(-ev \frac{d}{dv}A-\left[\Diag(b_0,b_1,b_2,b_3),A \right] \right)P(v)^3 A^{-1} \in \Mat_4(R'^{\eta,\tau}_{\Mbar,\bar{\beta}}[v]).
\end{equation*}
The monodromy condition can be expressed as 
\begin{equation}\label{eq:monodromy condition}
-P_N(A)+M\Big|_{v=-p}=  \frac{d}{dv}(-P_N(A)+M)\Big|_{v=-p}=0
\end{equation}
where $M\in \Mat_4(\cO^{\mathrm{rig}}_{R_{\Mbar,\bar{\beta}}'^{\eta,\tau}})$ is an ``error term'' such that 
\begin{equation*}
M\Big|_{v=-p}, \frac{d}{dv}M\Big|_{v=-p}\in p^{4}\Mat_4 (R_{\Mbar,\bar{\beta}}'^{\eta,\tau}).
\end{equation*}
For this, see the proof of Proposition 3.4.12 in \cite{LLL}. Let $I^{\mathrm{mon}}$ denote the ideal of $R_{\Mbar,\bar{\beta}}'^{\eta,\tau}$ corresponding to the 32 equations in \eqref{eq:monodromy condition}, and let $R_{\Mbar,\bar{\beta}}'^{\eta,\tau,\nabla}$ be the corresponding $p$-flat and reduced quotient. Moreover, define
\begin{equation*}
R_{\Mbar,\bar{\beta}}'^{\eta,\tau,\square,\nabla}:= R_{\Mbar,\bar{\beta}}'^{\eta,\tau,\square}\otimes_{R_{\Mbar,\bar{\beta}}'^{\eta,\tau}}R_{\Mbar,\bar{\beta}}'^{\eta,\tau,\nabla}.
\end{equation*}

This allows us to define two more symplectic deformation problems which take into account the mondromy condition.
\begin{itemize}
\item We define $R_{(\Mbar,\bar{\alpha}),\bar{\beta}}^{\eta,\tau,\psi,\nabla}= R_{(\Mbar,\bar{\alpha}),\bar{\beta}}^{\eta,\tau,\psi}\otimes_{R_{\Mbar,\bar{\beta}}'^{\eta,\tau}} R_{\Mbar,\bar{\beta}}'^{\eta,\tau,\nabla}$.

\item Similarly,  $R_{(\Mbar,\bar{\alpha}),\bar{\beta}}^{\eta,\tau,\psi,\square,\nabla}= R_{(\Mbar,\bar{\alpha}),\bar{\beta}}^{\eta,\tau,\psi,\square}\otimes_{R_{\Mbar,\bar{\beta}}'^{\eta,\tau}} R_{\Mbar,\bar{\beta}}'^{\eta,\tau,\square,\nabla}$.
\end{itemize}

The following diagram relates all the symplectic deformation problems defined above. It is  analogous to \cite[Diagram (5.9)]{LLHLM1}. Hooked arrows denote closed immersions, and f.s. stands for a formally smooth map. The square is cartesian by definition.
\begin{equation}\label{eq:deformation diagram}
\xymatrix{ &  & \Spf R_{(\Mbar,\bar{\alpha}),\bar{\beta}}^{\eta,\tau,\psi,\square,\nabla} \ar@{^(->}[d] \ar[r]^{\textrm{f.s.}}& \Spf R_{(\Mbar,\bar{\alpha}),\bar{\beta}}^{\eta,\tau,\psi,\nabla} \ar@{^(->}[d] \\  & \Spf R_{(\Mbar,\bar{\alpha}),\bar{\beta},\rhobar}^{\eta,\tau,\psi,\square}\ar[d]^{\textrm{f.s. }} \ar@{^(->}[r]_\xi & \Spf R_{(\Mbar,\bar{\alpha}),\bar{\beta}}^{\eta,\tau,\psi,\square} \ar[r]_{\textrm{f.s.}} & \Spf R_{(\Mbar,\bar{\alpha}),\bar{\beta}}^{\eta,\tau,\psi} \\ \Spf R_{\rhobar}^{\eta,\tau,\psi} & \Spf R_{(\Mbar,\bar{\alpha}),\rhobar}^{\eta,\tau,\psi,\square} \ar[l]_\sim^{\ref{prop:representability}} &  & }
\end{equation}

The only map which has yet to be discussed is $\xi$. It is defined to take $(M,\alpha,\beta,\rho,\delta)$ to $(M,\alpha,\beta,\beta')$ where $\beta'$ is the symplectic basis of $T_{\dd}^*(M,\alpha)$ induced by the isomorphism $\delta$. The assumption that $\ad(\rhobar)$ is cyclotomic-free impies that $\xi$ is a closed immersion, by appealing to Proposition 3.12 of \cite{LLHLM1}.

\begin{proposition}\label{prop:xi isomorphism}
In Situation \ref{situation}, the map $\xi$ above induces a closed immersion $\Spf(R_{(\Mbar,\bar{\alpha}),\bar{\beta},\rhobar}^{\eta,\tau,\psi,\square})\hookrightarrow \Spf((R_{(\Mbar,\bar{\alpha}),\bar{\beta}}^{\eta,\tau,\psi,\square,\nabla})^{p\textrm{-flat,red}})$, which is an isomorphism iff every $\Qpbar$-point of the latter corresponds to a potentially crystalline lift of Hodge-Tate weights exactly $\eta$. 
\end{proposition}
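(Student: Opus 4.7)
The approach is to verify that $\xi$ factors through the monodromy-cut-out $p$-flat reduced closed subspace, and then to analyze when the resulting closed immersion is an isomorphism by testing on $\Qpbar$-points.

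For the factorization, suppose $(M,\alpha,\beta,\rho,\delta)$ is an $A$-point of the source, for $A\in \COhat$. The isomorphism $\delta$ exhibits $(M,\alpha,\beta)$ as the framed Kisin module attached to the restriction to $\GKinfty$ of the potentially crystalline $\GK$-representation $\rho$ of Hodge-Tate type $\eta$ and inertial type $\tau$. By construction, the monodromy condition \eqref{eq:monodromy condition} cuts out precisely the locus of framed Kisin modules which arise via such a restriction from $\GK$-representations with Hodge-Tate weights $\leq \eta$ (see the proof of \cite[Prop.~3.4.12]{LLL}, cf.~\cite[\S 5]{LLHLM1}). Hence $\xi$ factors through $\Spf R_{(\Mbar,\bar{\alpha}),\bar{\beta}}^{\eta,\tau,\psi,\square,\nabla}$. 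Moreover, the source ring is $\cO$-flat and reduced: by Proposition \ref{prop:representability} and Remark \ref{rk:formally smooth} it is formally smooth over $R_{\rhobar}^{\eta,\tau,\psi}$, which is $\cO$-flat by construction and whose generic fibre is formally smooth, hence reduced, by Lemma \ref{lem:dimension of p crys def ring}. Therefore the surjection representing $\xi$ factors further through the $p$-flat reduced quotient, producing the desired closed immersion.

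For the isomorphism criterion, both source and target $(R_{(\Mbar,\bar{\alpha}),\bar{\beta}}^{\eta,\tau,\psi,\square,\nabla})^{p\textrm{-flat,red}}$ are $\cO$-flat, reduced, complete local noetherian $\cO$-algebras, so the closed immersion of their formal spectra is an isomorphism iff it is an isomorphism after inverting $p$, which can in turn be tested on $\Qpbar$-points of the rigid generic fibres. By Proposition \ref{prop:representability}, $\Qpbar$-points of the source correspond to tuples $(M,\alpha,\beta,\rho,\delta)$ with $\rho$ a symplectic potentially crystalline lift of $\rhobar$ of type $(\eta,\tau)$, in particular with Hodge-Tate weights \emph{exactly} $\eta$. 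By the discussion of the monodromy locus, $\Qpbar$-points of the target correspond to analogous tuples but where $\rho$ is only required to have Hodge-Tate weights $\leq \eta$, the symplectic structure on $\rho$ being uniquely reconstructed from $\alpha$ via the characteristic zero analogue of Proposition \ref{prop:mod p Galois and Kisin}(ii). These two sets of $\Qpbar$-points coincide precisely when every $\Qpbar$-point of the target realizes the Hodge-Tate weights exactly $\eta$, which is the stated criterion.

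The only conceptual subtlety I anticipate lies in the characteristic zero symplectic enhancement argument used to describe $\Qpbar$-points of the target: one must check that a given symplectic structure $\alpha$ on the Kisin module uniquely promotes to a symplectic structure on the full $\GK$-representation recovered from monodromy. This is a direct analogue of the mod $p$ argument of Proposition \ref{prop:mod p Galois and Kisin}(ii), using full faithfulness of the \'etale $\varphi$-module functor and triviality of the Kisin variety, and should introduce no new difficulty.
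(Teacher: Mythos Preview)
Your proposal is essentially correct and follows the same approach as the paper. One small point: the monodromy condition \eqref{eq:monodromy condition} only has the modular interpretation you invoke (Kisin modules arising from $\GK$-representations) on $\Qpbar$-points of the generic fibre, not on arbitrary $A$-points; so your deduction ``Hence $\xi$ factors through $\Spf R_{(\Mbar,\bar{\alpha}),\bar{\beta}}^{\eta,\tau,\psi,\square,\nabla}$'' is not yet justified at that stage. The fix is to invoke $p$-flatness and reducedness of the source \emph{first} (as you do in the next sentence) and then check factorization through the $\nabla$-ideal on $\Qpbar$-points---this is exactly how the paper proceeds, citing \cite[Theorem~5.12]{LLHLM1}. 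For the symplecticity of the extended $\GK$-representation in the isomorphism criterion, the paper appeals to the uniqueness of the extension from $\GKinfty$ to $\GK$ (\cite[Corollary~3.6]{LLHLM1}) rather than triviality of the Kisin variety directly, but your argument via $\alpha$ and full faithfulness amounts to the same thing.
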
 
\begin{proof}
Since $R_{\rhobar}^{\eta,\tau,\psi}$ is $p$-flat and reduced (by Lemma \ref{lem:dimension of p crys def ring}), it follows that $\xi$ factors through a map $\Spf(R_{(\Mbar,\bar{\alpha}),\bar{\beta},\rhobar}^{\eta,\tau,\psi,\square})\hookrightarrow \Spf((R_{(\Mbar,\bar{\alpha}),\bar{\beta}}^{\eta,\tau,\psi,\square, \nabla})^{p\textrm{-flat,red}})$. Since both these rings are $p$-flat and reduced, it suffices to show factorization at the level of $\Qpbar$-points. But this follows from the argument in the proof of Theorem 5.12 of \cite{LLHLM1}. 

To see that the induced map is an isomorphism, by the same argument it suffices to show that for  every finite extension $\cO'/\cO$ and $\cO'$-point $(M,\alpha,\beta,\beta')$ of $\Spf((R_{(\Mbar,\bar{\alpha}),\bar{\beta}}^{\eta,\tau,\psi,\square,\nabla})$, $\rho_1:=T_{\dd}^*(M,\alpha)[1/p]:G_{K_\infty}\rightarrow \GSp_4(E')$ extends to a symplectic potentially crystalline representation of $G_{\Qp}$ of type $(\eta,\tau)$. By the condition on Hodge--Tate weights, it only remains to show the symplecticity. This follows rom the isomorphism between $\rho_1$ and $\rho_1\dual \otimes \psi$ and the uniqueness of the extension (\cite[Corollary 3.6]{LLHLM1}). 
\end{proof}

\begin{definition}
We say that $(\rhobar,\tau)$ is \emph{good} if the condition on the Hodge-Tate weights in the previous proposition holds.
\end{definition}
In practice, goodness can be read off the shape of $(\rhobar,\tau)$ after applying the Hodge type $\leq \eta$ conditions to the universal Kisin module. In this situation, Proposition \ref{prop:xi isomorphism} and \eqref{eq:deformation diagram} furnish our desired description of $R_{\rhobar}^{\eta,\tau,\psi}$.

\begin{corollary}\label{cor:explicit deformation ring}
In Situation \ref{situation}, if $(\rhobar,\tau)$ is good then there is an isomorphism
\begin{equation*}
R_{\rhobar}^{\eta,\tau,\psi}[[x_1,x_2]]\cong (R_{\tw,A_{\bar{\beta}}}/(I^{\leq\eta}+I^\psi+I^{ \textrm{mon}}))^{p\textrm{-flat,red}}[[y_1,\ldots,y_{10}]].\qed
\end{equation*}
\end{corollary}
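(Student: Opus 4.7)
The proof is essentially a diagram chase on \eqref{eq:deformation diagram}, combining the identifications assembled in the preceding results. The plan is to start from the top-right of the diagram and work down and to the left.

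First, apply Proposition \ref{prop:xi isomorphism}: since $(\rhobar,\tau)$ is assumed to be good, the map $\xi$ induces an isomorphism
\begin{equation*}
\Spf R_{(\Mbar,\bar{\alpha}),\bar{\beta},\rhobar}^{\eta,\tau,\psi,\square} \isoto \Spf\bigl((R_{(\Mbar,\bar{\alpha}),\bar{\beta}}^{\eta,\tau,\psi,\square,\nabla})^{p\textrm{-flat,red}}\bigr).
\end{equation*}
On the left-hand branch of the diagram, Proposition \ref{prop:representability} identifies $R_{(\Mbar,\bar{\alpha}),\rhobar}^{\eta,\tau,\psi,\square}$ with $R_{\rhobar}^{\eta,\tau,\psi}$, and the forgetful map $D_{(\Mbar,\bar{\alpha}),\bar{\beta},\rhobar}^{\eta,\tau,\psi,\square}\to D_{(\Mbar,\bar{\alpha}),\rhobar}^{\eta,\tau,\psi,\square}$ is formally smooth of relative dimension $2=\dim T'$ by Remark \ref{rk:formally smooth} (using Lemma \ref{lem:symplectic gauge bases}(ii)). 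Choosing a section yields $R_{(\Mbar,\bar{\alpha}),\bar{\beta},\rhobar}^{\eta,\tau,\psi,\square}\cong R_{\rhobar}^{\eta,\tau,\psi}[[x_1,x_2]]$.

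Next, move along the top row. The map $R_{(\Mbar,\bar{\alpha}),\bar{\beta}}^{\eta,\tau,\psi}\to R_{(\Mbar,\bar{\alpha}),\bar{\beta}}^{\eta,\tau,\psi,\square}$ is formally smooth of relative dimension $10$ by Remark \ref{rk:formally smooth}, and this smoothness is preserved after base change along $R_{\Mbar,\bar{\beta}}'^{\eta,\tau}\to R_{\Mbar,\bar{\beta}}'^{\eta,\tau,\nabla}$, so
\begin{equation*}
R_{(\Mbar,\bar{\alpha}),\bar{\beta}}^{\eta,\tau,\psi,\square,\nabla}\cong R_{(\Mbar,\bar{\alpha}),\bar{\beta}}^{\eta,\tau,\psi,\nabla}[[y_1,\ldots,y_{10}]].
\end{equation*}
Since adjoining formal variables commutes with taking the $p$-flat reduced quotient, the right-hand side of the displayed isomorphism from Proposition \ref{prop:xi isomorphism} becomes $(R_{(\Mbar,\bar{\alpha}),\bar{\beta}}^{\eta,\tau,\psi,\nabla})^{p\textrm{-flat,red}}[[y_1,\ldots,y_{10}]]$.

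It remains to identify $(R_{(\Mbar,\bar{\alpha}),\bar{\beta}}^{\eta,\tau,\psi,\nabla})^{p\textrm{-flat,red}}$ with $(R_{\tw,A_{\bar{\beta}}}/(I^{\leq\eta}+I^\psi+I^{\mathrm{mon}}))^{p\textrm{-flat,red}}$. By Lemma \ref{lem:representability}, $R_{(\Mbar,\bar{\alpha}),\bar{\beta}}^{\eta,\tau,\psi}$ is obtained from $R_{\tw,A_{\bar{\beta}}}$ by imposing $I^{\leq\eta}$ (and $p$-flat reducing) and then $I^\psi$; tensoring over $R_{\Mbar,\bar{\beta}}'^{\eta,\tau}$ with the monodromy quotient $R_{\Mbar,\bar{\beta}}'^{\eta,\tau,\nabla}$ further imposes the equations in $I^{\mathrm{mon}}$. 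After a final $p$-flat reduced quotient, the order in which the three ideals are quotiented out is immaterial, giving the claimed description. Combining the isomorphisms then yields the stated formula.

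There is no substantive obstacle here beyond bookkeeping; every nontrivial input (goodness, representability of the symplectic gauged deformation problem, formal smoothness of the appropriate forgetful maps, and the cyclotomic-free hypothesis entering via Proposition \ref{prop:xi isomorphism}) has already been established. The only point requiring a little care is the compatibility of $p$-flat reduction with the successive quotients and with adjoining the formal variables $y_1,\ldots,y_{10}$, which is standard.
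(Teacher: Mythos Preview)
Your proof is correct and matches the paper's approach exactly: the paper marks this corollary with a \qed, indicating it follows immediately from Proposition \ref{prop:xi isomorphism} and diagram \eqref{eq:deformation diagram}, and your argument simply unpacks this diagram chase in detail using the formal smoothness dimensions from Remark \ref{rk:formally smooth} and the explicit description in Lemma \ref{lem:representability}.
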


 For the remainder of the section we discuss universal Frobenius eigenvalues. By definition, a closed point $x\in\Spec(R_{\rhobar}^{\eta,\tau,\psi})$ with residue field $E_x$ gives rise to a representation $\rho_x:G_{\Qp}\rightarrow \GSp_4(E_x)$ such that $\WD(\rho_x)|_{I_{\Qp}}\cong \tau$. Write $r_x:=\WD(\rho_x)$. For $0\leq i\leq 3$, let $t_{x,i}\in E_x$ be the eigenvalue of $r_x(\Frob)$ on the $I_{\Qp}=\omega^{b_i}$ eigenspace. By symplecticity we have $t_{x,0}t_{x,3}=t_{x,1}t_{x,2}=\WD(\psi)(\Frob)=\xi p^3$. 
\begin{definition}
For $0\leq i\leq 3$ we define the \emph{universal Frobenius eigenvalue} $\Lambda_i\in R_{(\Mbar,\bar{\alpha}),\bar{\beta}}^{\eta,\tau,\psi}$ to be the image of $(A^\univ)_{ii}\mod v\in R_{\tw,A_{\bar{\beta}}}$ under the quotient of Lemma \ref{lem:representability}. 
\end{definition}

The next proposition explains this terminology. 
\begin{proposition}\label{prop:universal frobenius eigenvalues}
For $0\leq i\leq 3$, let $\theta_i\in R_{(\Mbar,\bar{\alpha}),\bar{\beta},\rhobar}^{\eta,\tau,\psi,\square}$ denote the image of $\Lambda_i$ under the composite morphism in \eqref{eq:deformation diagram}. Then
\begin{itemize}
\item[(i)] $\theta_i$ lies in the subalgebra $R_{(\Mbar,\bar{\alpha}),\rhobar}^{\eta,\tau,\psi,\square}$. 
\item[(ii)] For any closed point $x\in \Spec R_{\rhobar}^{\eta,\tau,\psi}[1/p]$, the image of $\theta_i$ in $E_x$ is equal to $t_{x,i}$. 
\end{itemize}
\end{proposition}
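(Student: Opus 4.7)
The plan is to verify the two parts separately: part (i) is a descent statement reflecting invariance of $\Lambda_i$ under the $T'$-action on gauge bases, while part (ii) reduces to a standard computation for $\GL_4$ which identifies diagonal entries of $A_\beta \bmod v$ with Frobenius eigenvalues on Weil--Deligne representations.

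For (i), by Remark \ref{rk:formally smooth} the forgetful morphism $D_{(\Mbar,\bar{\alpha}),\bar{\beta},\rhobar}^{\eta,\tau,\psi,\square} \to D_{(\Mbar,\bar{\alpha}),\rhobar}^{\eta,\tau,\psi,\square}$ is formally smooth with fibres being torsors under $\ker(T'(A) \to T'(\F))$, arising from Lemma \ref{lem:symplectic gauge bases}(ii). Thus it suffices to check that $\Lambda_i$ is invariant under the action of $T'(A)$ changing the gauge basis. Concretely, if $\beta' = \beta c^{-1}$ with $c \in T'(A) \subset T_4(A)$, then applying $\varphi_M$ to both sides and using that $\varphi$ fixes the scalar entries of $c$, one computes $C_{\beta'} = c C_\beta c^{-1}$, and since $c$ commutes with the diagonal matrix $D$, also $A_{\beta'} = c A_\beta c^{-1}$. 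The $ii$-th diagonal entry of $A_{\beta'}$ therefore coincides with that of $A_\beta$, and so $\Lambda_i$ descends to $R_{(\Mbar,\bar{\alpha}),\rhobar}^{\eta,\tau,\psi,\square}$ as required.

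For (ii), observe that any closed point $x \in \Spec R_{\rhobar}^{\eta,\tau,\psi}[1/p]$ also defines a closed point of $\Spec R'^{\eta,\tau}_{\rhobar}[1/p]$, and via Propositions \ref{prop:mod p Galois and Kisin} and \ref{prop:representability} gives rise to a Kisin module $M_x \in Y^{\eta,\tau}(E_x)$ equipped with a gauge basis $\beta_x$ specializing $\bar{\beta}$. The claim then reduces to the following purely $\GL_4$ statement: for such a point, the diagonal entries of $A_{\beta_x} \bmod v$ are the Frobenius eigenvalues of $\WD(\rho_x)$ on the corresponding $I_{\Qp}$-isotypic components. This is a standard computation passing through Breuil--Kisin and Fontaine modules, carried out in the parallel setting of \cite{LLHLM1} (in particular, the reasoning surrounding their Proposition 2.13 and the subsequent comparison with Weil--Deligne representations); the proof only uses the underlying $\GL_4$-valued representation and so applies in our setting without modification. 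This identifies the specialization of $\theta_i$ at $x$ with $t_{x,i}$.

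The main technical point, such as it is, lies in (ii): one must carefully align the normalizations chosen in the construction of $T_{\dd}^*$, the embedding $\tW_4^\vee \hookrightarrow \GL_4(R((v)))$, and the conventions governing $\WD$ (including the contravariance of $T_{\dd}^*$ and the sign of the Hodge--Tate weight on $\epsilon$ fixed in \S\ref{sec:notation}), so that the $\omega^{b_i}$-eigenspace on the Weil--Deligne side matches the $i$-th diagonal entry of $A^\univ \bmod v$. Once this bookkeeping is correct, the symplecticity relation $t_{x,0}t_{x,3} = t_{x,1}t_{x,2} = \xi p^3$ is then consistent with the defining equation $(A^\univ)^t J A^\univ = \xi P(v)^3 J$ of $I^\psi$ evaluated at $v=0$.
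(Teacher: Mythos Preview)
Your proposal is correct and takes essentially the same approach as the paper. For (i) you give a slightly more explicit version of the paper's one-line observation that $\theta_i$ is independent of the gauge basis; for (ii) the paper makes the comparison with $\WD(\rho_x)$ explicit via the canonical isomorphism $(M_x/uM_x)[1/p]\cong D_{\st}^*(T_{\dd}^*(M_x)[1/p]|_{G_L})$ from \cite[\S2.5(1)]{Kis08}, whereas you defer to the analogous computation in \cite{LLHLM1}, but these are the same argument.
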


\begin{proof}
The map $R_{(\Mbar,\bar{\alpha}),\rhobar}^{\eta,\tau,\psi,\square}\rightarrow R_{(\Mbar,\bar{\alpha}),\bar{\beta},\rhobar}^{\eta,\tau,\psi,\square}$ which forgets the gauge basis $\beta$ is formally smooth (Remark \ref{rk:formally smooth}). As $\theta_i$ is clearly independent of the choice of $\beta$, it lies in the subalgebra $R_{(\Mbar,\bar{\alpha}),\rhobar}^{\eta,\tau,\psi,\square}$. 

Observe that if $x\in \Spec(R_{(\Mbar,\bar{\alpha}),\bar{\beta},\rhobar}^{\eta,\tau,\psi,\square}[1/p])$ is a closed point with residue field $E_x$ then there is a canonical $\varphi$- and $\Delta$-equivariant isomorphism of $E_x$-modules (cf. \cite[\S2.5(1)]{Kis08})
\begin{equation*}
(M_x/uM_x)[1/p]\cong D_{\st}^*\left(T_{\dd}^*(M_x)[1/p]\big|_{G_{L}}\right).
\end{equation*}
It follows that $D_\st(\rho_x)$ has an eigenbasis $\mathfrak{e}=(e_0,e_1,e_2,e_3)$ such that $\Delta$ acts on $e_i$ by $\omega^{b_i}$ and $\varphi(e_i)=((A_{\beta_x})_{ii}\mod v)^{-1}e_i$. The second claim now follows from the definition of $\WD(\rho_x)$. 
\end{proof}

\begin{remark}\label{rk:units after 1/p}
Note that the symplecticity condition says $\theta_0\theta_3=\theta_1\theta_2=\xi p^3$. In particular, the $\theta_i$ are units in $R_{\rhobar}^{\eta,\tau,\psi}[1/p]$. 
\end{remark}

\subsection{Hecke algebras and deformation rings}\label{sec:LLC}
In this section $G$ denotes $\GSp_4(\Qp)$ and $K=\GSp_4(\Zp)$. 
Let $\rhobar:G_{\Qp}\rightarrow \GSp_4(\F)$ be a continuous representation, $\psi=\epsilon^3\tomega^{b-3}\nr_\xi:G_{\Qp}\rightarrow \cO\x$ a fixed similitude character, and $\tau:I_{\Qp}\rightarrow \GSp_4(\cO)$ a generic tame principal series inertial type. In this section we reformulate Proposition \ref{prop:universal frobenius eigenvalues} as the existence of a morphism of $\cO$-algebras $\Theta_{\rhobar,\tau}:\cH(\sigma(\tau))\rightarrow R_{\rhobar}^{\eta,\tau,\psi}[1/p]$ which interpolates the local Langlands correspondence for $G$ in the Bernstein block corresponding to $\tau$. Here $\cH(\sigma(\tau))$ is a Hecke algebra over $E$ which is defined below. This morphism will be used to express classical local-global compatibility in \S\ref{subsec:patched module axioms}. 

\begin{remark}
In \cite[\S3]{CEG+}, given \emph{any} inertial type $\tau$ for $\GL_n(F)$, the authors show the existence of a ``$K$-type'' $\sigma(\tau)$ for the Bernstein block corresponding to $\tau$ which detects irreducible smooth representations $\pi$ whose corresponding $L$-parameter has trivial monodromy operator. In \S4 they show the existence of a morphism (there denoted $\eta$) analogous to $\Theta_{\rhobar,\tau}$. The case we are concerned with here is more concrete, and in fact we make it completely explicit. 
\end{remark}

\begin{definition}
We say that $\mu=(x,y;z)\in X^*(T)$ is \emph{regular} if its $W$-orbit has size $|W|=8$. 
We say that it is \emph{irreducible} if none of $x,y,x\pm y$ is equal to $\pm1$.
\end{definition}

In this section let $\mu=(x,y;z)\in X^*(T)$ be a regular and irreducible weight. Let $\tau=\tau(\id,\bar{\mu}):I_{\Qp}\rightarrow \GSp_4(\cO)$; this is a lowest alcove presentation. If $\chi=\chi_x\times\chi_y\rtimes\chi_z:T(\Qp)\rightarrow \Qpbar\x$ is a smooth character such that $\chi_i|_{\Zp\x}=\tomega^i\circ\Art_{\Qp}$ for $i\in\{x,y\}$ and $\chi_z|_{\Zp\x}=\tomega^{(z-x-y)/2}\circ\Art_{\Qp}$ the irreducibility of $\mu$ guarantees that the principal series representation $\pi=\Ind_{B(\Qp)}^{\GSp_4(\Qp)
}(\chi)$ is irreducible (see \cite[Proposition 2.4.6]{BCGP}). These $\pi$ form the set of irreducible representations belonging to a Bernstein block which we denote $\Omega_\tau$.

\begin{lemma}\label{lem:Lpacket}
For any principal series representation $\pi=\Ind_{B(\Qp)}^{\GSp_4(\Qp)}(\chi)$ as above,  
\begin{itemize}
\item[(i)] $\recGTp(\pi)=(\rho,0)$ where 
\begin{equation*}
\rho=\begin{pmatrix}\chi_x\chi_y\chi_z|\cdot|^{-3} &&&\\ & \chi_x\chi_z|\cdot|^{-2} &&\\ && \chi_y\chi_z|\cdot|^{-1} & \\ &&&\chi_z \end{pmatrix}\circ \Art_{\Qp}^{-1}.
\end{equation*} 
\item[(ii)] The inverse image of $(\rho,0)$ under $\recGTp$ is equal to $\{\pi\}$. 
\end{itemize}
\end{lemma}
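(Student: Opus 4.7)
For (i), the strategy is to reduce the computation to the LLC for $\GL_4$ via the standard embedding $\std:\GSp_4\to\GL_4$, exploiting the compatibility of Gan--Takeda's correspondence with parabolic induction from $B$. For the \emph{normalized} induction of $\chi$ from $B$, the $L$-parameter composed with $\std$ is (by the Langlands classification for $\GL_4$ applied after induction in stages) the direct sum of the four characters through which $\chi$ acts on the coordinates of the embedding $T\hookrightarrow T_4$, $\diag(t_1,t_2,\nu/t_2,\nu/t_1)$, transported through $\Art_{\Qp}^{-1}$. Passing from normalized to unnormalized induction multiplies by $\delta_B^{-1/2}$, and an elementary computation with the positive roots $\{\alpha_0,\alpha_1,\alpha_0+\alpha_1,2\alpha_0+\alpha_1\}$ of $\GSp_4$ yields $\delta_B(\diag(t_1,t_2,\nu/t_2,\nu/t_1))=|t_1|^4|t_2|^2|\nu|^{-3}$. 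Finally, the twist by $|\nusim|^{-3/2}$ in the definition of $\recGTp$ shifts each of the four diagonal entries of the parameter by a uniform power of $|\cdot|$. Combining these three shifts entry by entry recovers the four characters stated in the lemma. The vanishing of the monodromy operator is automatic because $\pi$, being an irreducible fully induced principal series under the regularity and irreducibility hypotheses on $\mu$, has an $L$-parameter that factors through a torus.

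For (ii), let $\pi'$ be any irreducible smooth representation of $\GSp_4(\Qp)$ with $\recGTp(\pi')=(\rho,0)$. Since $\rho|_{I_{\Qp}}\cong\tau$, the representation $\pi'$ lies in the Bernstein block $\Omega_\tau$. The irreducibility condition on $\mu$ (no value in $\{x,y,x\pm y\}$ equals $\pm 1$) translates into the statement that no ratio of two diagonal characters of $\rho$ equals $|\cdot|^{\pm 1}$. This precludes $\rho$ from being the parameter of any non-principal-series constituent of $\Omega_\tau$, in particular any Saito--Kurokawa or generalized Steinberg type representation, whose parameters would necessarily contain a $|\cdot|^{1/2}\oplus|\cdot|^{-1/2}$-pattern. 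Hence $\pi'=\Ind_B(\chi')$ for some character $\chi'$, and part (i) forces $\chi'$ to lie in the $W$-orbit of $\chi$. The regularity of $\mu$ together with standard intertwining theory (cf.\ \cite[Proposition 2.4.6]{BCGP}) then gives $\Ind_B(w\cdot\chi)\cong\Ind_B(\chi)$ for all $w\in W$, so $\pi'\cong\pi$.

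I anticipate the principal obstacle to be the bookkeeping of normalizations in (i): three separate conventions (normalized vs.\ unnormalized induction, the $|\nusim|^{-3/2}$ twist in $\recGTp$, and $\Art_{\Qp}^{-1}$) combine to produce the various shifts by powers of $|\cdot|$ appearing in the four diagonal entries of $\rho$. A reassuring consistency check is that the two pairs of entries of $\rho$ symmetric about the antidiagonal both multiply to $\chi_x\chi_y\chi_z^2|\cdot|^{-3}$, matching the central character of $\pi\otimes|\nusim|^{-3/2}$ evaluated on a central element $\diag(t,t,t,t)$ (where $\nusim=t^2$). For (ii), the conceptually delicate point beyond (i) is the classification of $\Omega_\tau$, which under the irreducibility of $\mu$ reduces to a single irreducible principal series up to isomorphism; the rest is then Weyl-group combinatorics.
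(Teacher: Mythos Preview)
Your proposal is correct and considerably more detailed than the paper's own proof, which consists of the single sentence ``See the references in \cite[Proposition 2.4.6]{BCGP}.'' What you have written is essentially an unpacking of what lies behind that citation: compatibility of $\recGT$ with parabolic induction plus careful normalization bookkeeping for (i), and a classification argument inside the Bernstein block for (ii).

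One small comment on (ii): your route through the structure of $\Omega_\tau$ is valid, but a slightly more direct argument is available. Since the four diagonal characters of $\rho$ are pairwise distinct (their restrictions to $I_{\Qp}$ are already distinct by the regularity of $\mu$), the centralizer of the image of $\rho$ in $\GSp_4(\C)$ is exactly the dual torus, which is connected. Hence the component group attached to the parameter is trivial, and Gan--Takeda's parametrization gives an $L$-packet of size one. This bypasses the need to separately rule out Saito--Kurokawa or Steinberg-type constituents. Your translation of the irreducibility condition on $\mu$ into a statement about ratios of the diagonal characters is morally right but slightly imprecise as stated: what the condition actually guarantees (together with regularity) is that the relevant character ratios are \emph{ramified}, which a fortiori prevents them from equalling $|\cdot|^{\pm 1}$.
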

\begin{proof}
See the references in \cite[Proposition 2.4.6]{BCGP}. 
\end{proof}

If $R$ is any $\cO$-algebra we write $\tilde{\mu}_R$ for the character 
\begin{equation*}
\Iw\onto B(\Fp)\onto T(\Fp) \xrightarrow{\mu}\Fp\x\rightarrow R\x
\end{equation*}
where the final map is the Teichm\"uller lift. We define 
\begin{equation*}
\sigma(\tau):=\Ind_{\Iw}^K (\tilde{\mu}_E).
\end{equation*}
This is an irreducible $E$-representation of $K$ because $\mu$ is regular. 
The next lemma expresses the fact that $\sigma(\tau)$ is a $K$-type for $\Omega_\tau$.  
\begin{lemma}\label{lem:K-type}
 If $\pi$ is an irreducible smooth $\Qpbar$-representation of $G$ then $\Hom_K(\sigma(\tau),\pi)\neq 0$ if and only if $\pi\in\Omega_\tau$. In this case $\Hom_K(\sigma(\tau),\pi)$ is 1-dimensional.
\end{lemma}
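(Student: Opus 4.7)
The plan is to apply Frobenius reciprocity and a Mackey decomposition along the double coset space $B(\Zp)\backslash K/\Iw$, where the regularity of $\mu$ collapses the sum to a single term.

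By Frobenius reciprocity,
\[
\Hom_K(\sigma(\tau),\pi) \;=\; \Hom_\Iw(\tilde\mu_E,\pi|_\Iw) \;=\; \pi^{\Iw,\tilde\mu_E},
\]
so it suffices to compute this $(\Iw,\tilde\mu_E)$-isotypic component. For the ``if'' direction, take $\pi = \Ind_B^G(\chi) \in \Omega_\tau$ with $\chi|_{T(\Zp)} = \tilde\mu$. By the Iwasawa decomposition $G = B(\Qp)K$ with $B(\Qp)\cap K = B(\Zp)$, one has $\pi|_K \cong \Ind_{B(\Zp)}^K(\chi|_{B(\Zp)})$. Since $B(\Zp) \subseteq \Iw$, reduction modulo $p$ identifies $B(\Zp)\backslash K/\Iw$ with $B(\Fp)\backslash \GSp_4(\Fp)/B(\Fp) \cong W$ by the Bruhat decomposition. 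Mackey's formula then yields
\[
\pi^{\Iw,\tilde\mu_E} \;=\; \bigoplus_{w\in W} \Hom_{wB(\Zp)w^{-1}\cap \Iw}\bigl(\tilde\mu_E,\,{}^w\chi\bigr).
\]
Restricting to $T(\Zp)$ shows each summand vanishes unless $\tilde\mu = w\cdot\tilde\mu$, and the regularity assumption on $\mu$ forces $w = 1$. In the remaining $w=1$ summand, both characters are trivial on $U(\Zp)$ and agree with $\tilde\mu$ on $T(\Zp)$, so the term is one-dimensional.

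For the ``only if'' direction, suppose $\pi^{\Iw,\tilde\mu_E}\neq 0$. I would invoke the standard isomorphism
\[
\pi^{\Iw,\tilde\mu_E} \;\cong\; (\pi_U)^{T(\Zp),\tilde\mu},
\]
a $(\tilde\mu$-equivariant) generalization of the Borel--Casselman comparison between Iwahori invariants and Jacquet modules. This forces $\pi_U$ to have a $T(\Zp)$-eigenvector of eigenvalue $\tilde\mu$. Second adjointness then gives an embedding $\pi\hookrightarrow \Ind_B^G(\chi)$ for some smooth character $\chi:T(\Qp)\to \Qpbar^\times$ with $\chi|_{T(\Zp)} = \tilde\mu$. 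Since $\mu$ is regular and irreducible, this principal series is irreducible (as cited from \cite[Proposition 2.4.6]{BCGP}), so $\pi \cong \Ind_B^G(\chi) \in \Omega_\tau$.

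The main delicate point is the forward direction: the Mackey calculation alone only shows how to compute the eigenspace once $\pi$ is known to be a principal series, so one needs the (not entirely formal but standard) generalized Borel--Casselman identification to certify that irreducible $\pi$ carrying such an eigenvector must lie in the prescribed Bernstein block. Everything else is bookkeeping controlled by regularity of $\mu$.
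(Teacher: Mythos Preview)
Your dimension-one computation via Frobenius reciprocity, Iwasawa decomposition, and Mackey is exactly what the paper does for the second claim. For the first claim (the ``if and only if''), the paper simply cites Roche's Theorem 7.7, whereas you unwind things via the Borel--Casselman comparison $\pi^{\Iw,\tilde\mu}\cong(\pi_U)^{T(\Zp),\tilde\mu}$; this is a legitimate and more self-contained route, and the comparison you invoke is indeed standard (it follows from Casselman's isomorphism $\pi^{\Iw_1}\xrightarrow{\sim}(\pi_U)^{T(\Zp)_1}$, which is $T(\Fp)$-equivariant).

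Two small points to tighten. First, ``second adjointness'' is a misnomer here: to obtain an embedding $\pi\hookrightarrow\Ind_B^G(\chi)$ you need $\Hom_T(\pi_U,\chi)\neq 0$, i.e.\ a character \emph{quotient} of $\pi_U$, which is ordinary (first) Frobenius reciprocity. Second adjointness would instead produce a map $\Ind_B^G(\chi)\to\pi$. Second, you pass from ``$\pi_U$ has a $T(\Zp)$-eigenvector of eigenvalue $\tilde\mu$'' to the existence of such a $\chi$ as a quotient; this needs a word: since $\pi_U$ is finite-dimensional and $T(\Zp)$ is compact (characteristic zero), the $\tilde\mu$-eigenspace is a $T(\Qp)$-stable direct summand, and any character of the lattice $T(\Qp)/T(\Zp)$ occurring there yields the desired quotient $\chi$. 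With these clarifications your argument is complete.
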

\begin{proof}
The first claim follows from Theorem 7.7 of \cite{Roche}. The second claim follows from Frobenius reciprocity and the Iwasawa decomposition $G(\Qp)=B(\Qp)K$.
\end{proof}

We define the associated Hecke algebra 
\begin{equation*}
\cH(\sigma(\tau)):=\End_{E[G]}(\ind_K^G(\sigma(\tau))).
\end{equation*}
We will describe the structure of $\cH(\sigma(\tau))$ in terms of the integral pro-$p$ Iwahori-Hecke algebra 
\begin{equation*}
\cH_1:=\End_{\cO[G]}(\ind_{\Iw_1}^G(1_\cO)).
\end{equation*}
For $\tw\in \tW^{\vee(1)}$ let $T_{\tw}=[\Iw_1 \tw\Iw_1]$.  The set $(T_{\tw})_{\tw\in\tW^{\vee(1)}}$ forms an $\cO$-basis of $\cH_1$ (the Iwahori-Matsumoto basis) satisfying the braid and quadratic relations (cf. \cite{Vig}). If $t\in T(\Zp)/T(\Zp)_1$ then we have $T_tT_{\tw}=T_{t\tw}$ and $T_{\tw}T_t=T_{\tw t}$, so for each $\theta:T(\Fp)\rightarrow \cO\x$ there is an idempotent $\epsilon_\theta=\frac{1}{|T(\Fp)|}\sum_{t\in T(\Zp)/T(\Zp)_1}\theta(t)^{-1}T_t\in \cH_1$ such that $
\cH_1^\theta:=\epsilon_\theta \cH_1\epsilon_\theta$ identifies with $\End_{\cO[G]}(\ind_{\Iw}^G(\theta))$.

Let $X_*(T)^+ \subset X_*(T)$ be the submonoid of dominant cocharacters.  There is a homomorphism
\begin{align*}
    E[X_*(T)] &\xrightarrow{} \cH_1[1/p]\\
    \lambda=xy^\mo & \mapsto \delta_B^{1/2}(x(p)) T_{x(p)}  (\delta_B^{1/2}(x(p)) T_{y(p)} )^\mo
\end{align*}
for $\lambda \in X_*(T)$, $x,y\in X_*(T)^+$.

Let $\pi$ be as in Lemma \ref{lem:Lpacket}. By \cite[Proposition 2.4.3, 2.4.4]{BCGP}, there is an isomorphism of $E[X_*(T)]$-modules
\begin{equation}\label{eqn:Iw1-invariant}
    \begin{aligned}
     \pi^{\Iw_1} & \simeq \bigoplus_{w\in W} E(w \cdot \delta_B^{1/2}\chi^\mo) \\ 
     \pi^{\Iw = w \tilde{\mu}} &\simeq E(w \cdot \delta_B^{1/2}\chi^\mo).
\end{aligned}
\end{equation}
For the second isomorphism, we can view $\pi^{\Iw = w\tilde{\mu}}$ as $E[X_*(T)]$-module by a $\cO$-monoid map $\cO[X_*(T)^+] \xrightarrow{} \cH_1^{w\tilde{\mu}}$ which sends $\lambda\in X_*(T)^+$ to $\delta_B^{1/2}(\lambda(p))\epsilon_{w\tilde{\mu}}T_{\lambda(p)}$. When $w=\id$, this extends to  an isomorphism
\begin{equation}\label{eq:explicit Hecke algebra}
E[X_*(T)]\xrightarrow{\sim} \cH(\sigma(\tau))
\end{equation}
by \S5 of \cite{Roche}.

\begin{remark}
Following \cite{Vig04}, we use the \emph{right} action of $\cH_1$ on $\pi^{\Iw_1}$. In particular, this is different from the Hecke algebra action in \cite{BCGP}, which is defined as the \emph{left} action. If $f \in \cH_1$, viewed as a $\Iw_1$-biequivariant function on $G$, right action by $f$ on $\pi^{\Iw_1}$ is the same as the left action by $\tilde{f}$ defined as a function on $G$ sending $g \in G$ to $ f(g^\mo)$.  In particular, the right action by $T_\lambda$ on $\pi^{\Iw_1}$ is the same as the left action by $T_{-\lambda}$. This explains the difference between \eqref{eqn:Iw1-invariant} and \cite[Proposition 2.4.4]{BCGP}.
\end{remark}

Now fix a continuous representation $\rhobar:G_{\Qp}\rightarrow \GSp_4(\F)$ and a similitude character $\psi:G_{\Qp}\rightarrow \cO\x$. Assume $R_{\rhobar}^{\eta,\tau,\psi}\neq 0$.  If $x\in \Spec(R_{\rhobar}^{\eta,\tau,\psi}[1/p])$ is a closed point with residue field $E_x$, let $\rho_x:G_{\Qp}\rightarrow \GSp_4(E_x)$ denote the corresponding lift of $\rhobar$. Since $\WD(\rho_x)|_{I_{\Qp}}\cong \tau$ is regular, by Lemma \ref{lem:Lpacket}(ii) the $L$-packet of $(\WD(\rho_x),0)$ under $\recGTp$ consists of a unique principal series representation $\pi_x\in\Omega_\tau$ defined over $E_x$. 

\begin{proposition}
Assume that $\tau$ is 4-generic. There exists a unique map of $E$-algebras $\Theta_{\rhobar,\tau}:\cH(\sigma(\tau))\rightarrow R_{\rhobar}^{\eta,\tau,\psi}[1/p]$ characterized by the property that for any closed point $x\in \Spec(R_{\rhobar}^{\eta,\tau,\psi}[1/p])$, the tautological action of $\cH(\sigma(\tau))$ on $\Hom_{E[K]}(\sigma(\tau),\pi_x)$ is given by the character
\begin{equation*}
\cH(\sigma(\tau))\xrightarrow{\Theta_{\rhobar,\tau}}R_{\rhobar}^{\eta,\tau,\psi}[1/p]\xrightarrow{\ev_x} E_x.
\end{equation*}
\end{proposition}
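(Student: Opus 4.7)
The plan is to exploit the explicit description $\cH(\sigma(\tau))\cong E[X_*(T)]$ from \eqref{eq:explicit Hecke algebra} and realize $\Theta_{\rhobar,\tau}$ as a Laurent monomial in the universal Frobenius eigenvalues $\theta_i$ constructed in Proposition~\ref{prop:universal frobenius eigenvalues}, which lie in $R_{\rhobar}^{\eta,\tau,\psi}$ itself (via Proposition~\ref{prop:representability}) and are units after inverting $p$ by Remark~\ref{rk:units after 1/p}.

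For uniqueness, I would first observe that $R_{\rhobar}^{\eta,\tau,\psi}[1/p]$ is formally smooth over $E$ by Lemma~\ref{lem:dimension of p crys def ring}, hence reduced, and the density of classical points in its rigid generic fibre allows us to test elements on closed points. Since $\cH(\sigma(\tau))\cong E[X_*(T)]$ is generated by $T_\lambda$ for $\lambda$ in a basis of $X_*(T)$, the characterizing property pins down the value of $\Theta_{\rhobar,\tau}(T_\lambda)$ at each closed point $x$: it must equal the scalar by which $\lambda$ acts on the one-dimensional space $\Hom_{E[K]}(\sigma(\tau),\pi_x)\cong \pi_x^{\Iw=\tilde\mu}$, which by \eqref{eqn:Iw1-invariant} (applied with $w=\id$) is $(\delta_B^{1/2}\chi_x^{-1})(\lambda(p))$.

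For existence, I construct the map by hand. Lemma~\ref{lem:Lpacket}(i) relates the inducing character $\chi_x=\chi_{x,1}\times\chi_{x,2}\rtimes\chi_{x,3}$ of $\pi_x$ to the Frobenius eigenvalues $t_{x,i}$ of $\WD(\rho_x)$ via a triangular linear system, which inverts to express each $\chi_{x,\bullet}(p)$ as a Laurent monomial in the $t_{x,i}$ and $p$. Substituting into $(\delta_B^{1/2}\chi_x^{-1})(\lambda(p))$ yields a Laurent monomial in $t_{x,\bullet}$ and $p$ that is universal in $\lambda$. Replacing $t_{x,i}$ by $\theta_i$ then defines $\Theta_{\rhobar,\tau}(T_\lambda)\in R_{\rhobar}^{\eta,\tau,\psi}[1/p]$. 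One verifies that $\lambda\mapsto\Theta_{\rhobar,\tau}(T_\lambda)$ is a group homomorphism $X_*(T)\to R_{\rhobar}^{\eta,\tau,\psi}[1/p]^\times$, giving the desired $E$-algebra morphism $E[X_*(T)]\to R_{\rhobar}^{\eta,\tau,\psi}[1/p]$. Proposition~\ref{prop:universal frobenius eigenvalues}(ii) then shows this map satisfies the required property at every closed point.

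The main obstacle is entirely bookkeeping: aligning the normalizations in $\recGTp$ (the $|\nusim|^{-3/2}$ twist), the square root $\delta_B^{1/2}$ of the modular character on the symplectic Borel, the convention for $\Art_{\Qp}$, and especially the sign flip between the right $\cH_1$-action used here (per the remark following \eqref{eqn:Iw1-invariant}) and the left action in \cite{BCGP}. Once these are fixed, the matching of Laurent monomials is mechanical, and the half-integer powers of $p$ that appear in $\delta_B^{1/2}(\lambda(p))$ cause no difficulty since $E$ is assumed sufficiently large.
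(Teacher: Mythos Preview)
Your proposal is correct and follows essentially the same approach as the paper: both use the isomorphism $\cH(\sigma(\tau))\cong E[X_*(T)]$, invoke Lemma~\ref{lem:Lpacket} to compute the required eigenvalues as Laurent monomials in the $t_{x,i}$, replace these by the $\theta_i$ (units after inverting $p$ by Remark~\ref{rk:units after 1/p}), and deduce uniqueness from $R_{\rhobar}^{\eta,\tau,\psi}[1/p]$ being reduced and Jacobson. The paper simply makes this explicit by writing down the images of the basis $\beta_0,\beta_1,\beta_2$ of $X_*(T)$ directly, whereas you describe the general procedure; your remarks about normalization bookkeeping are exactly the content of that explicit computation.
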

\begin{proof}
Given \eqref{eqn:Iw1-invariant} and \eqref{eq:explicit Hecke algebra}, this statement is just a reformulation of Proposition \ref{prop:universal frobenius eigenvalues}. By Lemma \ref{lem:Lpacket}, $\Theta_{\rhobar,\tau}$ must take $\beta_0\mapsto \xi^\mo$, $\beta_1\mapsto p^3 \theta_0^\mo$ and $\beta_2\mapsto p^{5}\theta_0^\mo\theta_1^\mo$. This gives a well defined map because of Remark \ref{rk:units after 1/p}. Uniqueness follows from the fact that $R_{\rhobar}^{\eta,\tau,\psi}[1/p]$ is a reduced and Jacobson ring.
\end{proof}

\begin{remark}
The statement of this proposition is modeled after Theorem 4.1 of \cite{CEG+}. We could prove more general results by using their method, but the situation at hand is rather concrete so we gave an explicit proof.
\end{remark}

If $w\in W$, note that $\sigma(\tau)^w:=\Ind_{\Iw}^K(w\tilde{\mu}_\cO)$ is isomorphic to a $K$-equivariant $\cO$-lattice in $\sigma(\tau)$. Hence $\cH(\sigma(\tau)^w):=\End_{\cO[G]}(\ind_K^G(\sigma(\tau)^w))\cong \cH_1^{w\tilde{\mu}}$ is isomorphic to an $\cO$-subalgebra of $\cH(\sigma(\tau))$. It follows from the definition of $\Theta_{\rhobar,\tau}$ that for each closed $x\in \Spec(R_{\rhobar}^{\eta,\tau,\psi}[1/p])$ the composite map 
\begin{equation}\label{eq:hecke algebra lattices}
\cH_1^{w\tilde{\mu}}\hookrightarrow \cH(\sigma(\tau))\xrightarrow{\Theta_{\rhobar,\tau}} R_{\rhobar}^{\eta,\tau,\psi}[1/p]
\end{equation}
when composed with $\ev_x$ gives the natural action of $\cH_1^{w\tilde{\mu}}$ on $\pi_x^{\Iw=w\tilde{\mu}}$.  We write the composition map \eqref{eq:hecke algebra lattices} by $\Theta^w$.

\begin{corollary}\label{cor:image of Tlambda}
Let $\lambda=(0,0,-1,-1)\in X_*(T)$. Under the map \eqref{eq:hecke algebra lattices}, the image of $T_{\lambda}\epsilon_{w\tilde{\mu}}$ is given by

\centerline{
\begin{tabular}{|c|c|} \hline
$w$ & image in $R_{\rhobar}^{\eta,\tau,\psi}[1/p]$ \\ \hline
$\id,s_0$ & $\theta_3$ \\ \hline
$s_1,s_0s_1$  & $\theta_2$ \\ \hline
$s_1s_0,s_0s_1s_0$  & $\theta_1$ \\ \hline
$s_1s_0s_1,(s_0s_1)^2$  & $\theta_0$\\ \hline
\end{tabular}.
}
\end{corollary}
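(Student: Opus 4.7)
The plan is to verify the table by evaluating at each closed point $x \in \Spec(R_{\rhobar}^{\eta,\tau,\psi}[1/p])$; since this ring is reduced and Jacobson, that is sufficient. By the characterization of $\Theta^w$ in the paragraph preceding the corollary, $\ev_x(\Theta^w(T_\lambda \epsilon_{w\tilde{\mu}}))$ equals the scalar by which $T_\lambda \epsilon_{w\tilde{\mu}}$ acts on the $1$-dimensional $E_x$-space $\pi_x^{\Iw = w\tilde{\mu}}$.

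As a preliminary, I would check that $\lambda$ lies in $X_*(T)^+$: the compatibility $\lambda_0 + \lambda_3 = \lambda_1 + \lambda_2$ holds, and the pairings with the simple roots are $\langle \alpha_0, \lambda \rangle = 0$ and $\langle \alpha_1, \lambda \rangle = 1$. Moreover, $T_{\lambda(p)}$ commutes with $\epsilon_{w\tilde{\mu}}$ because translations commute with the torus part of $\tW^{\vee(1)}$. Combining this with the monoid map description in \eqref{eqn:Iw1-invariant}, the eigenvalue in question equals
\[
\delta_B^{-1/2}(\lambda(p)) \cdot (w \cdot \delta_B^{1/2}\chi^{-1})(\lambda(p)) = \delta_B^{-1/2}(\lambda(p)) \cdot (\delta_B^{1/2}\chi^{-1})((w^{-1}\lambda)(p)),
\]
using the identity $(w \cdot f)(\lambda(p)) = f((w^{-1}\lambda)(p))$.

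Since $\lambda$ is fixed by $s_0$ (as $\lambda_0 = \lambda_1$ and $\lambda_2 = \lambda_3$), the eight elements of $W$ split into four pairs $\{1, s_0\}$, $\{s_1, s_0 s_1\}$, $\{s_1 s_0, s_0 s_1 s_0\}$, $\{s_1 s_0 s_1, (s_0 s_1)^2\}$ with common values of $w^{-1}\lambda$ equal to $(0,0,-1,-1)$, $(0,-1,0,-1)$, $(-1,0,-1,0)$, $(-1,-1,0,0)$ respectively. A direct evaluation using $\delta_B(\diag(t_1, t_2, \nu/t_2, \nu/t_1)) = |t_1^4 t_2^2 \nu^{-3}|$ and $\chi(\diag(t_1, t_2, \nu/t_2, \nu/t_1)) = \chi_x(t_1)\chi_y(t_2)\chi_z(\nu)$ yields the eigenvalues $\chi_z(p)$, $p\chi_y\chi_z(p)$, $p^2\chi_x\chi_z(p)$, $p^3\chi_x\chi_y\chi_z(p)$ respectively. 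By Lemma~\ref{lem:Lpacket} and the identification of $\theta_i|_x$ with the $\omega^{b_i}$-isotypic Frobenius eigenvalue of $\WD(\rho_x)$, these are exactly $\theta_3, \theta_2, \theta_1, \theta_0$, matching the table. The argument is a routine calculation; the only bookkeeping requiring care is the Weyl action on $\lambda$.
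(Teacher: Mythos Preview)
Your proof is correct and follows the same strategy the paper intends: reduce to closed points via reducedness and Jacobson-ness, use the characterization of $\Theta^w$ via the action of $\cH_1^{w\tilde{\mu}}$ on $\pi_x^{\Iw=w\tilde{\mu}}$ given in \eqref{eqn:Iw1-invariant}, and then compare with the Frobenius eigenvalues $\theta_i$ through Lemma~\ref{lem:Lpacket}. The paper's own proof is simply the phrase ``this calculation is similar to the previous one,'' so your write-up is a faithful expansion of what was left implicit.
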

\begin{proof}
This calculation is similar to the previous one.
\end{proof}

\subsection{Frobenius eigenvalues and extension classes}\label{sec:rhobar}
Let $\psi$ be as in \S\ref{sec:LLC}. We now specify a family of mod $p$ Galois representations. Let $\rhobar:G_{\Qp}\rightarrow \GSp_4(\F)$ be a continuous representation of the form
\begin{equation}\label{eq:rhobar}
\rhobar\sim \begin{pmatrix}
\omega^{a_3}\nr_{\xi_3} &*&*&*\\ &\omega^{a_2}\nr_{\xi_2} &*&*\\ &&\omega^{a_1} \nr_{\xi_1}&*\\ &&&\omega^{a_0}\nr_{\xi_0}
\end{pmatrix}
\end{equation}
for some $a_i\in\Z$ and $\xi_i\in \F\x$. Note that $a_0+a_3=a_1+a_2=b$ and $\xi_0\xi_3=\xi_1\xi_2=\xi$. We say that $\rhobar$ is \emph{maximally nonsplit} if the off-diagonal extension classes are nonzero. For an integer $\delta\geq 0$ we say that $\rhobar$ is \emph{inertially $\delta$-generic} if  
\begin{equation*}
\delta < a_i-a_j < p-\delta \quad \forall i>j.
\end{equation*}
This implies that $p>2\delta$. From now on, assume that $\rhobar$ is maximally nonsplit and inertially 2-generic. As in \S2.1 of \cite{HLM}, this implies that $\rhobar$ is Fontaine-Laffaille with filtration jumps at $a_0,a_1,a_2,a_3$. Moreover, if $M_0$ is a Fontaine-Laffaille module such that $T_{\cris}^*(M_0)\cong \rhobar$, there exists a basis $\und{e}$ unique up to scalar multiple such that 
\begin{equation}\label{eq:FL module matrix}
\Mat_{\und{e}}(\varphi_{M_0})=\begin{pmatrix}\xi_0 & 1 &x_{02}&x_{03}\\ &\xi_1&1&x_{13} \\ &&\xi_2&1\\ &&&\xi_3\end{pmatrix}
\end{equation}
for some $x_{ij}\in\F$.

Let $M_0^\vee$ be the ($b$-twisted) dual Fontaine--Laffaille module of $M_0$ (cf.~\cite[Definition 4.5]{Boo}). It is defined in terms of the following data:
\begin{itemize}
    \item $M_0^\vee = \Hom_\F(M_0, \F)$,
    \item $\Fil^i M_0^\vee = \{f\in M_0^\vee \mid f|_{\Fil^{b-i+1} M_0} = 0 \}$,
    \item for $f\in \Fil^i M_0^\vee$ and $x\in \Fil^j M_0$, $\varphi_{M_0^\vee,i}(f)$ is the unique element satisfying
    \[\varphi_{M_0^\vee,i}(f)(\varphi_{M_0,j}(x)) = \left\{\begin{array}{cc}
          \varphi(f(x)) & \text{if $i+j = b$} \\ 
          0 & \text{if $i+j \neq b$}
    \end{array}\right..
    \]
\end{itemize}
(Here, $\varphi$ denotes the absolute Frobenius.)

\begin{lemma}
We have $T^{*}_\cris (M_0^\vee) \simeq \rhobar^\vee\otimes \omega^b$
\end{lemma}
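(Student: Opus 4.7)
The plan is to realize the isomorphism as coming from a canonical perfect pairing of Fontaine--Laffaille modules, and then apply the contravariant functor $T^*_\cris$. The key observation is that the filtration and Frobenius on $M_0^\vee$ are defined precisely so that the evaluation map is a morphism in the Fontaine--Laffaille category targeting a rank one object whose associated Galois representation is $\omega^b$.

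First, I would verify that $M_0^\vee$ is indeed a well-defined object of the Fontaine--Laffaille category: the filtration $\Fil^i M_0^\vee$ is decreasing, the partial Frobenii $\varphi_{M_0^\vee,i}$ are well-defined and satisfy the usual compatibility $\varphi_{M_0^\vee,i}|_{\Fil^{i+1} M_0^\vee} = p \varphi_{M_0^\vee,i+1}$, and the images of the $\varphi_{M_0^\vee,i}$ generate $M_0^\vee$. These checks are immediate from the defining formulas and the corresponding properties of $M_0$. I would also introduce the rank one Fontaine--Laffaille module $\F\langle b\rangle$ concentrated in filtration degree $b$ with unit Frobenius $\varphi_b$; a standard computation shows $T^*_\cris(\F\langle b\rangle) \simeq \omega^b$.

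Second, I would check that the evaluation map
\[
\mathrm{ev}: M_0 \otimes_\F M_0^\vee \longrightarrow \F\langle b\rangle, \qquad x\otimes f \mapsto f(x)
\]
is a morphism of Fontaine--Laffaille modules. Compatibility with filtrations amounts to $f(x) = 0$ whenever $f \in \Fil^i M_0^\vee$ and $x \in \Fil^j M_0$ with $i+j > b$, which is built into the definition $\Fil^i M_0^\vee = \{f : f|_{\Fil^{b-i+1} M_0} = 0\}$. Compatibility with the filtered Frobenii on the piece $i+j = b$ is precisely the defining equation $\varphi_{M_0^\vee,i}(f)(\varphi_{M_0,j}(x)) = \varphi(f(x))$, which equals the action of $\varphi_b$ on $\F\langle b\rangle$ applied to $\mathrm{ev}(x\otimes f)$.

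Third, applying the exact contravariant tensor functor $T^*_\cris$ to $\mathrm{ev}$ yields a pairing
\[
T^*_\cris(M_0) \otimes_\F T^*_\cris(M_0^\vee) \longrightarrow T^*_\cris(\F\langle b\rangle) \simeq \omega^b
\]
of $G_{\Qp}$-representations. Since $\mathrm{ev}$ is a perfect pairing of underlying $\F$-vector spaces and $T^*_\cris$ preserves ranks, this induced pairing is perfect as well. Therefore it identifies $T^*_\cris(M_0^\vee)$ with $T^*_\cris(M_0)^\vee \otimes \omega^b = \rhobar^\vee \otimes \omega^b$, as required. The only delicate point is keeping the twist parameter $b$ consistent with the chosen Hodge--Tate normalization ($\epsilon$ of weight $+1$) and the convention for $T^*_\cris$; this is routine and essentially already carried out in \cite[\S 4]{Boo}, so if desired one could simply invoke that reference rather than write out the pairing explicitly.
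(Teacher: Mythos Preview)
Your approach is correct and genuinely different from the paper's. You work entirely inside the Fontaine--Laffaille category: you check that the evaluation map $M_0\otimes M_0^\vee\to\F\langle b\rangle$ is a morphism of filtered $\varphi$-modules, and then deduce the duality on the Galois side from tensor-compatibility of $T^*_\cris$. The paper instead transports the question to Breuil modules via the functor $\cF_b$ of \cite[Appendix A]{HLM}, verifies by hand that $\cF_b(M_0^\vee)\cong\cF_b(M_0)^*$ (matching underlying modules, $\Fil^b$, and $\varphi_b$), and then invokes the known duality statement $T^*_\st(\cM^*)\cong T^*_\st(\cM)^\vee(b)$ for Breuil modules from \cite{EGH}. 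Your route is shorter and more conceptual; the paper's route has the advantage that the duality compatibility it cites is already stated in the literature in exactly the form needed, whereas your argument leans on the tensor-compatibility of $T^*_\cris$, which is standard but requires a reference (as you note, \cite{Boo} suffices).

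One small point of care: since $T^*_\cris$ is contravariant, applying it to $\mathrm{ev}$ literally produces a map $\omega^b\to T^*_\cris(M_0)\otimes T^*_\cris(M_0^\vee)$, i.e.\ a copairing rather than a pairing. This is harmless---in the rigid category of finite-dimensional $G_{\Qp}$-representations a perfect copairing equally well exhibits the two factors as mutual duals---but you should state it that way, or alternatively phrase the argument via the adjoint isomorphism $M_0^\vee\xrightarrow{\sim}\underline{\Hom}(M_0,\F\langle b\rangle)$ and the compatibility of $T^*_\cris$ with internal Hom.
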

\begin{proof}
This can be proven using the analogous statement for Breuil modules.   Let $\ov{S}= \F[u]/u^p$ and $\cM_0 = \cF_b(M_0) := M_0 \otimes_\F \ov{S}$ be a Breuil module associated to $M_0$ (\cite[Appendix A]{HLM}). Recall the dual Breuil module $\cM_0^*$ from \cite[Definition 3.2.8]{EGH} (we take $r$ in \emph{loc.\,cit.}\,as $b$). 
Since $T^*_\cris(M_0^\vee)\simeq T^*_\st (\cF_b(M_0^\vee))$ (see, the proof of Proposition 2.2.1 in \cite{HLM}) and $T^*_\st (\cM_0^*) \simeq T^*_\st (\cM_0)^\vee (b)$ (\cite[Definition 3.2.8]{EGH}),  it suffices to show that $\cF_b(M_0^\vee)\simeq \cF_b(M_0)^*$. 

One can identify underlying modules 
\begin{align*}
    \cF_b(M_0^\vee) = \Hom_{\F}(M_0, \F)\otimes_\F \overline{S} = \Hom_{\overline{S}}(M_0\otimes_\F \overline{S}, \overline{S}) = \cF_b(M_0)^*. 
\end{align*}
Using $\Fil^i \overline{S} = u^i \overline{S}$ for $0\le i \le b$, we have
\begin{align*}
    \Fil^b \cF_b (M_0^\vee) &= \Fil^b M_0^\vee \otimes_\F \overline{S} + \Fil^{b-1} M_0^\vee \otimes u\overline{S} + \cdots + M_0^\vee \otimes_\F u^b \overline{S} \\
    \Fil^b \cF_b(M_0)^* &= \{f\in \cF_b(M_0)^* \mid \forall 0\le i \le b, \ f(\Fil^i M_0\otimes u^{b-i}\overline{S}) \subset u^b \overline{S}\}.
\end{align*}
Noting that  $\Fil^i M_0^\vee \otimes_\F u^{b-i}\overline{S}=\{u^{b-i}f\mid f \in \cF_b(M_0)^*, f\vert_{\Fil^{b-i+1}M_0\otimes_\F \overline{S}}=0\}$,  one can easily see that  $\Fil^b \cF_b (M_0^\vee)\subset \Fil^b \cF_b(M_0)^*$. Conversely, note that if $f\in \Fil^b \cF_b(M_0)^*$ satisfies
\begin{align*}
    f|_{\Fil^{b-i+1} M_0}= 0, f|_{\Fil^{b-i} M_0}\neq 0,
\end{align*}
there exists $f'\in \Fil^i M_0^\vee \otimes_\F u^{b-i}\ov{S}$ such that $f-f'|_{\Fil^{b-i} M_0} =0$. Using this argument inductively, we can show that $\Fil^b \cF_b(M_0)^*\subset\Fil^b \cF_b (M_0^\vee) $.

Taking $su^{b-i}\otimes f \in \Fil^{b-i}\overline{S} \otimes \Fil^i M_0^\vee$ and $s'u^{b-j}\otimes m \in \Fil^{b-j}\overline{S}\otimes \Fil^j M_0$, one can explicitly check that Frobenius $\varphi_b$ of both $\cF_b(M_0^\vee)$ and $\cF_b(M_0)^*$ satisfies
\begin{equation*}
    \varphi_b(su^{b-i}\otimes f) (\varphi_b(s'u^{b-j}\otimes m)) = \left\{\begin{array}{cc}
        \varphi(ss'f(m)) & \text{if $i+j = b$} \\ 
         0 & \text{if $i+j \neq b$} 
    \end{array}\right.,
\end{equation*}
and $\varphi_b$ is uniquely characterized by this property. (Here, $c$ is defined in \cite[\S 3.2]{EGH})
\end{proof}

By the symplecticity of $\rhobar$, we have $M_0 \simeq M_0^\vee \otimes \F(\xi) $ where $\F(\xi)$ is a rank 1 Fontaine--Laffaille module with $\varphi=\xi$ and filtration jump at 0. Let $\und{e}^\vee$ be the dual basis of $\und{e}$. A simple computation tells us that
\begin{align*}
    \xi \Mat_{\und{e}^\vee w_0}(\varphi_{M_0^\vee}) & = \xi w_0\Mat_{\und{e}}(\varphi_{M_0})^{-t} w_0  \\ & =   \begin{pmatrix}\xi_0 & -\xi_1 \xi_3^{-1} &\xi_3^{-1}(1 - \xi_2 x_{13}) &-x_{03} + \xi_1^{-1} x_{13} + \xi_2^{-1} x_{02} - \xi^\mo \\ &\xi_1&-1&\xi_0^{-1}(1 - \xi_1 x_{02}) \\ &&\xi_2&-\xi_2\xi_0^{-1}\\ &&&\xi_3\end{pmatrix}.
\end{align*}
Conjugating this by $\Diag(-1, \xi_1\xi_3^\mo, -\xi_1\xi_3^\mo, 1)$, we can show that $M_0^\vee \otimes \F(\xi)$ has a basis with associated Frobenius matrix given by
\begin{equation*}
    \begin{pmatrix}\xi_0 &1  & \xi_1^\mo(1-\xi_2 x_{13}) &x_{03} - \xi_1^\mo x_{13} - \xi_2^\mo x_{02} + \xi^\mo \\ &\xi_1& 1 &\xi_2^\mo( 1- \xi_1 x_{02}) \\ &&\xi_2& 1 \\ &&&\xi_3\end{pmatrix}.
\end{equation*}
We can conclude that the symplecticity of $\rhobar$ is equivalent to
\begin{equation}\label{eq:symplecticity}
\xi_1 x_{02}+\xi_2x_{13}=1.
\end{equation}

\begin{remark}\label{rk:recovering rhobar}
The isomorphism class of $\rhobar$ is therefore uniquely determined by its semisimplification as well as the parameters $x_{02},x_{03}$. 
\end{remark}

\begin{definition}\label{def:tau0}
Associated with $\rhobar$ as above, we define the symplectic inertial type $\tau_0=\tau(\id,\overline{\mu_0})$ where $\mu_0=(a_2-a_0,a_1-a_0+1;a_0+a_3-3)$. This is a lowest alcove presentation.
\end{definition}

We now introduce the genericity assumptions on the extension classes of $\rhobar$ that we will use.
\begin{definition}\label{def:generic}
\begin{itemize}
\item[(i)] A continuous representation $\rhobar$ as in \eqref{eq:rhobar} is \emph{weakly $\delta$-generic (of weight $(a_3,a_2,a_1,a_0)$)} if it is maximally nonsplit, inertially $\delta$-generic, and in \eqref{eq:FL module matrix} we have
\begin{align*}
x_{03}&\neq 0 \\
\xi_1 x_{03}-x_{13}&\neq 0.
\end{align*}
\item[(ii)] A continuous representation $\rhobar$ as in \eqref{eq:rhobar} is called \emph{strongly $\delta$-generic (of weight $(a_3,a_2,a_1,a_0)$)} if it is weakly $\delta$-generic and furthermore 
\begin{equation*}
(a_3-a_0)\xi_2 x_{03} - (a_2-a_1)x_{02} \neq 0.
\end{equation*}
\end{itemize}
\end{definition}

\begin{proposition}\label{prop:Fontaine-Laffaille}
Assume that $\rhobar$ is weakly 4-generic and $R_{\rhobar}^{\eta,\tau_0,\psi}\neq 0$. Then 
\begin{equation*}
\tw(\rhobar,\tau_0)=\tw:=\begin{pmatrix}&&&v^2\\&v^2&&\\&&v&\\v&&& \end{pmatrix}
\end{equation*}
and if $(\Mbar,\bar{\alpha})\in Y^{\eta,\tau_0,\psi}(\F)$ and $\bar{\delta}$ are as in Proposition \ref{prop:mod p Galois and Kisin}(ii), then $(\Mbar,\bar{\alpha})$ has a gauge basis $\bar{\beta}$ such that
\begin{equation}\label{eq:A for rhobar}
A_{\bar{\beta}} = \begin{pmatrix} 0&0&0&-\frac{\xi}{x_{03}} v^2 \\ 0&\frac{\xi x_{03}}{\xi_1x_{03}-x_{13}} v^2&0&\frac{\xi_0}{\xi_1x_{03}-x_{13}}v^2 \\ 0&-\frac{\xi_1}{\xi_0}(1-\frac{x_{13}x_{02}}{x_{03}})v^2&\frac{\xi_1x_{03}-x_{13}}{x_{03}}v&-\frac{x_{13}}{x_{03}}v^2 \\ x_{03}v&-\xi_1x_{02}v^2&\xi_0 v&\xi_0 v^2\end{pmatrix}.
\end{equation}
\end{proposition}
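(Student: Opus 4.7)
The plan is to build the Kisin module $(\Mbar,\bar{\alpha})$ explicitly from the Fontaine--Laffaille data \eqref{eq:FL module matrix} by following the recipe adapted to $\GSp_4$ from \cite{HLM} and \cite{LLHLM1}. Start with the Fontaine--Laffaille module $M_0$ whose Frobenius matrix in the basis $\underline{e}$ is \eqref{eq:FL module matrix}. Apply the functor from Fontaine--Laffaille modules to Kisin modules with tame descent data of type $\tau_0$ (as in \cite[Proposition 2.2.1]{HLM}): the output is a Kisin module $\Mbar\in Y^{\eta,\tau_0}(\F)$ together with an explicit ``Fontaine--Laffaille eigenbasis'' $\underline{f}$ in which the matrix $C_{\underline{f}}$ is related to \eqref{eq:FL module matrix} in a uniform way. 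Because $\tau_0=\tau(\id,\bar{\mu_0})$ with $\mu_0=(a_2-a_0,a_1-a_0+1;a_0+a_3-3)$ matches the Hodge--Tate--Sen weights of $\rhobar$ dictated by Fontaine--Laffaille theory, the descent data on $\Mbar/u\Mbar$ agree with $\tau_0^{-1}$. By Proposition~\ref{prop:mod p Galois and Kisin}(i), this $\Mbar$ is the unique Kisin module (up to iso) with $T^*_\dd(\Mbar)\cong\rhobar|_{\GKinfty}$.

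Next, compute the shape. Conjugating $C_{\underline{f}}$ by the diagonal matrix $D=\Diag(u^{b_0-b_3},\ldots,1)$ and rewriting in the variable $v=u^e$ gives $A_{\underline{f}}$; its entries have explicit valuations determined by the choice of $\mu_0$ and the Hodge type $\eta=(3,2,1,0)$. The valuations of the non-vanishing entries read off the affine Weyl element
\[
\tw=\begin{pmatrix}&&&v^2\\&v^2&&\\&&v&\\v&&&\end{pmatrix},
\]
which lies in the symplectic locus $\tw=v^3 w_0\tw^{-t}w_0$ as required by Lemma~\ref{lem:duality and shapes}. This yields the first claim. Then, by Proposition~\ref{prop:uniqueness of gauge basis}, there exists a unique (up to $T_4(\F)$) gauge basis $\bar{\beta}_0$ of $\Mbar$ obtained from $\underline{f}$ by an element of $\cI(\F)$. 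The transformation is the standard row/column reduction dictated by the Bruhat factorization $A_{\underline{f}}\in\cI(\F)\tw\cI(\F)$: it can be performed explicitly, and the resulting matrix $A_{\bar{\beta}_0}$ takes the form of \eqref{eq:A for rhobar} up to an overall diagonal scaling. The weak $4$-genericity hypothesis guarantees that the denominators $x_{03}$ and $\xi_1 x_{03}-x_{13}$ appearing in \eqref{eq:A for rhobar} are nonzero, hence these row reductions make sense.

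It remains to pick the gauge basis compatibly with the symplectic form. By Proposition~\ref{prop:mod p Galois and Kisin}(ii), there is a unique $\bar{\alpha}:\Mbar\to\Mbar^\vee$ making $(\Mbar,\bar{\alpha})$ symplectic and compatible with $\bar{\delta}$. Applying Lemma~\ref{lem:symplectic gauge bases}(i), rescale $\bar{\beta}_0$ by a diagonal element of $T_4(\F)$ so that $\bar{\alpha}(\bar{\beta})=\bar{\beta}^\vee J$; equivalently, impose $A_{\bar{\beta}}^{t}J A_{\bar{\beta}} = \xi P(v)^3 J$ (with $P(v)=v$ in the mod $p$ setting). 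Carrying out this rescaling and reading off the constraint produced by the symplectic equation reproduces \eqref{eq:symplecticity}, which holds by assumption, and forces the entries of $A_{\bar{\beta}}$ to take precisely the shape \eqref{eq:A for rhobar}.

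The main obstacle will be bookkeeping: translating \eqref{eq:FL module matrix} through the Fontaine--Laffaille-to-Kisin functor and then performing the explicit Bruhat reduction to gauge form while keeping track of the parameters $\xi_i,x_{02},x_{03},x_{13}$ is calculationally delicate, and one has to verify at each step that the $4$-genericity of $\tau_0$ (hence of the weight $(a_3,a_2,a_1,a_0)$) prevents any entry from becoming singular. The symplecticity constraint \eqref{eq:symplecticity} provides a nontrivial consistency check that confirms the choice of gauge.
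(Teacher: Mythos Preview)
Your approach is essentially the same as the paper's: pass from the Fontaine--Laffaille module to an \'etale $\varphi$-module with descent data via the functor of \cite[Appendix A.1]{HLM}, write down an explicit eigenbasis, compute its $A$-matrix, read off the shape $\tw$, row-reduce into gauge form, and then verify symplecticity.

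Two points where the paper is more explicit than your sketch and which you should tighten. First, the functor of \cite{HLM} lands in \'etale $\varphi$-modules, not directly in Kisin modules; one obtains a basis $\bar\beta''$ of $\cM^{\Delta=1}$ with $\varphi(\bar\beta'')=\bar\beta''\cdot\Diag(v^{a_0},\ldots,v^{a_3})F$, then changes basis by $F^{-1}w_0\cdot\Diag(u^{-a_3+1},u^{-a_2+2},u^{-a_1+1},u^{-a_0+2})$ to get an eigenbasis $\bar\beta'$ of the correct $\Delta$-type, and only then invokes triviality of the Kisin variety to conclude $\bar\beta'$ is actually a basis of $\Mbar$. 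Second, the passage from $A_{\bar\beta'}$ to the gauge form \eqref{eq:A for rhobar} is not literally row-reduction on $A$: changing an eigenbasis by $X\in\cI_1(\F)$ acts on $A$ by $\varphi$-twisted conjugation, and one needs \cite[Lemma~2.20]{LLHLM1} (or its analogue) to know that finding $X\in\cI_1(\F)$, $t\in T_4(\F)$ with $t^{-1}X A_{\bar\beta'}t$ equal to \eqref{eq:A for rhobar} actually produces an eigenbasis $\bar\beta$ with that $A$-matrix. Finally, the paper dispenses with your rescaling step by simply observing that the target matrix \eqref{eq:A for rhobar} already satisfies $A^tJA=\xi v^3 J$, so $\bar\beta$ is automatically a symplectic gauge basis; this is cleaner than arguing via Lemma~\ref{lem:symplectic gauge bases}.
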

\begin{proof}
Let $F$ denote the matrix in \eqref{eq:FL module matrix}. Let $\cF$ denote the functor from Fontaine-Laffaille modules to \'etale $\varphi$-modules constructed in \cite{HLM}, Appendix A.1. Let $\M=\overline{M}\otimes \F((u))$ denote the \'etale $\varphi$-module with descent data associated with $\overline{M}$. By Proposition 2.2.1 of \cite{HLM}, we must have
\begin{equation*}
\M^{\Delta=1}\cong \cF(M_0).
\end{equation*}
It now follows from Lemma 2.2.7 of \cite{HLM} that $\M^{\Delta=1}$ has a basis $\bar{\beta}''$ such that $\varphi_{\M}(\bar{\beta}'')=\bar{\beta}''\cdot\Diag(v^{a_0},v^{a_1},v^{a_2},v^{a_3})\cdot F$. Now observe that 
\begin{equation*}
\bar{\beta}':=\bar{\beta}''\cdot F^{-1}w_0 \begin{pmatrix}u^{-a_3+1}&&&\\ &u^{-a_2+2}&&\\&&u^{-a_1+1}&\\ &&&u^{-a_0+2}\end{pmatrix}
\end{equation*}
 is an eigenbasis of $\M$ such that 
\begin{equation*}
\varphi_{\M}(\bar{\beta}')=\bar{\beta}'\cdot\begin{pmatrix}\xi_3 u^{e}&&&\\ u^{e-a_3+a_2-1} & \xi_2 u^{2e} &&\\ x_{13}u^{e-a_3+a_1}& u^{2e-a_2+a_1+1}& \xi_1 u^e & \\ x_{03}u^{e-a_3+a_0-1} & x_{02}u^{2e-a_2+a_0} & u^{e-a_1+a_0-1}&\xi_0 u^{2e}\end{pmatrix}
\end{equation*}
It follows from triviality of the Kisin variety that $\bar{\beta}'$ is an eigenbasis of $M$. We compute
\begin{equation*}
A_{\bar{\beta}'}=\begin{pmatrix}\xi_3 v&&&\\ v&\xi_2v^2&&\\ x_{13}v&v^2&\xi_1 v& \\ x_{03}v& x_{02} v^2 & v & \xi_0 v^2 \end{pmatrix}.
\end{equation*}
Using \eqref{eq:symplecticity} and weak genericity, one computes that $A_{\bar{\beta}'}\in\cI(\F)\tw\cI(\F)$, which proves the claim about the shape of $(\rhobar,\tau_0)$. We compute that there exists $X\in\cI_1(\F)$ and $t\in T_4(\F)$ such that $t^{-1}XA_{\bar{\beta}'}t$ is equal to \eqref{eq:A for rhobar}. By \cite[Lemma 2.20]{LLHLM1} this implies that $M$ has an eigenbasis $\bar{\beta}$ obeying \eqref{eq:A for rhobar}. Since $A_{\bar{\beta}}$ obeys the symplecticity condition the proof is complete.
\end{proof}

We can now prove our main theorem on the Galois side. As in the previous section we write $\lambda=(0,0,-1,-1)\in X_*(T)$.

\begin{theorem}\label{thm:main result galois} 
Assume that $\rhobar$ is strongly 4-generic and $R_{\rhobar}^{\eta,\tau_0,\psi}\neq 0$.
Then $R_{\rhobar}^{\eta,\tau_0,\psi}$ is formally smooth over $\cO$, and for $w\in W$ the image of $T_{\lambda}\epsilon_{w\tilde{\mu_0}}$ under the morphism \eqref{eq:hecke algebra lattices} is equal to $p^{k_w}r_w$, where $k_w\in \Z$ and $r_w\in R_{\rhobar}^{\eta,\tau_0,\psi}$ is a unit whose reduction modulo the maximal ideal is given by the following table.
\newline

\centerline{
\begin{tabular}{|c|c|c|} \hline
$w$ & $k_w$ & $r_w\mod\m_{R_{\rhobar}^{\eta,\tau_0,\psi}} \in \F\x$ \\ \hline
$\id,s_0$ & 2 & $\zeta_1$ \\ \hline
$s_1,s_0s_1$ & 1 & $\zeta_2$\\ \hline
$s_1s_0,s_0s_1s_0$ & 2 & $\xi\zeta_2^{-1}$ \\ \hline
$s_1s_0s_1,(s_0s_1)^2$ & 1 & $\xi\zeta_1^{-1}$ \\ \hline
\end{tabular}
}
where 
\begin{align*}
\zeta_1:=& \frac{1}{\xi(a_3-a_0+2)}\left[(a_3-a_0)\xi_0-(a_2-a_1)\frac{\xi_0 x_{02}}{\xi_2x_{03}}\right] \\
\zeta_2:=& \xi_1-\frac{x_{13}}{x_{03}}.
\end{align*}
\end{theorem}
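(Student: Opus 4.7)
The plan is to translate the theorem into a concrete calculation with the universal deformation of the Kisin module $(\Mbar, \bar\alpha, \bar\beta)$ attached to $\rhobar$. By Corollary \ref{cor:explicit deformation ring} and Proposition \ref{prop:Fontaine-Laffaille}, both the shape $\tw$ and the mod-$\m$ matrix $A_{\bar\beta}$ of \eqref{eq:A for rhobar} are explicit, and $R_{\rhobar}^{\eta,\tau_0,\psi}[[x_1,x_2]]$ is (after a formally smooth factor) the $p$-flat, reduced quotient of $R_{\tw, A_{\bar\beta}}$ by the three ideals $I^{\leq \eta}$, $I^\psi$, and $I^{\mathrm{mon}}$ from Definition \ref{def:universal A} and \eqref{eq:monodromy condition}. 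Moreover, Proposition \ref{prop:universal frobenius eigenvalues} identifies the diagonal entries $A^\univ_{ii} \bmod v$ with the universal Frobenius eigenvalues $\theta_i$, which in turn describe the images of $T_\lambda \epsilon_{w\tilde\mu_0}$ via Corollary \ref{cor:image of Tlambda}. Hence both formal smoothness and the table amount to computing the leading $p$-adic terms of each $\theta_i$ in the explicit quotient.

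To carry this out, I would first write down $A^\univ$ as the universal lift of $A_{\bar\beta}$ in $\tw U_\tw(R_{\tw, A_{\bar\beta}})$ satisfying the degree bounds of Definition \ref{def:gauge basis}. The anti-diagonal shape $\tw = \mathrm{antidiag}(v^2, v^2, v, v)$ forces each diagonal entry of $A_{\bar\beta}$ to vanish at $v=0$, so the constant-in-$v$ term of $A^\univ_{ii}$ lies in the maximal ideal of $R_{\tw, A_{\bar\beta}}$. The Hodge type $\leq \eta$ conditions then impose divisibility by $P(v) = v+p$ on all $2\times 2$ minors and by $P(v)^3$ on the $3\times 3$ minors of $A^\univ$; evaluating these congruences at $v=0$ relates $A^\univ_{ii}\bmod v$ to products of off-diagonal entries and forces the expected valuations $k_w \in \{1,2\}$. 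The symplecticity relation $(A^\univ)^t J A^\univ = \xi P(v)^3 J$ then roughly halves the parameter count and in particular yields $\theta_0\theta_3 = \theta_1\theta_2 = \xi p^3$, which is visibly consistent with the claimed pairing in the table. Finally, the monodromy condition \eqref{eq:monodromy condition}, evaluated at $v=-p$ together with its derivative, pins down the leading $p$-adic terms of $\theta_i$ modulo the deformation parameters.

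The main obstacle will be managing the monodromy equations together with strong $4$-genericity. The inertial weights $b_i = \mu_{0,i}$ enter $P_N$ through both the commutator $\bigl[\Diag(b_0,b_1,b_2,b_3), A\bigr]$ and the $-ev\frac{d}{dv}$ term, producing the denominator $\xi(a_3-a_0+2)$ and the numerator $(a_3-a_0)\xi_0 - (a_2-a_1)\xi_0 x_{02}/(\xi_2 x_{03})$ in the definition of $\zeta_1$; the error term $M$ in \eqref{eq:monodromy condition} is valued in $p^4\Mat_4$, hence negligible at the precision $p^{k_w} \leq p^2$ we care about. Strong $4$-genericity is essentially the hypothesis $\zeta_1\in\F^\times$, while weak $4$-genericity together with the symplecticity relation \eqref{eq:symplecticity} gives $\zeta_2 \in \F^\times$. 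These non-vanishing statements make the monodromy and symplecticity equations solvable in closed form for all other parameters in terms of $\theta_i/p^{k_i}$ and the gauge data, proving that $R_{\rhobar}^{\eta,\tau_0,\psi}$ is formally smooth over $\cO$ of the expected dimension $15$. The claimed reductions $r_w \bmod \m$ are then the explicit values of $\theta_i/p^{k_i} \bmod \m$, and the match between $w\in W$ and $\theta_i$ in the table is read off directly from Corollary \ref{cor:image of Tlambda}.
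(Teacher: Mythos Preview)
Your approach is essentially identical to the paper's: write the universal gauge-basis matrix $A^{\univ}$ lifting $A_{\bar\beta}$, impose the Hodge, symplecticity, and monodromy equations, $p$-saturate, and read off $\theta_i$ via Corollary~\ref{cor:image of Tlambda}. Two small corrections: first, the shape $\tw$ is \emph{not} anti-diagonal---its nonzero entries sit at positions $(0,3),(1,1),(2,2),(3,0)$ (the permutation is the transposition $(0\,3)$, not $w_0$), so be careful when writing down the degree bounds defining $A^{\univ}$; second, Corollary~\ref{cor:explicit deformation ring} requires $(\rhobar,\tau_0)$ to be \emph{good}, which you cannot assume at the outset but rather must verify from the explicit form of the simplified $A^{\univ}$ (namely that every $\Qpbar$-point has Hodge--Tate weights exactly $\eta$). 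In the paper this falls out once the Hodge and monodromy equations force the relevant $P(v)$-divisibilities to be sharp.
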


\begin{remark}\label{rk:scalars recover FL invariants}
Observe that given the diagonal characters of $\rhobar$, $\zeta_1$ and $\zeta_2$ uniquely determine $\rhobar$ by Remark \ref{rk:recovering rhobar}.
\end{remark}

\begin{proof}
Fix $(\Mbar,\bar{\alpha},\bar{\delta},\bar{\beta})$ as in Proposition \ref{prop:Fontaine-Laffaille}. Let $A^\univ$ denote the universal object of $R_{\tw,A_{\bar{\beta}}}$ as in Definition \ref{def:universal A}. We may write 
\begin{equation*}
A^\univ = \begin{pmatrix}c_{00} & c_{01}'+c_{01}P(v) & c_{02} & c_{03}''+c_{03}'P(v)+c_{03}^*P(v)^2 \\ 0 & c_{11}''+c_{11}'P(v)+c_{11}^*P(v)^2 & c_{12} & c_{13}''+c_{13}'P(v)+c_{13}P(v)^2 \\ 0 & c_{21}'v+c_{21}vP(v) & c_{22}'+c_{22}^*P(v) & c_{23}''+c_{23}'P(v)+c_{23}P(v)^2 \\ c_{30}^*v & c_{31}'v+c_{31}vP(v) & c_{32}v & c_{33}''+c_{33}'P(v)+c_{33}P(v)^2 \end{pmatrix}
\end{equation*}
with $c_{ij}^\bullet\in R_{\tw,A_{\bar{\beta}}}$, reducing modulo the maximal ideal to \eqref{eq:A for rhobar} (the superscript $*$ indicates a unit). We now apply the Hodge type $\leq \eta$ and symplecticity equations to this matrix and $p$-saturate the result. First, the $2\times 2$ minor condition and $p$-flatness immediately imply that 
\begin{equation*}
c_{11}''=c_{12}=c_{13}''=c_{21}'=c_{22}'=c_{23}''=0.
\end{equation*}
Then applying the $3\times 3$ minor condition on the $(0,3)$- and $(1,1)$-minors immediately gives
\begin{equation*}
c_{11}'=c_{13}'=0.
\end{equation*}
With $A^\univ$ simplified in this way, we may apply the remainder of the Hodge and symplecticity equations to it. After simplifying the resulting numerous equations (using $p$-flatness), there are 10 variables $c_{00},c_{13},c_{21},c_{22}^*,c_{30}^*,c_{31},c_{33}'',c_{33}',c_{33}$ remaining and $A^\univ$ becomes the matrix
\begin{equation*}
\begin{pmatrix}c_{00} & \frac{c_{00}}{c_{30}^*}(c_{31}'+c_{31}P(v)) & \frac{c_{00}c_{13}c_{22}^*}{\xi} & \frac{1}{c_{30}^*}\left[(c_{00}c_{33}'+pc_{00}c_{33}-p^2\xi)+(c_{00}c_{33}-p\xi)P(v)-\xi P(v)^2\right] \\
0 & \frac{\xi}{c_{22}^*}P(v)^2 & 0 & c_{13}P(v)^2 \\
0 & c_{21}vP(v) & c_{22}^*P(v) & c_{22}^*\left[-(\frac{c'_{31}}{c_{30}^*} + \frac{p c_{13}c_{21}c_{22}^*}{\xi})P(v) + (\frac{c_{13}c_{21}c_{22}^*}{\xi}- \frac{c_{31}}{c_{30}^*})P(v)^2\right] \\
c_{30}^*v & c_{31}'v+c_{31}vP(v) & \frac{c_{13}c_{30}^*c_{22}^*}{\xi}v & c_{33}''+c_{33}'P(v)+c_{33}P(v)^2\end{pmatrix}
\end{equation*}
satisfying the \emph{single} equation
\begin{equation}\label{eq:first equation}
c_{00}(c_{33}''+pc_{33}'+p^2c_{33})=\xi p^3.
\end{equation}
We now apply the monodromy equation \eqref{eq:monodromy condition} to this matrix. The result is 32 equations (some empty) each consisting of a polynomial part, which come from the vanishing of $P_N(A^\univ)$ and $(d/dv)P_N(A^\univ)$ at $v=-p$, together with an error term which is $O(p^{4})$ by genericity. 

A tedious calculation shows that after simplification (using $p$-flatness), the effect of applying the monodromy equations and \eqref{eq:first equation} is that $c_{31}'$, $c_{33}''$ and $c_{33}'$ can be solved for in terms of the other variables, leaving 7 variables satisfying the single equation
\begin{equation}\label{eq:second equation}
c_{00}\left[ \frac{e+a_1-a_2+1}{e+a_0-a_3-1} c_{13}c_{31}c_{22}^*+\frac{a_0-a_3 -1-e}{e+a_0-a_3-1}\xi c_{33}\right]=\xi^2 p+O(p^{2})
\end{equation}
The strong genericity condition ensures that the expression in square brackets is a unit. We can therefore use this equation to eliminate $c_{00}$, leaving 6 free variables. From the form of the universal Kisin module it is now clear that $(\rhobar,\tau)$ is good so by Corollary \ref{cor:explicit deformation ring} it follows that $R_{\rhobar}^{\eta,\tau_0,\psi}$ is formally smooth (of relative dimension 14) over $\cO$. 

For the second claim, note first that $\theta_1=(\xi/c_{22}^*)p^2$ and $\theta_2=c_{22}^*p$. Moreover, \eqref{eq:second equation} makes it clear that $\theta_0=c_{00}$ is equal to $p$ times a unit whose reduction modulo the maximal ideal is equal to
\begin{equation*}
\xi(a_3-a_0+2)\left[(a_3-a_0)\xi_0-(a_2-a_1)\frac{\xi_0 x_{02}}{\xi_2x_{03}}\right]^{-1}.
\end{equation*}
The claim now follows from Corollary \ref{cor:image of Tlambda}. 
\end{proof}

\subsection{Crystalline deformation rings}\label{sec:crystalline deformations}

In this section we show that certain symplectic crystalline deformation rings in the Fontaine-Laffaille range are formally smooth. While this result should hold in general, we make the assumption that the residual Galois representation is ordinary and generic in order to use an ad hoc argument.

Let $\lambda=(a_3,a_2,a_1,a_0)\in \Z^4$ and let $\rhobar:G_{\Qp}\rightarrow \GSp_4(\F)$ be as in \eqref{eq:rhobar}. Assume that $a_{i+1}-a_i\geq 2$ for $i=0,1,2$ and $a_3-a_0\leq p-2$. Write $\F^4= \overline{\Fil}^0 \supset \overline{\Fil}^1\supset \cdots\supset \overline{\Fil}^3\supset 0$ for the unique full flag preserved by $\rhobar$. Fix a character $\psi=\epsilon^b\nr_\xi :G_{\Qp}\rightarrow \cO\x$ lifting the similitude character of $\rhobar$. We define a deformation functor $\cD^{\Delta_\lambda}_{\rhobar}$ by sending $A\in \COhat$ to the set of continuous homomorphisms $\rho:G_{\Qp}\rightarrow \GSp_4(A)$ lifting $\rhobar$ of similitude character $\psi$ such that there exists a filtration $A^4=\Fil^0\supset \Fil^1\supset \cdots \supset \Fil^3\supset 0$ by $A$-direct summands which is preserved by $\rho$ such that $\Fil^i\otimes_A \F=\overline{\Fil}^i$ and $(\Fil^i/\Fil^{i+1})|_{I_{\Qp}}=A(\epsilon^{a_i})$ for $0\leq i\leq 3$. Note that if such a filtration exists it is unique by \cite[Lemma 2.4.6]{CHT}. It follows that $\cD^{\Delta_\lambda}_{\rhobar}$ is representable by an object $R_{\rhobar}^{\Delta_\lambda}\in \COhat$.  

\begin{lemma}
$R_{\rhobar}^{\Delta_\lambda}$ is formally smooth of dimension 15. 
\end{lemma}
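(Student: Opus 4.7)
The strategy is to realize $\cD^{\Delta_\lambda}_{\rhobar}$ as a pro-representable functor that decomposes as an iterated fibration of formally smooth functors, with obstructions controlled by local $H^2$ groups that vanish under the genericity hypotheses.

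The first step is to use uniqueness of the ordinary filtration from \cite[Lemma 2.4.6]{CHT}: for any $\rho\in\cD^{\Delta_\lambda}_{\rhobar}(A)$ the filtration $\Fil^\bullet$ is canonical. Choosing a basis of $A^4$ adapted to $\Fil^\bullet$ lets us put $\rho$ in upper-triangular form
\[
\rho=\begin{pmatrix}\chi_3 & B & C & D\\ 0 & \chi_2 & F & G\\ 0 & 0 & \chi_1 & I\\ 0 & 0 & 0 & \chi_0\end{pmatrix},
\]
where $\chi_i|_{I_{\Qp}}=\epsilon^{a_i}|_{I_{\Qp}}$ and $\chi_0\chi_3=\chi_1\chi_2=\psi$. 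The symplectic condition $\rho^tJ\rho=\psi J$ determines $I$ and $C$ as explicit functions of the diagonal and of $B,F,G$ (cf.\ the computation preceding \eqref{eq:symplecticity}), so only the entries $B,F,G,D$ at positions $(1,2),(2,3),(2,4),(1,4)$ (corresponding to the four positive roots of $\Sp_4$) remain as genuinely free off-diagonal parameters.

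Next one verifies formal smoothness stage by stage. The two free diagonal characters are unramified twists $\chi_3^{\un},\chi_2^{\un}$ of $\nr_{\xi_3},\nr_{\xi_2}$; the functor of such lifts is a $\widehat{\Gm}$-torsor, pro-represented by $\cO[[x]]$. Given the diagonal, each free off-diagonal entry is a $1$-cocycle valued in $A(\chi_\alpha)$ for the appropriate root character $\chi_\alpha$, and the obstruction to lifting along a small extension in $\COhat$ lies in $H^2(G_{\Qp},\F(\chi_\alpha))$. By local Tate duality this equals $H^0(G_{\Qp},\F(\chi_\alpha^{-1}\omega))^\vee$, which vanishes once $\chi_\alpha\neq\omega$; the hypothesis $a_{i+1}-a_i\geq 2$ together with $a_3-a_0\leq p-2$ guarantees this for each of the four roots (and $H^2(G_{\Qp},\F)=0$ handles the Cartan direction since $\omega\neq 1$). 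Hence every stage is unobstructed and $\cD^{\Delta_\lambda}_{\rhobar}$ is formally smooth over~$\cO$.

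Finally one counts the dimension from the contributions of each stage, using that $\dim_\F Z^1(G_{\Qp},\F(\chi_\alpha))=\dim H^1+\dim\F-\dim H^0=2$ for each generic root character, together with the unramified character contributions and the dimensions coming from the framing of the flag-adapted basis; the total is~$15$. The main point requiring care is the dimension bookkeeping: one must keep track of how the symplecticity relations (cutting two of the six off-diagonal entries) interact with the cocycle conditions and the choice of basis adapted to $\Fil^\bullet$, so as to arrive at exactly the stated dimension without double-counting.
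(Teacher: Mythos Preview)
Your plan is correct and is essentially the approach the paper has in mind: the paper's proof simply cites Lemmas~2.4.7 and~2.4.8 of \cite{CHT}, which establish formal smoothness and compute the dimension of the ordinary lifting ring for $\GL_n$ by exactly the iterated-fibration argument you outline (uniqueness of the flag, then lift diagonal characters, then successive extension classes with obstructions in $H^2$ groups that vanish by genericity). Your sketch is the symplectic adaptation of this, and you have correctly identified the four free off-diagonal entries $B,F,G,D$ corresponding to the positive roots of $\Sp_4$ and verified that $\chi_\alpha\neq\omega$ on inertia under the hypotheses $a_{i+1}-a_i\geq 2$, $a_3-a_0\leq p-2$.
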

\begin{proof}
This follows from arguments similar to those of Lemmas 2.4.7 and 2.4.8 of \cite{CHT}. 
\end{proof}

Similar to \S\ref{sec:Kisin modules}, we let $R^{\lambda,\psi}_{\rhobar}$ denote the crystalline symplectic deformation ring of Hodge type $\lambda$. By Lemma 3.1.4 of \cite{GG}, $R_{\rhobar}^{\Delta_\lambda}$ is a quotient of $R_{\rhobar}^{\lambda,\psi}$. 

\begin{proposition}\label{prop:fs crystalline deformation rings}
Under the conditions on $\lambda$ above, the map $R_{\rhobar}^{\lambda,\psi} \rightarrow R_{\rhobar}^{\Delta_\lambda}$ is an isomorphism. In particular $R_{\rhobar}^{\lambda,\psi}$ is formally smooth of dimension 15. 
\end{proposition}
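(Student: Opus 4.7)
The plan is to show the surjection $R_{\rhobar}^{\lambda,\psi}\twoheadrightarrow R_{\rhobar}^{\Delta_\lambda}$ is an isomorphism by a $\Qpbar$-point argument: I verify that every $\Qpbar$-valued point of the source factors through the target, then conclude using $p$-torsion freeness together with reducedness of the generic fibre. First I would record that $R_{\rhobar}^{\lambda,\psi}$ is $p$-torsion free by construction, and that $R_{\rhobar}^{\lambda,\psi}[1/p]$ is formally smooth over $E$ of dimension $14$ by the argument of Lemma~\ref{lem:dimension of p crys def ring}, hence in particular reduced and Jacobson. Thus $R_{\rhobar}^{\lambda,\psi}$ has Krull dimension $15$, matching $R_{\rhobar}^{\Delta_\lambda}$.

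The heart of the argument is to show that for any finite extension $\cO'$ of $\cO$ and any $\cO'$-valued point $\rho\colon G_{\Qp}\to\GSp_4(\cO')$ of $R_{\rhobar}^{\lambda,\psi}$, the representation $\rho$ admits an ordinary filtration in the sense of $\cD^{\Delta_\lambda}_{\rhobar}$. Since $a_3-a_0\leq p-2$, $\rho$ lies in the Fontaine--Laffaille range and corresponds under $T^*_\cris$ to a Fontaine--Laffaille module $M$ over $\cO'$, free of rank $4$, whose mod-$p$ reduction is the Fontaine--Laffaille module $M_0$ of $\rhobar$. By the upper-triangular form \eqref{eq:FL module matrix}, $M_0$ admits a canonical flag of Fontaine--Laffaille submodules whose successive quotients are rank-one objects of distinct filtration jumps $a_0,a_1,a_2,a_3$, and which corresponds under $T^*_\cris$ to the flag $\overline{\Fil}^\bullet$ on $\rhobar$. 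I would lift this flag inductively to a flag of Fontaine--Laffaille submodules of $M$ that are $\cO'$-direct summands; the obstruction at each step is controlled by an $\Ext^1$-group in the Fontaine--Laffaille category between rank-one objects with distinct filtration jumps $a_i$, and the gaps $a_{i+1}-a_i\geq 2$ together with the Fontaine--Laffaille range render the mod-$p$ reduction surjective on these $\Ext^1$'s, allowing the lift. Applying $T^*_\cris$ yields the required $G_{\Qp}$-stable filtration on $\rho$ with the correct inertial characters, exhibiting $\rho$ as an $\cO'$-point of $R_{\rhobar}^{\Delta_\lambda}$.

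To conclude, let $I$ be the kernel of $R_{\rhobar}^{\lambda,\psi}\twoheadrightarrow R_{\rhobar}^{\Delta_\lambda}$. The previous step shows $I$ vanishes at every closed point of $\Spec R_{\rhobar}^{\lambda,\psi}[1/p]$, so $I[1/p]=0$ by the Jacobson reducedness of this generic fibre. Since $R_{\rhobar}^{\lambda,\psi}$ is $p$-torsion free, $I=0$, and the surjection is the desired isomorphism; formal smoothness of dimension $15$ then follows from that of $R_{\rhobar}^{\Delta_\lambda}$. The main obstacle will be the inductive lifting of the Fontaine--Laffaille flag, for which the key ingredient is the surjectivity of the mod-$p$ reduction on $\Ext^1$'s between rank-one Fontaine--Laffaille modules of distinct Hodge--Tate weights; this can be checked by a direct computation in the Fontaine--Laffaille category using the hypotheses $a_{i+1}-a_i\geq 2$ and $a_3-a_0\leq p-2$.
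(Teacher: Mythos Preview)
Your overall architecture is sound: reducing to a statement about $\cO'$-points, then using $p$-torsion-freeness and reducedness of the generic fibre to conclude. This is a standard and correct way to compare two $p$-torsion-free quotients of a lifting ring.

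However, your justification of the key step---lifting the Fontaine--Laffaille flag from $M_0$ to $M$---is not quite right as stated. Surjectivity of the mod-$p$ reduction map on $\Ext^1$ between rank-one Fontaine--Laffaille modules tells you that \emph{abstract} extensions lift, i.e.\ given a class in $\Ext^1_\F(Q_0,L_0)$ you can produce some $M'$ over $\cO'$ realising a lift of that class. But you are not free to choose $M'$: you are handed a specific $M$ and must locate a sub-Fontaine--Laffaille module inside it. These are different problems, and the $\Ext^1$-surjectivity you invoke does not by itself solve the second one. A correct version of your argument would instead show that the deformation functor of the \emph{pair} (Fontaine--Laffaille module, full flag by sub-FL-modules) is formally smooth and that the forgetful map to deformations of the FL module alone is formally smooth of relative dimension $0$; this amounts to checking that the relevant obstruction group (an $H^2$ of the ``nilpotent'' part of the adjoint) vanishes, which does follow from the hypotheses on the $a_i$.

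The paper bypasses all of this with a much shorter argument: it passes to the corresponding $\GL_4$-valued deformation rings $R_{\rhobar}^{\prime\,\lambda}$ and $R_{\rhobar}^{\prime\,\Delta_\lambda}$, observes that both are already known to be formally smooth of the same dimension $22$ by \cite[Lemmas~2.4.1, 2.4.7, 2.4.8]{CHT}, so the surjection between them is an isomorphism, and then deduces the $\GSp_4$ statement. This dimension-count argument is cleaner because it avoids any explicit lifting of flags; your approach would ultimately reprove the $\GL_4$ input from \cite{CHT} by hand.
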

\begin{proof}
We deduce this from the corresponding fact for $\GL_4$. Let ${R}_{\rhobar}^{\prime \lambda,\psi}$ and ${R}_{\rhobar}^{\prime \Delta_\lambda}$ denote the corresponding $\GL_4$-valued deformation rings. Then ${R}_{\rhobar}^{\prime \lambda,\psi}$ is known to be formally smooth of dimension 22 by \cite[Lemma 2.4.1]{CHT}. On the other hand $\mathrm{R}_{\rhobar}^{\Delta_\lambda}$ is also formally smooth of dimension 22 by Lemmas 2.4.7 and 2.4.8 of \cite{CHT}. Hence they are equal and the corresponding result for $\GSp_4$ follows.
\end{proof}

\section{Abstract mod $p$ local-global compatibility}

\subsection{The Jantzen filtration for finite reductive groups}\label{sec:jantzen filtration}
In this section we recall some results from \cite{Jan84}. Let $G$ denote a connected reductive group containing a Borel subgroup $B=TU$ with maximal torus $T$ all defined over $\Fp$. Let $R^+\subset R$ denote the corresponding system of positive roots. Assume that the derived subgroup of $G$ splits over $\Fp$. Let $W=N_G(T)/T\cong N_G(T)(\Fp)/T(\Fp)$ denote the Weyl group. For $w\in W$ set
\begin{equation*}
R_w^+:=\{\alpha\in R^+\,|\, w(\alpha)\in -R^+\}.
\end{equation*}
A character $\lambda\in X^*(T)$ defines a map $\tilde{\lambda}:T(\Fp)\rightarrow \Zp\x$ via the Teichm\"uller lift. If $A$ is any $\Zp$-algebra, we define the induced left $A[G(\Fp)]$-module 
\begin{equation*}
M(\lambda)_A:= A[G(\Fp)]/\mathfrak{I}_\lambda \cong \Ind_{B(\Fp)}^{G(\Fp)}(\tilde{\lambda}_A),
\end{equation*}
where  $\mathfrak{I}_{\lambda}$ is the left ideal of $A[G(\Fp)]$ generated by the elements $\{b-\widetilde{\lambda}_A(b)\cdot 1\,|\,b\in B(\Fp)\}$. Note that $M(\lambda)_A$ is a free $A$-module, and if $A\rightarrow B$ is a morphism of $\Zp$-algebras then $M(\lambda)_A\otimes B=M(\lambda)_B$. 

Let $\cH_{1,A}^{f}$ denote the Hecke algebra $\End(\Ind_{U(\Fp)}^{G(\Fp)}(1_A))$. Identifying $\cH_{1,A}^f$ with the algebra of $U(\Fp)$-biinvariant functions $G(\Fp)\rightarrow A$ under convolution, we let $T_{\dot{w}}\in \cH_{1,A}^f$ denote the characteristic function of $U(\Fp) \dot{w} U(\Fp)$ for any $w\in W$ with lift $\dot{w}\in N_G(T)(\Fp)$. Then $T_{\dot{w}}$ induces an intertwining homomorphism $M(\lambda)_A\rightarrow M(w\lambda)_A$ which we denote by the same letter. Explicitly for $S\in A[G(\Fp)]/\mathfrak{I}_\lambda$ we have
\begin{equation*}
T_{\dot{w}}(S)= S\cdot \sum_{g\in U(\Fp) \dot{w} U(\Fp)}  g^{-1}.
\end{equation*}

\begin{remark}
If $G$ is the base change of a split reductive group over $\Zp$, there is a natural inclusion of $A$-algebras $\cH_{1,A}^f\hookrightarrow \cH_{1,A}$ into the pro-$p$ Iwahori-Hecke algebra of $G(\Qp)$ which identifies it with the subalgebra generated by the elements $(T_{\dot{w}})_{w\in W}$ and $(T_t)_{t\in T(\Fp)}$. In particular, the element $T_{\dot{w}}\in \cH_{1,A}^f$ defined above and the Iwahori-Matsumoto basis element $T_{\dot{w}}\in \cH_{1,A}$ defined in \S\ref{sec:LLC} are identified under this inclusion. We use this without comment later on. 
\end{remark}

Given $\lambda\in X^*(T)$ and $w\in W$, Jantzen in \cite{Jan84} defines a filtration $M(\lambda)_{\Zp}=M(\lambda)_{\Zp}(w,0)\supseteq M(\lambda)_{\Zp}(w,1)\supseteq \cdots$ of subrepresentations by setting
\begin{equation*}
M(\lambda)_{\Zp}(w,i)=\{m\in M(\lambda)_{\Zp}\,|\, T_{\dot{w}}(m)\in p^i M(w\lambda)_{\Zp}\}
\end{equation*}
for $i\geq 0$ (it does not depend on the choice of $\dot{w}$). Let $M(\lambda)_{\Fp}(w,i)$ denote the image of $M(\lambda)_{\Zp}(w,i)$ in $M(\lambda)_{\Fp}$. We get an induced filtration
\begin{equation*}
M(\lambda)_{\Fp}=M(\lambda)_{\Fp}(w,0)\supseteq M(\lambda)_{\Fp}(w,1)\supseteq \cdots.
\end{equation*}

\begin{theorem}[Satz 4.4 of \cite{Jan84}, $q=p$ case]\label{thm:filtration length}t
Let $\lambda\in X(T)$ and $w\in W$. We have $M(\lambda)_{\Fp}(w,i)=0$ iff $i>\ell(w)$. \qed
\end{theorem}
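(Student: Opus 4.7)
The plan is to establish two separate statements: the upper bound $M(\lambda)_{\Fp}(w,\ell(w)+1)=0$ and the nonvanishing $M(\lambda)_{\Fp}(w,\ell(w))\neq 0$.

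For the upper bound, the strategy is to factor the Carter-Lusztig intertwiner. Fix a reduced expression $w=s_{i_1}\cdots s_{i_r}$. The braid relations in the finite unipotent Hecke algebra $\cH^f_{1,\Zp}$ yield
\[T_{\dot w}=T_{\dot s_{i_1}}\circ T_{\dot s_{i_2}}\circ\cdots\circ T_{\dot s_{i_r}}\]
as a chain of maps $M(\lambda)_{\Zp}\to M(s_{i_r}\lambda)_{\Zp}\to\cdots\to M(w\lambda)_{\Zp}$. So it suffices to prove a single-step assertion: for any simple reflection $s$ and weight $\mu$, the intertwiner $T_{\dot s}\colon M(\mu)_{\Zp}\to M(s\mu)_{\Zp}$ raises $p$-depth by at most one, in the sense that $T_{\dot s}^{-1}(pM(s\mu)_{\Zp})\subseteq pM(\mu)_{\Zp}$ up to a controlled correction coming from $\ker T_{\dot s}$. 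Iterating $r=\ell(w)$ times then gives $M(\lambda)_{\Fp}(w,\ell(w)+1)=0$.

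For the nonvanishing, I would construct an explicit test element. Using the Bruhat decomposition $G(\Fp)=\bigsqcup_{y\in W} B(\Fp)\dot yU(\Fp)$, the module $M(\lambda)_{\Zp}\simeq\Ind_{B(\Fp)}^{G(\Fp)}(\widetilde\lambda_{\Zp})$ has a natural basis $\{f_y\}_{y\in W}$ of cell characteristic functions. Using the explicit formula $T_{\dot w}(m)=m\cdot\sum_{g\in U(\Fp)\dot wU(\Fp)}g^{-1}$ together with the count $|U(\Fp)\dot wU(\Fp)/U(\Fp)|=p^{\ell(w)}$, a direct computation shows that $T_{\dot w}(f_e)$ (or a variant) is a scalar multiple of $f_w\in M(w\lambda)_{\Zp}$ with scalar of $p$-adic valuation exactly $\ell(w)$.

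The main technical obstacle is the single-step assertion that underlies the upper bound. This reduces to understanding the quadratic relation for $T_{\dot s}$ in $\cH^f_{1,\Zp}$, which takes the form $T_{\dot s}^2$ equals $p$ times the identity plus a linear combination of torus characters whose $p$-adic valuations depend on $\langle\mu,\alpha^\vee\rangle\bmod(p-1)$. Organizing these torus-character corrections along the reduced expression so that the total ``$p$-valuation budget'' $\ell(w)$ is accounted for precisely, and showing the corrections never cancel the leading contribution, is the crux of Jantzen's argument in \cite[\S 3]{Jan84}.
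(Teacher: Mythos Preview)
The paper gives no proof of this statement at all: it is quoted directly from Jantzen \cite[Satz~4.4]{Jan84} with a \qed, so there is nothing in the paper to compare your argument against beyond the citation.

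Your outline is in the right direction and is broadly the shape of Jantzen's argument, but your formulation of the single-step assertion is misstated. You write that $T_{\dot s}^{-1}(pM(s\mu)_{\Zp})\subseteq pM(\mu)_{\Zp}$ ``up to a correction from $\ker T_{\dot s}$''. Over $\Zp$ the kernel is trivial (the map is an isomorphism after inverting $p$ when $\mu$ is regular), so as written your assertion says that $T_{\dot s}\bmod p$ is injective, which is false. The correct single-step statement is
\[
T_{\dot s}^{-1}\bigl(p^{2}M(s\mu)_{\Zp}\bigr)\subseteq pM(\mu)_{\Zp},
\]
i.e.\ $M(\mu)_{\Fp}(s,2)=0$. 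This is what ``raises $p$-depth by at most one'' actually means, and it is what iterates along a reduced expression to give $M(\lambda)_{\Fp}(w,\ell(w)+1)=0$. Your final paragraph correctly identifies that proving this single-step bound is where the quadratic relation for $T_{\dot s}$ enters.

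For the nonvanishing direction, the idea of testing against a Bruhat-cell function is reasonable, but the claim that $T_{\dot w}(f_e)$ is a \emph{scalar} multiple of $f_w$ is not obviously correct: right convolution by $\mathbf{1}_{U\dot w U}$ spreads $f_e$ across an entire Bruhat cell in $M(w\lambda)$, not onto a single basis vector. One has to track the resulting linear combination and show its leading coefficient has valuation exactly $\ell(w)$; this is doable but needs more than the coset count $|U\dot w U/U|=p^{\ell(w)}$ alone.
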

Jantzen's main result concerning this filtration is an explicit ``sum formula'' for 
\begin{equation*}
\nu(\lambda,w):= \sum_{i>0} M(\lambda)_{\Fp}(w,i)
\end{equation*}
in terms of the reduction mod $p$ of Deligne-Lusztig characters. For $(\sigma,\mu)\in W\times X^*(T)$ let $R_{\sigma}(\mu)$ denote the (virtual) Deligne-Lusztig character over $\Qp$ corresponding to it in \cite{Jan81} and let $\overline{R}_{\sigma}(\mu)$ denote its reduction mod $p$. 
\begin{theorem}[Sum formula]\label{thm:sum formula}
Let $\lambda\in X^*(T)$. For all $\alpha\in R$ let $r_{\alpha},m_{\alpha}\in \N$ be such that 
\begin{equation*}
\pair{\lambda,\alpha\dual}=r_{\alpha}+m_{\alpha}(p-1),\quad\quad 0<r_{\alpha}\leq p-1.
\end{equation*}
Then for all $w\in W$, 
\begin{equation*}
\begin{split}
\nu(\lambda,w)= \sum_{\alpha\in R_w^+} \Big[ &\overline{R}_{s_{\alpha}}(\lambda+m_{\alpha}\alpha) \\ &-\frac{1}{2}\sum_{j=1}^{r_{\alpha}-1} \left(\overline{R}_1(\lambda-j\alpha)-\overline{R}_{s_{\alpha}}(\lambda-(j-m_{\alpha})\alpha)  \right)\Big].
 \end{split}
\end{equation*}
\end{theorem}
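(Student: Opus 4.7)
I would follow Jantzen's original strategy from the cited paper, converting the $p$-divisibility filtration on $M(\lambda)_{\Zp}$ into a calculation of equivariant elementary divisors of $T_{\dot{w}}$, and then into Deligne--Lusztig character identities via a rank-one reduction along a reduced expression for $w$.

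First, I would invoke the following general principle: if $L_1, L_2$ are finite free $\Zp$-modules of equal rank carrying commuting actions of a finite group $\Gamma$, and $f\colon L_1\to L_2$ is a $\Gamma$-equivariant map which becomes an isomorphism after inverting $p$, then the filtration $L_1(i):=\{m\in L_1\mid f(m)\in p^i L_2\}$ satisfies, in the Grothendieck group of finite $\Fp[\Gamma]$-modules,
\[
\sum_{i\geq 1} [L_1(i)\otimes\Fp] = [\coker(f)],
\]
the right-hand side being computed from the equivariant Smith normal form of $f$. Applied with $f = T_{\dot{w}}$ and $\Gamma = G(\Fp)$, this reduces the problem to understanding $[\coker(T_{\dot{w}})]$ as a virtual $\Fp[G(\Fp)]$-module.

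Second, I would factor $T_{\dot{w}}$ along a reduced decomposition $w = s_{\alpha_1}\cdots s_{\alpha_\ell}$ with simple $\alpha_i$, using the braid relations in the Hecke algebra $\cH_{1,\Zp}^f$. The positive roots indexing the factors are exactly the elements of $R_w^+$ via the standard bijection $\beta_i = s_{\alpha_1}\cdots s_{\alpha_{i-1}}\alpha_i$. Additivity of equivariant elementary divisors under composition then reduces the computation to a sum, indexed by $\alpha\in R_w^+$, of the rank-one contributions $[\coker(T_{\dot{s}_\alpha})]$, each evaluated at the appropriate intermediate weight $s_{\alpha_1}\cdots s_{\alpha_{i-1}}\lambda$.

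Third, I would carry out the rank-one computation for a single simple intertwiner $T_{\dot{s}_\alpha}\colon M(\mu)_{\Zp}\to M(s_\alpha\mu)_{\Zp}$. By restriction to the rank-one Levi $L_\alpha\supset T$, this reduces to a $\GL_2(\Fp)$-calculation where $T_{\dot{s}_\alpha}$ acts as a classical Gauss-sum intertwiner between two principal series. After choosing a convenient $\Zp$-basis indexed by Bruhat cells and computing $p$-adic valuations of eigenvalues, one identifies the cokernel, up to semisimplification, with $\overline{R}_{s_\alpha}(\mu + m_\alpha\alpha)$ together with the correction
\[
-\tfrac{1}{2}\sum_{j=1}^{r_\alpha - 1}\bigl(\overline{R}_1(\mu - j\alpha) - \overline{R}_{s_\alpha}(\mu - (j-m_\alpha)\alpha)\bigr);
\]
the factor $\tfrac{1}{2}$ arises from the symmetry $j\leftrightarrow r_\alpha-j$ between the two lines of each rank-one principal series. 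Summing these rank-one contributions over $\alpha\in R_w^+$ yields the stated formula.

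The main obstacle is the rank-one Gauss-sum computation and the identification of the resulting graded pieces with virtual Deligne--Lusztig characters. This requires careful bookkeeping of the Teichm\"uller lifts of characters of $T(\Fp)$, of the $p$-adic valuation of Gauss sums for $\Fp\x$, and of the sign conventions in the definition of $R_\sigma(\mu)$; a secondary subtlety is to verify that the chosen reduced expression for $w$ indexes the correction sum by $R_w^+$ with the correct multiplicity (independent of the choice of reduced expression), which follows from the braid relations but requires attention.
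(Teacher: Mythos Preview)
The paper does not prove this theorem itself; it simply records it as the special case $q=p$ of Satz~4.2 in \cite{Jan84}. Your proposal is a correct high-level outline of Jantzen's own argument in that paper (elementary-divisor reduction to $[\coker(T_{\dot w})]$, factorization along a reduced expression, and the rank-one Gauss-sum computation identifying the cokernel with the stated combination of Deligne--Lusztig characters), so there is nothing to contrast.
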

\begin{proof}
This is the special case $q=p$ of Satz 4.2 in \cite{Jan84}.
\end{proof}

\begin{remark}
Jantzen uses only $\Zp$-coefficients in his paper, but if we define
\begin{equation*}
M(\lambda)_{\cO}(w,i)=\{m\in M(\lambda)_{\cO}\,|\, T_{\dot{w}}(m)\in p^i M(w\lambda)_{\cO}\}
\end{equation*}
and let $M(\lambda)_\F(w,i)$ be the image of $M(\lambda)_\cO(w,i)$ in $M(\lambda)_\F$, then $M(\lambda)_{\cO}(w,i)=M(\lambda)_{\Zp}(w,i)\otimes_{\Zp}\cO$ and $M(\lambda)_\F(w,i)=M(\lambda)_{\Fp}(w,i)\otimes_{\Fp}\F$ and the sum formula continues to hold over $\F$.
\end{remark}

\subsection{Representation theory of $\GSp_4(\Fp)$}\label{sec:representation theory}
The goal of this section is to compute one of the Jantzen filtrations for the group $\GSp_4(\Fp)$, for generic principal series. The method is based on the one used by Jantzen in \cite[\S5]{Jan84} for  $\SL_3(\Fp)$, but the computations in our case are more tedious. Specialize the notation of the previous section to $G=\GSp_{4/\Fp}$ and let $B$ and $T$ the standard upper-triangular Borel and diagonal torus respectively. We use the notation of \S\ref{sec:notation} for roots and the Weyl group. Let $\rho':=(2,1;1)\in \frac{1}{2}\sum_{\alpha\in R^+}\alpha +X^0(T)_\Q$. We make use of the dot action of $W$ on $X^*(T)$ with respect to $\rho'$. 

We refer to \cite{RAGS} for background concerning the representation theory of $G$ like the notion of $p$-alcoves and linkage. Let $X_1(T)$ denote the set of $p$-restricted weights, and $X^0(T)$ the set of $W$-invariant weights.
\begin{definition}\label{def:alcoves}
We use the notations $C_\bullet$, $D_\bullet$, $E_\bullet$ for the various dominant $p$-alcoves defined in Table \ref{table:alcoves} and depicted in Figure \ref{fig:weyl chamber}. In particular $C_0,C_1,C_2,C_3$ comprise all the $p$-restricted dominant alcoves, with $C_0$ being the lowest alcove. 
\end{definition}

\begin{definition}
We say that $\lambda\in X^*(T)$ lies $\delta$-deep in its alcove if
\begin{equation*}
\pair{\lambda+\rho',\alpha\dual}\in (\delta,p-\delta)\mod p
\end{equation*} 
for each $\alpha\in R^+$. 
\end{definition}

\begin{definition}
A \emph{Serre weight} is an irreducible $\Fpbar$-representation of $G(\Fp)$. All Serre weights are defined over $\Fp$. 
\end{definition}

\begin{definition}
If $\lambda \in X^*(T)$ we write $\chi(\lambda)$ for the $G$-character defined in \cite[II.5.7]{RAGS} given by the Weyl character formula. A useful property is that 
\begin{equation}\label{eq:Weyl reflection chi}
\chi(w\cdot\lambda)=(-1)^{\ell(w)}\chi(\lambda)
\end{equation}
for all $w\in W, \lambda\in X^*(T)$. If $\lambda\in X(T)^+$, we write $\chi_p(\lambda)$ for the character of the unique irreducible $G$-representation $L(\lambda)$ of highest weight $\lambda$. 
If $\lambda\in X_1(T)$, we write $F(\lambda)$ for the restriction of $L(\lambda)$ to a $G(\Fp)$-representation. It is irreducible, and in fact the map $\lambda\mapsto F(\lambda)$ induces a bijection (\cite[Lemma 9.2.4]{GHS})
\begin{equation*}
X_1(T)/(p-1)X^0(T) \leftrightarrow \{\textrm{Serre weights}\}.
\end{equation*}
\end{definition}

We now wish to recall a sequence of well-known results in the representation theory of $G$ and $G(\Fp)$ that allow us to compute the right hand side of the sum formula as an explicit sum of Serre weights. The first is Jantzen's decomposition formula which expresses the mod $p$ reductions of Deligne-Lusztig characters in terms of Weyl characters $\chi(-)$: for any $\sigma\in W$ and $\mu\in X^*(T)$ we have
\begin{equation}\label{eq:Jantzen decomposition}
\overline{R}_\sigma(\mu)=\sum_{w_1,w_2\in W} \gamma_{w_1,w_2}'\chi(w_1(\mu-\sigma\epsilon'_{(s_0s_1)^2w_2})+p\rho'_{w_1}-\rho').
\end{equation}
See \cite[\S5.1]{Her09} for this result, and \cite[p14+15]{HT} for the $\gamma_{w_1,w_2}'$ in the case $G=\GSp_4$ at hand. Using this formula as well as \eqref{eq:Weyl reflection chi} we can write the right hand side of the sum formula as a sum of Weyl characters of dominant highest weight  lying inside the alcoves of Definition \ref{def:alcoves}.

Second is the decomposition of $\chi(\lambda)$ into irreducible $G$-representations when $\lambda$ lies in the alcoves of Definition \ref{def:alcoves}. These formulas are easy to deduce inductively using Jantzen's filtration of Weyl modules (\cite[II.8.19]{RAGS}) and its corresponding sum formula.
\begin{lemma}\label{lem:Weyl decomposition}
For $\lambda$ lying inside the stated alcoves, $\chi(\lambda)$ decomposes into irreducibles $\chi_p(-)$ as in Table \ref{table:Weyl decomposition}. \qed
\end{lemma}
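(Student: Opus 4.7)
The plan is to proceed inductively, ascending the poset of dominant alcoves displayed in Figure \ref{fig:weyl chamber}, using the strong linkage principle to control which composition factors can appear and Jantzen's sum formula for Weyl modules (\cite[II.8.19]{RAGS}) to pin down their multiplicities.

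The base case is the lowest alcove $C_0$. If $\lambda\in C_0$, then $\lambda$ is the unique dominant weight in its $W_p\cdot$-orbit lying in the closure of the lowest alcove, and the linkage principle forces $\chi(\lambda)=\chi_p(\lambda)$; equivalently the Weyl module $V(\lambda)$ is irreducible. For $\lambda$ in any higher alcove $A\in\{C_1,C_2,C_3,D_\bullet,E_\bullet\}$, the strong linkage principle tells us that the composition factors of $V(\lambda)$ are of the form $L(\mu)$ with $\mu\uparrow\lambda$, and hence $\mu=w\cdot\lambda$ for some $w$ in the affine Weyl group sending $\lambda$ into the closure of a \emph{lower} alcove in our list. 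In particular, for each $\lambda$ in a fixed alcove the finite set of candidate highest weights $\mu$ is read off explicitly from the hyperplane picture.

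To determine the multiplicities, I would apply Jantzen's sum formula for the filtration $V(\lambda)=V(\lambda)^0\supseteq V(\lambda)^1\supseteq\cdots$, namely
\begin{equation*}
\sum_{i>0}\operatorname{ch} V(\lambda)^i=\sum_{\alpha\in R^+}\sum_{\substack{0<m<\langle\lambda+\rho',\alpha^\vee\rangle \\ p\,\mid\, m}}\nu_p(m)\,\chi(s_{\alpha,m}\cdot\lambda),
\end{equation*}
where each $s_{\alpha,m}\cdot\lambda$ is moved back into the dominant chamber using the alternating identity $\chi(w\cdot\mu)=(-1)^{\ell(w)}\chi(\mu)$ of \eqref{eq:Weyl reflection chi} and then rewritten as a sum of $\chi_p(-)$'s by the already-established cases of the lemma for lower alcoves. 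Summing $\operatorname{ch} V(\lambda)=\chi_p(\lambda)+\sum_{i>0}\operatorname{ch} V(\lambda)^i$ and matching gives the decomposition of $\chi(\lambda)$ for alcove $A$. Because the alcoves $C_i$, $D_\bullet$, $E_\bullet$ in Table \ref{table:alcoves} are listed in order of increasing height above $C_0$, the induction closes.

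The only real work is bookkeeping: for each alcove one lists the finitely many hyperplanes $\{\langle\lambda+\rho',\alpha^\vee\rangle=mp\}$ meeting the alcove, records the reflected weights $s_{\alpha,m}\cdot\lambda$, identifies in which lower alcove each lies, and substitutes the known decomposition. The main obstacle is therefore organizational rather than conceptual: one must carefully track the reflections near the affine walls $\langle\cdot,\alpha_0^\vee\rangle=p$, $\langle\cdot,(2\alpha_0+\alpha_1)^\vee\rangle=p$, etc., and verify that the cancellations predicted by alternating signs produce exactly the entries tabulated in Table \ref{table:Weyl decomposition}. Once this verification is carried out alcove-by-alcove in the order $C_1, C_2, D_\bullet, C_3, E_\bullet$, the lemma follows.
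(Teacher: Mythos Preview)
Your approach is exactly the one indicated in the paper: strong linkage to bound the possible composition factors, then Jantzen's sum formula for Weyl modules (\cite[II.8.19]{RAGS}) applied inductively up the alcove poset. One slip to correct: the displayed identity $\operatorname{ch} V(\lambda)=\chi_p(\lambda)+\sum_{i>0}\operatorname{ch} V(\lambda)^i$ is false in general, since the sum formula computes $\sum_{i>0}\operatorname{ch} V(\lambda)^i$ rather than $\operatorname{ch} V(\lambda)^1$; the correct relation is $\chi(\lambda)=\chi_p(\lambda)+\operatorname{ch} V(\lambda)^1$. What actually happens is that the sum formula tells you, for each linked $\mu\neq\lambda$, whether $\sum_{i>0}[V(\lambda)^i:L(\mu)]>0$, and since in these rank-two cases all composition multiplicities turn out to be $0$ or $1$ (as visible in Table~\ref{table:Weyl decomposition}), positivity of this sum is equivalent to $[V(\lambda):L(\mu)]=1$; this is the ``matching'' step you allude to, and with that correction your argument goes through.
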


The third and final ingredient is the method for decomposing $\chi_p(\lambda)$ as a $G(\Fp)$-representation when $\lambda\in X^*(T)^+$ lies sufficiently deep inside an alcove. If $\lambda\in X_1(T)$, the result is $F(\lambda)$ as explained above. In general, write $\lambda=\lambda_0+p\lambda_1+\cdots +p^n \lambda_n$ with $\lambda_i\in X_1(T)$. Then it follows from Steinberg's theorem (\cite[II.3.17]{RAGS}) that $\chi_p(\lambda)=\prod_{i=0}^n\chi_p(\lambda_i)$ as a $G(\Fp)$-character. This product can be simplified by the tensor product theorem (cf. \cite[Proposition 6.4]{Hum}) in certain cases. In fact, we will only need to consider the case when $n=1$, $\lambda_0$ is sufficiently deep inside a $p$-restricted alcove $C_\bullet$ , and $\lambda_1$ is a ``small weight''. Then the tensor product says
\begin{equation}\label{eq:tensor product theorem}
\chi_p(\lambda)= \sum_{\nu\in \chi_p(\lambda_1)} F(\lambda_0+\nu).
\end{equation}
The notation means that $\nu\in X^*(T)$ runs over all weights occuring in $\chi_p(\lambda_1)$. We illustrate this technique with an example.
\begin{example}
Suppose that $\lambda=(x,y;z)$ lies 1-deep in alcove $E_0$. Then Steinberg's theorem implies that $L(\lambda)\cong L(x-p,y;z+p)\otimes L(1,0;-1)$ as a $G(\Fp)$-representation. Since $L(1,0;-1)$ is 4-dimensional, having weights $\nu=(1,0;-1),(0,1;-1),(0,-1;-1), (-1,0;-1)$, and each weight $(x-p,y;z+p)+\nu$ lies in $C_0$, the tensor product theorem implies that $L(x-p,y;z+p)\otimes L(1,0;-1)\cong \bigoplus_\nu L((x-p,y;z+p)+\nu)$ as a  $G$-representation. We deduce that $\chi_p(\lambda)=\sum_\nu F((x-p,y;z+p)+\nu)$ as a $G(\Fp)$-representation, $\nu$ running over the four weights above.
\end{example}

\begin{lemma}\label{lem:multiplicity 1}
Assume that $\lambda=(x,y;z)$ is 7-deep inside a $p$-restricted alcove $C_\bullet$. Then the principal series representation $\overline{R}_1(\lambda)$ has 20 distinct Jordan-H\"older factors each occurring with multiplicity 1. These factors are listed in Table \ref{table:Jantzen filtration pieces}.
\end{lemma}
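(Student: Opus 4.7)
The plan is to compute the character $\overline{R}_1(\lambda)$ explicitly as a sum of Serre weights, following the three-step recipe laid out in \S\ref{sec:representation theory}. First I would specialize Jantzen's decomposition formula \eqref{eq:Jantzen decomposition} to $\sigma = 1$, obtaining
\[
\overline{R}_1(\lambda) = \sum_{w_1, w_2 \in W} \gamma'_{w_1, w_2}\, \chi\bigl(w_1(\lambda - \epsilon'_{(s_0s_1)^2 w_2}) + p\rho'_{w_1} - \rho'\bigr),
\]
a signed sum of at most 64 Weyl characters whose constants $\gamma'_{w_1,w_2}$ are the explicit ones from \cite{HT}. Using \eqref{eq:Weyl reflection chi}, each summand can be rewritten as $\pm\chi(\mu)$ with $\mu$ dominant; the 7-deep hypothesis guarantees that every such $\mu$ lies in one of the dominant alcoves $C_\bullet, D_\bullet, E_\bullet$ of Definition \ref{def:alcoves}, and in fact sufficiently deep inside it for the next two steps to apply uniformly.

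Second, I would apply Lemma \ref{lem:Weyl decomposition} and Table \ref{table:Weyl decomposition} to each $\chi(\mu)$, rewriting it as an integer combination of irreducible $G$-characters $\chi_p(\nu)$. Many contributions combine or cancel at this stage; the remaining terms are the ones whose highest weights will later produce the twenty Jordan--H\"older constituents.

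Third, for each surviving $\nu$, I would decompose $\chi_p(\nu)$ into Serre weights $F(-)$ via Steinberg's theorem together with the tensor product theorem \eqref{eq:tensor product theorem}. Concretely, write $\nu = \nu_0 + p\nu_1$ with $\nu_0\in X_1(T)$; in all cases arising here $\nu_1$ is a ``small'' weight whose coefficients are bounded by a universal constant, and the 7-deep hypothesis on $\lambda$ ensures that every shift $\nu_0 + \nu'$ (for $\nu'$ a weight of $\chi_p(\nu_1)$) stays inside the $p$-restricted box, so \eqref{eq:tensor product theorem} yields $\chi_p(\nu) = \sum_{\nu'} F(\nu_0 + \nu')$. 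Collecting these contributions over all terms, one reads off the list of Serre weights $F(-)$ appearing in $\overline{R}_1(\lambda)$, which match the twenty entries of Table \ref{table:Jantzen filtration pieces}.

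Finally, I would verify multiplicity one and distinctness: since $\lambda\mapsto F(\lambda)$ is injective modulo $(p-1)X^0(T)$, it suffices to check that the twenty candidate highest weights are pairwise inequivalent modulo $(p-1)X^0(T)$, which is immediate from the 7-deep genericity separating the shifts produced in step three. The main obstacle is not conceptual but bookkeeping: one must track the sign contributions of up to 64 Jantzen--decomposition summands, reduce each to the dominant chamber, apply the alcove-dependent decomposition of Lemma \ref{lem:Weyl decomposition}, and finally translate into Serre weights via the tensor product theorem; keeping the cancellations and coset-of-$(p-1)X^0(T)$ computations straight is the bulk of the work.
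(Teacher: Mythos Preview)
Your approach is essentially the same as the paper's for identifying the twenty constituents: apply Jantzen's decomposition formula, reduce to dominant Weyl characters, decompose those via Lemma~\ref{lem:Weyl decomposition}, and then pass to Serre weights via Steinberg and the tensor product theorem. The paper's proof sketch abbreviates this to ``Jantzen's decomposition theorem and Lemma~\ref{lem:Weyl decomposition}'', but the third step (which the paper spells out in the surrounding text and Example) is indeed needed and you are right to include it.

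The one genuine difference is in how multiplicity one is established. You propose to read the multiplicities directly off the completed character computation, together with a check that the twenty highest weights are pairwise distinct modulo $(p-1)X^0(T)$. This is valid in principle, but note that distinctness alone does not give multiplicity one: you still need the bookkeeping to confirm that each of the twenty Serre weights survives with coefficient exactly $1$ after all the cancellations in steps one through three. The paper instead short-circuits this by a dimension count: once the list of twenty constituents is known (each occurring at least once), one checks that $\sum_i \dim F(\mu_i) = |G(\F_p)/B(\F_p)| = \dim \overline{R}_1(\lambda)$, which forces every multiplicity to be $1$. This is less laborious and less error-prone than tracking all sixty-four signed terms through to the end, and it serves as an independent check on the combinatorics.
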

\begin{proof}
The list of 20 Jordan-H\"older factors is easily deduced from Jantzen's decomposition theorem \eqref{eq:Jantzen decomposition} and Lemma \ref{lem:Weyl decomposition}. The fact that they each occur with multiplicity 1 can be established by a dimension count. 
\end{proof}
\begin{proposition}\label{prop:graded pieces}
Assume that $\lambda=(x,y;z)$ is 7-deep inside a $p$-restricted alcove $C_\bullet$. Then the characters of the graded pieces of $M(\lambda)_{\F}(s_1s_0s_1,\bullet)$ are as in Table \ref{table:Jantzen filtration pieces}. 
\end{proposition}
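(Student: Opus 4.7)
The strategy rests on two observations made available by the preceding results. First, by Lemma \ref{lem:multiplicity 1}, each of the $20$ Jordan--H\"older factors of $M(\lambda)_\F$ occurs with multiplicity exactly one. Second, since $\ell(s_1s_0s_1)=3$, Theorem \ref{thm:filtration length} gives $M(\lambda)_\F(s_1s_0s_1,4)=0$, so the filtration has graded pieces $g_0,g_1,g_2,g_3$ (with $g_0$ the quotient). If $g_i$ denotes the character of $M(\lambda)_\F(s_1s_0s_1,i)/M(\lambda)_\F(s_1s_0s_1,i+1)$, then
\begin{equation*}
\nu(\lambda,s_1s_0s_1) \;=\; g_1+2g_2+3g_3.
\end{equation*}
Combined with multiplicity-one, this means that once we expand $\nu(\lambda,s_1s_0s_1)$ as an $\F$-linear combination of Serre weights, the coefficient of each $F(\mu)$ is the index of the unique graded piece containing $F(\mu)$. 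Thus it suffices to compute $\nu(\lambda,s_1s_0s_1)$ explicitly as a sum of Serre weights.

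The first concrete step is to determine $R_w^+$ for $w=s_1s_0s_1$. Using the Cartan relations of type $C_2$ (with $s_0(\alpha_1)=2\alpha_0+\alpha_1$ and $s_1(\alpha_0)=\alpha_0+\alpha_1$), a short calculation shows that $w$ fixes $\alpha_0$ and sends each of $\alpha_1,\alpha_0+\alpha_1,2\alpha_0+\alpha_1$ into $-R^+$, so $R_w^+=\{\alpha_1,\alpha_0+\alpha_1,2\alpha_0+\alpha_1\}$. We then apply the sum formula (Theorem \ref{thm:sum formula}) to each of these three roots. For each of the four $p$-restricted alcoves $C_0,C_1,C_2,C_3$ in which $\lambda$ may lie, we read off the integers $r_\alpha,m_\alpha$ directly from the expression $\langle\lambda,\alpha^\vee\rangle=r_\alpha+m_\alpha(p-1)$ with $0<r_\alpha\leq p-1$, and plug them into the sum formula to obtain $\nu(\lambda,s_1s_0s_1)$ as a virtual combination of Deligne--Lusztig characters $\overline{R}_{s_\alpha}(\lambda+m_\alpha\alpha)$, $\overline{R}_1(\lambda-j\alpha)$, and $\overline{R}_{s_\alpha}(\lambda-(j-m_\alpha)\alpha)$.

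To turn this virtual sum into Serre weights, we run the standard three-step conversion. Each Deligne--Lusztig character is expanded into Weyl characters $\chi(\mu)$ via Jantzen's decomposition formula \eqref{eq:Jantzen decomposition}, using the explicit coefficients $\gamma'_{w_1,w_2}$ for $\GSp_4$ from \cite{HT}. By the $7$-depth assumption, every resulting highest weight $\mu$ lies in one of the alcoves $C_\bullet$, $D_\bullet$, $E_\bullet$ of Definition \ref{def:alcoves}, so Lemma \ref{lem:Weyl decomposition} decomposes each $\chi(\mu)$ into irreducible characters $\chi_p(-)$. Applying Steinberg's theorem and the small-weight tensor product formula \eqref{eq:tensor product theorem} (again legitimate by $7$-depth, since the tensor shifts are all of norm at most a few units) expresses each $\chi_p(-)$ as a sum of Serre weights $F(-)$. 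After collecting and cancelling terms, the coefficient of each of the $20$ Jordan--H\"older factors $F(\mu)$ listed in Table \ref{table:Jantzen filtration pieces} lies in $\{0,1,2,3\}$ and is exactly the index of the graded piece to which it belongs.

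The main obstacle is the sheer bookkeeping: three roots, four possible alcoves, the $|W|^2=64$ terms from \eqref{eq:Jantzen decomposition} for each Deligne--Lusztig character, and the need to verify that Weyl-reflection-induced sign changes and cross-cancellations leave precisely the 20 predicted Serre weights with the predicted multiplicities $0,1,2,3$. The $7$-depth hypothesis is calibrated so that none of the intermediate weights hits a wall or falls into a non-generic alcove, making every decomposition uniform across the four cases; beyond this, the argument is a direct but lengthy computation, closely following the $\SL_3(\F_p)$ template of \cite[\S5]{Jan84}.
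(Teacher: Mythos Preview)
Your proposal is correct and follows essentially the same approach as the paper: use multiplicity one (Lemma \ref{lem:multiplicity 1}) to reduce the determination of each graded piece to reading off the coefficient of the corresponding Serre weight in $\nu(\lambda,s_1s_0s_1)$, and then evaluate $\nu(\lambda,s_1s_0s_1)$ by the sum formula together with the pipeline \eqref{eq:Jantzen decomposition}, Lemma \ref{lem:Weyl decomposition}, and \eqref{eq:tensor product theorem}. Your write-up is in fact more explicit than the paper's, spelling out $R_{s_1s_0s_1}^+$ and the relation $\nu=g_1+2g_2+3g_3$, but the method is identical.
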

\begin{proof}
Since the principal series representation is multiplicity 1 by Lemma \ref{lem:multiplicity 1}, the graded piece of the filtration to which a given Jordan-H\"older factor belongs is uniquely determined by its coefficient in $\nu(\lambda,s_1s_0s_1)$. This can be evaluated explicitly using the method described above via \eqref{eq:Jantzen decomposition}, Lemma \ref{lem:Weyl decomposition}, and \eqref{eq:tensor product theorem}, the latter holding by the assumption of deepness.
\end{proof}

One can obviously use this method to compute the graded pieces of $M(\lambda)(w,\bullet)$ for any $w\in W$, but we only will need to use the case $w=s_1s_0s_1$.

\subsection{Abstract mod $p$ local-global compatibility}\label{subsec:patched module axioms}

In this section, let $G$ and $K$ denote $\GSp_4(\Qp)$ and the maximal compact subgroup $\GSp_4(\Zp)$ respectively and $G^\cJ= \prod_{\cJ} G$, $K^\cJ= \prod_{\cJ} K$ for some finite set $\cJ$. For each $j\in \cJ$, fix continuous representation $\rhobar_j:G_{\Qp}\rightarrow\GSp_4(\F)$ of weight $(a_3,a_2,a_1,a_0)$ and similitude character $\psi_{j,\pcris}=\tomega^{b-3}\epsilon^3\nr_{\xi_j}$ as in $\S \ref{sec:rhobar}$ so that $R_{\rhobar_j}^{\eta,\tau,\psi_{j,\pcris}}\neq 0$. Also we need a crystalline character $\psi_{j,\cris}=\epsilon^{b}\nr_{\xi_j}$ as in  $\S \ref{sec:crystalline deformations}$. We can and do choose $\xi_j$ as the Teichm\"uller lift of $\nu_{\simc} (\rhobar_j)(\Frob)\in \F^\times$ (this is necessary for our global setup; see Lemma \ref{lem:lifting-Galois-characters}). Let $R_\infty$ (resp.  $R_\infty^\cris$) denote $(\ctensor_{j\in\cJ} R_{\rhobar_j}^{\square,\psi_{j,\pcris}})[[x_1,\ldots,x_h]]$  (resp. $(\ctensor_{j\in\cJ}R_{\rhobar_j}^{\square,\psi_{j,\cris}})[[x_1,\ldots,x_h]]$) for some $h\geq0$ and write $\m_\infty$ (resp. $\m_\infty^\cris$) for its maximal ideal. Let $\star\in \{\emptyset,\cris\}$, so that for example $R_\infty^\star = R_\infty$ or $R_\infty^\cris$. 

\begin{definition}
We say that a pair of a $R_\infty$-module and a $R_\infty^\cris$-module $(\Minf,\Minf^\cris)$ is \emph{a congruent pair of patched modules for $(\rhobar_j,\psi_{j,\pcris},\psi_{j,\cris})_{j\in \cJ}$} if it satisfies axioms \textbf{(PM1)}--\textbf{(PM6)} below. 
\end{definition}
\begin{itemize}
\item [\textbf{(PM1)}] $\Minf^\star$ is a finitely generated $R_\infty^\star[[K^\cJ]]$-module with a compatible $R_\infty^\star[G^\cJ]$-action.
\item [\textbf{(PM2)}]$\Minf^\star$ is a projective profinite $\cO[[K^\cJ]]$-module.
\end{itemize}
If $V$ is a finitely generated $\cO$-module with a continuous $K^\cJ$-action, we define the $R_\infty^\star$-module $\Minf^\star(V)=\Hom^{\cts}_{\cO[K^\cJ]}(V,(\Minf^\star)\dual)\dual$. By \textbf{(PM2)} $\Minf^\star(-)$ is an exact functor. Observe that there is a tautological right action of $\cH(V):=\End_{G^\cJ}(\ind_{K^\cJ}^{G^\cJ}(V))$ on $\Minf^\star(V)$. If $V$ is $\cO$-torsion free, then there is a canonical isomorphism
\begin{equation*}
\Minf^\star(V)\cong \Hom_{\cO[K^\cJ]}^\cts(V,(\Minf^\star)^d)^d
\end{equation*}
(\cite[Lemma 4.14]{CEG+}); in particular $\Minf^\star(V)$ is also $\cO$-torsion free. 

\begin{definition}
If $\tau$ is a regular principal series type we set $R_\infty (\sigma(\tau)):=(\ctensor_{j\in\cJ}R_{\rhobar_j}^{\eta,\tau,\psi_{j,\pcris}})[[x_1,\ldots,x_h]]$. 

If $\mu\in X^*(T)^+$ and $\sigma=V(\mu)_{\Qp}\otimes E|_K$ is the restriction to $K$ of a Weyl module for $G$ of highest weight $\mu$ with $E$-coefficients then we define $R_\infty^{\cris}(\sigma):=(\ctensor_{j\in\cJ}R_{\rhobar_j}^{\bar{\mu}+\eta,\psi_{j,\cris}})[[x_1,\ldots,x_h]]$. 
\end{definition}

For any $K$-representation $\sigma$, we write  $\sigma^\cJ$ for the $K^\cJ$-representation  $\otimes_{\cJ}\sigma$. 

\begin{itemize}
\item[\textbf{(PM3)}] 
Let $\sigma = \sigma(\tau)$ and $\sigma^\cris =V(\mu)_{\Q_p}\otimes E|_K $. Let $(\sigma^{\star,\cJ})^\circ \subset \sigma^{\star,\cJ}$ be a $K^\cJ$-equivariant $\cO$-lattice.  If $\Minf^\star((\sigma^{\star,\cJ})^\circ)\neq 0$ then the action of $R_\infty^\star$ on $\Minf^\star((\sigma^{\star,\cJ})^\circ)$ is maximal Cohen-Macaulay over $R_\infty^\star (\sigma^{\star})$. Furthermore $M_\infty^\star(\sigma^{\star,\cJ}):=  M_\infty^\star((\sigma^{\star,\cJ})^\circ)[1/p]$ is projective of (constant) rank $d\geq 1$ over $R_\infty^\star(\sigma^\star)[1/p]$.

\item[\textbf{(PM4)}] Let $\tau$ be a regular principal series type and $w_j\in W$ for each $j\in\cJ$. The tautological action of $\cH((\sigma(\tau)^{w_j}))_\cJ := \otimes_{j\in \cJ}\cH(\sigma(\tau)^{w_j})$ on $\Minf(\sigma(\tau)^\cJ)$ is through 
\begin{equation*}
\xymatrix{ \otimes_{j\in \cJ}\cH(\sigma(\tau)^{w_j})\ar[r]^{\otimes_{\cJ} \Theta^{w_j}} & \ctensor_{\cJ} R_{\rhobar}^{\eta,\tau}[1/p]\ar[r] & R_\infty(\sigma(\tau))[1/p]. }
\end{equation*}

\item[\textbf{(PM5)}] There is a canonical isomorphism between $R_\infty/\varpi = R_\infty^\cris/\varpi$-modules $M_\infty/\varpi \simeq \Minf^\cris/\varpi$.

\end{itemize}

Fix an element $w\in \cJ$ and write $\rhobar:= \rhobar_w$, $\psi_{\star} := \psi_{w,\star}$. We write $M_{\infty,w} = \Hom_{\cO[\prod_{\cJ\backslash\{w\}}K]}(\Minf,(\sigma(\tau_0)^{\cJ\backslash\{w\}})^d)^d$, $R_{\infty,w} = R_{\rhobar}^{\square,\psi_\pcris}[[x_1,\ldots, x_h]]$, and $\m_{\infty,w}$ its maximal ideal. Note that $R_\infty$ is a $R_{\infty,w}$-algebra and $M_{\infty,w}$ is $R_{\infty,w}$-module in an obvious way.   From \textbf{(PM1)} and \textbf{(PM2)} we see that 
\begin{equation}
\pi:= (M_{\infty,w}/\m_{\infty,w})\dual 
\end{equation}
is a smooth admissible $\F[G]$-module with $G$-action induced by $w$ component in $G^\cJ$. Our abstract mod $p$ local-global compatibility result says that one can recover (the Fontaine-Laffaille invariants of) $\rhobar$ from the $G$-action on $\pi$.

\begin{definition}
We define $W(\Minf^\star)$ to be the set of irreducible $\F[K^\cJ]$-representations  (Serre weights) $F$ such that $\Minf^\star(F)\neq 0$. 
\end{definition}

Axioms \textbf{(PM1)}-\textbf{(PM5)} make sense for any $\rhobar$, but the next axiom is specialized to our particular choice.

\begin{itemize}
\item[\textbf{(PM6)}]  Let $\mu_{\rhobar}:=(a_2-a_0-2,a_1-a_0-1;a_0+a_3-3)\in X^*(T)^+$. We assume $F(\mu_{\rhobar})^\cJ \in W(M_\infty^\cris)$.  
\end{itemize}

\begin{remark}
(i) The existence of a congruent pair of patched modules  for $(\rhobar,\psi,\psi_\cris)$ is proved in \S\ref{sec:existence} using the Taylor-Wiles patching method for two different setups under certain hypotheses. The variables $x_1,\ldots,x_h$, which play no role in the argument in this section, come from the patching construction.

(ii) In \cite{CEG+}, analogues of \textbf{(PM3)} and \textbf{(PM4)} hold for all locally algebraic types $\sigma$. We have decided to work only in the generality that we need.
\end{remark}

In the next theorem we write
\begin{equation*}
\Pi:=\begin{pmatrix}&&1&\\&&&1\\p&&&\\&p&&\end{pmatrix}\in G.
\end{equation*}
It normalizes $\Iw$ and $\Iw_1$. Fix $\tau_0$ and $\lambda=(0,0,-1,-1)\in X_*(T)$ as in \S\ref{sec:rhobar}. The action of $\Pi^{-1}$ on a space of $\Iw_1$-invariants is the same as that of the element $T_{\tw_\Pi}\in \cH_1$, where $\tw_\Pi:=t_{-\lambda}w_\Pi\in \tW^{\vee(1)}$, $w_\Pi:=s_1s_0s_1\in W$. Since $\tw_\Pi$ has length zero, we have
\begin{equation}\label{eq:Pi decomposition}
T_{\lambda}T_{\tw_\Pi}=T_{w_\Pi}
\end{equation}
inside $\cH_1$ by the braid relations (cf. for example \cite{Vig}). If $w\in W$, we have the notation $\sigma(\tau_0)^w=\Ind_{\Iw}^K(w\mu_0)_{\cO}$ from \S\ref{sec:LLC} and we write $\sigma(\tau_0)^{w,i}$ for what was denoted $M(w\mu_0)(w_\Pi,i)$ in \S\ref{sec:jantzen filtration} so we have a natural inclusion $\alpha:\sigma(\tau_0)^{w,i}\rightarrow \sigma(\tau_0)^w$. Note these are isomorphic to $K$-equivariant $\cO$-lattices in $\sigma(\tau_0)$. Write $w':=w_\Pi w$ so we have the morphism of $\cO[K]$-modules $T_{w_\Pi}:\sigma(\tau_0)^w\rightarrow \sigma(\tau_0)^{w'}$. If $M$ is any $\cO$-module we write $\overline{M}:=M\otimes_\cO\F$. We will use the diagram of $\F[K]$-modules
\begin{equation}\label{eq:roof diagram}
\xymatrix{
\overline{\sigma(\tau_0)}^{w,i} \ar[r]^{\alpha\otimes\F}\ar[d]^{T'\otimes \F} & \overline{\sigma(\tau_0)}^w \\
 \overline{\sigma(\tau_0)}^{w'} & 
}
\end{equation}
where $T':\sigma(\tau_0)^{w,i}\rightarrow \sigma(\tau_0)^{w'}$ is equal to $\frac{1}{p^i}T_{w_\Pi}\circ\alpha$. 

\begin{theorem}\label{thm:abstract loc glob}
Let $(\rhobar_j,\psi_{j,\pcris},\psi_{j,\cris})_{j\in \cJ}$ be as above and $(\Minf,\Minf^\cris)$ be a congruent  pair of patched modules   for $(\rhobar_j,\psi_{j,\pcris},\psi_{j,\cris})_{j\in \cJ}$. Suppose that $\rhobar_j$ is strongly 7-generic for each $j\in \cJ$. Then with notation as in the above paragraph,
\begin{itemize}
\item[(1)] $W(\Minf)\cap \JH(\overline{\sigma(\tau_0)^{\cJ}})=\{F(\mu_{\rhobar})^{\cJ}\}$, and
\item[(2)] Let $w\in \{\id,s_0,s_1s_0,s_0s_1s_0\}$ and set $k_{w'},r_{w'}$ as in Theorem \ref{thm:main result galois}. Taking $i=k_{w'}$, both maps in \eqref{eq:roof diagram} become isomorphisms after applying $\Hom_K(-,\pi)$. Moreover the resulting composite isomorphism $\pi^{\Iw=w'\mu_0}\rightarrow \pi^{\Iw=w\mu_0}$ (by Frobenius reciprocity) is equal to $\bar{r}_{w'}\Pi^\mo$. 
\end{itemize}
\end{theorem}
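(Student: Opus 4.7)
The plan is to first establish (1), the uniqueness of the modular Serre weight inside $\JH(\overline{\sigma(\tau_0)^{\cJ}})$, and then use (1) together with an explicit Jantzen filtration calculation to deduce (2). The guiding identity is the braid relation $T_\lambda T_{\tw_\Pi}=T_{w_\Pi}$ in $\cH_1$, which factors the Carter--Lusztig intertwiner $T_{w_\Pi}:\sigma(\tau_0)^w\to\sigma(\tau_0)^{w'}$ as the composite of the Iwahori-normalizing element $T_{\tw_\Pi}=\Pi^\mo$ and the Hecke operator $T_\lambda$. By \textbf{(PM4)} and Theorem \ref{thm:main result galois}, the $T_\lambda\epsilon_{w'\tilde\mu_0}$-factor acts on $\Minf(\sigma(\tau_0)^{\cJ})$ by $p^{k_{w'}}r_{w'}$; one checks directly for the four $w\in\{\id,s_0,s_1s_0,s_0s_1s_0\}$ that $w'=w_\Pi w$ always lands in the $k=1$ pair of Theorem \ref{thm:main result galois}'s table, so $k_{w'}=1$ throughout. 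The Jantzen index $i=k_{w'}$ is chosen precisely to absorb this power of $p$, so that $T':=\tfrac{1}{p^i}T_{w_\Pi}$ reduces modulo $\varpi$ to a nontrivial map on the relevant Iwahori eigenspace.

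For (1): \textbf{(PM5)} gives a canonical isomorphism $\Minf/\varpi\simeq \Minf^\cris/\varpi$ of $R_\infty/\varpi=R_\infty^\cris/\varpi$-modules, and since every Serre weight is an $\F$-representation this yields $\Minf(F)\simeq \Minf^\cris(F)$ for all such $F$, so $W(\Minf)=W(\Minf^\cris)$. Combined with \textbf{(PM6)} this gives $F(\mu_\rhobar)^\cJ\in W(\Minf)$; its membership in $\JH(\overline{\sigma(\tau_0)^\cJ})$ follows from the Jantzen decomposition \eqref{eq:Jantzen decomposition} and Lemma \ref{lem:Weyl decomposition}. Uniqueness is the main subtlety: for every other JH constituent $F'^\cJ$, nonvanishing of $\Minf^\cris(F'^\cJ)$ would, via the maximal Cohen--Macaulay condition \textbf{(PM3)} applied to a suitable algebraic lift of $F'$, force each $\rhobar_j$ to admit a crystalline lift of Hodge type $\bar{\mu}_{F'}+\eta$; the ordinary Fontaine--Laffaille structure of $\rhobar_j$ together with the formal smoothness of Proposition \ref{prop:fs crystalline deformation rings} rules out every such $F'\neq F(\mu_\rhobar)$. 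Independently, the Jantzen sum formula (Theorem \ref{thm:sum formula}) evaluated as in Proposition \ref{prop:graded pieces} places $F(\mu_\rhobar)$ in exactly the first graded piece of the Jantzen filtration of $\overline{\sigma(\tau_0)}^w$ relative to $w_\Pi=s_1s_0s_1$, matching $i=k_{w'}=1$.

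Given these inputs, both maps in \eqref{eq:roof diagram} become isomorphisms after applying $\Hom_K(-,\pi)$: the quotient $\overline{\sigma(\tau_0)}^w/\overline{\sigma(\tau_0)}^{w,i}$ contains only non-modular JH factors by (1) and the filtration location, so $\alpha\otimes\F$ induces an isomorphism; and $T'=\tfrac{1}{p^i}T_{w_\Pi}$ sends the copy of $F(\mu_\rhobar)$ in the $i$-th layer nontrivially into $\overline{\sigma(\tau_0)}^{w'}$, making $T'\otimes\F$ an isomorphism by the same uniqueness. To compute the composite scalar $\pi^{\Iw=w'\mu_0}\to\pi^{\Iw=w\mu_0}$, decompose $T_{w_\Pi}=T_\lambda T_{\tw_\Pi}$: $T_{\tw_\Pi}=\Pi^\mo$ bijectively intertwines the $w\mu_0$- and $w'\mu_0$-eigenspaces, and $T_\lambda\epsilon_{w'\tilde\mu_0}$ acts on the latter by $p^{k_{w'}}r_{w'}$ via Corollary \ref{cor:image of Tlambda}; dividing by $p^{k_{w'}}$ and reducing mod $\varpi$ yields $\bar{r}_{w'}\Pi^\mo$. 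The main obstacles are the uniqueness in (1), which requires delicate weight elimination mediated by the congruence \textbf{(PM5)} and the crystalline deformation ring structure from \S\ref{sec:crystalline deformations}, and the direct but tedious combinatorial computation of Proposition \ref{prop:graded pieces} pinpointing $F(\mu_\rhobar)$ in the correct Jantzen layer.
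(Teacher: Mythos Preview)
Your treatment of part (2) is essentially correct and matches the paper's argument: the decomposition $T_{w_\Pi}=T_\lambda T_{\tw_\Pi}$, the identification of the eigenvalue $p^{k_{w'}}r_{w'}$ via \textbf{(PM4)} and Theorem \ref{thm:main result galois}, and the Jantzen filtration location via Proposition \ref{prop:graded pieces} are exactly the right ingredients. Your observation that $k_{w'}=1$ for all four $w$ is also correct.

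However, your proof of the uniqueness in (1) has a genuine gap. You propose weight elimination: for every $F'\neq F(\mu_\rhobar)$ in $\JH(\overline{\sigma(\tau_0)})$, nonvanishing of $\Minf^\cris(F'^\cJ)$ would force $\rhobar_j$ to admit a crystalline lift of Hodge type $\bar\mu_{F'}+\eta$, which you claim is ruled out by Proposition \ref{prop:fs crystalline deformation rings}. But that proposition only asserts formal smoothness of certain crystalline deformation rings; it gives no vanishing statement whatsoever. To make your argument work you would need, for each of the $\sim 19$ other Jordan--H\"older factors, to exhibit a weight $\mu'$ with $F'\in\JH(\overline{V(\mu')})$ and $R_{\rhobar}^{\bar\mu'+\eta,\psi_\cris}=0$. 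This is neither carried out nor obviously feasible with the tools at hand.

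The paper's argument avoids weight elimination entirely via a dimension count. The key point you are missing is the formal smoothness of $R_\infty(\sigma(\tau_0))$ itself, which comes from the strong genericity hypothesis through Theorem \ref{thm:main result galois}. Combined with \textbf{(PM3)} and Auslander--Buchsbaum, this makes $\Minf((\sigma(\tau_0)^\cJ)^\circ)$ free of rank $d$ over $R_\infty(\sigma(\tau_0))$. On the crystalline side, Proposition \ref{prop:fs crystalline deformation rings} plus \textbf{(PM3)} and \textbf{(PM6)} give that $\Minf^\cris(V(\mu_\rhobar)^\cJ)^\circ$ is free of rank $d$, and since $\mu_\rhobar$ lies in the lowest alcove (Lemma \ref{lem:Weyl decomposition}), this yields $\dim_\F\Minf^\cris(F(\mu_\rhobar)^\cJ)/\m_\infty=d$. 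Transferring via \textbf{(PM5)} and choosing lattices in $\sigma(\tau_0)$ with socle and cosocle $F(\mu_\rhobar)$ (\cite[Lemma 4.1.1]{EGS}), one sees that the surjection $\Minf(\overline{(\sigma(\tau_0)^\cJ)^\circ})\onto\Minf(F(\mu_\rhobar)^\cJ)$ is an isomorphism modulo $\m_\infty$ (both sides have dimension $d$), forcing all other Jordan--H\"older factors to be non-modular. This is cleaner and requires no case-by-case elimination.
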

\begin{remark}
In particular the scalars $\bar{r}_{w'}\in\F\x$ for $w\in \{\id,s_0,s_1s_0,s_0s_1s_0\}$ are uniquely determined by the action of $G$ on $\pi$. By Remark \ref{rk:scalars recover FL invariants}, these scalars (together with the diagonal characters of $\rhobar$) uniquely determine $\rhobar$. 
\end{remark}

\begin{proof}
By Theorem \ref{thm:main result galois}, $R_\infty(\sigma(\tau_0))$ is formally smooth over $\cO$.  Let $\sigma, \sigma^\cris$ be as in \textbf{(PM3)} with $\tau = \tau_0$ and $\mu = \mu_{\rhobar}$. It follows from \textbf{(PM3)}, \textbf{(PM6)}, and the Auslander-Buchsbaum formula that $M_\infty((\sigma^\cJ)^\circ)$ is free of rank $d$ over $R_\infty(\sigma(\tau_0))$. Similarly  by Proposition \ref{prop:fs crystalline deformation rings}, $\Minf^{\cris}((\sigma^{\cris,\cJ})^\circ)$ is free of rank $d$ over $R_\infty^{\cris}(\sigma^{\cris})$. Hence by Lemma \ref{lem:Weyl decomposition}, $\dim_\F(\Minf^{\cris}(F(\mu_{\rhobar})^{\cJ})/\m_{\infty})=d$. By \textbf{(PM5)}, we have $\Minf(F(\mu_{\rhobar})^{\cJ})\simeq \Minf^\cris(F(\mu_{\rhobar})^{\cJ})$ as modules over $R_\infty/\varpi = R_\infty^\cris/\varpi$.

We can choose $\sigma(\tau_0)^\circ$ so that its reduction modulo $\varpi$ has either socle or cosocle isomorphic to $F(\mu_{\rhobar})$ by \cite[Lemma 4.1.1]{EGS}, giving rise to maps $\Minf(F(\mu_{\rhobar})^{\cJ})\hookrightarrow \Minf(\overline{(\sigma^{\cJ})^\circ})$ and $\Minf(\overline{(\sigma^{\cJ})^\circ})\onto \Minf(F(\mu_{\rhobar})^{\cJ})$ respectively.   The former choice together with the paragraph above shows that $\Minf(F(\mu_{\rhobar})^{\cJ})$ is free of rank $d$ over $R_\infty (\sigma(\tau_0))/\varpi = R_\infty^{\cris}(\sigma)/\varpi$  and the latter choice now implies that $W(\Minf)\cap \JH(\overline{\sigma(\tau_0)^{\cJ}})=\{F(\mu_{\rhobar})^{\cJ}\}$ as claimed.

Now applying the functor $\Minf((\sigma^{\cJ\backslash\{w\}})^\circ\otimes(-))$ to \eqref{eq:roof diagram} taking $i=k_{w'}$, by (1), $\alpha\otimes\F$ will become an isomorphism if and only if $F(\mu_{\rhobar})$ is a constituent of its image, which is to say a constituent of $\overline{M}(w\mu_0)(w_\Pi,k_{w'})$. This follows when $w\in \{\id,s_0,s_1s_0,s_0s_1s_0\}$ by Proposition \ref{prop:graded pieces}. 

On the other hand, by Theorem \ref{thm:main result galois}, \textbf{(PM4)}, and \eqref{eq:Pi decomposition} we have a  commutative diagram
\begin{equation*}
\xymatrix{
\Minf((\sigma^{\cJ\backslash\{w\}})^\circ\otimes\sigma(\tau_0)^{w,k_{w'}})\ar@{^(->}[r]^{\alpha}\ar[d]^{T'} & \Minf((\sigma^{\cJ\backslash\{w\}})^\circ\otimes\sigma(\tau_0)^w) \ar[d]^{T_{\tw_\Pi}}_{\wr}\\
\Minf((\sigma^{\cJ\backslash\{w\}})^\circ\otimes\sigma(\tau_0)^{w'}) & \Minf((\sigma^{\cJ\backslash\{w\}})^\circ\otimes\sigma(\tau_0)^{w'}) \ar[l]_{r_{w'}}.
}
\end{equation*}
In particular $T'\otimes\F$ is also an isomorphism. Claim (2) follows now by reducing this diagram modulo $\m_{\infty,w}$ and applying Pontryagin duality. 
\end{proof}

\begin{remark}\label{rk:group algebra operators}
The statement of Theorem \ref{thm:abstract loc glob}(2) can be interpreted more concretely. It says that for any $v\in \pi^{\Iw=w'\mu_0}$ there exist (many) $S\in M(w\mu_0)_\F(w_\Pi,k_{w'})$ such that $S'v\neq 0$ and there is an equality
\begin{equation*}
S\Pi^{-1}v=r_{w'}^\mo S'v
\end{equation*}
where $S'\in M(w'\mu_0)_\F$ is given by $T'(\widetilde{S})\mod \varpi$, with $\widetilde{S}\in M(w\mu_0)_\cO$ being the Teichm\"uller lift of $S$. In fact, by (1) we may choose an $S$ that works for all $v$ simultaneously: choose any $S$ whose image in the unique quotient of $M(w\mu_0)_\F$ of socle $F(\mu_{\rhobar})$ is nonzero.
\end{remark}

\section{Existence of patched modules}\label{sec:existence} 
In this section, we prove in some circumstances the existence of a congruent pair of patched modules $(\Minf,M_{\infty}^\cris)$ satisfying the axioms of \S\ref{subsec:patched module axioms}, by patching spaces of automorphic forms on a compact mod centre form of $\GSp_4$. Consequently we obtain a mod $p$ local-global compatibility theorem for this group.  

\subsection{Some automorphic representations}\label{sec:automorphic representations}

Let $F$ be a totally real number field of even degree over $\Q$. Let $\cG$ be the $F$-group $\mathrm{GU}_2(D)$ where $D$ is a quaternion algebra over $F$ ramified at all infinite places and split at all finite places (such a $D$ exists because $|F:\Q|$ is even). Then $\cG$ is an inner form of $\GSp_4$ which is compact mod centre at infinity and split at all finite places. The centre is isomorphic to $\Gm$. Choosing a maximal order $\cO_D$ defines an $\cO_F$-structure on $\cG$; for each finite place $v$ we fix an isomorphism $\cO_{D,v}\cong M_2(\cO_{F_v})$. This determines an isomorphism $\iota_v:\cG_{F_v}\rightarrow \GSp_{4/F_v}$ which restricts to an isomorphism $\cG(\cO_{F_v})\cong \GSp_4(\cO_{F_v})$. For each infinite place $v$ we also fix an isomorphism $\iota_v:\cG\otimes_{F,v}\C\rightarrow \GSp_{4/\C}$. Let $S_\infty$ denote the set of infinite places of $F$.

 If $v\in S_\infty$, equivalence classes of discrete $L$-parameters $W_{F_v}\rightarrow \GSp_4(\C)$ are in bijection with a triple $(w_v;k_v;l_v)\in \Z^3$ such that $k_v > l_v \ge 0$ and $k_v+l_v\equiv w+1 \mod 2$ (cf.~\cite[\S3.1]{Mok}). Explicitly, if we write $W_{F_v}=\C\x\cup \C\x j$, the discrete $L$-parameter $\phi_{(w_v;k_v,l_v)}$ corresponding to  $(w_v;k_v;l_v)$ is given by
\begin{equation*}
re^{i\theta}\mapsto r^{-w_v}\cdot \begin{pmatrix}e^{i\theta(k_v+l_v)}&&&\\ & e^{i\theta(k_v-l_v)}&&\\&&e^{i\theta(-k_v+l_v)}&\\ &&&e^{i\theta(-k_v-l_v)}\end{pmatrix}
\end{equation*}
and
\begin{equation*}
j\mapsto\begin{pmatrix}&&&1\\ &&1&\\&(-1)^{w_v+1}&&\\(-1)^{w_v+1}&&& \end{pmatrix}.
\end{equation*}
Note the similitude character takes $z\mapsto |z|^{-2w_v}$ and $j\mapsto (-1)^{w_v}$.

For a dominant weight $\mu_v=(a_v,b_v;c_v)\in X^*(T)^+$, we let $\xi_{\mu_v}$ be the irreducible $\C$-representation of $\cG(F_v)$ corresponding to the $L$-parameter $\phi_{(-c_v-6;a_v+2,b_v+1)}$. 
If $\mu\in(X^*(T)^+)^{S_\infty}$ we define $\xi_\mu=\bigotimes_{v\in S_\infty} \xi_{\mu_v}$. On the other hand, the $L$-packet of $\mu_v$ for $\GSp_4(F_v)$ contains two representations, one of which is holomorphic and the other generic. We write $\Pi_{\mu_v}^h$ and $\Pi_{\mu_v}^g$ for these, respectively. 

Fix a Hecke character $\chi:\A_F\x\rightarrow \C\x$, serving as a central character in all that follows.  We always assume that our $\infty$-types $\mu$ are compatible with $\chi$, which is to say that $\chi_v=(a\mapsto \abs{a}^{c_v+6})$ for all $v\in S_\infty$.

Let $\cA_\chi$ denote the space of automorphic forms on $\cG(\A_F)$ of central character $\chi$, and let $\cA_{\chi,\mu}$ denote the $\xi_\mu$-isotypic part. Since $\cG(F_\infty)$ is compact mod centre, $\cA_\chi=\bigoplus_\mu \cA_{\chi,\mu}$ and
\begin{equation*}
\cA_{\chi,\mu}=\bigoplus_{\pi} m_\pi\cdot\pi
\end{equation*}
for some finite multiplicity $m_\pi\geq 1$, the sum running over automorphic representations $\pi$ of $\cG$ of central character $\chi$ such that $\pi_\infty\cong \xi_\mu$.

A multiplicity-preserving Jacquet-Langlands transfer from automorphic representations of $\cG$ to $\GSp_4$ was proven by Sorensen in \cite[Theorem B]{Sor09}  for stable and tempered representations. (See \cite[\S1]{Sor09} for the definition of stable and tempered.)

Arthur described a classification of the discrete spectrum of $\GSp_4$ with central character $\chi$ in \cite{Art04}. This classification was proven in \cite{GeeTaibi}, conditional on unpublished results of Arthur, Moeglin and Waldspurger. We freely use consequences of the classification below.

\begin{corollary}
Stable and tempered automorphic representations of $\cG$ occur with multiplicity one in the discrete spectrum. 
\end{corollary}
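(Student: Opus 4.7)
The plan is to reduce the statement to the corresponding multiplicity-one result for $\GSp_4(\A_F)$ via Jacquet--Langlands, and then to read off the conclusion from the Arthur multiplicity formula. First I would take a stable tempered automorphic representation $\pi$ of $\cG$ occurring in the discrete spectrum with multiplicity $m_\pi \geq 1$. Since stability and temperedness are exactly the hypotheses of Sorensen's Theorem~B in \cite{Sor09}, the Jacquet--Langlands transfer produces a discrete automorphic representation $\pi^{\mathrm{JL}}$ of $\GSp_4(\A_F)$, again stable and tempered, occurring with the same multiplicity $m_\pi$. Thus it suffices to prove the corresponding multiplicity-one statement on the $\GSp_4$ side.

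Next I would invoke the Arthur classification of the discrete spectrum of $\GSp_4(\A_F)$ with fixed central character $\chi$, established in \cite{GeeTaibi} (conditional on the unpublished results of Arthur, Moeglin and Waldspurger alluded to in the paper). A stable tempered discrete automorphic representation of $\GSp_4(\A_F)$ corresponds to a ``general type'' discrete tempered A-parameter $\psi$, for which the associated L-packet $\Pi_\psi$ is a singleton and the character $\epsilon_\psi$ of the component group $\mathcal{S}_\psi$ entering the Arthur multiplicity formula is trivial on the unique element of the packet. The multiplicity formula therefore evaluates to $1$, giving $m_{\pi^{\mathrm{JL}}} = 1$, and hence $m_\pi = 1$.

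The argument really has no hard step; the only point requiring a moment of thought is the compatibility of the terms ``stable'' and ``tempered'' as used by Sorensen with the meaning they acquire inside Arthur's formalism for $\GSp_4$, but both sources use the standard conventions and a stable tempered discrete parameter in the sense of \cite{Sor09} is the same as a discrete tempered A-parameter of general type in the sense of \cite{Art13,GeeTaibi}. Thus the corollary is immediate from the combination of the two cited theorems.
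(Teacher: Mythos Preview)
Your approach is essentially the same as the paper's: transfer to $\GSp_4$ via Sorensen's multiplicity-preserving Jacquet--Langlands (\cite[Theorem B]{Sor09}), identify ``stable and tempered'' with ``general type'' in Arthur's classification, and invoke the Arthur multiplicity formula (via \cite{GeeTaibi} or \cite[Theorem 2.9.3]{BCGP}) to get multiplicity one on the $\GSp_4$ side. One small correction: the global $A$-packet $\Pi_\psi$ for a general-type parameter is \emph{not} a singleton (already the archimedean $L$-packets have two members); the reason the multiplicity formula yields $1$ is that the component group $\mathcal{S}_\psi$ is trivial for general-type parameters, so the character sum collapses.
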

\begin{proof}
Automorphic representations of $\GSp_4$ of general type (meaning its Arthur parameter is a $\chi$-self dual cuspidal automorphic representation of $\GL_4$ of symplectic type - equivalently $\Pi$ is non-CAP and non-endoscopic) occur with multiplicity one in the discrete spectrum by Arthur's multiplicity theorem (cf. \cite{GeeTaibi} or \cite[Theorem 2.9.3]{BCGP}). It follows from  Arthur's classification that an automorphic representation of $\GSp_4$ being of general type is equivalent to  being stable  and tempered. Now the claim follows from \cite[Theorem B]{Sor09}.
\end{proof}

From now on we assume that there exists $w\in\Z$ such that
\begin{equation}\label{eq:mok condition}
\textrm{$c_v=w$ for each infinite place $v$}.
\end{equation}
This assumption is necessary because any algebraic Hecke character over totally real field has a constant weight.

\begin{definition}[cf. Definition 7.6.2 of \cite{BCGP}]\label{def:odd}
If $r:G_F\rightarrow \GSp_4(\Qlbar)$ is a continuous homomorphism, we say that $r$ is \emph{odd} if for each infinite place $v$ and corresponding choice of complex conjugation $c_v\in G_F$ we have $\nusim(r(c_v))=-1$. 
\end{definition}

\begin{theorem}\label{thm:galois representations}
Assume that $\chi$ satisfies \eqref{eq:mok condition}, and let $\pi$ be a stable and tempered automorphic representation of $\cG$ that contributes to $\cA_{\chi,\mu}$. For any prime $\ell$ and choice of isomorphism $\iota:\Qlbar\xrightarrow{\sim}\C$ there exists a (unique up to isomorphism) continuous semisimple representation
\begin{equation*}
r_{\pi,\ell,\iota}: G_F\rightarrow \GSp_4(\Qlbar)
\end{equation*}
satisfying:
\begin{itemize}
\item[(i)] $r_{\pi,\ell,\iota}$ is de Rham at places dividing $\ell$. 

\item[(ii)] $r_{\pi,\ell,\iota}$ obeys local-global compatibility 
\begin{equation*}
\iota\circ\WD(r_{\pi,\ell,\iota}|_{G_{F_v}})^{\mathrm{F-ss}}\cong \recGT(\pi_v\otimes |\nusim|^{-3/2})
\end{equation*}
for each finite place $v$ of $F$. If $v|\ell$, the Hodge-Tate weights of $r_{\pi,\ell,\iota}|_{G_{F_v}}$ are given by $\bar{\mu}_v+(3,2,1,0)$.

\item[(iii)] $r_{\pi,\ell,\iota}$ has similitude character $\chi_{\ell,\iota} \epsilon^{-3}$ (where $\chi_{\ell,\iota}$ is the $\ell$-adic realization of $\chi$ through $\iota$).
\end{itemize}
\end{theorem}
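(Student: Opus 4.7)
The overall plan is to transfer $\pi$ to $\GSp_4(\A_F)$ by Jacquet--Langlands, descend the resulting Arthur parameter to a cuspidal symplectic self-dual automorphic representation $\Pi'$ of $\GL_4(\A_F)$, attach to $\Pi'$ a $\GL_4$-valued Galois representation via the known construction for regular algebraic essentially self-dual cuspidal representations over a totally real field, and then upgrade to a $\GSp_4$-valued representation using the symplectic structure on $\Pi'$.

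First I would invoke Sorensen's Jacquet--Langlands transfer \cite[Theorem B]{Sor09} to obtain a stable and tempered automorphic representation $\Pi$ of $\GSp_4(\A_F)$ with central character $\chi$, whose local components agree with those of $\pi$ (under the $\iota_v$) at every finite place and whose component at each infinite place $v$ lies in the discrete series $L$-packet $\{\Pi^h_{\mu_v},\Pi^g_{\mu_v}\}$ associated to $\phi_{(-w-6;a_v+2,b_v+1)}$. Stability and temperedness mean that $\Pi$ is of general type in Arthur's classification (\cite{Art04}, \cite{GeeTaibi}), so its Arthur parameter is a cuspidal automorphic representation $\Pi'$ of $\GL_4(\A_F)$ that is $\chi$-self-dual and of symplectic type (i.e.~$L^S(s,\Pi',\wedge^2\otimes\chi^{-1})$ has a pole at $s=1$). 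The assumption \eqref{eq:mok condition} is what allows such a $\Pi'$ to exist with matching central character. Matching infinitesimal characters at infinity forces $\Pi'$ to be regular algebraic with $\infty$-type computed explicitly from $\mu$ and $w$.

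Next I would attach to $\Pi'$ its associated $\ell$-adic Galois representation $r_{\Pi',\ell,\iota}:G_F\rightarrow\GL_4(\Qlbar)$ using the standard construction for RAESDC representations of $\GL_n$ over a totally real field (Harris--Taylor, Taylor--Yoshida, Shin, Chenevier--Harris, Caraiani), which gives a continuous semisimple representation that is de Rham at $\ell$-adic places with Hodge--Tate weights prescribed by the $\infty$-type of $\Pi'$ and that satisfies local-global compatibility at all finite places. The symplectic self-duality of $\Pi'$ implies that $r_{\Pi',\ell,\iota}$ carries a non-degenerate alternating pairing with multiplier $\chi_{\ell,\iota}\epsilon^{-3}$ (the $\epsilon^{-3}$ arising from the $|\nusim|^{-3/2}$ shift in the normalization of $\recGTp$), so after $\GL_4(\Qlbar)$-conjugation it factors through $\GSp_4(\Qlbar)$. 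I declare this symplectic-valued representation to be $r_{\pi,\ell,\iota}$.

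Finally I would verify (i)--(iii). Item (i) is immediate from the corresponding property of $r_{\Pi',\ell,\iota}$, and (iii) is the similitude computation above. For (ii), local-global compatibility at $v\nmid \ell$ follows by combining the analogous $\GL_4$ statement for $\Pi'$ with the matching of the local component $\Pi_v$ to the local Arthur parameter of $\Pi'$ at $v$ built into the classification; at places $v\mid \ell$ the Hodge--Tate weight formula $\bar\mu_v+(3,2,1,0)$ is the translation of the $\infty$-type of $\Pi'_v$ into Hodge--Tate weights under the usual normalizations. Uniqueness is Chebotarev applied at unramified finite places. The main technical obstacle is Step 1, namely the existence and symplectic nature of the transfer $\Pi'$, which depends on the (now-established in \cite{GeeTaibi}) endoscopic classification for $\GSp_4$; the remainder of the argument is careful bookkeeping with the shifts in the Hodge--Tate and similitude normalizations.
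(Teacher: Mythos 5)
Your plan matches the paper's proof at every stage: Jacquet--Langlands (Sorensen's Theorem~B) from $\cG$ to $\GSp_4$, passage to the $\GL_4$-transfer, the standard $\GL_n$ Galois representation machine, and a symplectic upgrade --- the paper simply compresses the middle two steps into a single citation of Mok's Theorem~3.5. The one place where your write-up has a genuine gap is the sentence ``the symplectic self-duality of $\Pi'$ implies that $r_{\Pi',\ell,\iota}$ carries a non-degenerate alternating pairing.'' Essential self-duality of $r_{\Pi',\ell,\iota}$ gives, at best, a pairing that is either symmetric or alternating (and even that presupposes irreducibility; for a possibly reducible semisimple representation the existence of a well-defined sign is not automatic). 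Determining that the sign is in fact alternating when $\Pi'$ is of symplectic type is a theorem --- it is Corollary~1.3 of Bella\"iche--Chenevier, which the paper cites precisely for this step, and which the paper also points to via Remark~3.3(3) of Mok. As written, your ``upgrade to $\GSp_4$'' is asserted rather than argued, and this is the only substantive content left to prove once one has the $\GL_4$ representation in hand. The rest of your argument --- temperedness and general type via the endoscopic classification, infinitesimal-character bookkeeping at infinity, local-global compatibility inherited from the $\GL_4$ result, the Hodge--Tate and similitude normalization shifts, and Chebotarev uniqueness --- all lines up with the paper's treatment.
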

\begin{proof}
The existence of $r_{\pi,\ell,\iota}$ satisfying local-global compatibility with the stated Hodge-Tate weights follows from \cite[Theorem B]{Sor09} together with Theorem 3.5 of \cite{Mok}. 
The symplecticity and similitude character in (i) follow from Corollary 1.3 of \cite{BC11} (cf. \cite[Remark 3.3(3)]{Mok}). 
\end{proof}

For $\pi$ a regular cuspidal automorphic representation of $\GSp_4(\A_F)$ with central character $\chi$, we also write $r_{\pi,l,\iota}$ for a continuous semisimple representation
\begin{align*}
    r_{\pi,l,\iota}: G_F \rightarrow \GSp_4(\Qpbar)
\end{align*}
satisfying items (i)-(iii) above constructed by \cite[Theorem 3.5]{Mok}.

\begin{remark}
(i) One expects $r_{\pi,\ell,\iota}$ to always be irreducible if $\pi$ is of general type. This is known  for $F=\Q$  and $l \ge 5$ (\cite[Theorem 1.1]{Weiss2022}) and $F$ totally real and sufficiently large $l$ (\cite[Theorem A]{WeissThesis}).

(ii) The existence of these Galois representations is compatible with the conjectures of \cite{BG}: the automorphic representation of $\GSp_4$ corresponding to $\pi$ is $C$-algebraic and one twists this into an $L$-algebraic automorphic representation (see \cite[\S5]{BG}). 

(iii) It follows immediately from the theorem that 
$r_{\pi,\ell,\iota}:G_F\rightarrow \GSp_4(\Qlbar)$ is odd in the sense of Definition \ref{def:odd}.
\end{remark}

We finish this subsection by proving that all automorphic representations of $\cG(\A_F)$ of our interest are stable and tempered.

Let $\pi$ be a regular algebraic cuspidal automorphic representation of $\GSp_4(\A_F)$ or $\cG(\A_F)$. We say that a continuous representation $\rbar:G_F \rightarrow \GSp_4(\F)$ is \emph{attached to $\pi$} if for almost all finite places $v$ at which $\pi$ and $\rbar$ are unramified, we have
\begin{align*}
    \iota\circ \WD(\rbar|_{G_{F_v}}) \simeq \recGT(\pi_v\otimes |\nusim|^{-3/2}) \mod \varpi.
\end{align*}

\begin{lemma}\label{lem:weak-JL}
    Let $\pi'$ be a regular algebraic cuspidal automorphic representation of $\cG(\A_F)$. Suppose that there exists  a continuous semisimple representation $\rbar:G_F \rightarrow \GSp_4(\F)$ attached to $\pi'$. Then there exists a regular algebraic cuspidal automorphic representation $\pi$ of $\GSp_4(\A_F)$ which is a weak Jacquet--Langlands transfer of $\pi'$. In other words, we have $\pi_v\simeq\pi'_v$ for almost all finite places $v$ of $F$.
\end{lemma}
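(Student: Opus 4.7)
The plan is to invoke Arthur's multiplicity formula on both $\cG$ and $\GSp_4$, using the framework established in \cite{Art13, Taibi, GeeTaibi}, together with the constraint provided by the attached Galois representation $\rbar$. First, I would apply Arthur's classification on the inner form $\cG$ to $\pi'$, attaching to it a global Arthur parameter $\psi$ of $\GSp_4$ such that $\pi'$ is an automorphic member of the corresponding global Arthur packet $\Pi_\psi(\cG)$. At each finite place $v$ of $F$ where $\pi'_v$ is unramified, the group $\cG_{F_v}$ is identified with $\GSp_{4,F_v}$ via $\iota_v$, and the local Arthur packet $\Pi_{\psi_v}$ contains a unique unramified representation with Satake parameters determined explicitly by $\psi_v$; this representation must be $\pi'_v$.

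Next, I would apply the multiplicity formula on the quasi-split form $\GSp_4$ to the same parameter $\psi$ to produce an automorphic $\pi \in \Pi_\psi(\GSp_4)$ with the required local properties. The global pairing condition in Arthur's multiplicity formula for $\GSp_4$ is inherited from the analogous condition for $\cG$ (which is satisfied because $\pi'$ is automorphic), using the compatibility between the local pairings on $\cG(F_v)$ and $\GSp_4(F_v)$ dictated by the theory of inner forms. Since the local Arthur packets at unramified places are singletons and agree for the two groups via $\iota_v$, we obtain $\pi_v \simeq \pi'_v$ at almost all finite $v$, as desired, and regularity of $\pi_\infty$ is inherited from $\pi'_\infty$ through the archimedean local packet.

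The attached Galois representation $\rbar$ enters to rule out degenerate types of $\psi$: in particular the one-dimensional type would correspond to a residue of Eisenstein series rather than a cuspidal representation of $\GSp_4$, and its existence would force $\rbar\ss$ to be a direct sum of characters in a way incompatible with the prescribed setup. The remaining Arthur parameter types listed in \cite[\S2.9]{BCGP}, namely general, Yoshida, Soudry, Saito--Kurokawa and endoscopic, all contribute to the cuspidal spectrum of $\GSp_4$ whenever the relevant sign condition is satisfied.

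The main obstacle I expect is the careful verification that the sign condition on $\GSp_4$ follows from its analogue on $\cG$, which amounts to unwinding Arthur's multiplicity formula across inner forms and using the explicit relation between local pairings on $\cG_v$ and $\GSp_4(F_v)$. This is precisely the step that relies on the same unpublished results of Arthur, Moeglin and Waldspurger on which the $\GSp_4$ classification of \cite{GeeTaibi} is conditional.
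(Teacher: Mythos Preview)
Your approach is genuinely different from the paper's, and it has a gap worth flagging. The paper does \emph{not} invoke an Arthur classification directly on the similitude inner form $\cG=\mathrm{GU}_2(D)$. Instead it passes to the derived group $\cH=\cG^{\mathrm{der}}$, an inner form of $\Sp_4$, restricts $\pi'$ to $\cH(\A_F)$ via Hiraga--Saito, transfers the resulting representation to a cuspidal self-dual $\Pi$ on $\GL_5(\A_F)$ using Ta\"ibi's work \cite{Taibi}, descends $\Pi$ to $\Sp_4(\A_F)$ via \cite{Art13}, and finally lifts back to $\GSp_4(\A_F)$ with the correct central character using \cite{Patrikis-thesis}. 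The attached $\rbar$ is then used, not to rule out degenerate Arthur types, but to kill a quadratic twist ambiguity: passing through $\Sp_4$ (equivalently $\GL_5$) loses the distinction between $\pi$ and $\pi\otimes\theta$ for quadratic $\theta$, and the paper shows this twist must be trivial by comparing mod $p$ Satake parameters and using $p>2$.

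The gap in your plan is the first step. The references \cite{Art13,Taibi,GeeTaibi} establish multiplicity formulas for $\Sp_{2n}$, $\SO_n$, their (pure) inner forms, and the quasi-split $\GSp_4$; none of them provides an Arthur classification for the non-quasi-split similitude group $\cG$. So the sentence ``apply Arthur's classification on the inner form $\cG$'' is not justified by the cited literature, and the sign-condition compatibility you identify as the main obstacle is not something you can simply read off. The paper's detour through $\Sp_4$ is precisely designed to stay within the range where the relevant endoscopic classification is known, at the cost of introducing the twist ambiguity that $\rbar$ then resolves.
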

\begin{proof}
    Let $\cH:= \cG^{\mathrm{der}}$ be the derived subgroup of $\cG$. Then $\cH$ is an inner form of $\Sp_4$ which splits at all finite places. By a Theorem of Hiraga--Saito (\cite[Theorem 5.1.2]{GeeTaibi}), $\pi'|_{\cH(\A_F)}$ contains a regular algebraic cuspidal automorphic representation of $\cH(\A_F)$ which we denote by $\Tilde{\pi}'$. By \cite[Theorem 4.0.1]{Taibi}, there exists a self-dual regular algebraic discrete automorphic representation $\Pi$ of $\GL_5(\A_F)$ which is a transfer of $\Tilde{\pi}'$. It is moreover cuspidal by \cite[Theorem A]{Ramakrishnan}. Then by \cite[Theorem 1.4.1]{Art13}, there exists a regular algebraic discrete automorphic representation $\Tilde{\pi}$ of $\Sp_4(\A_F)$ whose transfer to $\GL_5(\A_F)$ is $\Pi$. Since $\Pi$ is algebraic, $\Pi_{\infty}$ is essentially tempered by \cite[Lemma 4.9]{Clozel90}, and $\pi_{\infty}$ is also essentially tempered, so that $\pi$ is cuspidal by \cite[Theorem 4.3]{Wallach}.  Note that at all finite places $v$, $\Tilde{\pi}'_v$ and $\Tilde{\pi}_v$ have the same $L$-parameter (these $L$-parameters are constructed in \cite{Art13}).

    By \cite[Proposition 3.1.4]{Patrikis-thesis}, there exists a cuspidal automorphic representation $\pi$ of $\GSp_4(\A_F)$ lifting $\Tilde{\pi}$ and whose central character is equal to the central character of $\pi'$. 
    For any finite place $v$ of $F$, $\std'(\rbar)_v$ and $\std'(\rbar_{\pi,p,\iota})_v$ are isomorphic, as they are mod $p$ reduction of $L$-parameters attached to $\Tilde{\pi}'_v$ and $\Tilde{\pi}_v$ respectively (here, we use the compatibility between the local Langlands correspondence for $\GSp_4$ and $\Sp_4$ upon restriction; see \cite[Main Theorem (v)]{GTSp4}).   By Brauer--Nesbitt Theorem, we have $\std'(\rbar)\simeq\std'(\rbar_{\pi,p,\iota})$. Since $\rbar_{\pi,p,\iota}$ and $\rbar$ also have the same similitude character, we have $\rbar \simeq \rbar_{\pi,p,\iota}\otimes \theta$ for some quadratic character $\theta: G_F \rightarrow \Qpbar^\times$. By twisting $\pi$ by $\theta^{\mo}$, we can ensure that $\rbar_{\pi,p,\iota}\simeq \rbar$.

    Let $v$ be a finite place of $F$ at which $\pi$ and $\pi'$ are unramified. Then $\recGT(\pi_v\otimes |\nusim|^{-3/2})$ and $\recGT(\pi'_v\otimes |\nusim|^{-3/2})$ have the same similitude character and are isomorphic after composed with $\std'$ (again, by using the local Langlands for $\Sp_4$). Thus,  $\recGT(\pi_v\otimes |\nusim|^{-3/2})\simeq \recGT(\pi'_v\otimes |\nusim|^{-3/2})\otimes \theta_v$ for some quadratic character $\theta_v: W_{F_v} \rightarrow \Qpbar^\times$. However, by the previous paragraph, we have $\recGT(\pi_v\otimes |\nusim|^{-3/2}) \simeq \recGT(\pi'_v\otimes |\nusim|^{-3/2}) \mod \varpi$, which means $\theta_v$ is trivial modulo $\varpi$. Since $\theta_v$ has order at most 2 and $p>2$, this implies that $\theta_v$ is trivial and $\recGT(\pi_v\otimes |\nusim|^{-3/2}) \simeq \recGT(\pi'_v\otimes |\nusim|^{-3/2})$. Since the $L$-packet of an unramified $L$-parameter valued in $\GSp_4$ is a singleton, we have $\pi_v \simeq \pi'_v$.
\end{proof}

\begin{lemma}\label{lem:stable-tempered}
    Let $\pi$ be a regular cuspidal automorphic representation of $G(\A_F)$ where $G=\GSp_4$ or $G=\cG$. Suppose that there exists a continuous irreducible representation $\rbar:G_F \rightarrow \GSp_4(\A_F)$ attached to $\pi$, then $\pi$ is stable and tempered.
\end{lemma}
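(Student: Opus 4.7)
The plan is first to reduce to the case $G = \GSp_4$. If $G = \cG$, then Lemma~\ref{lem:weak-JL} produces a regular algebraic cuspidal automorphic representation $\pi$ of $\GSp_4(\A_F)$ with $\pi_v \simeq \pi'_v$ at almost all finite places of $F$. Stability and temperedness of $\pi'$ follow from those of $\pi$ via the Jacquet--Langlands correspondence of \cite[Theorem B]{Sor09}, so it suffices to prove the lemma for the transferred $\pi$ on $\GSp_4(\A_F)$; by Chebotarev density, the same $\rbar$ is attached to $\pi$.

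Now supposing $G = \GSp_4$, I would invoke Arthur's classification of the discrete spectrum of $\GSp_4(\A_F)$ via \cite{GeeTaibi}. As noted in the proof of the corollary preceding the lemma, $\pi$ being stable and tempered is equivalent to $\pi$ being of general type, i.e., having an Arthur parameter given by a single $\chi$-self-dual cuspidal symplectic automorphic representation of $\GL_4(\A_F)$. It therefore suffices to show that being of non-general type forces $\rbar$ to be reducible, contradicting the hypothesis.

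To do this, I would use the Galois representation $r_{\pi,p,\iota}$ supplied by \cite[Theorem 3.5]{Mok}: local-global compatibility at almost all unramified finite places, combined with Chebotarev density and Brauer--Nesbitt, yields $\rbar \simeq (r_{\pi,p,\iota} \bmod \varpi)^\ss$. If $\pi$ is not of general type, its Arthur parameter decomposes non-trivially as $\psi = \boxplus_i \pi_i[d_i]$ with each $\pi_i$ a cuspidal automorphic representation of some $\GL_{n_i}(\A_F)$ with $n_i < 4$. In each of the non-general (Yoshida, Saito--Kurokawa, Soudry) cases, matching Satake parameters at unramified places identifies $r_{\pi,p,\iota}$ with a direct sum built from the Galois representations associated to the $\pi_i$ (Hecke character Galois representations from class field theory for the $\GL_1$-pieces, and the Hilbert modular Galois representations of Taylor, Blasius--Rogawski, etc.\ for the $\GL_2$-pieces). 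This direct sum is manifestly reducible, so its semisimple mod $\varpi$ reduction is reducible as well, contradicting the irreducibility of $\rbar$.

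The main obstacle will be cleanly identifying $r_{\pi,p,\iota}$ with the direct sum predicted by a non-general Arthur parameter; this amounts to a combinatorial check of Satake parameters at almost all unramified places together with an appeal to \cite{GeeTaibi} to see that Arthur's global parameters govern the associated Galois representations on the $\GSp_4$ side, so that we can replace the potentially mysterious $r_{\pi,p,\iota}$ by the explicit reducible representation constructed from the pieces $\pi_i$.
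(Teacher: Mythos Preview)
Your proposal is correct and follows essentially the same approach as the paper: reduce to $\GSp_4$ via Lemma~\ref{lem:weak-JL}, then use that irreducibility of $\rbar$ forces $\pi$ to be of general type (the paper cites \cite[Theorem~2.9.3]{BCGP} for this, while you sketch the case-by-case analysis yourself), and finally invoke Arthur's classification for the equivalence between general type and stable-and-tempered. One small correction: in the reduction step you should not cite \cite[Theorem~B]{Sor09}, which \emph{assumes} stable and tempered; rather, the point (as the paper notes) is that stability and temperedness in Sorensen's sense are conditions on $\pi_v$ at almost all finite places, so they transfer directly along the weak Jacquet--Langlands isomorphism $\pi_v\simeq\pi'_v$.
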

\begin{proof}
    If $G=\cG$, then we can apply Lemma \ref{lem:weak-JL} to replace $\pi$ by a regular cuspidal automorphic representation of $\GSp_4(\A_F)$ to which $\rbar$ is attached. Since being stable and tempered is characterized by components at almost all finite places, it suffices to prove the claim for $G=\GSp_4$, and $\rbar \simeq \rbar_{\pi,p,\iota}$ in this case. By \cite[Theorem 2.9.3]{BCGP}, the irreducibility of $\rbar$ implies that $\pi$ is of general type, which implies that $\pi$ is stable and tempered by Arthur's classification (see item (a)-(f) at the end of \cite{Art04}). 

    To be more precise, suppose that $\pi$ is not stable, i.e.~$\pi$ is either CAP or endoscopic (again, in the sense of \cite{Sor09}). If $\pi$ is CAP, then by the results of Piatetski-Shapiro and Soudry \cite{PS,Soudry}, $\pi$ can be realized as a theta lift (such $\pi$ corresponds to either Soudry type or Saito--Kurokawa type in \cite{Art04}). Using this, one can easily see that $\rbar$ has to be reducible. If $\pi$ is endoscopic, it corresponds to Yoshida type in \emph{loc.~cit.}, and $\rbar$ in this case has to be reducible as well.

    To show that $\pi$ is tempered, note that the transfer $\Pi$ of $\pi$ to $\GL_4$ is cuspidal. By Ramanujan conjecture (proven in \cite{Shin} in this case), $\Pi_v$ is tempered for all places $v$. This implies that the $L$-parameter attached to $\Pi_v$ has bounded image. By \cite[Main Theorem (vii)]{GT}, the $L$-packet containing $\pi_v$ has a unique generic tempered element. Then for any finite place $v$ at which $\pi$ is unramified, $\pi_v$ is generic tempered because the $L$-packet containing it is a singleton. 
\end{proof}

\subsection{$p$-adic automorphic forms}\label{sec:p-adic automorphic forms}

We now define integral models of spaces of automorphic forms on $\cG$ over $\cO$. Fix a choice of isomorphism $\iota:\Qpbar\rightarrow\C$, and let $\psi:=\chi_{p,\iota}\epsilon^{-3}$. All the constructions in this section depend on $\iota$, so we omit it from the notation. If $V$ is an $\cO$-module with a linear action of $\cG(\cO_{F_p})$ and $U\leq \cG(\A_F^{\infty,p})\times \cG(\cO_{F_p})$ is a compact open subgroup, we define $S_\chi(U,V)$ to be the $\cO$-module of functions $f:\cG(F)\backslash \cG(\A_F^\infty)\rightarrow V$ such that $f(z g)=\iota^{-1}(\chi(z))f(g)$ and $f(gu)=u_p^{-1} f(g)$ for $z\in Z(\A_F^\infty)$, $g\in \cG(\A_F^\infty)$ and $u\in U$. Note this space is zero unless $V$ has central character equal to $\iota^{-1}\circ\chi^{-1}|_{\cO_{F_p}\x}$.

\begin{definition}
We say that a compact open subgroup $U$ as above is \emph{sufficiently small}
if its projection to $\cG(F_v)$ contains no element of exact order $p$ for some finite place $v$.

 We say that $U$ is \emph{unramified} at a finite place $v$ if $U=U_vU^v$ where $U_v=\cG(\cO_{F_v})$. 
\end{definition}

By finiteness of class numbers, $S_\chi(U,V)$ is finitely generated (resp. free) over $\cO$ whenever $V$ is, and if $U$ is sufficiently small then $S_\chi(U,V\otimes_\cO A)=S_\chi(U,V)\otimes_\cO A$ for any $\cO$-module $A$ with trivial $\cG(\cO_{F_p})$-action. 

Let $P_U$ denote the finite set of finite places at which $U$ is ramified.  Let $S_p$ denote the set of places of $F$ dividing $p$. If $P\supseteq P_U\cup S_p$ is any finite set of finite places we let $\T^{P,\univ}$ denote the polynomial $\cO$-algebra generated by variables $S_v,T_{v,1},T_{v,2}$ for each $v\notin P$. Then $\T^{P,\univ}$ has a natural action on $S_\chi(U,V)$, where the elements above act through the double coset operators of $\beta_0(\varpi_v),\beta_1(\varpi_v),\beta_2(\varpi_v)$ in the spherical Hecke algebra $\cO[\GSp_4(F_v)//\GSp_4(\cO_{F_v})]$ respectively.

For simplicity we now assume that $p$ splits completely in $F$. Then we can identify places above $p$ with $\Q$-embeddings $F\hookrightarrow E$ and $\iota$ induces a bijections $v\mapsto \iota(v):S_p\rightarrow S_\infty$. For $\lambda\in X^*(T)^+$, let $V_\lambda$ denote the Weyl module of $\GSp_4$ over $\cO_F$ of highest weight $\lambda$. So $V_\lambda$ is a finite free $\cO_F$-module with an action of $\GSp_4(\cO_F)$. Now if $\mu=(\mu_v)_{v|p}\in (X^*(T)^+)^{\Hom_\Q(F,\Qpbar)}$ we let $V(\mu)$ denote the $\cO[\GSp_4(\cO_{F,p})]$-module 
\begin{equation*}
V(\mu)=\bigotimes_{v\in S_p} V_{\mu_v}\otimes_{\cO_{F_v}} \cO
\end{equation*}
which we regard a $\cO[\cG(\cO_{F,p})]$-module. Then $V(\mu)\otimes_{\cO,\iota}\C$ has a natural action of $\cG(F_\infty)$ making it isomorphic to $\xi_{\iota(\mu)}$.  Moreover, $\iota$ induces an isomorphism of $\Qpbar[\cG(\A_F^\infty)]$-modules 
\begin{equation}\label{eq:integral model}
\varinjlim_U S_\chi(U,V(\mu))\cong \cA_{\chi,\iota(\mu)}\otimes_{\C,\iota^{-1}} \Qpbar,
\end{equation}
where $U$ runs over all compact open subgroups of $\cG(\A_F^{\infty,p})\times\cG(\cO_{F,p})$. See for example the proof of Proposition 3.3.2 in \cite{CHT}.  We define
\begin{align*}
    S_{\chi,\mu}(U,W) := S_{\chi}(U,V(\mu)\otimes_\cO W)
\end{align*}
for any $\cO[\cG(\cO_{F_p})]$-module $W$ and level $U\le \cG(\A_F^{\infty,p})\times \cG(\cO_{F,p})$.

Suppose that $\rbar:G_F\rightarrow \GSp_4(\F)$ is an absolutely irreducible continuous representation. Let $P_{\rbar}$ denote the set of finite places either dividing $p$ or at which $\rbar$ is ramified. For any finite set of finite places $P\supseteq P_{\rbar}$ we define a maximal ideal $\m_{\rbar,\chi}^P\leq \T^{P,\univ}$ with residue field $\F$ by demanding that for each $v\notin P$, 
\begin{equation*}
S_v\mod \m_{\rbar,\chi}^P=\bar{\psi}(\Frob_v),
\end{equation*}
 and the characteristic polynomial $\rbar(\Frob_v)$ in $\F[X]$ is given by
\begin{equation*}
X^4-T_{v,1}X^3+(q_vT_{v,2}+(q_v^3+q_v)S_v)X^2-q_v^3T_{v,1}S_vX+q_v^6S_v^2\mod \m_{\rbar,\chi}^P
\end{equation*}
(cf. \cite[\S2.4.7]{BCGP}; note $\m_{\rbar,\chi}^P$ is well-defined by symplecticity of $\rbar$). 
\begin{definition}\label{def:modularity}
We say that a pair $(\rbar,\chi)$ as above is \emph{modular of weight $\mu$ (resp.~weight $F(\mu)$) and level $U$} if there exists a weight $\mu\in (X^*(T)^+)^{S_p}$ (resp.~$\mu\in (X_1(T))^{S_p}$ and $F(\mu):=\otimes_{v\in S_p} F(\mu_v)$), a level $U\leq \cG(\A_F^{\infty,p})\times \cG(\cO_{F_p})$, and a finite set of finite places $P$ containing $P_U \cup P_{\rbar}$ such that $S_{\chi,\mu}(U,\cO)_{\m_{\rbar,\chi}^P}\neq 0$ (resp. $S_{\chi}(U,F(\mu))_{\m_{\rbar,\chi}^P}\neq 0$).
\end{definition}

\begin{remark}\label{rk:modularity-equiv}
    A pair $(\rbar,\chi)$ being modular is equivalent to asking that there exists a regular cuspidal automorphic representation $\pi'$ of $\cG(\A_F)$ to which $\rbar$ is attached  such that $\pi^U\neq 0$ and $\iota^{-1}(\pi)$ contributes to \eqref{eq:integral model}. As such we are free to shrink $U$ and enlarge $P$. Moreover, such $\pi$ is necessarily stable and tempered by Lemma \ref{lem:stable-tempered}, and we can apply Theorem \ref{thm:galois representations} to obtain 
    \begin{align*}
        r_{\pi,p,\iota} : G_F \rightarrow \GSp_4(\Qpbar)
    \end{align*}
    such that $\nusim(r_{\pi,p,\iota}) = \chi_{p,\iota}\epsilon^{-3}$ and the reduction mod $p$ of $r_{\pi,p,\iota}$ is $\GL_4(\Fpbar)$-conjugate to $\rbar$. 
    
    Also, by applying \cite[Theorem B]{Sor09}, $(\rbar,\chi)$ being modular is in turn equivalent to asking that there exists a regular algebraic cuspidal automorphic representation $\pi$ of $\GSp_4(\A_F)$ with central character $\chi$ such that $\rbar_{\pi,p,\iota}\simeq \rbar$.
\end{remark}

\begin{definition}\label{def:pd-automorphic}
We say that a pair $(\rbar,\chi)$ is \emph{potentially diagonalizably automorphic} if it is modular and $\pi$ as above can be taken so that $\std(r_{\pi,p,\iota})|_{G_{F_v}}$ is potentially diagonalizable (in the sense of \cite[\S1.4]{BLGGT}) for all $v|p$.
\end{definition}


For any finite set of finite places $P\supseteq P_U$ let $\T_{\chi,\mu}^P(U)$ denote the image of $\T^{P,\univ}$ inside $\End_\cO(S_{\chi,\mu}(U,\cO))$. Observe that $\T_{\chi,\mu}^P(U,\cO)_{\m_{\rbar}^P}$ is an object of $\COhat$. 
\begin{lemma}\label{lem:reduced}
The ring $\T_{\chi,\mu}^P(U)_{\m_{\rbar}^P}$ is reduced. 
\end{lemma}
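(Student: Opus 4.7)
The plan is to prove the stronger statement that $\T := \T_{\chi,\mu}^P(U)$ is itself reduced, from which the lemma follows by localizing.

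First I would verify that $S := S_{\chi,\mu}(U,\cO) = S_\chi(U,V(\mu))$ is $\cO$-torsion free. Since $\cG$ is compact mod centre at infinity, the double coset space $\cG(F)\backslash \cG(\A_F^\infty)/U$ is finite, and evaluation on a set of representatives embeds $S$ into a finite direct sum of copies of the free $\cO$-module $V(\mu)$. Consequently $\End_\cO(S)$ is $\cO$-torsion free, and hence so is its subring $\T$. In particular $\T$ injects into $\T[1/p] = \T \otimes_\cO E$, so it suffices to show that $\T \otimes_\cO E$ is reduced.

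Next I would extend scalars to $\Qpbar$ via $\iota$ and apply the Hecke-equivariant isomorphism \eqref{eq:integral model} to obtain a decomposition
\begin{equation*}
S \otimes_\cO \Qpbar \;\cong\; \bigoplus_\pi m_\pi \cdot (\pi^\infty)^U,
\end{equation*}
where $\pi$ runs over automorphic representations of $\cG(\A_F)$ with central character $\chi$ and $\infty$-type $\iota(\mu)$. For every $v \notin P$, the operators $S_v, T_{v,1}, T_{v,2} \in \T^{P,\univ}$ are spherical and act on each isotypic summand $(\pi^\infty)^U$ by scalars given by the Satake parameters of $\pi_v$. Therefore the induced map $\T^{P,\univ} \otimes_\cO \Qpbar \to \End_{\Qpbar}(S \otimes_\cO \Qpbar)$ factors through a finite product of copies of $\Qpbar$ indexed by the (finitely many) $\pi$ contributing to the decomposition. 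In particular the image $\T \otimes_\cO \Qpbar$ embeds into this product of fields, hence is reduced; its subring $\T \otimes_\cO E$ is then also reduced.

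Combining the two previous paragraphs, $\T$ embeds into the reduced ring $\T \otimes_\cO E$, so $\T$ is reduced, and consequently $\T_{\m_{\rbar,\chi}^P}$ is reduced as claimed. There is no serious obstacle to this argument; the only mild care needed is the $\cO$-torsion freeness of $S$, which is automatic from the compactness mod centre of $\cG(F_\infty)$ and the freeness of $V(\mu)$ over $\cO$.
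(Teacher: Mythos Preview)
Your proof is correct and follows essentially the same approach as the paper's: both deduce reducedness of $\T_{\chi,\mu}^P(U)$ from the semisimplicity of $S_{\chi,\mu}(U,\cO)\otimes_\cO\Qpbar$ afforded by the automorphic decomposition \eqref{eq:integral model}, together with $\cO$-torsion freeness of $S$ (you simply spell out the ``commutative algebra'' the paper invokes). One minor imprecision: the double coset space you evaluate on should be $\cG(F)Z(\A_F^\infty)\backslash \cG(\A_F^\infty)/U$ rather than $\cG(F)\backslash \cG(\A_F^\infty)/U$ (the latter need not be finite), but this is harmless since the paper already records that $S_\chi(U,V)$ is finite free over $\cO$ when $V$ is.
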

\begin{proof}
By commutative algebra the semisimplicity of $S_{\chi,\mu}(U,
\cO)\otimes_\cO \Qpbar$, which follows from \eqref{eq:integral model}, shows that $\T_{\chi,\mu}^P(U)$ is reduced. The claim follows.
\end{proof}
\begin{proposition}\label{prop:hecke valued galois representations}
Assume that $\rbar:G_F\rightarrow \GSp_4(\F)$ is absolutely irreducible and $(\rbar,\chi)$ is modular of weight $\mu$ and level $U$. Then for any finite set $P\supset P_U$ there exists a unique continuous homomorphism
\begin{equation*}
r_{\chi,\mu}^P(U):G_F\rightarrow \GSp_4(\T_{\chi,\mu}^P(U)_{\m_{\rbar,\chi}^P})
\end{equation*}
lifting $\rbar$ such that
\begin{itemize}
\item[(i)] $\nusim\circ r_{\chi,\mu}^P(U)= \psi$,
\item[(ii)] if $v\notin P$ then $r_{\chi,\mu}^P(U)$ is unramified and the characteristic polynomial of $r_{\chi,\mu}^P(U)(\Frob_v)$ in $\T^P_{\chi,\mu}(U)_{\m^P_{\rbar,\chi}}[X]$ is equal to
\begin{equation*}
X^4-T_{v,1}X^3+(q_vT_{v,2}+(q_v^3+q_v)\chi_v(\varpi_v))X^2-q_v^3T_{v,1}\chi_v(\varpi_v)X+ q_v^6 \chi_v(\varpi_v)^2,
\end{equation*}
\item[(iii)] and for every $\cO$-algebra homomorphism $\zeta:\T_{\chi,\mu}^P(U)_{\m_{\rbar}^P} \rightarrow E'$ where $E'$ is a finite extension of $E$, the representation $\zeta\circ r_{\chi,\mu}^P(U)|_{G_{F_v}}$ is de Rham of Hodge-Tate weights $\bar{\mu}_v+(3,2,1,0)$ for all $v\in S_p$. 
\end{itemize}
\end{proposition}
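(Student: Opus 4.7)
The plan is to assemble the Galois representations associated with the automorphic representations contributing to $S_{\chi,\mu}(U, V(\mu)\otimes_\cO\cO)_{\m_{\rbar,\chi}^P}$ and descend to a $\T$-valued representation using absolute irreducibility of $\rbar$. Since $\T := \T_{\chi,\mu}^P(U)_{\m_{\rbar,\chi}^P}$ is finite and reduced over $\cO$ by Lemma \ref{lem:reduced}, it embeds into a finite product $\prod_i E_i$ of finite extensions of $E$, one factor for each minimal prime. Via \eqref{eq:integral model}, each $E_i$ corresponds to a cuspidal automorphic representation $\pi_i$ of $\cG(\A_F)$ of weight $\iota(\mu)$ and central character $\chi$, with $\pi_i^U \neq 0$ and mod-$p$ Hecke eigensystem equal to $\m_{\rbar,\chi}^P$. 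Because $\rbar$ is absolutely irreducible and attached to each $\pi_i$, Lemma \ref{lem:stable-tempered} ensures $\pi_i$ is stable and tempered, so Theorem \ref{thm:galois representations} yields a continuous $r_i : G_F \to \GSp_4(\Qpbar)$ with similitude character $\psi$, satisfying local-global compatibility at primes away from $P$ and de Rham with Hodge--Tate weights $\bar\mu_v + (3,2,1,0)$ at each $v | p$. Applying Lemma \ref{lem:symplectic reduction mod p}, we may take each $r_i$ valued in $\GSp_4(\cO_i)$, where $\cO_i\subset E_i$ is the valuation ring, with mod-$p$ reduction $\GSp_4(\F)$-conjugate to $\rbar$.

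Next I will interpolate the $r_i$ into a single $\GL_4(\T)$-valued representation. The Hecke operators $T_{v,1}$, $T_{v,2}$, $S_v$ lie in $\T$ for each $v\notin P$ and are by construction the coefficients of the characteristic polynomial prescribed in (ii); their images in each $E_i$ coincide with the coefficients of $\det(X - r_i(\Frob_v))$ by local-global compatibility in Theorem \ref{thm:galois representations}. By Chebotarev density, this determines a continuous four-dimensional determinant $D : G_F \to \T$ in the sense of Chenevier whose residual reduction is the determinant associated with the absolutely irreducible $\rbar$. The theorem of Chenevier (in the pseudocharacter formulation of Nyssen--Rouquier, which suffices since $\rbar$ is residually absolutely irreducible) then lifts $D$ to a continuous representation $r : G_F \to \GL_4(\T)$ lifting $\rbar$, unique up to $\GL_4(\T)$-conjugation; its projection to each $E_i$ is conjugate to $r_i$.

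Finally I will promote $r$ to a symplectic representation with similitude $\psi$. Since $r^\vee\otimes\psi$ and $r$ have the same associated determinant---checked by projecting to each $E_i$ and using the symplecticity of $r_i$---residual absolute irreducibility yields an isomorphism $\alpha : r \isoto r^\vee \otimes \psi$, unique up to $\T^\times$ by Schur's lemma over $\T$. The scalar $c \in \T^\times$ defined by $(\alpha^\vee)^{-1} \circ \alpha = c \cdot \mathrm{id}_r$ reduces to $-1$ in each $E_i$ by the alternating condition satisfied by $r_i$, hence equals $-1$ in $\T$; thus $\alpha$ is an alternating symplectic pairing, and after a further $\GL_4(\T)$-conjugation $r$ takes values in $\GSp_4(\T)$ with similitude character $\psi$. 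This furnishes $r_{\chi,\mu}^P(U)$; property (i) is immediate, (ii) follows from the construction of $D$, and (iii) follows because any $\zeta:\T \to E'$ factors through some $E_i$, and $\zeta\circ r$ is then conjugate to $r_i$, which is de Rham of the prescribed Hodge--Tate weights. Uniqueness (up to $\GSp_4(\T)$-conjugation) is a direct consequence of Chebotarev and residual absolute irreducibility. The main technical obstacle in the plan is the descent of the symplectic structure in step three---gluing the symplectic forms on the individual $r_i$ into a single pairing over $\T$ with the correct alternating normalization---but this is governed entirely by Schur's lemma for $\rbar$, so once the Chenevier lift is in hand the remaining bookkeeping is formal.
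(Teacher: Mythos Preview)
Your argument is correct and follows essentially the same strategy as the paper: use reducedness of $\T$ to embed into a product of fields, apply Lemma~\ref{lem:stable-tempered} and Theorem~\ref{thm:galois representations} at each minimal prime, then glue. The paper cites the Carayol-style argument of \cite[Proposition 3.4.4]{CHT} for the gluing and invokes Lemma~\ref{lem:symplectic reduction mod p} for symplecticity, whereas you use the equivalent Chenevier/Nyssen--Rouquier pseudocharacter lift and spell out the symplectic descent explicitly via Schur's lemma; in the residually absolutely irreducible setting these are interchangeable.
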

\begin{proof}
By Lemma \ref{lem:stable-tempered} and the irreducibility of $\rbar$, we know that the automorphic representation associated with Hecke eigensystem $\T^P_{\chi,\mu}(U)_{\m_{\rbar}^P} \xrightarrow{} E'$ is stable and tempered.  
Then the Proposition   can be shown by using Theorem \ref{thm:galois representations} and a standard argument using Lemma \ref{lem:reduced}; see for example the proof of \cite[Proposition 3.4.4]{CHT}.   In order to ensure the resulting representation is symplectic, one applies Lemma \ref{lem:symplectic reduction mod p}. Item (ii)  follows from \cite[\S2.4.7]{BCGP}.
\end{proof}

From now on, we fix a preferred place $w|p$. If $W$ is any $\cO[\cG(\cO_{F_w})]$-module and $\sigma$ is a finite $\cO$-module with smooth $\GSp_4(\Zp)$-action, we define
\begin{equation*}
S_{\chi,\mu,\sigma}(U^w,W):=\varinjlim_{U_w\leq \cG(\cO_{F_w})} S_{\chi}(U^wU_w,V(\mu')\otimes_\cO \sigma^{S_p \backslash\{w\}} \otimes_\cO W)
\end{equation*}
where $\mu'$ is equal to $\mu$ but with $\mu_w$ replaced by $(0,0;0)$, and $\sigma^{S_p \backslash\{w\}}$ is viewed as smooth $\cO[\prod_{v\in S_p \backslash\{w\}}\cG(\cO_{F_v})]$-module by identifying $\cG(\cO_{F_v}) \simeq \GSp_4(\Zp)$. 
Note that the functor $W\mapsto S_{\chi,\mu,\sigma}(U^w,W)$ is exact if $U^w$ is sufficiently small.

By definition $S_{\chi,\mu,\sigma}(U^w,\F)$ is an admissible representation of $\cG(F_w)\cong\GSp_4(\Qp)$ over $\F$ with central character $\iota^{-1}\circ\chi_w$. Suppose that $\ov{r}:G_F \rightarrow \GSp_4(\F)$ is modular of some weight and level. We will show that $S_{\chi,\mu,\sigma}(U^w,\F)[\m_{\rbar,\chi}^P]$ is nonzero for certain  $\mu$ and  $\sigma$. We are interested in studying the relationship between the $\F[\cG(F_w)]$-module $S_{\chi,
\mu,\sigma}(U^w,\F)[\m_{\rbar,\chi}^P]$ and the local Galois representation $\rbar|_{G_{F_w}}$. The main tool for doing this is the patching method described in the next two sections.

\subsection{A patching lemma}

In order to clarify the patching method used in the next section, we present here a formalized version. In this section $G$ denotes a locally profinite group having a countable fundamental system of open neighbourhoods at the identity. 

\begin{definition}
Let $A$ be a commutative ring and $\mj=\{\mj_1\supseteq\mj_2\supseteq\cdots\}$ a system of ideals. A \emph{$(G,A,\mj)$-inverse system} is an inverse system $\M=\{M_r(H)\}_{r\geq 1,H\leq_\co G}$ of finite $A$-modules together with a collection of morphisms of inverse systems $g_*:M_r(H)\rightarrow M_r(gHg^{-1})$ for each $g\in G$ satisfying $(gh)_*=g_*h_*$ and $\id_*=\id$. We assume that the transition maps have the property that they induce isomorphisms
\begin{equation*}
M_{r+1}(H)/\mj_r\xrightarrow{\sim} M_r(H)
\end{equation*}
and
\begin{equation*}
M_r(H')_H\xrightarrow{\sim}M_r(H)
\end{equation*}
for all $r\geq 1, H\leq_\co G$, and any open normal subgroup $H'\subseteq H$. (In particular, all transition maps are surjective.) These objects form a category in an obvious way.

We say that $\M$ is \emph{projective} if $M_r(H')$ is a projective $(A/\mj_r)[H/H']$-module whenever $H'\subseteq H$ is an open normal subgroup. 
\end{definition}

\begin{remark}
If $\mb=\{\mb_1\supseteq \mb_2\supseteq\cdots\}$ is another system of ideals of $A$ such that $\mb_r\supseteq \mj_r$ then $(M_r(K)/\mb_r)_{r,K}$ is a (projective) $(G,A,\mb)$-inverse system.
\end{remark}

\begin{definition}[cf. \cite{EmeOrd}]
Let $A$ be a complete local noetherian $\cO$-algebra with finite residue field. We let $\Mod^{\textrm{pro aug}}_G(A)$ denote the abelian category of left $A[G]$-modules $M$ equipped with a profinite topology such that for any compact open $H\subseteq G$, the $A[H]$-action extends to an action of $A[[H]]$ making $M$ into a linear topological $A[[H]]$-module for the profinite topology on $A[[H]]$.
\end{definition}

\begin{lemma}\label{lem:inverse limit}
Let $A$ be a complete local noetherian $\cO$-algebra with finite residue field, and suppose that the topology determined by $\mj$ is the max-adic one. The functor
\begin{equation*}
\M\mapsto \M_\infty:= \varprojlim_{r,H} M_r(H)
\end{equation*}
is an equivalence of categories 
 between the category of $(G,A,\mj)$-inverse systems and $\Mod^{\textrm{\emph{{pro aug}}}}_G(A)$. Moreover, if $\M$ is projective then $\M_\infty$ is a finitely generated projective $A[[H]]$-module for any compact open $H\subseteq G$. 
\end{lemma}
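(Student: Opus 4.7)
The plan is to construct an explicit quasi-inverse and to verify finite generation and projectivity via Nakayama and Mittag--Leffler. Given $M \in \Mod^{\textrm{pro aug}}_G(A)$, I define $\M(M)_r(H) := (M/\mj_r M)_H = M/(\mj_r M + I_H M)$, where $I_H$ denotes the augmentation ideal of $A[[H]]$. This is a finite $A/\mj_r$-module since $M/\mj_r M$ is a compact $A/\mj_r$-module (with $A/\mj_r$ finite by the max-adic hypothesis on $\mj$) and coinvariants of a compact module by a compact open group yield a finite quotient. The $G$-action $g_*$ is induced by that on $M$, and the two compatibility isomorphisms $M_{r+1}(H)/\mj_r \cong M_r(H)$ and $M_r(H')_H \cong M_r(H)$ are immediate from right-exactness of quotients and transitivity of coinvariants.

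To see that $\M \mapsto \M_\infty$ and $M \mapsto \M(M)$ are mutually quasi-inverse, I would check both round-trips directly. For an inverse system $\M$, the compatibility conditions in the definition force the canonical map $(\M_\infty/\mj_r \M_\infty)_H \to M_r(H)$ to be an isomorphism. Conversely, for $M$ in the target category, the profinite topology on $M$ admits a fundamental system of open $A[[H]]$-submodules given by $\mj_r M + I_H M$ as $r$ grows and $H$ shrinks through compact open subgroups of $G$, so the canonical map $M \to \varprojlim_{r,H}(M/\mj_r M)_H$ is a topological isomorphism onto the profinite completion.

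For finite generation and projectivity of $\M_\infty$ over $A[[H]]$, note that $A[[H]]$ is a complete local pseudocompact ring with maximal ideal $(\m_A, I_H)$ and residue field $\F$. Lifting an $\F$-basis of the finite-dimensional quotient $\M_\infty/(\m_A\M_\infty + I_H\M_\infty)$ yields a map $\phi: A[[H]]^n \to \M_\infty$ which is surjective by the completed Nakayama lemma. To split $\phi$, one first splits at each finite level using the hypothesis that $M_r(H')$ is projective over $(A/\mj_r)[H/H']$, and then assembles a coherent splitting from the inverse system of kernels. I expect the main obstacle to be this assembly step: the naive level-wise splittings need not be compatible with the transition maps, so one must lift them inductively using the projectivity hypothesis to ensure surjectivity of the transition maps on kernels, thereby establishing the Mittag--Leffler property and producing $\M_\infty$ as a direct summand of the free module $A[[H]]^n$.
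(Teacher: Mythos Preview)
Your approach is essentially correct and is a reasonable unpacking of the two citations that the paper defers to: the quasi-inverse you write down is exactly the construction underlying \cite[Lemma~2.2.6]{EmeOrd}, and your use of completed Nakayama for finite generation matches the paper verbatim.

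The one place where your argument is more circuitous than necessary is the projectivity step. The paper (via \cite[Proposition~2.10]{CEG+}) uses the observation that $(A/\mj_r)[H/H']$ is a \emph{local} ring (since $H/H'$ is a finite $p$-group and the residue field has characteristic $p$), so projective over it means free. Because the rank is determined by the common residual dimension $n$, your surjection $\phi$ is a surjection of free modules of equal rank at every finite level, hence an isomorphism at every level, hence an isomorphism in the limit. There is no need to assemble level-wise splittings: the kernel is already zero. Your Mittag--Leffler strategy does work, but note a small misattribution in your sketch: surjectivity of the transition maps on kernels follows from the snake lemma applied to the surjective transitions on $F_r$ and $M_r$ (using that $\ker(M_{r+1}\to M_r)=\mj_r M_{r+1}$ by the inverse-system axioms), not from projectivity. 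Projectivity is what lets you adjust a given section $s_{r+1}'$ by a map $M_{r+1}\to K_{r+1}$ so that it reduces to the previously chosen $s_r$.

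One further caution: your argument that $(M/\mj_r M)_H$ is automatically finite for an arbitrary $M\in \Mod^{\textrm{pro aug}}_G(A)$ is not correct as stated (take $M=\prod_{\mathbb N}\F$ with trivial $G$-action). The paper sidesteps this by citing Emerton; in practice only the forward functor and the identity $(\M_\infty)_H/\mj_r\cong M_r(H)$ are used downstream, so the imprecision is harmless, but you should be aware that essential surjectivity onto the full category as you have defined it requires more care.
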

\begin{proof}
The equivalence of categories follows from \cite[Lemma 2.2.6]{EmeOrd}. In particular, for any open subgroup $H\subseteq G$ we have $(\M_\infty)_H/\mj_r=M_r(H)$. Using this, the topological Nakayama lemma for $A[[H]]$ shows that $\M_\infty$ is finitely generated over $A[[H]]$. The argument in the proof of \cite[Proposition 2.10]{CEG+} now shows that $\M_\infty$ is a projective $A[[H]]$-module if $\M$ is projective.
\end{proof}

\begin{definition}\label{def:G patching datum}
A \emph{$G$-patching datum over $\cO$} is a tuple
\begin{equation*}
(S_\infty,R_\infty,(R_n,\varphi_n,\M_n,\alpha_n)_{n\geq 1},\M_0)
\end{equation*}
where
\begin{itemize}
\item $S_\infty$ is a power series ring over $\cO$ with augmentation ideal $\ma_\infty=\ker(S_\infty\rightarrow\cO)$,
\item $R_\infty$ is a complete local $\cO$-algebra with finite residue field,
\item $R_n$ is a local $S_\infty$-algebra for $n\geq 1$,
\item $\varphi_n:R_\infty\onto R_n$ is a local $\cO$-algebra surjection for $n\geq 1$,
\item $\M_n=\{M_r(H)_n\}_{r\geq 1,H\leq_\co G}$ is a $(G,R_n,(\varpi^r)_{r\geq 1}$-inverse system for $n\geq 0$ with $\M_0$ projective (where we define $R_0:=\cO$),
\item and $\alpha_n:\M_n/\ma_\infty\rightarrow \M_0$ is an isomorphism of $(G,S_\infty,(\varpi^r)_{r\geq 1})$-inverse systems for $n\geq 1$. 
\end{itemize}
This data is assumed to satisfying the following axioms:
\begin{enumerate}
\item[(1)] For all $r,n\geq 1$ the $S_\infty$-module $M_r(H)_n$ is free over a quotient $S_\infty/I_{r,n}$ (independent of $H$) such that $I_{r,n}\subseteq (\varpi^r,\ma_\infty)$ and 
\begin{equation}\label{eq:Irn condition}
\textrm{for any open ideal $\mb\subseteq S_\infty$, $I_{r,n}\subseteq (\varpi^r,\mb)$ for almost all $n\geq 1$.}
\end{equation}
\item[(2)] Whenever $H'\subseteq H$ is an open normal subgroup, $M_r(H')_n$ is a projective $S_\infty/I_{r,n}[H/H']$-module for all $r,n\geq 1$. Also $M_r(H')_0$ is a projective $(\cO/\varpi^r)[H/H']$-module. 
\end{enumerate}
\end{definition}

\noindent One can visualize a $G$-patching datum via the commutative diagram

\begin{equation}\label{eq:patching diagram}
\xymatrix{R_\infty \ar@{->>}[r]^{\varphi_n} & R_n\ar@{->>}[r] & R_n/\varpi^r &   \curvearrowright M_r(H)_n. \\ S_\infty \ar[ur]\ar@{-->}[u] & & &  }
\end{equation}
The dotted arrow exists for each $n\geq 1$ because $S_\infty$ is a power series ring.

\begin{definition}[cf. \cite{Man}]
Let $\cF\subset 2^{\N}$ be a nonprincipal ultrafilter. If $S$ is a ring and $\cQ=(Q_n)_{n\geq 1}$ is a sequence of left $S$-modules we let $\cU_\cF(\cQ)$ denote their ultraproduct over $\cF$. The properties of this construction that we will use are
\begin{enumerate}
\item $\cU_{\cF}(-)$ is an exact functor from the category of sequences of $S$-modules with $\cF$-morphisms, and 
\item if $S$ is finite and the cardinalities of the $Q_n$ are uniformly bounded, then $\cU_\cF(\cQ)\cong Q_i$ for $\cF$-many $i\geq 1$. 
\end{enumerate} 
\end{definition}

\begin{deflemma}\label{deflemma:patched module}
Fix a nonprincipal ultrafilter $\cF$. Given a $G$-patching datum as in Definition \ref{def:G patching datum} we define the associated \emph{patched module} to be
\begin{equation*}
M_\infty:= \varprojlim_{r,H}\,\, \cU_\cF(\{(M_r(H)_n\otimes_{S_\infty} S_\infty/\m_{S_\infty}^r\}_{n\geq 1}).
\end{equation*}
It is a finitely generated projective object in $\Mod^{\mathrm{pro\,aug}}_G(S_\infty)$ such that $M_\infty/\ma_\infty\cong (\M_0)_\infty$. It also has a compatible $R_\infty[G]$-module structure via a local $\cO$-algebra map $S_\infty\rightarrow R_\infty$. 
\end{deflemma}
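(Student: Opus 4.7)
The plan is to identify $M_\infty$ as the object in $\Mod^{\mathrm{pro\,aug}}_G(S_\infty)$ associated to a projective $(G, S_\infty, (\m_{S_\infty}^r))$-inverse system, invoke Lemma \ref{lem:inverse limit}, and then produce the $R_\infty[G]$-structure. Set $N_r(H) := \cU_\cF(\{M_r(H)_n \otimes_{S_\infty} S_\infty/\m_{S_\infty}^r\}_n)$.

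For the finite-level structure, since $\m_{S_\infty}^r$ is open in $S_\infty$, condition \eqref{eq:Irn condition} applied with $\mb = \m_{S_\infty}^r$ yields $I_{r,n} \subseteq \m_{S_\infty}^r$ for $\cF$-many $n$. By axiom (1), $M_r(H)_n$ is free over $S_\infty/I_{r,n}$ of some rank $d_r(H)$, which one computes by reducing modulo $\ma_\infty$ and applying $\alpha_n$: the result is the rank of $M_r(H)_0$ as a free $\cO/\varpi^r$-module (well-defined since $\M_0$ is projective). Hence for $\cF$-many $n$, $M_r(H)_n \otimes S_\infty/\m_{S_\infty}^r$ is free of rank $d_r(H)$ over the finite ring $S_\infty/\m_{S_\infty}^r$, and by axiom (2), $M_r(H')_n \otimes S_\infty/\m_{S_\infty}^r$ is projective over $S_\infty/\m_{S_\infty}^r[H/H']$ for open normal $H' \subseteq H$. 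Uniform boundedness of cardinality lets us invoke property (2) of the ultraproduct to conclude $N_r(H) \cong M_r(H)_{n_0} \otimes S_\infty/\m_{S_\infty}^r$ as $S_\infty/\m_{S_\infty}^r[H]$-modules for $\cF$-many $n_0$, so $N_r(H)$ is free of rank $d_r(H)$ over $S_\infty/\m_{S_\infty}^r$ and $N_r(H')$ enjoys the analogous projectivity.

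Next, the transition morphisms of the $\M_n$ commute with the $S_\infty$-action, with tensor products, and with $\cU_\cF$, hence induce compatible transition morphisms among the $N_r(H)$. The key consistency check $N_{r+1}(H)/\m_{S_\infty}^r \xrightarrow{\sim} N_r(H)$ follows because $M_{r+1}(H)_n / \varpi^r \xrightarrow{\sim} M_r(H)_n$ by axiom (1) and $\varpi \in \m_{S_\infty}$, so after tensoring with $S_\infty/\m_{S_\infty}^r$ both sides become $M_r(H)_n \otimes S_\infty/\m_{S_\infty}^r$; the coinvariant compatibility $(N_r(H'))_H \xrightarrow{\sim} N_r(H)$ is analogous. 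Thus $\{N_r(H)\}_{r,H}$ is a projective $(G, S_\infty, (\m_{S_\infty}^r))$-inverse system, and Lemma \ref{lem:inverse limit} furnishes $M_\infty = \varprojlim_{r,H} N_r(H)$ as a finitely generated projective object in $\Mod^{\mathrm{pro\,aug}}_G(S_\infty)$ which is projective over $S_\infty[[H]]$ for every compact open $H \leq G$. The isomorphism $M_\infty/\ma_\infty \cong (\M_0)_\infty$ then reduces to $N_r(H)/\ma_\infty \cong \cU_\cF(\{M_r(H)_0 \otimes \cO/\varpi^r\}_n) \cong M_r(H)_0$, exploiting $\alpha_n$ and the fact that $\cU_\cF$ of a constant sequence is constant.

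For the $R_\infty[G]$-action, each $M_r(H)_n$ is an $R_\infty$-module via the surjection $\varphi_n : R_\infty \onto R_n$, and these structures are carried by tensor products, ultraproducts, and inverse limits to an $R_\infty$-action on $M_\infty$ commuting with $G$. To produce the local $\cO$-algebra map $\iota_\infty : S_\infty \to R_\infty$, I would exploit the formal smoothness of the power series ring $S_\infty$ over $\cO$ to lift each given $\iota_n : S_\infty \to R_n$ to $\tilde{\iota}_n : S_\infty \to R_\infty$; since $R_\infty/\m_{R_\infty}^r$ is finite, the reductions $\tilde{\iota}_n \bmod \m_{R_\infty}^r$ admit a well-defined $\cF$-ultralimit, and assembling these over $r$ (using completeness of $R_\infty$) gives $\iota_\infty$. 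The \emph{main obstacle} is verifying that the $S_\infty$-action on $M_\infty$ coincides with the restriction of the $R_\infty$-action along $\iota_\infty$; this is a finite-level check on each $N_r(H)$, where both actions agree with that of $\tilde{\iota}_{n_0}$ through $R_\infty$ for $\cF$-many $n_0$, and one exploits that $M_r(H)_{n_0}$ is killed by $\varpi^r$ together with the congruence $\iota_\infty \equiv \tilde{\iota}_{n_0}$ modulo $\m_{R_\infty}^{r'}$ for sufficiently large $r'$ to conclude.
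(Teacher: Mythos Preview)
Your proof is correct and follows essentially the same approach as the paper's own proof: both verify that $\{N_r(H)\}$ forms a projective $(G,S_\infty,(\m_{S_\infty}^r))$-inverse system and then invoke Lemma~\ref{lem:inverse limit}. Your treatment is considerably more detailed than the paper's, particularly in the explicit construction of $\iota_\infty$ as an ultralimit of lifts $\tilde{\iota}_n$ and the compatibility verification; the paper simply asserts that since the dotted lift $S_\infty\to R_\infty$ in diagram~\eqref{eq:patching diagram} is independent of $r$ and $H$, ``the same is true of $M_\infty$''.
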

\begin{proof}
For each $r,H$, the isomorphisms $\alpha_n$ imply that $M_r(H)_n\otimes_{S_\infty} S_\infty/\m_{S_\infty}^r$ is of uniformly bounded cardinality, so the ultraproduct is well behaved. Axioms (1) and (2) together with the properties of $\cU_\cF$ above imply that $(\cU_\cF(\{(M_r(H)_n\otimes_{S_\infty} S_\infty/\m_{S_\infty}^r\}_{n\geq 1})_{r\geq1,H\leq_\co G}$ is a projective $(G,S_\infty,(\m_{S_\infty}^r)_{r\geq 1})$-inverse system whose reduction mod $\ma_\infty$ is isomorphic to $\M_0$. The first claim now follows from Lemma \ref{lem:inverse limit}. 

For the second claim, at each finite level $n\geq 1$ the $S_\infty$ action on $\M_n$ factors through the local map $S_\infty\rightarrow R_\infty$ in \eqref{eq:patching diagram}. Since this map is independent of $r$ and $H$ the same is true of $M_\infty$. 
\end{proof}

\subsection{Patching}\label{sec:patching}
We give the precise conditions of the global Galois representations to which our main result applies.

\begin{definition}\label{def:suitable}
A triple $(F, \ov{r}, \chi)$ of a totally real field $F$, a continuous representation $\ov{r}: G_F \rightarrow \GSp_4(\F)$, and a Hecke character $\chi: \A_F^\times/F^\times \rightarrow \C^\times$ is called \emph{suitable of weight $(a_3,a_2,a_1,a_0)$} if it satisfies the followings:
\begin{itemize}
    \item [(\textbf{A1})] $F$ has even degree over $\Q$,
    \item [(\textbf{A2})] $\ov{r}$ is odd,
    \item [(\textbf{A3})] $\ov{r}$ is \emph{vast} and \emph{tidy} in the sense of \cite[\S 7.5]{BCGP} (this implies that $\rbar$ is absolutely irreducible),
    \item [(\textbf{A4})] $(\ov{r},\chi)$ is potentially diagonaliazably automorphic of some weight $\xi'$ and level unramified away from $p$,
    \item [(\textbf{A5})] $\ov{r}|_{G_{F_v}}$ is strongly 7-generic of weight $(a_3,a_2,a_1,a_0)$ as in Definition \ref{def:generic}(ii) for each $v|p$.
\end{itemize}
Let $\rhobar: G_{\Qp} \rightarrow \GSp_4(\F)$ be strongly 7-generic of weight $(a_3,a_2,a_1,a_0)$.  We say that a triple $(F, \ov{r}, \chi)$ is a \emph{suitable globalization of $\rhobar$} if it is suitable of weight $(a_3,a_2,a_1,a_0)$, and there is a place $w|p$ of $F$ such that $\ov{r}|_{G_{F_w}}\simeq \rhobar$.
\end{definition}

\begin{remark}\label{rk:assumptions-on-rbar}
In \textbf{(A4)}, we assume $(\rbar,\chi)$ to be potentially diagonalizably automorphic as opposed to being modular. This is used to show that $(\rbar,\chi)$ is modular of the ``obvious'' Serre weight $F(\mu)$ in Corollary \ref{lem:modularity of ordinary weight} by applying the automorphy lifting result for potentially diagonalizable lifts (\cite[4.2.1]{BLGGT}). The unramifiedness in \textbf{(A4)} is assumed for simplicity in the construction of patched modules. Assumption \textbf{(A5)} is not important for the construction of patching modules.
\end{remark}

For the following series of Lemmas, we let $K/\Qp$ be a finite unramified extension. We only need the case $K=\Qp$ for this article.

The first Lemma provides a lift of global residual character with prescribed local lifts. This must be well-known but we give the proof as we could not find it in the literature.

\begin{lemma}\label{lem:lifting-Galois-characters}
Let $F$ be a totally real field and $S$ be a finite set of places  of $F$ containing all places dividing $p$. Suppose we have a continuous character $\ov{\psi} : G_F \xrightarrow{} \F^\times$ and $\psi_v: G_{F_v} \xrightarrow{} \cO^\times$ lifting $\ov{\psi}|_{G_{F_v}}$ for all $v\in S$. We further assume that for all $v|p$, $\psi_v$ is de Rham with fixed Hodge--Tate weight $c$ and $\psi_v \epsilon^{-c}$ has finite image. Then there is a continous character $\psi: G_F \xrightarrow{} \cO^\times$ lifting $\ov{\theta}$ and $\psi|_{G_{F_v}} = \psi_v$ for all $v\in S$.
\end{lemma}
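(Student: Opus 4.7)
The plan is to translate the problem, via global class field theory, into extending a system of local Hecke characters on $F_v^\times$ to a continuous global character of $\A_F^\times/F^\times$. The key move is to write $\psi$ in the form $\psi = \epsilon^c \cdot \chi$, so that the ``algebraic'' Hodge--Tate data at $v\mid p$ is absorbed into the global cyclotomic character $\epsilon^c: G_F \to \Z_p^\times \subset \cO^\times$. By hypothesis, $\chi$ must then restrict to the finite order character $\psi_v^{\mathrm{fin}} := \psi_v\epsilon^{-c}|_{G_{F_v}}$ at each $v\mid p$, and must lift $\bar{\psi}\omega^{-c}$ globally.

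Next, I would invoke the Grunwald--Wang theorem (there is no special-case obstruction since $p>2$) to produce a global finite order character $\chi^{\mathrm{fin}}: G_F \to \cO^\times$ simultaneously lifting $\bar{\psi}\omega^{-c}$ and restricting to $\psi_v^{\mathrm{fin}}$ at each $v\mid p$. Setting $\psi_1 := \epsilon^c\cdot\chi^{\mathrm{fin}}$ yields a global lift of $\bar{\psi}$ which matches $\psi_v$ at every $v\mid p$. In the case $S=S_p$ this already completes the construction.

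For the remaining places $v \in S\setminus S_p$ I would construct $\psi$ in the form $\psi=\psi_1\cdot\eta$, where $\eta: G_F \to 1+\varpi\cO$ is required to be trivial at each $v\mid p$ and to equal $\eta_v := \psi_v\cdot(\psi_1|_{G_{F_v}})^{-1}$ at each $v\in S\setminus S_p$. A crucial structural observation is that every continuous character of $G_{F_v}$ (with $v\nmid p$) into the pro-$p$ group $1+\varpi\cO$ is automatically unramified, because the tame and wild inertia at $v$ are pro-$\ell$ for $\ell\neq p$ and cannot map nontrivially to a pro-$p$ group. Hence each $\eta_v$ is determined by its value on $\Frob_v$, reducing the task to constructing an unramified-at-$S\setminus S_p$ character $\eta$ with prescribed Frobenius values there.

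The main obstacle is verifying the global compatibility required for $\eta$ to descend to $\A_F^\times/F^\times$. Via the $p$-adic logarithm (valid since $p>2$) and class field theory, this becomes an extension problem for additive characters $\A_F^\times/F^\times \to \cO$. Using the finiteness of the $S_p$-class group $\mathrm{Cl}_{S_p}(F)$ and torsion-freeness of $\cO$, the compatibility reduces to a finite system of linear equations on the finitely generated group of $S_p$-units $\cO_{F,S_p}^\times$ (by Dirichlet). These equations can be solved by refining the Grunwald--Wang step to additionally prescribe the Frobenius values of $\chi^{\mathrm{fin}}$ at the places of $S\setminus S_p$ (possibly after enlarging $\cO$ to contain the required roots of unity), arranging $\eta_v=1$ so that $\eta$ itself can be taken trivial.
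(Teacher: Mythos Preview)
Your approach is genuinely different from the paper's, and the final step has a gap.

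The paper's argument is more direct. After the same cyclotomic twist to reduce to $c=0$, it passes to the idele class group, chooses a small enough open compact $U\subset(\A_F^S)^\times$ on which $\bar\psi$ is trivial and such that each $\psi_v$ is trivial on $U\cap F^\times$, defines $\psi$ on the finite-index open subgroup $U\prod_{v\in S}F_v^\times/(U\cap F^\times)$ of $\A_F^\times/F^\times$ by gluing the $\psi_v$ with the trivial character on $U$, and then extends to all of $\A_F^\times/F^\times$ through a finite chain of cyclic quotients, at each step choosing a lift compatible with $\bar\psi$. This sidesteps any Grunwald--Wang or unit-group bookkeeping entirely.

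In your decomposition $\psi=\epsilon^c\cdot\chi^{\mathrm{fin}}\cdot\eta$, the problematic step is the last one. You propose to arrange $\eta_v=1$ for $v\in S\setminus S_p$ by using Grunwald--Wang to further prescribe $\chi^{\mathrm{fin}}(\Frob_v)$. But $\eta_v=1$ forces $\chi^{\mathrm{fin}}(\Frob_v)=\psi_v(\Frob_v)\,\epsilon^{-c}(\Frob_v)$, and the right-hand side is an arbitrary element of $\cO^\times$ lifting a given residue: it need not be a root of unity in any finite extension, so no finite-order $\chi^{\mathrm{fin}}$ can hit it. Enlarging $\cO$ does not help. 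A second, smaller issue is that Grunwald--Wang controls local restrictions at finitely many places but says nothing about the global reduction modulo $\varpi$; you would need an extra step (e.g.\ start from the Teichm\"uller lift of $\bar\psi\omega^{-c}$) to handle that, and then the correction lands you back in the $1+\varpi\cO$-valued problem anyway. The honest way to finish your argument is to construct $\eta$ directly as a $(1+\varpi\cO)$-valued idele class character with the prescribed (unramified) local components at $S\setminus S_p$ and trivial at $S_p$; but verifying that the global-unit obstruction vanishes is exactly where the work lies, and the cyclic-extension argument the paper uses is the cleanest way to do it.
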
  
\begin{proof}
Twisting by a power of cyclotomic character, we can assume $c=0$. Also, we use class field theory to consider $\ov{\psi}$ and $\psi_v$ as characters of $\A_F^\times /F^\times$ and $F_v^\times$ respectively. Following the argument in \cite[Lemma 4.1.1]{CHT}, we let $U\subset (\A_F^{S})^\times$ be the open compact subgroup such that $\ov{\psi}$ is trivial on $U$, and $\psi_v$ is trivial on $U\cap F^\times$. We define $\psi : U\prod_{v\in S}F_v^\times /(U\cap{F}^\times)$ by setting trivial on $U$ and $\psi_v$ on $F_v$.   Since the quotient $(\A_F^\times /F^\times)/(U\prod_{v\in S}F_v^\times /(U\cap{F}^\times))$ is finite abelian group, we can choose a sequence of subgroups
\begin{align*}
    U\prod_{v\in S}F_v^\times /(U\cap{F}^\times) = U_0 \subsetneq U_1 \subsetneq \cdots \subsetneq U_r = \A_F^\times /F^\times
\end{align*}
where each quotient $U_i/U_{i-1}$ is a finite cyclic group. We can inductively extend $\psi$ from $U_{i-1}$ to $U_i$ while ensuring that $\psi$ lifts $\ov{\psi}|_{U_i}$ at each step.
\end{proof}

The following Lemma shows that the modularity lifting for potentially diagonalizable representations for $\GSp_4$ follows from that for $\GL_4$.

\begin{lemma}\label{lem:pd-lift}
    Let $r:G_F \rightarrow \GSp_4(E)$ be a continuous representation. Let $\chi: \A_F^\times/F^\times \rightarrow \C^\times$ be a Hecke character such that $\chi_{p,\iota}\epsilon^{-3}=\nusim(r)$. Suppose that
    \begin{enumerate}
        \item for all $v|p$, $\std(r)|_{G_{F_v}}$ is potentially diagonalizable with distinct Hodge--Tate weights;
        \item $\rbar$ is vast;
        \item $(\rbar,\chi)$ is potentially diagonalizably automorphic for some $\chi: \A_F^\times/F^\times \rightarrow \C^\times$.
    \end{enumerate}
Then there exists a regular algebraic cuspidal automorphic representation $\pi$ of $\GSp_4(\A_F)$ such that $r\simeq r_{\pi,p,\iota}$. 
\end{lemma}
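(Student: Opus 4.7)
The plan is to reduce to the analogous automorphy lifting theorem for $\GL_4$ applied to $\std(r)$, and then to descend back to $\GSp_4$ via Arthur's classification, essentially the same strategy as in Lemma \ref{lem:weak-JL}.

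First, by assumption (3) together with Remark \ref{rk:modularity-equiv}, there exists a regular algebraic cuspidal automorphic representation $\pi'$ of $\GSp_4(\A_F)$ with central character $\chi$ such that $\rbar_{\pi',p,\iota}\simeq \rbar$ and $\std(r_{\pi',p,\iota})|_{G_{F_v}}$ is potentially diagonalizable for every $v\mid p$. Since $\rbar$ is vast (hence in particular absolutely irreducible), Lemma \ref{lem:stable-tempered} forces $\pi'$ to be stable and tempered, so that its Arthur transfer $\Pi'$ to $\GL_4(\A_F)$ is a regular algebraic cuspidal polarized automorphic representation, and $r_{\Pi',p,\iota}\simeq \std(r_{\pi',p,\iota})$.

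Next, I will apply the potentially diagonalizable automorphy lifting theorem for $\GL_4$ (in the form of \cite[Theorem 4.2.1]{BLGGT}) to $\std(r)$. The residual representation $\std(\rbar)\simeq r_{\Pi',p,\iota}\bmod \varpi$ is automorphic via $\Pi'$, which is potentially diagonalizable at all $v\mid p$; on the other hand $\std(r)|_{G_{F_v}}$ is potentially diagonalizable with distinct Hodge--Tate weights by hypothesis (1). The ``adequacy'' and ``big image'' hypotheses needed to apply \cite{BLGGT} follow from the vastness of $\rbar$. This produces a regular algebraic cuspidal polarized automorphic representation $\Pi$ of $\GL_4(\A_F)$ with $r_{\Pi,p,\iota}\simeq \std(r)$. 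Because $\std(r)$ is symplectic of similitude character $\chi_{p,\iota}\epsilon^{-3}$, the representation $\Pi$ is of symplectic type with compatible central character.

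I will then descend $\Pi$ back to $\GSp_4$ by the same route as in the proof of Lemma \ref{lem:weak-JL}: project to a self-dual cuspidal representation of $\GL_5(\A_F)$, descend to a regular algebraic discrete automorphic representation $\tilde\pi$ of $\Sp_4(\A_F)$ via \cite[Theorem 1.4.1]{Art13}, and lift to a cuspidal automorphic representation $\pi$ of $\GSp_4(\A_F)$ with central character $\chi$ via \cite[Proposition 3.1.4]{Patrikis-thesis}. Temperedness at infinity together with \cite[Theorem 4.3]{Wallach} shows $\pi$ is cuspidal rather than merely discrete. Finally, since $\rbar$ is absolutely irreducible both $r$ and $r_{\pi,p,\iota}$ are absolutely irreducible symplectic lifts, and the isomorphism $\std(r)\simeq \std(r_{\pi,p,\iota})\simeq r_{\Pi,p,\iota}$ is unique up to scalar by Schur's lemma. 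This scalar rescales the symplectic form, and since both similitude characters equal $\chi_{p,\iota}\epsilon^{-3}$, the two symplectic structures must in fact coincide, giving $r\simeq r_{\pi,p,\iota}$.

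The main obstacle is the descent step from $\GL_4$ back to $\GSp_4$: one must check that the descended representation $\pi$ is cuspidal (not merely discrete), has the prescribed central character $\chi$ rather than a quadratic twist, and that all the hypotheses of the invoked automorphy lifting theorem \cite{BLGGT} are genuinely implied by vastness of $\rbar$. The cuspidality issue is resolved by temperedness at infinity; the central character issue is handled exactly as in Lemma \ref{lem:weak-JL} by twisting $\pi$ by a quadratic character if necessary (using $p>2$ to kill the ambiguity after comparing mod $p$).
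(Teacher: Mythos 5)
The paper's proof is shorter and cleaner: it invokes \cite[Theorem 7.4.1]{GeeTaibi} once to show that $\std(\rbar)$ is potentially diagonalizably automorphic in the polarized $\GL_4$-sense of \cite{BLGGT}, applies \cite[Theorem 4.2.1]{BLGGT} to produce $\Pi$ on $\GL_4(\A_F)$ with $r_{\Pi,p,\iota}\simeq\std(r)$, observes that $\Pi$ is $\chi$-self-dual, and then invokes \cite[Theorem 7.4.1]{GeeTaibi} (equivalently \cite[Theorem 2.9.3]{BCGP}) a second time to descend $\Pi$ to $\pi$ on $\GSp_4(\A_F)$. Your middle step (apply BLGGT to $\std(r)$) matches the paper. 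Your entry into the $\GL_4$ modularity statement, via an explicit $\pi'$ and its transfer $\Pi'$, is a bit more hands-on than the paper's single citation but is essentially correct.

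However, your proposed descent step has a genuine gap: you say you will ``project $\Pi$ to a self-dual cuspidal representation of $\GL_5(\A_F)$'' and then ``descend to $\Sp_4(\A_F)$'' as in Lemma \ref{lem:weak-JL}, but the $\GL_5$ representation in that lemma's proof is not obtained from anything on $\GL_4$ --- it arises by applying Hiraga--Saito and Ta\"ibi to a representation of $\cH=\cG^{\mathrm{der}}$, an inner form of $\Sp_4$, and then $\GL_5$ appears because $\widehat{\Sp_4}=\SO_5$ has a $5$-dimensional standard representation. Producing that $5$-dimensional object directly from $\Pi$ on $\GL_4$ would require a functorial lift (roughly an exterior-square construction), which you neither cite nor verify. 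As written, your route is circular: to get the $\GL_5$ object you effectively need the representation on $\Sp_4$ or $\GSp_4$ that you are trying to construct. The paper sidesteps all of this by citing \cite[Theorem 7.4.1]{GeeTaibi}/\cite[Theorem 2.9.3]{BCGP}, which precisely packages the two-way transfer between $\GSp_4$-automorphy of $r$ and polarized $\GL_4$-automorphy of $\std(r)$, including the central-character and cuspidality bookkeeping you flag as obstacles. You should replace your descent step with that citation (the uniqueness-of-symplectic-form argument you give at the end is fine and is implicit in those references).
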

\begin{proof}
    By \cite[Theorem 7.4.1]{GeeTaibi}, $\std(\rbar)$ is potentially diagonalizably automorphic in the sense of \cite{BLGGT}. Thus, Theorem 4.2.1 in \emph{loc.~cit.}~implies that there exists a regular algebraic cuspidal automorphic representation $\Pi$ of $\GL_4(\A_F)$ such that a continuous representation $r_{\Pi,p,\iota}$ (see Theorem 2.1.1 in \emph{loc.~cit.}) is isomorphic to $\std(r)$. Such $\Pi$ is then $\chi$-self dual. Again by applying \cite[Theorem 7.4.1]{GeeTaibi} (also see Theorem 2.9.3 in \cite{BCGP}), this proves the existence of $\pi$ such that $r\simeq r_{\pi,p,\iota}$. 
\end{proof}

We now prove the existence of a suitable globalization of $\rhobar$ following the strategy of \cite[Appendix A]{EG}. 

\begin{lemma}\label{lem:globalizing-gal-rep}
Assume that $p>7$. Let $\rhobar : G_K \xrightarrow{} \GSp_4(\F)$ be a continuous representation. Suppose there is a lift $\rho : G_K \xrightarrow{} \GSp_4(E)$ such that $\std(\rho)$ is potentially diagonalizable.
Then there is a totally real field $F$ of even degree over $\Q$ such that $F_v \simeq K$ for all $v|p$ and a continuous representation $r : G_F \xrightarrow{} \GSp_4(E)$, with its reduction modulo $\varpi$ denoted by $\rbar$,  satisfying that
\begin{enumerate}
    \item $\ov{r}$ is odd,
    \item $\ov{r}$ is vast and tidy,
    \item $\ov{r}\vert_{G_{F_v}} \simeq \rhobar$ for all $v|p$,
    \item $\ov{r}$ is unramified at all finite places not dividing $p$,
    \item $r\vert_{G_{F_v}}$ and $\rho$ correspond to points contained in the same component of a potentially crystalline deformation ring  for all $v| p$ (in particular, $\std(r\vert_{G_{F_v}})$ is potentially diagonalizable), and
    \item there exists a regular algebraic cuspidal automorphic representation $\pi$ of $\GSp_4(\A_F)$ such that $r\simeq r_{\pi,p,\iota}$.
\end{enumerate}
\end{lemma}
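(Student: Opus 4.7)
The plan is to adapt the globalization strategy of \cite[Appendix A]{EG} to the symplectic setting, with the key inputs being potential automorphy for $\GSp_4$ from \cite{BCGP} and transfer results from \cite{GeeTaibi}.

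First I would choose a suitable totally real field $F$ of even degree such that $F_v \simeq K$ for all $v \mid p$ (so $p$ splits completely when $K = \Qp$), while also arranging that there is a sufficient supply of auxiliary finite places where we have control of local decomposition groups. This can be done by weak approximation and standard number field constructions. Second, I would construct the residual representation $\bar{r} : G_F \to \GSp_4(\F)$ with the prescribed local behavior at $v \mid p$ and unramified outside $p$ using a Ramakrishna-style argument adapted to $\GSp_4$: one sets up a global deformation problem with local conditions $\rhobar$ at places dividing $p$, unramified conditions elsewhere, and allows Steinberg-type deformation conditions at a finite set of carefully chosen auxiliary primes (the mod-$p$ reduction of a Steinberg lift at such a prime is unramified, so this is consistent with asking $\bar{r}$ itself to be unramified there). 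One then solves the relevant Galois cohomology problem to produce $\bar{r}$, and chooses the construction so that the image contains enough of $\Sp_4(\F_p)$ to guarantee vastness and tidiness in the sense of \cite[\S7.5]{BCGP}. Oddness is arranged by fixing a suitable lift $\psi$ of $\nusim \circ \bar{r}$ using Lemma \ref{lem:lifting-Galois-characters}.

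Third, I would lift $\bar{r}$ to a characteristic zero representation $r : G_F \to \GSp_4(E)$ with $r|_{G_{F_v}}$ lying in the same component of the potentially crystalline deformation ring of $\rhobar$ as $\rho$, for each $v \mid p$. This is a standard deformation-theoretic argument (cf.\ \cite{BLGGT}, \cite{CHT}): the generic fibers of the local deformation rings at $v \mid p$ are smooth at the point determined by $\rho$, and using the auxiliary Ramakrishna primes one cuts the global deformation ring down so that its characteristic zero points are controlled by the local ones. In particular $\std(r)|_{G_{F_v}}$ is potentially diagonalizable for all $v \mid p$.

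Fourth, to establish automorphy of $r$ I would apply potential automorphy. Using the potential automorphy theorem of \cite{BLGGT} (applied to $\std(r)$, which is symplectic and potentially diagonalizable at places above $p$), we obtain a solvable totally real extension $F'/F$ over which $\std(r)|_{G_{F'}}$ is automorphic; combined with the classification of \cite{Art13}, \cite{GeeTaibi} and the transfer results for $\GSp_4$, this yields an automorphic representation of $\GSp_4(\A_{F'})$ matching $r|_{G_{F'}}$. Standard solvable descent (via base change, as in \cite{BCGP}) combined with Lemma \ref{lem:pd-lift} then produces the desired cuspidal automorphic representation $\pi$ of $\GSp_4(\A_F)$ with $r \simeq r_{\pi,p,\iota}$, verifying condition (6). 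The assumption $p > 7$ and the vastness/tidiness hypotheses ensure the various lifting and descent theorems apply.

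The main obstacle will be the second step: producing the residual globalization $\bar{r}$ which simultaneously realizes $\rhobar$ at every place above $p$, is unramified outside $p$, and has image large enough to be vast and tidy. The delicate part is balancing unramifiedness outside $p$ (which constrains auxiliary primes to have Frobenius eigenvalues compatible with unramified reduction) against the need for enough auxiliary Ramakrishna primes to annihilate the obstruction classes in the relevant symplectic Galois cohomology groups. The rest of the argument is then a standard package of modularity lifting and base change for $\GSp_4$.
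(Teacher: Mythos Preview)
Your proposal has a genuine gap in the second step. You propose to construct the residual representation $\bar{r}$ via a ``Ramakrishna-style argument'': set up a deformation problem, allow Steinberg conditions at auxiliary primes, and solve a Galois cohomology problem. But Ramakrishna's method produces characteristic-zero \emph{lifts} of a given $\bar{r}$; it does not manufacture $\bar{r}$ itself. There is no Galois-cohomological mechanism for producing a global mod $p$ representation with image equal to $\GSp_4(\F)$ and with prescribed restriction at each $v\mid p$. The paper instead invokes \cite[Proposition 3.2]{CalFM}, which is a Moret--Bailly style result: one realizes $\GSp_4(\F)$ as a Galois group of an extension $L''/L'$ of a chosen totally real field $L$, with the decomposition groups at $v\mid p$ matching the image of $\rhobar$ and with complex conjugation hitting a Chevalley involution. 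This gives $\bar{r}':G_{L'}\twoheadrightarrow\Gal(L''/L')\cong\GSp_4(\F)$ with the correct local behavior, oddness, and (using $p>7$ and linear disjointness from $L(\zeta_p)$) vastness and tidiness for free. Unramifiedness away from $p$ is then arranged by a further base change as in \cite[Proposition A.2]{EG}.

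A second, related issue is that you fix $F$ at the outset and then attempt descent at the end. The paper does the opposite: it starts over some $L'$, lifts $\bar{r}'$ to $r'$ via \cite[Theorem 3.4]{PT} with the prescribed local components, applies potential automorphy \cite[Proposition A.6]{EG} to obtain automorphy of $\std(r')$ over a \emph{further} totally real extension $F/L'$, and then simply takes $r=r'|_{G_F}$. There is no solvable descent; the field $F$ in the statement is the one produced by potential automorphy, not one chosen in advance.
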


\begin{proof}
In this proof, we let $S_{p,L}$ (resp.~$S_{\infty,L}$) be the set of places of a number field $L$ dividing $p$ (resp.~$\infty$).

Let $L$ be a totally real field of even degree over $\Q$ satisfying $L_v\simeq K$ for all $v | p$. We apply \cite[Proposition 3.2]{CalFM}. In the notation of \emph{loc.~cit.}, we set $G:= \GSp_4(\F)$, $E:=L$, $S:=S_{p,L}\cup S_{\infty,L}$, $F:=L(\zeta_p)$, $H_v :=\overline{L_v}^{\ker \rhobar}$ and $\phi_v$ given by $\rhobar$ with image $D_v$ and the isomorphism $L_v\simeq K$ for $v\in S_{p,L}$, and $c_v$ to be a Chevalley involution for all $v\in S_{\infty,L}$. Then there exists extensions of totally real fields $L''/L'/L$ such that
\begin{itemize}
    \item all places in $S$ split completely in $L'$,
    \item $L''/L$ is linearly disjoint from $L(\zeta_p)/L$,
    \item $L''/L$ is Galois and $\Gal(L''/L)\simeq \GSp_4(\F)$,
    \item for all places $w$ of $L'$ above $v\in S_{p,L}$, the isomorphism $\Gal(L''/L)\simeq \GSp_4(\F)$ induces  $\Gal(L''_w/L'_w)\simeq D_v$. 
\end{itemize}
We write $\rbar': G_{L'}\onto \Gal(L''/L) \xrightarrow{\sim} \GSp_4(\F)$. Then $\rbar'$ satisfies conditions (1) and (3) above. Moreover, such $\rbar'$ is tidy by \cite[Lemma 7.5.12]{BCGP}. By Lemma 7.5.15 of \emph{loc.~cit.}, 
$\rbar'$ is vast if
\begin{align*}
    \Sp_4(\F_p)\subset \ov{r}'(G_{L'(\zeta_p^N)})
\end{align*}
for all $N\ge 1$. For $N=1$, this is true because $L''/L$ is linearly disjoint from $L(\zeta_p)/L$. Then this holds for all $N\ge 1$ because $\Sp_4(\F_p)$ has no $p$-power order quotient.  Moreover, using the argument explained in the last two paragraph of the proof of \cite[Proposition A.2]{EG}, we can and do replace $L'$ by its totally real extension such that $\ov{r}'$ further satisfies (4).  

Using Lemma \ref{lem:lifting-Galois-characters}, choose an odd continuous character $\psi: G_{L'} \xrightarrow{} \cO^\times$ lifting  $\nu_{\simc}(\ov{r})$ and  $\psi|_{G_{L'_v}}\simeq \nu_{\simc} (\rho)$ for all $v\in S_{p,L'}$. Then \cite[Theorem 3.4]{PT} provides a lift $r' : G_{L'} \xrightarrow{} \GSp_4(E)$ of $\ov{r}'$ such that  $r'\vert_{G_{L'_v}}$ and $\rho_v$ lie on the same component of potentially crystalline deformation ring of $\rbar|_{G_{L'_v}}$ for all $v\in S_{p,L'}$. By \cite[Proposition A.6]{EG} (which is based on \cite[Theorem 4.5.1]{BLGGT}), there exists a finite extension $F/L'$ of totally real fields in which $p$ splits completely and a regular algebraic cuspidal automorphic representation $\Pi$ of $\GL_4(\A_F)$ such that $r_{\Pi,p,\iota}\simeq r'|_{G_{F}}$. Let $\chi:\A_F^\times/F^\times$ be a Hecke character such that $\chi_{p,\iota}\epsilon^{-3}\simeq \nusim(r')|_{G_F}$. Then $\Pi$ is $\chi$-self dual, and it descends to a regular algebraic cuspidal automorphic representation $\pi$ of $\GSp_4(\A_F)$. Thus, the representation $r:= r'|_{G_F}$ satisfies all desired properties.
\end{proof}

\begin{corollary}\label{cor:globalization}
Suppose $\rhobar : G_{\Qp} \xrightarrow{} \GSp_4(\F)$ is $7$-generic of  weight $(a_3,a_2,a_2,a_0)$. Then there is a suitable globalization $(F,\ov{r}, \chi_\cris)$ of $\rhobar$ such that $(\chi_{\cris,p,\iota}\otimes \epsilon^{-3})|_{G_{F_v}} = \epsilon^b \nr_{\xi_v}$.
\end{corollary}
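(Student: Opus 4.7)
The plan is to apply Lemma \ref{lem:globalizing-gal-rep} to a carefully chosen symplectic crystalline lift of $\rhobar$ with the desired similitude character, and then verify axioms \textbf{(A1)}--\textbf{(A5)} of Definition \ref{def:suitable}.

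First, I would construct a symplectic crystalline lift $\rho : G_{\Qp} \to \GSp_4(\cO)$ of $\rhobar$ with Hodge type $\lambda = (a_3, a_2, a_1, a_0)$ and similitude character $\epsilon^b \nr_\xi$, where $b = a_0 + a_3$ and $\xi \in \cO^\times$ is the Teichm\"uller lift of $\nusim(\rhobar)(\Frob) = \bar{\xi}_0\bar{\xi}_3$. Since $\rhobar$ is upper-triangular by \eqref{eq:rhobar}, strong $7$-genericity forces $a_{i+1} - a_i \geq 8$ and $a_3 - a_0 \leq p - 8$, so Proposition \ref{prop:fs crystalline deformation rings} applies: $R_{\rhobar}^{\lambda, \psi_\cris}$ is formally smooth of dimension $15$ over $\cO$, and in particular admits an $\cO$-point $\rho$. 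Such $\rho$ is crystalline ordinary, and hence $\std(\rho)$ is diagonalizable in the sense of \cite{BLGGT} (any crystalline ordinary representation in the Fontaine--Laffaille range lies on the same irreducible component of its crystalline deformation ring as its semisimplification), hence in particular potentially diagonalizable.

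Next, I would apply Lemma \ref{lem:globalizing-gal-rep} to $\rho$ to obtain a totally real field $F$ of even degree with $F_v \simeq \Qp$ for all $v | p$, a continuous representation $r : G_F \to \GSp_4(E)$ with reduction $\rbar$ satisfying conditions (1)--(5) of the Lemma, and a regular algebraic cuspidal automorphic representation $\pi$ of $\GSp_4(\A_F)$ with $r \simeq r_{\pi,p,\iota}$. Let $\chi_\cris$ be the central character of $\pi$, so that $\chi_{\cris,p,\iota}\epsilon^{-3} = \nusim(r)$. Condition (5) of the Lemma (together with the proof of Lemma \ref{lem:globalizing-gal-rep}, in which the similitude character of the global lift is chosen to coincide locally with $\nusim(\rho)$) gives $\nusim(r)|_{G_{F_v}} = \nusim(\rho) = \epsilon^b \nr_\xi$, so $(\chi_{\cris,p,\iota} \epsilon^{-3})|_{G_{F_v}} = \epsilon^b \nr_{\xi_v}$ with $\xi_v = \xi$ for each $v | p$, as required.

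It remains to verify that $(F, \rbar, \chi_\cris)$ is suitable. Axioms \textbf{(A1)}, \textbf{(A2)}, \textbf{(A3)} are immediate from Lemma \ref{lem:globalizing-gal-rep}(1)--(3); axiom \textbf{(A5)} holds since $\rbar|_{G_{F_v}} \simeq \rhobar$ by condition (3) and $\rhobar$ is strongly $7$-generic by hypothesis. For \textbf{(A4)}, modularity of $(\rbar, \chi_\cris)$ is witnessed by $\pi$, and potential diagonalizability of $\std(r|_{G_{F_v}})$ for $v | p$ follows from condition (5) of the Lemma together with the choice of $\rho$. The main technical issue is that \textbf{(A4)} demands modularity at a level unramified away from $p$, i.e.\ that $\pi_v$ be unramified for every finite $v \nmid p$; by local-global compatibility (Theorem \ref{thm:galois representations}(ii)) this amounts to showing $r$ itself is unramified at every finite $v \nmid p$. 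This I would arrange by imposing the (smooth) unramified local deformation condition at each such $v$ when invoking \cite[Theorem 3.4]{PT} in the proof of Lemma \ref{lem:globalizing-gal-rep}; this is legitimate because $\rbar$ is unramified away from $p$ by condition (4).

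The main obstacle is precisely this last point: propagating unramifiedness from the residual level to the characteristic-zero lift $r$, and therefore to the automorphic witness $\pi$. All other ingredients are essentially formal given the results already established earlier in the paper.
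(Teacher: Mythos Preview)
Your proposal is correct and follows essentially the same approach as the paper: construct an ordinary crystalline symplectic lift of $\rhobar$ with the desired similitude character, observe that it is potentially diagonalizable, and feed it into Lemma \ref{lem:globalizing-gal-rep}. The paper obtains the lift by citing \cite[Lemma 7.6.7]{GG} and invokes \cite[Lemma 1.4.3]{BLGGT} for potential diagonalizability, whereas you use the formal smoothness of $R_{\rhobar}^{\lambda,\psi_\cris}$ from Proposition \ref{prop:fs crystalline deformation rings}; both routes are valid and yield the same object. You are also more explicit than the paper about the point that the lift $r$ (and not merely $\rbar$) must be unramified away from $p$ in order to secure \textbf{(A4)}; the paper leaves this implicit in its appeal to \cite[Theorem 3.4]{PT} inside the proof of Lemma \ref{lem:globalizing-gal-rep}, and your proposed fix of imposing the unramified local condition there is exactly right.
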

\begin{proof}
By \cite[Lemma 7.6.7]{GG}, $\rhobar$ has an ordinary crystalline symplectic lift $\rho$ of weight $\ov{\mu}_{\rhobar,v}+\eta$. Twisting by unramified character, we can assume that $\nu_{\simc}(\rho) = \epsilon^{b+3}\nr_{\xi}$ with finite order $\nr_\xi$. Then $\std(\rho)$ is potentially diagonalizable by \cite[Lemma 1.4.3]{BLGGT}. The claim follows from Lemma \ref{lem:globalizing-gal-rep} and Remark \ref{rk:modularity-equiv} with $\chi_{\cris,p,\iota} = \nu_{\simc}(r) \otimes \epsilon^{3}$.
\end{proof}

\begin{corollary}\label{lem:modularity of ordinary weight}
    For any $(F,\ov{r},\chi)$ suitable of weight $(a_3,a_2,a_1,a_0)$, there is a Hecke character $\chi_\cris$ such that $(\chi_\cris\otimes \epsilon^{-3})|_{G_{F_v}} = \epsilon^b \nr_{\xi_v}$ for $v|p$, and $(\ov{
r},\chi_\cris)$ is modular of weight $F(\mu)$ and level unramified outside $p$ where $\mu_v=\mu_{\rhobar}$ for $v|p$.
\end{corollary}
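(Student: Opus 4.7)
The plan is to reduce modularity of $F(\mu)$ to modularity of the algebraic Weyl-module weight $V(\mu)$ by producing an ordinary crystalline global lift of $\rbar$, and then to identify $V(\mu)\otimes_\cO\F$ with $F(\mu)$ via a lowest-alcove argument. First, I would construct $\chi_\cris$ by invoking Lemma~\ref{lem:lifting-Galois-characters} with $S=S_p$ and prescribed local components $\psi_v := \epsilon^{b+3}\nr_{\xi_v}$ at each $v|p$; these agree with $\nusim(\rbar)|_{G_{F_v}}\cdot\epsilon^3$ modulo $\varpi$ by the choice of $\xi_v$ as the Teichm\"uller lift of $\nusim(\rbar|_{G_{F_v}})(\Frob_v)$, and the oddness of $\rbar$ from (A2) yields the compatibility required to convert the resulting global Galois character to a Hecke character $\chi_\cris$ via $\iota$ and class field theory, with $\chi_{\cris,p,\iota}\epsilon^{-3}|_{G_{F_v}} = \epsilon^b\nr_{\xi_v}$ by design.

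Next, I would globalize. For each $v|p$, the strong 7-genericity (A5) provides an ordinary crystalline symplectic local lift $\rho_v$ of $\rbar|_{G_{F_v}}$ with Hodge--Tate weights $\bar\mu_{\rhobar,v}+\eta=(a_3,a_2,a_1,a_0)$ and similitude character $\epsilon^{b+3}\nr_{\xi_v}$, for instance via \cite[Lemma 7.6.7]{GG}. Applying \cite[Theorem 3.4]{PT} exactly as in the proof of Lemma~\ref{lem:globalizing-gal-rep}, with these crystalline conditions at $v|p$ together with unramified conditions at all other finite places, produces a global symplectic lift $r_\cris\colon G_F\to\GSp_4(E)$ of $\rbar$ that is unramified outside $p$, has similitude character $\chi_{\cris,p,\iota}\epsilon^{-3}$, and such that $r_\cris|_{G_{F_v}}$ lies on the same component of the potentially crystalline deformation ring as $\rho_v$ for each $v|p$.

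To establish automorphy of $r_\cris$, I would combine (A4) with Lemma~\ref{lem:pd-lift}. The potentially diagonalizable automorphy of $(\rbar,\chi)$ transfers via \cite[Theorem B]{Sor09} and the standard representation to give potentially diagonalizable automorphy of $\std(\rbar)$ for $\GL_4$; since $r_\cris|_{G_{F_v}}$ is itself potentially diagonalizable by \cite[Lemma 1.4.3]{BLGGT}, Lemma~\ref{lem:pd-lift} produces a regular algebraic cuspidal automorphic representation $\pi_\cris$ of $\GSp_4(\A_F)$ with $r_{\pi_\cris,p,\iota}\simeq r_\cris$ and central character $\chi_\cris$. By Lemma~\ref{lem:stable-tempered}, $\pi_\cris$ is stable and tempered, so \cite[Theorem B]{Sor09} descends it to a cuspidal automorphic representation $\pi'_\cris$ of $\cG(\A_F)$. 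Since $r_\cris$ is unramified outside $p$ and crystalline with distinct Hodge--Tate weights at $v|p$, the Weil--Deligne representation of $r_\cris|_{G_{F_v}}$ is unramified at every finite $v$, so $\pi'_\cris$ has nonzero $\cG(\cO_{F_v})$-fixed vectors everywhere; via \eqref{eq:integral model} this contributes to $S_{\chi_\cris,\mu}(U,\cO)[1/p]$ for $U$ unramified everywhere and $\mu_v=\mu_{\rhobar}$ (matching Hodge--Tate weights via $\bar\mu_{\rhobar,v}+\eta = (a_3,a_2,a_1,a_0)$), giving $S_{\chi_\cris,\mu}(U,\cO)_{\m^P_{\rbar,\chi_\cris}}\neq 0$.

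Finally, to pass from $V(\mu)$-modularity to $F(\mu)$-modularity, under strong 7-genericity a short computation shows $\mu_{\rhobar,v}$ lies in the interior of the lowest $p$-restricted alcove $C_0$ (the critical inequality being $a_3-a_0 < p-7$). Consequently the Weyl module $V(\mu_v)\otimes_\cO\F$ is irreducible and isomorphic to $F(\mu_v)$ as a $\GSp_4(\Fp)$-representation, so $V(\mu)\otimes_\cO\F\simeq F(\mu)$, and reducing modulo $\varpi$ via Nakayama yields $S_{\chi_\cris}(U,F(\mu))_{\m^P_{\rbar,\chi_\cris}}\neq 0$, establishing the desired modularity. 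The main technical obstacle will be the simultaneous control of local conditions in the application of \cite[Theorem 3.4]{PT} --- producing a global symplectic lift with the correct similitude character, ordinary crystalline local behavior at each $v|p$, and unramifiedness at every other finite place --- which is the Khare--Wintenberger-style input underlying the entire argument.
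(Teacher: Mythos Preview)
Your proposal is correct and follows essentially the same approach as the paper: apply \cite[Theorem 3.4]{PT} (with the ordinary crystalline local inputs and Lemma~\ref{lem:lifting-Galois-characters} for the similitude character, as in Lemma~\ref{lem:globalizing-gal-rep}) to produce a global lift, then invoke Lemma~\ref{lem:pd-lift}, Lemma~\ref{lem:stable-tempered}, and \cite[Theorem B]{Sor09} to obtain automorphy on $\cG$, and finally use that $\mu_{\rhobar}$ lies in the lowest alcove so that $V(\mu)\otimes_\cO\F\cong F(\mu)$ (the paper phrases this last step via Lemma~\ref{lem:Weyl decomposition}). The only cosmetic difference is that the paper defines $\chi_\cris$ \emph{a posteriori} as $\nusim(r)\otimes\epsilon^3$ after constructing $r$, whereas you build $\chi_\cris$ first and feed it into the lifting theorem; both orderings amount to the same argument.
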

\begin{proof}
     As in the proof of Lemma \ref{lem:globalizing-gal-rep}, we can apply \cite[Theorem 3.4]{PT} to construct a lift $r$ of $\ov{r}$ such that $r|_{G_{F_v}}$ is crystalline of weight $\ov{\mu}_{v}+\eta$ for each $v|p$ and unramified outside $p$, and $\chi_\cris = \nu_{\simc}(r) \otimes \epsilon^3$. By Lemma \ref{lem:pd-lift}, \ref{lem:stable-tempered}, and \cite[Theorem B]{Sor09}, there exists an automorphic representation $\pi$ of $\cG(\A_F)$ such that $r\simeq r_{\pi,p,\iota}$. Moreover, such $\pi$ arise from $\mathcal{A}_{\chi_\cris, \iota(\mu)}$.  Since ${\mu}_{v}$ is a lowest alcove weight the claim follows from Lemma \ref{lem:Weyl decomposition}.
\end{proof}

From now on, we let $(F,\ov{r},\chi_\cris)$ be suitable of weight $(a_3,a_2,a_1,a_0)$ as in Corollary \ref{lem:modularity of ordinary weight} and $\cG$ be as in \S \ref{sec:automorphic representations}.  We write  $\psi_\cris = \chi_\cris \epsilon^{-3}: G_F \xrightarrow{} \cO^\times$ and $\psi_{v,\cris}:= \psi|_{G_{F_v}}=\epsilon^{b}\nr_{\xi_v}$ for all $v|p$.  We also write $\psi_{v,\pcris}=\tomega^{b-3}\epsilon^3\nr_{\xi_v}$ for $v|p$. By Lemma \ref{lem:lifting-Galois-characters}, there exists $\psi_\pcris: G_F \xrightarrow{} \cO^\times$ lifting $\nu_{\simc}(\rbar)$ such that $(\psi_\pcris)|_{G_{F_v}} \simeq \psi_{v,\pcris}$ for all $v|p$. Finally, we set $\chi = \chi_\pcris := \psi_\pcris \otimes \epsilon^3$.
 Let $S_p$ be the set of places of $F$ dividing $p$. Fix a choice of preferred place $w|p$.  Since $\ov{r}$ is vast and tidy, we can find a finite place $v_0\notin S_p$ such that $q_{v_0}\not\equiv 1\mod p$, no two eigenvalues of $\rbar(\Frob_{v_0})$ have ratio $q_{v_0}$, and the residue characteristic of $v_0$ is $>5$ (\cite[\S7.7]{BCGP}). 
 
 Let $S$ be a finite set of finite places of $F$ containing $S_p\cup \{v_0\}$. For $v\neq v_0$, we set $U_v:=\cG(\cO_{F_v})$, and we also define $U_{v_0}=\Iw_1(v_0)$. Then set $U^p:=\prod_{v\nmid p} U_v$ and $U_p:= \prod_{v|p} U_v$.  Corollary \ref{lem:modularity of ordinary weight} ensures that $S_{\chi_\cris}(U^pU_p,F(\mu))[\m_{\rbar,{\chi_\cris}}^{S}]\neq 0$. The choice of $v_0$ ensures that $U^p U_p$ is sufficiently small.

 Let $T$ be a subset of $S$. For each $v\in S$, if $\cD_v$ (resp. $\cD_v^\cris$)  denotes a deformation problem on symplectic lifts of $\rbar|_{G_{F_v}}$  and $R_v$ (resp. $R_v^\cris$) is the corresponding quotient of the universal symplectic lifting ring, the tuples
\begin{equation*}
\cS=(S,T,\{\cD_v\}_{v\in S},\psi_\pcris), \  \cS_\cris=(S,T,\{\cD_v^\cris\}_{v\in S},\psi_\cris)
\end{equation*}
are global deformation problems and we write $\cD_{\cS}^T$ (resp. $\cD_{\cS_\cris}^T$) for the the functor of $T$-framed symplectic deformations of type $\cS$ (resp $\cS_\cris)$. To simplify the notations, we write $\star \in \{\emptyset,\cris\}$. Since $\rbar$ is absolutely irreducible, $\cD_{\cS_\star}^{T}$ is representable by a ring $R_{\cS_\star}^T$. When $T=\emptyset$, we omit the superscript $T$. The natural map $R_{\cS_\star}\rightarrow R_{\cS_\star}^T$ is formally smooth of relative dimension $11|T|-1$. Write $\cT:=\cO[[y_1,\ldots,y_{11|T|-1}]]$. The choice of a universal lift $r_{\cS_\star}:G_F\rightarrow \GSp_4(R_{\cS_\star})$ determines a canonical isomorphism $R_{\cS_\star}^T\cong R_{\cS_\star}\hat{\otimes}_\cO \cT$. 
Define $R^{T,\loc}_{\cS_\star}=\widehat{\bigotimes}_{v\in T} R_v^\star$. There is a natural map $R^{T,\loc}_{\cS_\star}\rightarrow R_{\cS_\star}^T$. 

\begin{definition}
A \emph{Taylor-Wiles datum} is a tuple $(Q,(\bar{\alpha}_{v,1},\ldots,\bar{\alpha}_{v,4})_{v\in Q})$ where $Q$ is a finite set of finite places disjoint from $S$ such that $q_v\equiv 1\mod p$ for all $v\in Q$ and an ordering of the eigenvalues of $\rbar(\Frob_v)$ which are assumed to be $\F$-rational and pairwise distinct: $\bar{\alpha}_{v,1},\bar{\alpha}_{v,2}$ and $\bar{\alpha}_{v,3}=\nu_{\simc}(\rbar)(\Frob_v)\bar{\alpha}_{v,2}^{-1}$ and $\bar{\alpha}_{v,4}=\nu_{\simc}(\rbar)(\Frob_v)\bar{\alpha}_{v,1}^{-1}$.
\end{definition}

Given a Taylor-Wiles datum we define the augmented deformation problem $\cS_{Q}=(S\cup Q,\{\cD_v\}_{v\in S}\cup \{\cD_v^\square\}_{v\in Q},\psi_\pcris)$ and $\cS_{\cris,Q}$ similarly. For $v\in Q$ let $\Delta_v=k_v\x(p)^2$ where $k_v\x(p)$ is the maximal $p$-power quotient of $k_v\x$. Set $\Delta_Q:=\prod_{v\in Q}\Delta_v$. There is a canonical map $\cO[\Delta_Q]\rightarrow R_{\cS_{\star,Q}}^T$ for any subset $T\subseteq S$ such that the natural surjection $R_{\cS_{\star,Q}}^T\rightarrow R_{\cS_\star}^T$ has kernel $\ma_QR_{\cS_{\star,Q}}^T$. Here $\ma_Q\leq\cO[\Delta_Q]$ denotes the augmentation ideal. 

\begin{proposition}\label{prop:TW primes}
Let $q\geq h^1(F_S/F,\ad(\rbar)(1))$. For every $n\geq 1$ there exists a choice of Taylor-Wiles datum $Q_n$ such that
\begin{enumerate}
\item $|Q_n|=q$;
\item $q_v\equiv 1\mod p^n$ for each $v\in Q_n$;
\item $R_{\cS_{\star,Q_n}}^S$ is a quotient of a power series ring over $R_{\cS_\star}^{S,\loc}=R_{\cS_{\star,Q_n}}^{S,\loc}$ in 
\begin{equation*}
g:= 2q-4|F:\Q|+|S|-1
\end{equation*}
variables.
\end{enumerate}
\end{proposition}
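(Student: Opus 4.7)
The plan is to adapt the standard Taylor--Wiles argument to our symplectic setting, following the strategy in \cite{BCGP} (which itself follows \cite{CHT}). The two main ingredients are Chebotarev's density theorem, and the vastness and tidiness assumptions on $\rbar$ from \textbf{(A3)}: these properties guarantee that the image of $\rbar$ inside $\GSp_4(\F)$ is large enough and generic enough that the usual Chebotarev arguments go through.

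The primes $v \in Q_n$ are constructed one at a time to kill classes in the dual Selmer group. Starting from $Q = \emptyset$, at each stage choose a non-zero class $[\phi]$ in the current dual Selmer group $H^1_{\cS_{\star,Q}^\perp}(F_{S \cup Q}/F, \ad^0(\rbar)(1))$, and apply Chebotarev to the compositum of $F(\zeta_{p^n})$, the extension cut out by $\ad\rbar$, and the field determined by $\phi$, producing a prime $v$ such that: (i) $q_v \equiv 1 \mod p^n$; (ii) $\rbar(\Frob_v)$ is regular semisimple in $\GSp_4(\F)$ with eigenvalues in $\F^\times$ coming in symplectic pairs of the form $\{\alpha,\beta,\nusim(\rbar)(\Frob_v)\beta^{-1},\nusim(\rbar)(\Frob_v)\alpha^{-1}\}$; (iii) $\phi|_{G_{F_v}} \neq 0$. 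Vastness ensures that $\Sp_4(\F_p)$ lies in $\rbar(G_{F(\zeta_{p^\infty})})$ up to a scalar, so Chebotarev produces the required symplectic-regular Frobenii. Iterating until the dual Selmer vanishes requires at most $h^1(F_S/F, \ad(\rbar)(1)) \leq q$ primes; pad with any further primes satisfying (i) and (ii) so that $|Q_n| = q$ exactly. At each such $v$, the local framed symplectic lifting ring is a formal power series over $\cO[\Delta_v]$ with $\Delta_v = k_v^\times(p)^2$ matching the rank-$2$ torus quotient of $\Sp_4$; this also gives $R^{S,\loc}_{\cS_{\star,Q_n}} = R^{S,\loc}_{\cS_\star}$.

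Finally, the bound on $g$ follows from the Greenberg--Wiles (Poitou--Tate) formula. With $h^1_{\cS_{\star,Q_n}^\perp} = 0$ by construction, and both $h^0(F_S/F, \ad^0(\rbar))$ and $h^0(F_S/F, \ad^0(\rbar)(1))$ vanishing (from absolute irreducibility and vastness respectively), the contributions to the minimal number of generators of $R_{\cS_{\star,Q_n}}^S$ over $R^{S,\loc}_{\cS_\star}$ are: $+2$ per TW prime (as the relevant local $h^1 - h^0 = 2$ when $q_v \equiv 1 \mod p$ and $\rbar(\Frob_v)$ is regular semisimple); $-4$ per infinite place (since $\rbar$ is odd, the $+1$-eigenspace of complex conjugation on $\spfrak_4$ is $4$-dimensional); and $+(|S|-1)$ from the $S$-framing. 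Summing yields exactly $g = 2q - 4|F:\Q| + |S| - 1$.

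The main obstacle is the Chebotarev step: one must simultaneously arrange the symplectic-regular Frobenius structure and the detection of each dual Selmer class. The vastness and tidiness hypotheses are tailored to make this possible, by ensuring that the image of $\rbar$ contains sufficiently many regular semisimple symplectic elements outside any prescribed proper subvariety.
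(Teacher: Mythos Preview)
Your proposal is correct and takes essentially the same approach as the paper: the paper's proof is a one-line citation to \cite[Corollary 7.6.3]{BCGP}, noting only that $\rbar$ is odd with large image and that the hypothesis $\psi=\epsilon^{-1}$ in that reference is not actually needed. Your sketch is precisely an outline of the argument behind that cited corollary --- the Chebotarev construction of primes killing dual Selmer classes using vastness, followed by the Greenberg--Wiles Euler characteristic computation using oddness --- so you have reconstructed the content the paper simply imports.
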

\begin{proof}
This follows immediately from Corollary 7.6.3 of \cite{BCGP} because $\rbar$ is odd with large image. The assumption that $\psi=\epsilon^{-1}$ in this reference is not necessary for the proof.
\end{proof}

\begin{remark}
Note that we can and do choose the same Taylor--Wiles datum $Q_n$ for both $\star = \emptyset$ and $\star =\cris$.
\end{remark}

From now on, we take $S= S_p \cup \{v_0\}$,  $\cD_{v}=\cD_{v}^{\square,\psi_{v,\pcris}}$ (resp. $\cD_{v}^\cris=\cD_{v}^{\square,\psi_{v,\cris}}$) for $v\in S$.  
Write $R_n^\star:=R_{\cS_{\star,\cQ_n}}^S$. 
Let $R_\infty^\star:=R_{\cS_\star}^{S,\loc}\hat{\otimes}[[x_1,\ldots,x_g]]$. By Proposition \ref{prop:TW primes}(3) we may choose surjections $\varphi_n^\star:R^\star_\infty\onto R^\star_n$ for each $n\geq 1$. Define $S_\infty:=\cO[[\Z_p^{2q}]]\hat\otimes_{\cO}\cT$. For each $n\geq 1$, choose a surjection $S_\infty\onto \cO[\Delta_{Q_n}]\hat\otimes_\cO\cT$. This makes $R_n^\star$ into an $S_\infty$ algebra. Note that if we let $I_{r,n}\leq S_\infty$ be the ideal generated by the kernel of this map together with $\varpi^r$ then the collection $\{I_{r,n}\}$ obeys \eqref{eq:Irn condition} by Proposition \ref{prop:TW primes}(2). 

\begin{lemma}\label{lem:dimS=dimR}
Suppose that  $\sigma = \sigma(\tau)$ for a regular principal series $\tau$ and
    $\sigma^\cris = V(\mu_{\rhobar})_{\Qp} \otimes E|_K$,
then we have $\dim(S_\infty)=\dim(R^\star_\infty(\sigma^\star))$ for $\star \in \{\emptyset,\cris\}$.
\end{lemma}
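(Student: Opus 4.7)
The plan is to compute both sides as explicit integers and check they agree. Write $d := [F:\Q]$, so that $|S_p| = d$ (since $p$ splits completely) and $|S| = d+1$. For convenience I also write $\sigma^\star \in \{\sigma(\tau), V(\mu_{\rhobar})_{\Q_p}\otimes E|_K\}$ throughout.

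First I would compute $\dim S_\infty$. Since $S_\infty = \cO[[\Z_p^{2q}]]\,\hat\otimes_\cO\,\cT$ with $\cT = \cO[[y_1,\ldots,y_{11|S|-1}]]$, this is a formal power series ring in $2q + 11|S| - 1$ variables over $\cO$, so
\[
\dim S_\infty = 2q + 11|S| = 2q + 11(d+1).
\]

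Next I would expand
\[
R_\infty^\star(\sigma^\star) \;=\; \Bigl(\hat\bigotimes_{v\in S} R_v^{\star,\sigma^\star}\Bigr)\,\hat\otimes_\cO\,\cO[[x_1,\ldots,x_g]],
\]
where $R_v^{\star,\sigma^\star} = R_{\rhobar_v}^{\eta,\tau,\psi_{v,\pcris}}$ for $v|p$ in the pcris case, $R_v^{\star,\sigma^\star} = R_{\rhobar_v}^{\bar\mu_{\rhobar}+\eta,\psi_{v,\cris}}$ for $v|p$ in the crystalline case, and $R_{v_0}^{\star,\sigma^\star} = R_{\rhobar_{v_0}}^{\square,\psi_{v_0,\star}}$. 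For $v|p$ both rings are formally smooth over $\cO$ of relative dimension $14$ by Lemma~\ref{lem:dimension of p crys def ring} (pcris case) and Proposition~\ref{prop:fs crystalline deformation rings} (crystalline case; note the ordinary hypothesis and Hodge--Tate weight constraints hold by our assumption that $\rhobar_v$ is of the form \eqref{eq:rhobar} and strongly $7$-generic), so each has Krull dimension $15$.

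The main computation is at $v_0$: I need to show $R_{v_0}^{\square,\psi_{v_0,\star}}$ is formally smooth over $\cO$ of relative dimension~$10$, hence of Krull dimension $11$. The defining properties of $v_0$ (namely $q_{v_0}\not\equiv 1 \pmod p$ and no two eigenvalues of $\rbar(\Frob_{v_0})$ have ratio $q_{v_0}$) imply via Tate local duality that $H^2(G_{F_{v_0}},\ad^0\rbar) = 0$, where $\ad^0\rbar = \spfrak_4(\F)$ carries the adjoint action. The local Euler characteristic formula then gives $h^1 = h^0$ on $\ad^0\rbar$, so $\dim_\F Z^1(G_{F_{v_0}},\ad^0\rbar) = h^1 + \dim\ad^0 - h^0 = 10$, and the vanishing of $H^2$ kills the obstructions to lifting. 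Hence $R_{v_0}^{\square,\psi_{v_0,\star}} \cong \cO[[t_1,\ldots,t_{10}]]$.

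Finally, I assemble the dimensions using $\dim(A\,\hat\otimes_\cO\, B) = \dim A + \dim B - 1$ for formally smooth complete local noetherian $\cO$-algebras (which iterates to $\dim(\hat\bigotimes_i A_i) = \sum_i \dim A_i - (k-1)$ for $k$ factors). With $g = 2q - 4d + |S| - 1 = 2q - 3d$ from Proposition~\ref{prop:TW primes}, this yields
\[
\dim R_\infty^\star(\sigma^\star) \;=\; \bigl(11 + 15d\bigr) - \bigl((d+1)-1\bigr) + g \;=\; 11 + 14d + (2q - 3d) \;=\; 2q + 11(d+1),
\]
which matches $\dim S_\infty$. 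The only step that is not a bookkeeping exercise is the local cohomology calculation at $v_0$, but this is classical given the choice of $v_0$.
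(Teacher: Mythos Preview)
Your argument is correct and follows the same approach as the paper, which simply cites Lemma~\ref{lem:dimension of p crys def ring} and Proposition~\ref{prop:fs crystalline deformation rings} and leaves the numerical bookkeeping to the reader. You have made that bookkeeping explicit and in particular verified the formal smoothness and dimension of the auxiliary factor at $v_0$, which the paper silently absorbs into the identification of the abstract $R_\infty^\star$ of \S\ref{subsec:patched module axioms} (over $\cJ=S_p$) with the patched ring $R_{\cS_\star}^{S,\loc}[[x_1,\ldots,x_g]]$ constructed here.
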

\begin{proof}
This follows from Lemma \ref{lem:dimension of p crys def ring} and Proposition \ref{prop:fs crystalline deformation rings}.
\end{proof}

 For each $n\geq 1$ we now define $U_1^p(Q_n)$ to be $\prod_{v\nmid p} U_1(Q_n)_v$, where $U_1(Q_n)_v=U_v$ if $v\notin Q_n\cup S_p$ and $U_1(Q_n)_v=\Iw_1(v)$ if $v\in Q_n$. Let $G:=\cG(F_w)$, $G_p:= \prod_{v|p}\cG(F_v)$, and $K_p = \prod_{v|p}\cG(\cO_{F_v})$.  
 For $r,n\geq 1$ and $H\leq_\co G_p$,  we define
\begin{equation*}
M_r^\star(H)_n = [S_{\chi_{\star}}(H\cdot U_1^p(Q_n),\cO/\varpi^r)_{\m_{\rbar,\chi_\star}^{S\cup Q_n,r},\m_{Q_n}^\star}]\dual \otimes_{R_{\cS_{\star,Q_n}}}R_n^\star.
\end{equation*}
where $\m_{\rbar,\chi_\star}^{S\cup Q_n,r}$ is the image of $\m_{\rbar,\chi_\star}^{S\cup Q_n}$ inside $\T_{\chi_\star}^{S\cup Q_n}(H\cdot U_1^p(Q_n),\cO/\varpi^r)$ and $\m_{Q_n}^\star$ is the maximal ideal of $\bigotimes_{v\in Q_n} \cO[T(F_v)/T(\cO_{F_v})_1]$ in $\End_{\cO}(S_{\chi_\star}(H\cdot U_1^p(Q_n),\cO/\varpi^r))$ containing $\bigotimes_{v\in Q_n} \cO[T(\cO_{F_v})/T(\cO_{F_v})_1]$ such that $\beta_0(\varpi_v)\equiv \bar{\chi}_v(\varpi_v)$, $\beta_1(\varpi_v)\equiv \bar{\alpha}_{v,1}$ and $\beta_2(\varpi_v)\equiv \bar{\alpha}_{v,1}\bar{\alpha}_{v,2}\mod \m_{Q_n}$ (cf. \cite[\S2.4.29]{BCGP}).
 Note that $S_{\chi_\star}(H\cdot U_1^p(Q_n),\cO/\varpi^r)_{\m_{\rbar,\chi_\star}^{S\cup Q_n,r,\star},\m_{Q_n}^\star}$ is a $R^S_{\cS_{\star,Q_n}}$-module by the natural map 
\begin{equation}\label{eq:R->T}
R_{\cS_{\star,Q_n}}\rightarrow \T_{\chi_\star}^{S\cup Q_n}(H\cdot U_1^p(Q_n),\cO/\varpi^r)
\end{equation}
 which exists by Proposition \ref{prop:hecke valued galois representations}, and the action of $\cO[\Delta_{Q_n}]$ on $M_r^\star(H)_n$ via $\cO[\Delta_{Q_n}]\rightarrow R_{\cS_{\star,Q_n}}^S$ agrees with the canonical action via the surjection $U^p/U_1^p(Q_n)\cong \Delta_{Q_n}$. 
 Note that \eqref{eq:R->T} is surjective by Proposition \ref{prop:hecke valued galois representations}(ii) (since $\chi_\star$ is fixed). 
Moreover we define
\begin{equation*}
M_r^\star(H)_0 = [S_{\chi_\star}(H\cdot U^p, \cO/\varpi^r)_{\m_{\rbar}^{S,r,\star}}]\dual.
\end{equation*}
It follows from \cite[\S2.4.29]{BCGP} that there is an isomorphism 
\begin{equation*}
\alpha_n^\star : M^\star_r(H)_n/\ma_\infty \cong M^\star_r(H)_0
\end{equation*}
compatible among different $r\geq 1$ and $H\leq_\co G_p$. 

\begin{lemma}\label{lem:patching-datum}
The data 
\begin{equation*}
(S_\infty,R^\star_\infty,(R^\star_n,\varphi^\star_n,\{M^\star_r(H)_n\}_{r\geq 1,H\leq_\co G},\alpha^\star_n)_{n\geq 1},\{M^\star_r(H)_0\}_{r\geq 1,H\leq_\co G})
\end{equation*}
 defined in the paragraphs above forms a $G_p$-patching datum for both $\star\in \{\emptyset,\cris\}$.
\end{lemma}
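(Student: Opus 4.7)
The plan is to verify each of the axioms in Definition \ref{def:G patching datum}. The basic inverse-system structure on $\{M_r^\star(H)_n\}$---surjectivity of transition maps, the isomorphisms $M_{r+1}^\star(H)_n/\varpi^r \cong M_r^\star(H)_n$, and $(M_r^\star(H')_n)_{H/H'} \cong M_r^\star(H)_n$ for open normal $H' \subseteq H \subseteq G_p$---follows from exactness of the functor $S_{\chi_\star}(U,-)$ when $U$ is sufficiently small (guaranteed by the inclusion of the $\Iw_1(v_0)$-level) together with standard Pontryagin duality applied to spaces of $p$-adic automorphic forms. The isomorphism $\alpha_n^\star : M_r^\star(H)_n/\ma_\infty \xrightarrow{\sim} M_r^\star(H)_0$ has already been recalled from \cite[\S2.4.29]{BCGP}. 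The surjections $\varphi_n^\star : R_\infty^\star \twoheadrightarrow R_n^\star$ are supplied by Proposition \ref{prop:TW primes}(3), and the $S_\infty$-algebra structure on $R_n^\star$ comes from the chosen surjection $S_\infty \twoheadrightarrow \cO[\Delta_{Q_n}] \hat\otimes_\cO \cT$ composed with the natural map $\cO[\Delta_{Q_n}] \hat\otimes_\cO \cT \to R_n^\star$.

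The main content is axiom (1). Write $N_r^\star(H)_n := [S_{\chi_\star}(H \cdot U_1^p(Q_n), \cO/\varpi^r)_{\m_{\rbar,\chi_\star}^{S\cup Q_n, r}, \m_{Q_n}^\star}]^\vee$, so that $M_r^\star(H)_n \cong N_r^\star(H)_n \hat\otimes_\cO \cT/\varpi^r$. The standard Taylor--Wiles diamond-operator argument shows that $N_r^\star(H)_n$ is free as a $(\cO/\varpi^r)[\Delta_{Q_n}]$-module: since $H \cdot U_1^p(Q_n)$ is sufficiently small, the quotient $(H \cdot U^p(Q_n))/(H \cdot U_1^p(Q_n)) \cong \Delta_{Q_n}$ acts freely on the relevant double coset space, and the localization at $\m_{Q_n}^\star$ picks out the desired eigenspace (cf.~\cite[\S2.4.29]{BCGP}). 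The framing variables from $\cT$ act freely on $M_r^\star(H)_n$ by construction of the tensor product with $R_n^\star$. Defining $I_{r,n}$ as the ideal generated by $\varpi^r$ together with the kernel of $S_\infty \twoheadrightarrow \cO[\Delta_{Q_n}] \hat\otimes_\cO \cT$, we obtain that $M_r^\star(H)_n$ is free over $S_\infty/I_{r,n} \cong (\cO/\varpi^r)[\Delta_{Q_n}] \hat\otimes_\cO (\cT/\varpi^r)$, independently of $H$. The containment $I_{r,n} \subseteq (\varpi^r, \ma_\infty)$ and the condition \eqref{eq:Irn condition} then follow from Proposition \ref{prop:TW primes}(2).

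For axiom (2) we combine the preceding freeness with the observation that for sufficiently small level, the Pontryagin dual of the space of mod $p^r$ automorphic forms is already projective over $(\cO/\varpi^r)[H/H']$ whenever $H'$ is an open normal subgroup of $H$; this reflects the fact that for sufficiently small $U^p$, every double coset in $\cG(F) \backslash \cG(\A_F^\infty)/ U^p \cdot H'$ has trivial stabilizer under $H/H'$. The two freeness/projectivity statements combine, as localization is exact, to yield projectivity of $M_r^\star(H')_n$ over $S_\infty/I_{r,n}[H/H']$ and of $M_r^\star(H')_0$ over $(\cO/\varpi^r)[H/H']$. The principal obstacle throughout is checking that the localizations at $\m_{\rbar,\chi_\star}^{S \cup Q_n, r}$ and $\m_{Q_n}^\star$ do not disturb these freeness properties, but this is a routine consequence of the exactness of localization and the fact that the $R_{\cS_{\star,Q_n}}$-action factors through the Hecke algebra by \eqref{eq:R->T}.
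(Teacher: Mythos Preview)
Your proof is correct and follows the same approach as the paper's; the paper's own proof is simply the two-line ``Axiom (1) follows from the remarks above and axiom (2) holds because $H\cdot U^p$ is sufficiently small,'' and you have spelled out precisely what those remarks and that sufficient-smallness entail (freeness over $\cO/\varpi^r[\Delta_{Q_n}]$ via the diamond-operator argument of \cite[\S2.4.29]{BCGP}, the definition and properties of $I_{r,n}$, and the trivial-stabilizer argument for projectivity over $(\cO/\varpi^r)[H/H']$).
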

\begin{proof}
Axiom (1) follows from the remarks above and axiom (2) holds because $H\cdot U^p$ is sufficiently small.
\end{proof}

Hence we may form the patched module $M^\star_\infty$ corresponding to this $G$-patching datum in Lemma \ref{deflemma:patched module}.

We now take $\cJ = S_p$ and choose an isomorphism $K^\cJ \simeq K_p$. The following is the main result of this section.
\begin{lemma}\label{lem:patched module axioms}
The pair $(\Minf, \Minf^\cris)$ is a congruent patched module pair for $(\rbar|_{G_{F_v}}, \psi_{v,\pcris}, \psi_{v,\cris})_{v\in S_p}$ satisfying \textbf{(PM1)}--\textbf{(PM6)}, and we have $d=|W|=8$ in \textbf{(PM3)}.
\end{lemma}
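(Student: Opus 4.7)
The plan is to verify the axioms (PM1)--(PM6) of \S\ref{subsec:patched module axioms} by applying the general patching formalism of Definition/Lemma \ref{deflemma:patched module} to the $G_p$-patching datum built in Lemma \ref{lem:patching-datum}. Axioms (PM1) and (PM2) will follow immediately from that formalism, with the profinite projectivity in (PM2) relying on the fact that $H \cdot U^p$ is sufficiently small thanks to the choice of auxiliary place $v_0$ (with residue characteristic $>5$ and $q_{v_0} \not\equiv 1 \bmod p$).

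For (PM3), I would first use classical local-global compatibility (Proposition \ref{prop:hecke valued galois representations}) to see that the $R_\infty^\star$-action on $\Minf^\star((\sigma^{\star,\cJ})^\circ)$ factors through $R_\infty^\star(\sigma^\star)$, since every arithmetic point in the support comes from an automorphic representation of the prescribed inertial/Hodge type at each $v \mid p$. The maximal Cohen--Macaulay property then follows from a standard depth-count argument using the $\cO[[K^\cJ]]$-projectivity from (PM2), the surjection $S_\infty \onto R_n^\star$ at each finite level, Lemma \ref{lem:dimS=dimR} (which matches $\dim S_\infty$ with $\dim R_\infty^\star(\sigma^\star)$), and the Auslander--Buchsbaum formula. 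The constant generic rank is read off at an arithmetic point: classical local-global compatibility identifies the fiber with a space of automorphic forms localized at the relevant Hecke eigensystem; by Lemma \ref{lem:stable-tempered} and the ensuing multiplicity-one corollary each contributing stable tempered $\pi$ appears once, Lemma \ref{lem:K-type} gives a one-dimensional $K$-isotypic contribution at each $v \mid p$, and the pro-$p$ Iwahori level at $v_0$ contributes an additional factor of $|W| = 8$ (using that $\pi_{v_0}$ is an irreducible unramified principal series by the eigenvalue hypothesis on $\rbar(\Frob_{v_0})$); thus $d = 8$.

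For (PM4), the two $\otimes_\cJ \cH(\sigma(\tau)^{w_j})$-actions on $\Minf(\sigma(\tau)^\cJ)$ agree at every arithmetic point of $R_\infty(\sigma(\tau))[1/p]$ by combining the characterizing property of $\Theta_{\rhobar,\tau}$ from \S\ref{sec:LLC} with classical local-global compatibility at $p$; since $R_\infty(\sigma(\tau_0))[1/p]$ is formally smooth over $\cO$ (hence reduced and Jacobson) by Theorem \ref{thm:main result galois}, equality on closed points forces global equality. For (PM5), the key observation is that $\psi_\pcris$ and $\psi_\cris$ both lift $\nu_{\simc}(\rbar)$, so $\chi_\pcris \equiv \chi_\cris \pmod{\varpi}$, and therefore the mod-$\varpi$ spaces of automorphic forms at each finite patching level coincide canonically together with their Hecke actions; since we chose the same Taylor--Wiles sequence $(Q_n)_{n \ge 1}$ and the same ultrafilter $\cF$ for both constructions, the patching functor commutes with reduction modulo $\varpi$ and produces the required canonical isomorphism $\Minf/\varpi \isoto \Minf^\cris/\varpi$. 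Finally, (PM6) is the direct patched consequence of Corollary \ref{lem:modularity of ordinary weight}, whose nonzero class in $S_{\chi_\cris}(U^p U_p, F(\mu))_{\m_{\rbar,\chi_\cris}^S}$ survives the patching procedure.

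The main obstacle is the rank computation in (PM3): one must carefully combine the Jacquet--Langlands transfer (Lemma \ref{lem:stable-tempered}), multiplicity-one in the discrete spectrum for stable tempered representations, the $K$-type theory of Lemma \ref{lem:K-type}, and the contribution of the Iwahori level at the auxiliary prime $v_0$ to extract $d = |W| = 8$; the remaining axioms are essentially formal consequences of the patching formalism once Lemma \ref{lem:patching-datum} is in hand.
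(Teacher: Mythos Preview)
Your proposal is correct and follows essentially the same approach as the paper, which verifies each axiom by invoking Lemma~\ref{deflemma:patched module} for \textbf{(PM1)}--\textbf{(PM2)}, Proposition~\ref{prop:hecke valued galois representations}(iii) together with Lemma~\ref{lem:dimS=dimR} and the standard commutative algebra argument of \cite[Lemma~4.18]{CEG+} for \textbf{(PM3)}, the analogue of \cite[Lemma~4.17]{CEG+} for \textbf{(PM4)}, the congruence of the finite-level inputs for \textbf{(PM5)}, and Corollary~\ref{lem:modularity of ordinary weight} for \textbf{(PM6)}. Your explanation of $d=|W|=8$ via the pro-$p$ Iwahori level at the auxiliary place $v_0$ is precisely what the paper's terse remark intends.

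One small imprecision: in your argument for \textbf{(PM4)} you invoke Theorem~\ref{thm:main result galois} for the formal smoothness (hence reducedness) of $R_\infty(\sigma(\tau_0))[1/p]$, but \textbf{(PM4)} is stated for an arbitrary regular principal series type $\tau$, not just $\tau_0$. The correct reference for reducedness of $R_\infty(\sigma(\tau))[1/p]$ in general is Lemma~\ref{lem:dimension of p crys def ring}, which shows that each $R_{\rhobar_j}^{\eta,\tau,\psi_{j,\pcris}}[1/p]$ is formally smooth over $E$ whenever nonzero.
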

\begin{proof}
\textbf{(PM1)} and \textbf{(PM2)} follow already from Lemma \ref{deflemma:patched module}. Axiom \textbf{(PM3)} follows from Lemma \ref{lem:Lpacket}, \ref{lem:K-type} (which can be seen as the inertial local Langlands for principal series inertial type),  Proposition \ref{prop:hecke valued galois representations}(iii) and Lemma \ref{lem:dimS=dimR} by the standard commutative algebra argument; see the proof of Lemma 4.18 of \cite{CEG+}. We may take $d=|W|$ because this is the dimension of the space of pro-$p$ Iwahori invariants in a principals series representation. Axiom \textbf{(PM4)} follows also from Proposition \ref{prop:hecke valued galois representations}(iii) similar to the proof of Lemma 4.17 of \cite{CEG+}.
\textbf{(PM5)} follows from the fact that $M_r^\star(H)_n$ in Lemma \ref{lem:patching-datum} for $\star \in \{\emptyset,\cris\}$ are congruent modulo $\varpi$ together with Lemma \ref{lem:inverse limit}.
Finally, \textbf{(PM6)} follows from Corollary \ref{lem:modularity of ordinary weight}.
\end{proof}

Fix a place $w$ of $F$ dividing $p$.  We write $U^w = \prod_{v\neq w} U_v$. Recall that 
\begin{align*}
    M_{\infty,w} &= \Hom_{\cO[\prod_{v\in S_p\backslash\{w\}}\cG(\cO_{F_v})]} (M_\infty, (\sigma(\tau_0)^{S_p\backslash\{w\}})^d)^d \\
    R_{\infty,w} &= R_{\ov{r}|_{G_{F_w}}}^{\square,\psi_{w,\pcris}}[[x_1,\ldots, x_h]]
\end{align*}
and $\m_{\infty,w}$ the maximal ideal of $R_{\infty,w}$. By Theorem \ref{thm:abstract loc glob}(1), we know that $M_{\infty,w}$ is non-zero. 
\begin{lemma}\label{lem:mod m}
We have
\begin{equation*}
(M_{\infty,w}/\m_{\infty,w})\dual \cong S_{\chi,0,\sigma(\tau_0)}(U^w,\F)[\m_{\rbar,\chi}^S].
\end{equation*}
\end{lemma}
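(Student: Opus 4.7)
The plan is to unwind $M_{\infty,w}/\m_{\infty,w}$ at the level of the $G_p$-patching datum of Lemma \ref{lem:patching-datum}, then match with the direct-limit definition of $S_{\chi,0,\sigma(\tau_0)}(U^w,\F)$ via Frobenius reciprocity. I would organize this in three steps.

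First, I would rewrite $M_{\infty,w}^\vee$ as a partial $\Hom$ from $\sigma(\tau_0)$ at the non-$w$ places. Using \textbf{(PM2)} (so that $M_\infty(V)^\vee$ behaves well) together with the definition of $M_{\infty,w}$, Schikof duality gives a natural identification
\[
M_{\infty,w}^\vee \;\cong\; \varinjlim_{U_w\leq \cG(\cO_{F_w})} \Hom_{\cO[\prod_{v\in S_p\setminus\{w\}}K_v]}\!\left(\sigma(\tau_0)^{S_p\setminus\{w\}},\,(M_\infty^\vee)^{U_w}\right),
\]
and Pontryagin duality converts $/\m_{\infty,w}$ on the left into $[\m_{\infty,w}]$ on the right, which passes inside the $\Hom$.

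Second, I would identify $(M_\infty^\vee)^{U_w}[\m_{\infty,w}]$ with the mod $p$ automorphic forms $S_\chi(U^w U_w,\F)[\m_{\bar r,\chi}^S]$. This uses the level-zero layer of the patching datum: since $\ma_\infty\subset S_\infty$ maps into $\m_{\infty,w}$ (through the structure map $S_\infty\to R_\infty\to R_{\infty,w}$), Lemma \ref{lem:inverse limit} combined with the isomorphisms $\alpha_n$ and the ultrafilter limit yields $M_\infty^\vee/\ma_\infty\text{-tors} \cong \varinjlim_{r,H} S_\chi(HU^p,\cO/\varpi^r)_{\m_{\bar r,\chi}^{S,r}}$, which is $S_\chi(U^pK_p,E/\cO)_{\m_{\bar r,\chi}^S}$ as a smooth $K_p$-representation. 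The residual generators of $\m_{\infty,w}$ modulo $\ma_\infty$ act via the image of the global deformation ring $R_{\cS}$ in the Hecke algebra (using Proposition \ref{prop:hecke valued galois representations}), and reducing by them cuts out precisely the $\m_{\bar r,\chi}^S$-torsion inside this localization; the $\sigma(\tau_0)$-specializations at non-$w$ places absorb the corresponding local lifting rings through $\Theta_{\bar r_v,\tau_0}$ of \textbf{(PM4)}.

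Third, I would combine the two steps to obtain
\[
(M_{\infty,w}/\m_{\infty,w})^\vee \;\cong\; \varinjlim_{U_w}\Hom_{\cO[\prod_{v\in S_p\setminus\{w\}}K_v]}\!\left(\sigma(\tau_0)^{S_p\setminus\{w\}},\, S_\chi(U^w U_w,\F)[\m_{\bar r,\chi}^S]\right),
\]
and apply the tensor--Hom adjunction (Frobenius reciprocity) to rewrite the right-hand side as $\varinjlim_{U_w} S_\chi(U^wU_w,\sigma(\tau_0)^{S_p\setminus\{w\}}\otimes \F)[\m_{\bar r,\chi}^S]$, which by definition equals $S_{\chi,0,\sigma(\tau_0)}(U^w,\F)[\m_{\bar r,\chi}^S]$.

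The main obstacle is the second step: tracking how the maximal ideal $\m_{\infty,w}$ interacts with the ultrapatching limit and with the image of the global deformation ring in the Hecke algebra, so that $[\m_{\infty,w}]$-torsion on the $R_{\infty,w}$-side corresponds precisely to $[\m_{\bar r,\chi}^S]$-torsion on the Hecke side. Everything else is a formal consequence of the patching axioms and of the adjunction between induction and restriction.
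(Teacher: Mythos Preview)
Your approach is essentially the paper's, which argues in two sentences: Lemma~\ref{deflemma:patched module} (giving $M_\infty/\ma_\infty\cong(\M_0)_\infty$) supplies a natural inclusion, and the surjectivity of \eqref{eq:R->T} gives the other containment. Your three steps simply unwind this, and you have correctly isolated the crux as matching $\m_{\infty,w}$-torsion with $\m_{\rbar,\chi}^S$-torsion on the automorphic side.

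One correction: there is no ring homomorphism $R_\infty\to R_{\infty,w}$; the paper sets things up so that $R_{\infty,w}$ is a \emph{sub}algebra of $R_\infty$ (see the sentence before the definition of $\pi$ in \S\ref{subsec:patched module axioms}). So your parenthetical ``through the structure map $S_\infty\to R_\infty\to R_{\infty,w}$'' is not well-posed. What you actually need is that on $(\M_0)_\infty^\vee$ the $R_\infty$-action, and hence the $R_{\infty,w}$-action via $R_{\infty,w}\hookrightarrow R_\infty$, factors through the Hecke algebra---and does so surjectively, by \eqref{eq:R->T}---so that the relevant maximal ideals match up after reducing modulo $\ma_\infty$. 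Also, your appeal to \textbf{(PM4)} and the morphisms $\Theta_{\rhobar_v,\tau_0}$ is superfluous: no local-global compatibility at the places $v\neq w$ is used here, only the global surjection $R\to\T$.
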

\begin{proof}
By Lemma \ref{deflemma:patched module} there is a natural inclusion 
$
(M_{\infty,w}/\m_{\infty,w})\dual\hookrightarrow S_{\chi,0,\sigma(\tau_0)}(U^w,\F)[\m_{\rbar,\chi}^S].
$
 It is surjective because we have arranged the map \eqref{eq:R->T} to be surjective.
\end{proof}

\begin{remark}
The $R_{\infty,w}$-module $M_{\infty,w}$ is the patched module constructed by \cite{CEG+}, adapted to the group $\GSp_4$, using the space of automorphic forms with infinite level at $w$. In contrast, the construction of the $R_\infty$-module $M_\infty$ uses the space of mod $p$ automorphic forms with infinite level at all places dividing $p$.  We find it is more convenient to make a comparison between $M_\infty$ and $M_\infty^\cris$ rather than between $M_{\infty,w}$ and its crystalline version to prove Theorem \ref{thm:abstract loc glob}(1).   However, it is plausible that one can obtain the same result using $M_{\infty,w}$ and its crystalline version.  
\end{remark}

\subsection{Mod $p$ local-global compatibility} 

We can now prove the main result. 
\begin{theorem}\label{thm:main result}
Let $(F,\ov{r},\chi)$ be suitable of weight $(a_3,a_2,a_1,a_0)$ with $\chi=\chi_\pcris$ as in \S \ref{sec:patching}, and $\cG$ be as in \S \ref{sec:automorphic representations}. Let $w$ be a place of $F$ dividing $p$. Then the Fontaine-Laffaille parameters of $\ov{r}|_{G_{F_w}}$ can be recovered from the admissible $\F[\GSp_4(F_w)]$-module $S_{\chi, 0,\sigma(\tau_0)}(U^w,\F)[\m_{\rbar,\chi}^S]$ by the recipe described in Remark \ref{rk:group algebra operators}.

\end{theorem}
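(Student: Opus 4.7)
The plan is to deduce this as a direct application of the abstract local-global compatibility result Theorem \ref{thm:abstract loc glob} to the congruent pair of patched modules constructed in \S\ref{sec:patching}. I take $\cJ = S_p$ (so the patching setup involves the local Galois representations at all $p$-adic places of $F$), $\rhobar_v := \ov{r}|_{G_{F_v}}$ for $v \in S_p$, and the similitude characters $\psi_{v,\pcris}, \psi_{v,\cris}$ as defined in \S\ref{sec:patching}. The hypotheses \textbf{(A1)}--\textbf{(A5)} in Definition \ref{def:suitable} match precisely those needed for the patching construction: \textbf{(A3)} (vast and tidy) ensures the absolute irreducibility and large image required for Proposition \ref{prop:TW primes} and for the construction of Taylor--Wiles data; \textbf{(A4)} (potentially diagonalizably automorphic) provides the initial modularity and, via Corollary \ref{lem:modularity of ordinary weight} and the automorphy lifting theorem of \cite{BLGGT} which is invoked in Lemma \ref{lem:pd-lift}, gives modularity in the crystalline weight $F(\mu_{\rhobar})$ needed for axiom \textbf{(PM6)}; and \textbf{(A5)} (strong $7$-genericity at each $v|p$) is exactly the local condition required to apply Theorem \ref{thm:abstract loc glob}.

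The first step is to invoke Lemma \ref{lem:patched module axioms} to produce a congruent pair of patched modules $(\Minf, \Minf^\cris)$ satisfying axioms \textbf{(PM1)}--\textbf{(PM6)}, with constant rank $d = |W| = 8$ in \textbf{(PM3)}. Because the Taylor--Wiles datum and the ultrafilter are chosen to be the same in the two constructions, the congruence \textbf{(PM5)} between $\Minf$ and $\Minf^\cris$ modulo $\varpi$ is automatic, and this is what ultimately allows the crystalline input (through $M_\infty^\cris$ and axiom \textbf{(PM6)}) to control the mod $p$ Serre weight appearing in the potentially crystalline module $M_\infty$, as used in the proof of Theorem \ref{thm:abstract loc glob}(1).

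Next I apply Theorem \ref{thm:abstract loc glob} to $(\Minf, \Minf^\cris)$ at the place $w$. This yields that the admissible $\F[\GSp_4(F_w)]$-module $\pi := (M_{\infty,w}/\m_{\infty,w})^\vee$ determines the scalars $\bar{r}_{w'} \in \F^\times$ for $w \in \{\id, s_0, s_1 s_0, s_0 s_1 s_0\}$ through the isomorphism $\pi^{\Iw = w'\tmu_0} \to \pi^{\Iw = w\tmu_0}$ of Theorem \ref{thm:abstract loc glob}(2), and by Remark \ref{rk:scalars recover FL invariants} these scalars together with the (already visible) diagonal characters of $\rhobar$ recover $\rhobar = \ov{r}|_{G_{F_w}}$, including its Fontaine--Laffaille parameters. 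The passage to the explicit recipe in terms of group algebra operators $S \in M(w\mu_0)_\F(w_\Pi, k_{w'})$ acting on $\Iw$-eigenspaces is the content of Remark \ref{rk:group algebra operators}.

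Finally, Lemma \ref{lem:mod m} provides the canonical isomorphism $\pi \cong S_{\chi_\pcris, 0, \sigma(\tau_0)}(U^w, \F)[\m_{\rbar,\chi}^S]$ of admissible $\F[\GSp_4(F_w)]$-modules, so the recipe of Remark \ref{rk:group algebra operators} transported along this isomorphism yields the Fontaine--Laffaille parameters of $\ov{r}|_{G_{F_w}}$ from the global automorphic object, as claimed. No genuine obstacle remains at this stage since all the serious work has been packaged into the patching construction of \S\ref{sec:patching} and the abstract compatibility result of \S\ref{subsec:patched module axioms}; the only small points to verify are that the similitude characters $\chi_\pcris$ and $\chi_\cris$ coming from Lemma \ref{lem:lifting-Galois-characters} and Corollary \ref{lem:modularity of ordinary weight} restrict locally to the characters $\psi_{v,\pcris}$ and $\psi_{v,\cris}$ used in the definition of a congruent pair of patched modules, and that the identification $K^\cJ \simeq K_p$ chosen in \S\ref{sec:patching} is compatible with the $\cG(\cO_{F_v}) \cong \GSp_4(\Zp)$ identifications fixed in \S\ref{sec:automorphic representations}---both of which hold by construction.
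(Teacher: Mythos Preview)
Your proof is correct and follows precisely the paper's approach: invoke Lemma \ref{lem:patched module axioms} to obtain the congruent pair of patched modules, apply Theorem \ref{thm:abstract loc glob}, and identify $\pi$ with the automorphic module via Lemma \ref{lem:mod m}. The paper's own proof is a single sentence citing these three results; your version simply unpacks the logical flow and verifies the hypotheses more explicitly.
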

\begin{proof}
This follows from Lemmas \ref{lem:patched module axioms} and \ref{lem:mod m} and Theorem \ref{thm:abstract loc glob}.
\end{proof}

\newpage
\appendix
\numberwithin{table}{section}
\numberwithin{figure}{section}
\section{Tables and Figures}\label{appendix:tables}
\vspace{35pt}
\begin{table}[h]
\centering
\begin{tabular}{|c|c|c|c|c|}
\hline
$\lambda\in\,$Alcove & $\pair{\lambda+\rho',\alpha_0\dual}$ & $\pair{\lambda+\rho',\alpha_1\dual}$ & $\pair{\lambda+\rho',(2\alpha_0+\alpha_1)\dual}$ & $\pair{\lambda+\rho',(\alpha_0+\alpha_1)\dual}$ \\ 
\hline
$C_0$ & $(0,p)$ & $(0,p)$ & $(0,p)$ & $(0,p)$\\
\hline
$C_1$& $(0,p)$ & $(0,p)$ & $(0,p)$ & $(p,2p)$ \\
\hline
$C_2$ & $(0,p)$ & $(0,p)$ & $(p,2p)$ & $(p,2p)$ \\
\hline
$C_3$ & $(0,p)$ & $(0,p)$ & $(p,2p)$ & $(2p,3p)$ \\
\hline
$D_0$ & $(0,p)$ & $(p,2p)$ & $(p,2p)$ & $(2p,3p)$ \\
\hline
$D_1$ & $(0,p)$ & $(p,2p)$ & $(p,2p)$ & $(3p,4p)$ \\
\hline 
$E_0$ & $(p,2p)$ & $(0,p)$ & $(p,2p)$ & $(p,2p)$ \\
\hline
$E_1$ & $(p,2p)$ & $(0,p)$ & $(p,2p)$ & $(2p,3p)$ \\
\hline
$E_2$ & $(p,2p)$ & $(0,p)$ & $(2p,3p)$ & $(2p,3p)$ \\
\hline 
$E_3$ & $(2p,3p)$ & $(0,p)$ & $(2p,3p)$ & $(2p,3p)$ \\
\hline
\end{tabular}
\caption{List of some $p$-alcoves for $\GSp_4$.}\label{table:alcoves}
\end{table}

\begin{figure}[b!]
\centering
\begin{tikzpicture}[scale=1.385]
\path[draw, line width = 2pt] (0,0) -- (4.05,4.05); 
\path[draw] (1,0) -- (4,3);
\path[draw] (2,0) -- (4,2);
\path[draw] (3,0) -- (4,1);
\path[draw, line width = 2pt] (0,0) -- (4.05,0); 
\path[draw] (1,1) -- (4,1);
\path[draw] (2,2) -- (4,2);
\path[draw] (2,2) -- (4,2);
\path[draw] (3,3) -- (4,3);
\path[draw, style=dashed] (-1,1) -- (1,-1);
\path[draw] (.5,.5) -- (1,0);
\path[draw] (1,1) -- (2,0);
\path[draw] (1.5,1.5) -- (3,0);
\path[draw] (2,2) -- (4,0);
\path[draw] (2.5,2.5) -- (4,1);
\path[draw] (3,3) -- (4,2);
\path[draw] (3.5,3.5) -- (4,3);
\path[draw, style=dashed] (0,1) -- (0,-1);
\path[draw] (1,1) -- (1,0);
\path[draw] (2,2) -- (2,0);
\path[draw] (3,3) -- (3,0);
\path[draw] (4,4) -- (4,0);
\node [below left] at (0,0) {$-\rho'$};

\node [right] at (4.05,4.05) {$s_{\alpha_0}$};

\node [right] at (4,0) {$s_{\alpha_1}$};

\node[above] at (0,1) {$s_{2\alpha_0+\alpha_1}$};

\node [right] at (1,-1) {$s_{\alpha_0+\alpha_1}$};

\node at (.5,.15) {$C_0$};
\node at (.85,.5) {$C_1$};
\node at (1.2,.5) {$C_2$};
\node at (1.5,.83) {$C_3$};
\node at (1.5,1.15) {$D_0$};
\node at (1.83,1.5) {$D_1$};
\node at (1.5,.15) {$E_0$};
\node at (1.83,.5) {$E_1$};
\node at (2.17,.5) {$E_2$};
\node at (2.5,.15) {$E_3$};
\end{tikzpicture}
\caption{A picture of the dominant Weyl chamber with labelled alcoves and reflection axes under the dot action.}\label{fig:weyl chamber}
\end{figure}
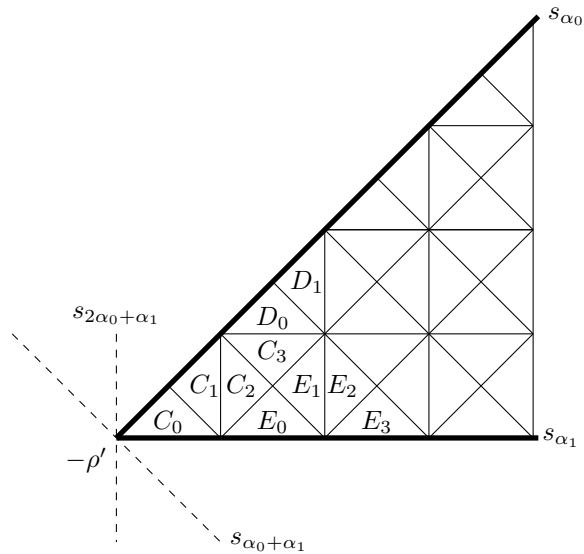

\newpage

\begin{table}[h]
\centering
\begin{tabular}{|c|c|}
\hline
$\lambda\in\,$Alcove & $\chi(\lambda)=$ \\ 
\hline
$C_0$ & $\chi_p(\lambda)$ \\
\hline
$C_1$ &  $\chi_p(\lambda)+\chi_p(C_0)$ \\
\hline
$C_2$ &  $\chi_p(\lambda)+\chi_p(C_1)$ \\
\hline
$C_3$ &  $\chi_p(\lambda)+\chi_p(C_2)$ \\
\hline
$D_0$ &  $\chi_p(\lambda)+\chi_p(C_3)$  \\
\hline
$D_1$ &  $\chi_p(\lambda)+ \chi_p(D_0)+ \chi_p(E_1)+\chi_p(C_3)$ \\
\hline 
$E_0$ & $\chi_p(\lambda)+\chi_p(C_2)$ \\
\hline
$E_1$ & $\chi_p(\lambda)+\chi_p(E_0)+\chi_p(C_3)+\chi_p(C_2)+\chi_p(C_1)$ \\
\hline
$E_2$ & $\chi_p(\lambda)+\chi_p(E_1)+\chi_p(D_0)+\chi_p(C_3)+\chi_p(C_2)+\chi_p(C_1)+\chi_p(C_0)$ \\
\hline 
$E_3$ & $\chi_p(\lambda)+\chi_p(E_2)+\chi_p(C_2)$ \\
\hline
\end{tabular}
\caption{This table gives the Jordan-H\"older factors of Weyl modules for $\GSp_4$. In it, we write $\chi_p(A)$ instead of $\chi_p(\lambda_A)$, where $\lambda_A$ is the unique weight linked to $\lambda$ in alcove $A$.}\label{table:Weyl decomposition}
\end{table}
 
\newpage
\begin{table}[t!]
\begin{center}
\begin{tabular}{|c|c|c|}
\Xhline{3\arrayrulewidth}
\multirow{4}{*}{$\lambda\in C_0$} & $\gr^0$ & $F(x,y)_0+F'(y+p-1,x)_2$ \\ \cline{2-3}
& $\gr^1$ & \makecell{$F(x-2,y)_0+F(x-1,y-1)_0+F'(p-3-y,x)_1$\\$+F'(y+p-1,p-1-x)_2+F(y+p-2,p-2-x)_2+F(x+p-1,p-1-y)_3$} \\ \cline{2-3}
& $ \gr^2$ & \makecell{$F'(p-2-x,y-1)_0+F(x-2,y-2)_0+F'(p-1-x,y)_0$\\$+F'(p-3-x,y-2)_0+F'(p-3-x,y)_0+F'(p-2-y,x-1)_1$\\$+F(p-3-y,p-1-x)_1+F'(p-1-y,x)_1+F(p-2-y,p-2-x)_1$\\$+F'(y+p-2,x-1)_2$} \\ \cline{2-3}
& $\gr^3$ & $F(p-1-y,p-1-x)_1+F(2p-2-x,p-1-y)_3$ \\ \Xhline{3\arrayrulewidth}

\multirow{4}{*}{$\lambda\in C_1$} & $\gr^0$ & $F(x,y)_1+F'(y+p-1,x)_3$ \\ \cline{2-3}
& $\gr^1$ & \makecell{$F'(y-2,p-1-x)_0+F'(y,p-1-x)_0+F(p-3-y,p-3-x)_0$\\$+F'(y-1,p-2-x)_0 +F'(y-2,p-3-x)_0+F(x-2,y)_1$\\$+F(x-1,y-1)_1+F'(x-1,p-2-y)_1+F'(x,p-1-y)_1$\\$+ F'(2p-3-x,p-2-y)_2$} \\ \cline{2-3}
& $ \gr^2$ & \makecell{$F(p-3-y,p-1-x)_0+F(p-2-y,p-2-x)_0+F'(x-2,p-1-y)_1$\\$+F(2p-2-x,y)_2+F(2p-3-x,y-1)_2+F(2p-2-y,x)_3$} \\ \cline{2-3}
& $\gr^3$ & $F(p-1-y,p-1-x)_0+F'(2p-2-x,p-1-y)_2$ \\ \Xhline{3\arrayrulewidth}

\multirow{4}{*}{$\lambda\in C_2$} & $\gr^0$ & $F'(y,x-p+1)_0+F(x,y)_2$ \\ \cline{2-3}
& $\gr^1$ & \makecell{$F'(y-1,x-p)_0+F'(y-2,x-p+1)_0+F(2p-4-x,y)_1$\\$+F'(x-1,p-2-y)_2+F'(x,p-1-y)_2+F'(y+p-1,2p-2-x)_3$} \\ \cline{2-3}
& $ \gr^2$ & \makecell{$F'(y-2,x-p-1)_0+F(p-2-y,x-p)_0+F(p-3-y,x-p-1)_0$\\$+F(p-3-y,x-p+1)_0+F(p-1-y,x-p-1)_0+F(2p-3-x,y-1)_1$\\$+F(2p-2-x,y)_1+F'(2p-3-x,p-2-y)_1+F'(2p-4-x,p-1-y)_1$\\$+F(x-1,y-1)_2$} \\ \cline{2-3}
& $\gr^3$ & $F'(2p-2-x,p-1-y)_1+F(2p-2-y,2p-2-x)_3$ \\ \Xhline{3\arrayrulewidth}

\multirow{4}{*}{$\lambda\in C_3$} & $\gr^0$ & $F'(y,x-p+1)_1+F(x,y)_3$ \\ \cline{2-3}
& $\gr^1$ & \makecell{$F(x-p-1,p-1-y)_0+F(x-p-1,p-3-y)_0+F(x-p,p-2-y)_0$\\$+F'(2p-4-x,p-3-y)_0+F(x-p+1,p-1-y)_0+F'(y-1,x-p)_1$\\$+F(y,2p-2-x)_1+F(y-1,2p-3-x)_1+F'(y-2,x-p+1)_1$\\$+F(2p-2-y,2p-2-x)_2+F(2p-3-y,2p-3-x)_2$} \\ \cline{2-3}
& $ \gr^2$ & \makecell{$F'(2p-3-x,p-2-y)_0+F'(2p-4-x,p-1-y)_0+F(y-2,2p-2-x)_1$\\$+ F'(2p-3-y,x-p)_2+F'(2p-2-y,x-p+1)_2+F'(3p-3-x,y)_3$} \\ \cline{2-3}
& $\gr^3$ & $F'(2p-2-x,p-1-y)_0$ \\ \Xhline{3\arrayrulewidth}

\end{tabular}
\caption{For $\lambda=(x,y;z)$ 7-deep inside a $p$-restricted alcove, this table gives the characters of $\gr^i:=\overline{M}(\lambda)_\F(s_1s_0s_1,i)/\overline{M}(\lambda))_\F(s_1s_0s_1,i+1)$ (we have $\gr^i=0$ for $i\geq 4$ by Theorem \ref{thm:filtration length}). We use the shorthand $F(a,b)_{i}:=F(a,b;z)$ and $F'(a,b)_{i}:=F(a,b;z+p-1)$, the subscript $i$ referring to the alcove $C_i$ in which the Serre weight lies.} \label{table:Jantzen filtration pieces}
\end{center}
\end{table}

\bibliography{compatibilitybibliography}{}
\bibliographystyle{amsalpha}

\noindent \small \textsc{Department of Mathematics, Northwestern University, 2033 Sheridan Road,
Evanston, IL 60208, USA} \\
\textit{Email address:} \href{mailto:john.enns@northwestern.edu}{\texttt{john.enns@northwestern.edu}} 

\vspace{1em}

\noindent \small \textsc{Department of Mathematics,
University of Toronto,
40 St. George Street,
Toronto, ON M5S 2E4, Canada} \\
\textit{Email address:} \href{mailto:heejong.lee@mail.utoronto.ca}{\texttt{heejong.lee@mail.utoronto.ca}} 

\end{document}